\documentclass{amsart}
\usepackage{color, graphicx, enumerate, amssymb}
\usepackage{hyperref}
\usepackage{epsfig,wrapfig}
\usepackage{pxfonts}
\usepackage{graphicx}

\usepackage{eucal}
\usepackage[utf8]{inputenc}
\usepackage{ulem}

\usepackage{amsmath}
\usepackage{geometry}
 \geometry{
 a4paper,
 total={154mm,227mm},
 left=23mm,
 top=30mm,
 }
 
\usepackage{comment}
\usepackage{enumitem}
\usepackage{thmtools}
\usepackage{cleveref}
\bibliographystyle{elsarticle-num}

\newtheorem{theorem}{Theorem}[section] 
\newtheorem{lemma}[theorem]{Lemma}
\newtheorem{corollary}[theorem]{Corollary}
\newtheorem{definition}{Definition}[section]
\newtheorem{proposition}[theorem]{Proposition}
\newtheorem{remark}[theorem]{Remark}

\newcommand{\supp}{\operatorname{supp}}
\newcommand{\dist}{{\rm dist}}

\newcommand{\rW}{{\mathrm w}}
\newcommand{\bR}{{\mathbb R}}

\newcommand{\cH}{{\mathcal H}}
\newcommand{\cN}{{\mathcal N}}
\newcommand{\cV}{{\mathcal V}}
\newcommand{\cL}{{\mathcal L}}

\title[Non-minimizing Axially Symmetric  Cavity Flow]{Non-minimizing Axially Symmetric Cavity Flow }

\author[M. Bayrami]{Masoud Bayrami}
\address[M. Bayrami]{School of Mathematics, Statistics and Computer Science, College of Science, University of Tehran.}
\email{aminlouee@ut.ac.ir}
\address[M. Bayrami]{School of Mathematics, Institute for Research in Fundamental Sciences (IPM), P.O. Box: 19395-5746, Tehran, Iran.}
\email{aminlouee@ipm.ir}

\author[M. Fotouhi]{Morteza Fotouhi}
\address[M. Fotouhi]{Department of Mathematical Sciences, Sharif University of Technology, P.O. Box: 11365-9415, Tehran, Iran}
\email{fotouhi@sharif.edu}

\author[P. Vosooqnejad]{Parisa Vosooqnejad}
\address[P. Vosooqnejad]{Department of Mathematical Sciences, Sharif University of Technology, P.O. Box: 11365-9415, Tehran, Iran}
\email{par.vosooq@sharif.edu}

\date{\today}

\begin{document}

\begin{abstract}
Cavity flow problems in two dimensions, as well as in the axially symmetric three-dimensional case, have been extensively studied in the literature from a qualitative perspective. 
While numerous results exist concerning minimizers or stable solutions-particularly regarding the regularity of the free boundary and the analysis of singularities—much less is known about the critical points of the corresponding energy functional.  
In this paper, we focus on investigating the properties of such critical points in the axially symmetric cavity flow problem with a free boundary, in relation to the known variational solutions.
Moreover, our approach extends naturally to the case of jet flow problems.
\end{abstract}

\keywords{Axially symmetric cavity flows, Alt-Caffarelli-type functional, non-minimizing solutions, degenerate elliptic equations, free boundary problems}

\subjclass[2020]{35R35, 35J70, 76B03}

\maketitle

\tableofcontents

\section{Introduction}
\subsection{Overview of the Mathematical Framework}

The classical theory of jets and cavities is initially focused on studying incompressible inviscid two-dimensional (ideal) flows; see, e.g., \cite{MR88230, MR119655, MR204008} and references therein. 
Related problems, such as those arising in the theory of steady two-dimensional free-surface water waves, have also been the subject of extensive research. For overviews of these developments, we refer to \cite{zbMATH05964521, zbMATH05800625, zbMATH01127680}; and also \cite{kriventsov2025minmaxvariationalapproachexistence}, which presents a framework that accommodates both a variational approach to various fluid equilibrium problems and the construction of min–max solutions to Bernoulli-type free boundary problems.

A significant development in the study of jets and cavities for compressible fluids using novel variational approaches began with the seminal works of Alt, Caffarelli, and Friedman \cite{MR752578, zbMATH00522487, alt1985compressible, caffarelli1982axially}. 
These variational problems, have been studied within a geometric measure-theoretic framework, employ geometric and blow-up arguments as well as novel monotonicity formulas. For further details, see for instance \cite{alt1983axially,  brock1993axiallyI, brock1993axiallyII, cheng2020free, du2023free1, du2023free2, du2024regularity, zbMATH03982826, zbMATH01844201, zbMATH01080663,  zbMATH05555290, zbMATH05963664, zbMATH06156243, zbMATH06327034, zbMATH02190600, zbMATH04107321, zbMATH04107322, zbMATH05934884}. 

In this paper, we build upon this line of research by focusing not on energy minimizers, but on \textit{non-minimizing critical points}, a class of solutions that is comparatively less understood yet of substantial physical and mathematical interest. 
Our investigation is motivated in part by recent advances in the understanding of variational limits of singular perturbation problems, particularly the work of Kriventsov and Weiss \cite{kriventsov2023rectifiability}, who developed a refined notion of variational solutions that lies between the classical weak and viscosity solution frameworks.

The cavity flow problem we consider arises in both two-dimensional and axially symmetric three-dimensional configurations and involves a free boundary separating a region occupied by fluid from a cavity or vacuum.  
While numerous results exist concerning minimizers or stable solutions—particularly regarding free boundary regularity and singularity analysis—much less is known about the critical points (specifically saddle points) of the corresponding energy functional. 

In the axisymmetric setting, the natural variational functional governing steady ideal incompressible flows is given by:
\begin{equation}
\label{singular-ACF}
J_{\lambda}(\psi) = \int_{\Omega} \left( \left|\dfrac{\nabla \psi}{y} \right|^{2}+ \lambda^{2} \chi_{\{\psi>0\}} \right)y \, dxdy,
\end{equation}
where $ \Omega $ lies in the half-plane $ \{(x,y) \, : \, y \geq 0\} $, $\lambda > 0$ is the cavity speed and $\psi$ is the stream function.

However, the functional \eqref{singular-ACF} is not convex, and it is evident that there exist critical points that are not minimizers. 
In fact, such saddle points are not merely mathematical artifacts but can correspond to physically relevant configurations, particularly in time-dependent or perturbed problems such as those appearing in combustion theory or free surface flows with instabilities. See \cite{MR1989835} for a detailed discussion in the context of parabolic singular perturbations. 
More precisely, in \cite{MR1989835}, Weiss showed that variational solutions naturally arise as limits of more regular solutions in the study of singular perturbations of semilinear equations. An important observation made there is that, although inner-variation solutions are stable under taking limits, this stability holds only in a generalized sense. Specifically, the limit $I$ of $\chi_{\{\psi_k>0\}}$, which forms part of the energy functional, is indeed the characteristic function of a set, but it does not necessarily coincide with the positivity set of the limiting function $\{\psi_0>0\}$. Instead, only the inclusion $\chi_{\{\psi_0>0\}} \leq I$ is guaranteed, which motivates a relaxed definition of variational solutions.

As highlighted in \cite{kriventsov2023rectifiability}, classical notions of weak solutions, as introduced by Alt and Caffarelli in \cite{caffarelli1981existence}, are often too restrictive to capture limits of approximating sequences arising from singular perturbations or viscous regularization. 
Conversely, the class of viscosity solutions, although more robust under limits, is too broad to permit fine structural or regularity results. 
The intermediate class of \textit{variational solutions}, as developed in \cite{MR1989835, kriventsov2023rectifiability}, provides a more natural and analytically tractable setting for studying such limit configurations.

Motivated by this framework, we study the Bernoulli-type free boundary problem associated with variational solutions of the functional \eqref{singular-ACF}:
\begin{equation}
\label{OUR-problem}
\begin{cases}  
\mathcal{L}\psi:=\mathrm{div\,} \left(\dfrac{1}{y} \nabla \psi \right) = 0, \qquad & \text{in} \ \{\psi > 0\} \cap \Omega, \\  
\psi = 0, \qquad & \text{on} \ \partial \{\psi > 0\} \cap \Omega, \\  
\dfrac{1}{y} \dfrac{\partial \psi}{\partial \nu} = \lambda, \qquad & \text{on} \ \partial \{\psi > 0\} \cap \Omega,  
\end{cases}
\end{equation}
which describes a steady axisymmetric incompressible ideal flow in three dimensions, with the $x$-axis serving as the axis of symmetry. 
The differential operator \( \mathcal{L} \) is singular at \( y = 0 \), reflecting the degeneracy introduced by axial symmetry in cylindrical coordinates. 
This dynamical boundary condition which equivalently can be represented as $| \nabla \psi |= \lambda y $ on the free boundary $\partial \{\psi > 0\} \cap \Omega$ indicates that the flow speed scales linearly with the radial distance from the axis, and degenerates as \( y \to 0 \).
Points on the free boundary where the gradient of the stream function vanishes are referred to as \textit{degenerate points}, such as those lying on the symmetry axis $y=0$. For our analysis, we also distinguish free boundary points in the region $y>0$ that are not regular free boundary points, which we call \textit{singular points}. The set of such points is denoted by $\Sigma_{H}$ and is precisely defined in \eqref{sigma_H}.

The focus of this paper is the qualitative and structural analysis of non-minimizing critical points of the functional \eqref{singular-ACF} near the free boundary. 
While our analysis includes both degenerate and nondegenerate points, we place particular emphasis on the singular points lying in the domain \(\{ y > 0 \}\). 
Understanding the structure of the free boundary near such points is crucial for capturing the qualitative behavior of the flow and the associated variational solutions.

\subsection{Notations}
We use \( X \cdot Y \) to denote the Euclidean inner product in \( \mathbb{R}^2 \), and \( |X| \) to denote the Euclidean norm. 
The Euclidean distance from a point \( X \in \mathbb{R}^2 \) to a set \( L \subset \mathbb{R}^2 \) is written as \( d(X, L) \).
The upper half-plane is denoted by \( \mathbb{R}^2_+ := \{ X = (x, y) \in \mathbb{R}^2 \, : \, y \geq 0 \} \), and the standard Lebesgue measure on \( \mathbb{R}^2 \) is written as \( dX = dx dy \). 
When the context is clear, we may omit the measure of integration notation for brevity.

The open ball of radius \( r \) centered at a point \( X_0 \in \mathbb{R}^2 \) is denoted by \( B_r(X_0) := \{ X \in \mathbb{R}^2 \, : \, |X - X_0| < r \} \). 
The intersection of this ball with the upper half-plane is denoted by \( B_r^+(X_0) := B_r(X_0) \cap \mathbb{R}^2_+ \).
We refer to the \textit{circular boundary} of this set as
\[
\partial B_r^{+}(X_0) := \{ X = (x, y) \in \mathbb{R}^2 \, : \, y \geq 0 \text{ and } |X - X_0| = r \}.
\]
Note that \( \partial B_r^+(X_0) \) is not the topological boundary of \( B_r^+(A) \), and \( B_r^+(A) \) is not necessarily a true half-ball. 
When the center is omitted, it is understood to be the origin; thus we write  \( B_r := B_r(0) \) and \( B_r^+ := B_r^+(0) \). 

We assume \( \Omega \subseteq \mathbb{R}^2_+ \) is an open set. The boundary \( \partial \Omega \) refers to the portion of the boundary lying strictly in the upper half-plane and is distinct from the full topological boundary \( \overline{\Omega} \subseteq \mathbb{R}^2 \).
The volume (Lebesgue measure) of a set  \( A \subseteq \mathbb{R}^2 \) is denoted by \( |A| \), representing the two-dimensional Lebesgue measure \( \mathcal{H}^2 \).  
In addition, we denote by  $|A|_{\rW}$ the weighted volume of \( A \) with respect to the measure $y\,dX$, defined by $|A|_{\rW} := \int_A y \, dX$.

To define the solutions of the problem in an appropriate sense, we require certain weighted Sobolev spaces, denoted as \( W^{1,2}_{\rW}(E) \).

\begin{definition}
Let \( E \subseteq \mathbb{R}^{2}_{+} \) be an open set. The weighted Lebesgue space \( L^{2}_{\rW}(E) \) and its local version \( L^{2}_{\rW,\mathrm{loc}}(E) \) are defined as follows:
\[
L^{2}_{\rW}(E) := \left\{ g : E \to \mathbb{R} \, : \, g \text{ is measurable and } \int_{E} \frac{1}{y} |g|^{2} \, dX < +\infty \right\},
\]
and
\[
L^{2}_{\rW,\mathrm{loc}}(E) := \left\{ g \in L^{2}_{\rW}(K) \text{ for every compact set } K \subseteq E \right\}.
\]
The corresponding norm on \( L^{2}_{\rW}(E) \) is
\[
\| g \|_{L^{2}_{\rW}(E)} := \left( \int_{E} \frac{1}{y} |g|^{2} \, dX \right)^{1/2}.
\]
The weighted Sobolev space \( W^{1,2}_{\rW}(E) \) and its local version \( W^{1,2}_{\rW,\mathrm{loc}}(E) \) are given by:
\[
W^{1,2}_{\rW}(E) := \left\{ g \in L^{2}_{\rW}(E) \, : \, \partial_x g, \partial_y g \in L^{2}_{\rW}(E) \right\},
\]
and
\[
W^{1,2}_{\rW,\mathrm{loc}}(E) := \left\{ g \in L^{2}_{\rW,\mathrm{loc}}(E) \, : \, \partial_x g, \partial_y g \in L^{2}_{\rW,\mathrm{loc}}(E) \right\},
\]
where \( \partial_x := \frac{\partial}{\partial x} = \partial_1 \) and \( \partial_y := \frac{\partial}{\partial y} = \partial_2 \) denote the weak partial derivatives with respect to \( x \) and \( y \), respectively.
\end{definition}

We define the degenerate elliptic operator
\[
\mathcal{L}u := \mathrm{div} \left( \frac{1}{y} \nabla u \right).
\]
A function \( u \) is called an \(\mathcal{L}\)-subsolution if it satisfies \( \mathcal{L}u \geq 0 \) in the distributional (or viscosity) sense. If both \( u \) and \( -u \) are \(\mathcal{L}\)-subsolutions, then \( u \) is referred to as an \(\mathcal{L}\)-solution.

We use \( \chi_A \) to denote the characteristic function of a set \( A \). 
For any real number \( a \in \mathbb{R} \), we write \( a^+ := \max(a, 0) \) and \( a^- := \min(a, 0) \). 
The symbol \( \nu \) will always refer to the inward-pointing unit normal vector of a surface. The notation \( \mathcal{H}^s \) denotes the \( s \)-dimensional Hausdorff measure.

We use the space of functions of bounded variation, denoted by \( BV(\Omega) \), consisting of functions \( f \in L^1(\Omega) \) whose distributional derivatives are vector-valued Radon measures. The total variation measure of such a function is denoted by \( |\nabla f| \). In particular, for a smooth open set \( U \subseteq \mathbb{R}^2 \), the measure \( |\nabla \chi_U| \) corresponds to the surface measure on \( \partial U \). We also use the concept of the reduced boundary, denoted \( \partial^\star U \), from geometric measure theory.

\subsection{Main Result}
We begin by introducing the class of non-minimizing solutions to the free boundary problem \eqref{OUR-problem}. 
In particular, we define the notion of a \textit{variational solution}, which includes certain critical points (such as saddle points) of the functional \eqref{singular-ACF} corresponding to the Euler-Lagrange equation \eqref{OUR-problem}. 
This formulation is inspired by the frameworks developed in \cite{kriventsov2023rectifiability, zbMATH06327034, MR1989835}. 
For simplicity, we assume $\lambda=1$ throughout the paper.

\begin{definition}[Variational solutions]
\label{Def-1-1}
Let \( \Omega \subseteq \mathbb{R}_+^{2} \) be an open set. 
A pair \( (\psi, I) \), with \( \psi: \Omega \to [0, +\infty) \) and \( \psi \in W^{1,2}_{\rW, \mathrm{loc}}(\Omega) \), and \( I: \Omega \to \{0,1\} \), is called a \textit{\textbf{variational solution}} (with constant \( C_V \)) if the following conditions are satisfied:
\begin{enumerate} 
\item $ \psi \in C^{0}(\Omega) \cap C^{2}(\Omega \cap \{\psi >0\}) $, with   $ \psi \geq 0 $ in  $ \Omega$, $\psi =0$ on $\{y=0\} \cap \Omega$, and
\begin{equation}\label{lip-condition}
    \dfrac1y|\nabla \psi | \leq C_{V}, \qquad \text{ in }\, \Omega;
\end{equation}
\item $I$ is a Borel measurable function; 
\item $\chi_{\{\psi >0\} } \leq I$, $ \mathcal{H}^2$-a.e in $\Omega$;
\item the first variation of the functional
\begin{equation*}
J(\tilde{\psi},\tilde{I}) = \int_{\Omega} \left( \left|\dfrac{\nabla \tilde{\psi}}{y} \right|^{2}+  \tilde{I} \right)y \, dX,
\end{equation*}
with respect to domain variations, vanishes at $ (\tilde{\psi},\tilde{I})=(\psi,I) $; that is, for every vector field $ \eta= (\eta_{1}, \eta_{2}) \in C_{c}^{1}(\Omega,\mathbb{R}^{2})$ such that $\eta_2=0$ on $\{y=0\}$, we have
\begin{align} \nonumber
0 & =  \dfrac{d}{dt}J \left(\psi(X+t\eta(X)),I(X+t\eta(X)) \right)_{\big
|_{t=0}}
\\ \label{inner-variation-1}
& = \int_{\Omega} \left(\frac{1}{y}\left|\nabla \psi  \right|^2 +  y I\right) \mathrm{div\,} \eta -\dfrac{2}{y}\nabla \psi \cdot D\eta \nabla \psi -\eta_2 \left(\dfrac{1}{y^2}|\nabla \psi|^2  -I \right)\, dX. 
\end{align}
\end{enumerate}
\end{definition}
\begin{remark}
The Lipschitz condition \eqref{lip-condition} follows from the boundedness of the velocity of the flow, which is known to hold at least for minimizing solutions of \eqref{singular-ACF}; see \cite{brock1993axiallyII}.
\end{remark}
Our main result establishes structural and regularity properties of variational solutions:
\begin{theorem}
\label{MAIN-THEOREM}
Let $(\psi, I)$ be a variational solution in a domain $\Omega$. Then:
\begin{itemize}
\item[(i)] Either $\psi \equiv 0$ or $I = \chi_{\{\psi > 0\}}$;

\item[(ii)] The whole topological free boundary $\partial\{\psi > 0\}$ is countably $\mathcal{H}^{1}$-rectifiable and has locally finite $\mathcal{H}^{1}$-measure.

\item[(iii)] The solution $\psi$ satisfies the following distributional equation
\[
\mathcal{L} \psi =  \mathcal{H}^{1} \llcorner \partial^\star \{\psi > 0\} + \frac{2}{\sqrt{\pi y}} \sqrt{\lim_{r \to 0^+} r^{-3} H_{X}(\psi,r)} \mathcal{H}^{1} \llcorner \Sigma_{H}.
\]
Here, $\partial^{\star}$ denotes the reduced boundary, $H_{X}(\psi,r)$ at the point $X=(x,y)$ is defined in \eqref{def-H-non-deg}, and $\Sigma_{H}$ denotes the set of all singular points $X_0 \in \partial \{\psi>0\}$ such that $ \psi(X) = a \,\big|\,(X-X_0)\cdot \nu\,\big| + \mathrm{o}(|X-X_0|)$, for some $a=a(X_0)>0$ and some unit vector $\nu=\nu(X_0)$.

\item[(iv)] For $\mathcal{H}^1$-a.e. singular point $X_0 \in \Sigma_{H}$, we have:
\[
\frac{r \int_{B_r^{+}(X_0)} \left( \frac{1}{y} \left|\nabla \psi(x,y)\right|^2 +  y \left(I - 1\right) \right) \, dx dy}{ \int_{\partial B_r^{+}(X_0)} \frac{1}{y} \psi^2(x,y) \, d\mathcal{H}^{1}} \to 1,
\]
and
\[
\frac{r \int_{B_r^{+}(X_0)} \frac{1}{y} \left|\nabla \left(\psi(x,y) - \alpha(X_0) \left|(x,y) - X_0 \right| \right) \cdot \nu(X_0)  \right|^2 \, dx dy}{\int_{\partial B_r^{+}(X_0)} \frac{1}{y} \psi^2(x,y) \, d\mathcal{H}^{1}} \to 0
\]
as $r \to 0$, for some unit vector $\nu(X_0)$. The constant $\alpha(X_0) = \sqrt{y_0/\pi}$ is an explicit normalizing factor. 
\end{itemize}
\end{theorem}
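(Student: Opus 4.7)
The proof follows the Weiss / Kriventsov--Weiss framework adapted to the degenerate weighted setting. First I would introduce a Weiss-type functional centered at $X_{0}\in\Omega$,
\[
W_{X_{0}}(\psi,I,r) = \frac{1}{r^{3}}\int_{B_{r}^{+}(X_{0})}\left(\frac{1}{y}|\nabla\psi|^{2}+yI\right)dX - \frac{1}{r^{4}}\int_{\partial B_{r}^{+}(X_{0})}\frac{1}{y}\psi^{2}\,d\mathcal{H}^{1},
\]
and prove its monotonicity in $r$ by testing the inner-variation identity \eqref{inner-variation-1} against radial vector fields $\eta(X)=\phi_{\varepsilon}(|X-X_{0}|)(X-X_{0})$, carefully tracking the extra $\eta_{2}\bigl(y^{-2}|\nabla\psi|^{2}-I\bigr)$ term produced by the weight. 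Passing to the blow-ups $\psi_{X_{0},r}(X):=r^{-1}\psi(X_{0}+rX)$ and $I_{X_{0},r}(X):=I(X_{0}+rX)$ at any $X_{0}$ with $y_{0}>0$ gives, along subsequences, $1$-homogeneous limits $(\psi_{0},I_{0})$ that are variational solutions on $\mathbb{R}^{2}$ with the constant weight $y_{0}$. A standard classification in this frozen Euclidean model leaves only three $1$-homogeneous possibilities: the trivial solution, the half-plane $\psi_{0}(X)=y_{0}(X\cdot\nu)_{+}$, and the two-plane $\psi_{0}(X)=y_{0}|X\cdot\nu|$; the third case is exactly what defines $\Sigma_{H}$.

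For (i), I would argue by contradiction: if $\{I=1\}\setminus\overline{\{\psi>0\}}$ has positive measure, pick a Lebesgue density point $X_{0}$ and insert into \eqref{inner-variation-1} a vector field supported in a neighborhood where $\psi\equiv 0$ and $I\equiv 1$. The identity collapses to $\int\bigl(yI\,\mathrm{div}\,\eta-\eta_{2}I\bigr)dX=0$, equivalently $\partial_{y}(yI)=I$ distributionally, forcing $I$ to be concentrated on an $\mathcal{H}^{1}$-negligible set there, a contradiction. A connectedness argument based on the lower semicontinuity of blow-up densities then yields the global dichotomy.

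For (ii) and (iii), the Weiss monotonicity together with the three-valued classification of blow-ups provides a uniform upper bound on the one-dimensional density of $\mathcal{H}^{1}\llcorner\partial\{\psi>0\}$ on any compact subset of $\Omega\cap\{y>\delta\}$, hence local finiteness of $\mathcal{H}^{1}$, and rectifiability either by a Preiss-type theorem or directly from the existence of a unique half-plane/two-plane tangent at $\mathcal{H}^{1}$-a.e.\ boundary point. For the distributional identity, I pair $\mathcal{L}\psi$ with $\varphi\in C_{c}^{1}(\Omega)$, localize near the free boundary, and split the resulting boundary integral into the contribution from $\partial^{\star}\{\psi>0\}$ (where the half-plane blow-up and the free boundary condition $|\nabla\psi|/y=1$ yield density $1$) and from $\Sigma_{H}$ (where the two-plane blow-up produces density $2a(X_{0})/y_{0}$). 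Explicitly evaluating $r^{-3}H_{X_{0}}$ on the two-plane blow-up gives $\pi a(X_{0})^{2}/y_{0}$, which rearranges into the claimed relation $2a(X_{0})/y_{0} = \tfrac{2}{\sqrt{\pi y_{0}}}\sqrt{\lim_{r\to 0^{+}}r^{-3}H_{X_{0}}(\psi,r)}$.

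Finally, for (iv), strict monotonicity of $W_{X_{0}}$ plus the classification of blow-ups forces uniqueness of the two-plane tangent at $\mathcal{H}^{1}$-a.e.\ $X_{0}\in\Sigma_{H}$. The first quantitative limit is then obtained by subtracting from $W_{X_{0}}(\psi,I,r)$ the value of the Weiss functional evaluated on the explicit profile $\alpha(X_{0})|X\cdot\nu|$, with $\alpha(X_{0})=\sqrt{y_{0}/\pi}$ chosen as the normalization that makes the weighted boundary $L^{2}$-norm of the profile equal to one in the frozen-weight limit, and the second follows from the $L^{2}$-convergence of rescaled gradients on spheres that accompanies Weiss monotonicity. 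The main analytic obstacle throughout is the joint handling of the singular weight and the non-divergence-free inner-variation term $\eta_{2}(y^{-2}|\nabla\psi|^{2}-I)$; this coupling both drives the precise shape of the monotonicity formula and is what produces the axisymmetric normalization $\sqrt{y_{0}/\pi}$ linking the abstract blow-up analysis to the concrete geometry of the problem.
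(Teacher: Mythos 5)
Your outline reproduces the soft part of the paper's argument (the inner-variation identity, a Weiss-type quantity, classification of homogeneous blow-ups into half-plane and two-plane profiles), but it skips the technical heart, and the steps you substitute for it do not work. The gap is in how you get (ii)--(iv) on the singular set: Weiss monotonicity plus the three-valued classification of blow-ups does \emph{not} give local finiteness of $\mathcal{H}^1\llcorner\partial\{\psi>0\}$ near $\Sigma_H$, nor rectifiability of $\Sigma_H$, nor uniqueness of the tangent direction there. A Preiss-type theorem presupposes a measure with locally finite mass and positive finite $1$-densities, which is exactly what is unproven; and ``strict monotonicity of $W_{X_0}$ forces uniqueness of the two-plane tangent'' is false as stated -- monotonicity fixes the density of every blow-up limit but not the direction $\nu$, which may a priori rotate along different subsequences. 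Moreover, at points of $\Sigma_H$ the linear-rate blow-up $r^{-1}\psi(X_0+rX)$ may vanish identically (this is precisely the degenerate alternative in Lemma \ref{81}), so the two-plane profile only emerges after renormalizing by $\sqrt{r^{-1}H_{X_0}(\psi,r)}$, and its slope is a variable $A>0$, not $y_0$. The paper resolves all of this with a different mechanism: the Almgren-type frequency $\mathcal{N}_{X_0}$ and its almost monotonicity for $y_0>0$ (Lemmas \ref{nprim}--\ref{nprim-N}), the analysis of frequency blow-ups (Lemma \ref{5.1n}, Theorem \ref{5.3}), quantitative splitting and the control of Jones' $\beta$-numbers by frequency drops (Lemmas \ref{63}, \ref{61}), and the Naber--Valtorta covering scheme giving Theorem \ref{72} (rectifiability, $\mathcal{H}^1$-finiteness, and $\mathcal{N}_{X_0}(0^+)=1$ a.e. on $\Sigma_H$). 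Only the approximate tangent line of $\Sigma_H$ obtained this way pins down $\nu$ and yields the strong convergence to $\alpha(X_0)|X\cdot\nu|$ in Lemma \ref{85}, which is what (iii) and (iv) actually rest on. Nothing in your proposal supplies a substitute for this machinery.

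Your argument for (i) also fails as written. On an open subset of $\{\psi=0\}^{\circ}$ the identity \eqref{inner-variation-1} reduces (with the correct signs) to $\int I\,\mathrm{div}(y\eta)\,dX=0$, which only says that $I$ is locally constant in $\{y>0\}$; it cannot produce a contradiction at a Lebesgue density point of $\{I=1\}\setminus\overline{\{\psi>0\}}$, because $I\equiv 1$ on such a component is perfectly admissible -- indeed $(\psi,I)=(0,1)$ is itself a variational solution, which is exactly why the dichotomy in (i) contains the alternative $\psi\equiv 0$. The genuine proof is global and again leans on the singular-set results: if $U$ is a component of $\{\psi=0\}^{\circ}$ with $I=1$, upper semicontinuity of $X\mapsto M_X(\psi,0^+)$ gives $\partial U\cap\Omega\subseteq\Sigma_H$, Theorem \ref{72} makes $U$ a set of locally finite perimeter, the relative isoperimetric inequality produces reduced-boundary points where $U$ has density $1/2$, and this contradicts the density-$1$ statement for $\{\psi>0\}$ at $\mathcal{H}^1$-a.e. point of $\Sigma_H$ (Lemma \ref{85}). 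As a further (minor) inaccuracy, your Weiss functional $r^{-3}D-r^{-4}H$ carries the degenerate (axis) exponents; at points with $y_0>0$ the correct scaling is $r^{-2}D-r^{-3}H$, and even then it is only \emph{almost} monotone, the $y$-weight error terms being part of what must be controlled.
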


\section{ Preliminaries}

We begin with the proof of the first-variation formula \eqref{inner-variation-1}. 
Although a similar result can be found in \cite[Section 3.2]{zbMATH00837932} or \cite[Lemma 9.5]{MR4807210}, 
special care is required to handle the singularity at \( y = 0 \), which arises due to the axial symmetry of the problem. 
For the sake of completeness, we include the proof in the following lemma. 
The assumption \( \eta_2 = 0 \) on \( \{y = 0\} \), is crucial to ensure that the flat portion of the boundary of \( \Omega \) remains fixed under the variation.

\begin{lemma}
Suppose that $ \Omega \subset \mathbb{R}^{2}$ is a bounded open set and that  $\psi \in W^{1,2}_{\rW, \mathrm{loc}}(\Omega)$. 
Let $ \eta \in C_{c}^{1} (\Omega,\mathbb{R}^{2}) $ be a given vector field with compact support in $ \Omega $ satisfying $\eta_2=0$ on $\{y=0\}$.
Define the perturbation map $ p_{t} $ by 
\begin{equation*}
p_{t}(X)= X + t \eta (X), \qquad \text{for every $X \in \Omega$}.
\end{equation*}
Then the following hold:
\begin{enumerate}[label=(\roman*)]
\item for sufficiently small $t$  (depending on the vector field $\eta$), the map $ p_{t}: \Omega \to \Omega $ is a $C^1$-diffeomorphism and setting $ q_{t}:= p_{t}^{-1} $, the function $ \psi_{t} := \psi \circ q_{t} $ is well-defined and belongs to $ W^{1,2}_{\rW, \mathrm{loc}}(\Omega) $; 
\item the function $ t \to J(
\psi_t, I_t)$, where \( I_t := I \circ q_t \),  
is differentiable at $ t=0 $, and its derivative is given by:
\begin{equation*}
\begin{aligned}
\left.\frac{d}{dt} J(\psi_t, I_t)\right|_{t=0}
= & \int_{\Omega} \left(\frac{1}{y}\left|\nabla \psi  \right|^2 +  y I\right) \mathrm{div\,} \eta -\dfrac{2}{y}\nabla \psi \cdot D\eta \nabla \psi- \eta_2 \left(\dfrac{1}{y^2}|\nabla \psi|^2  -I \right)\, dX, 
\end{aligned}
\end{equation*}
which is referred to as the inner-variation of the functional $J(\tilde{\psi},\tilde{I})$
at $(\psi, I)$, in direction of the vector field $\eta \in C_{c}^{1} (\Omega,\mathbb{R}^{2})$.
\end{enumerate} 
\end{lemma}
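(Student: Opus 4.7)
The plan is to push the integrals defining $J(\psi_t, I_t)$ back to the fixed reference domain $\Omega$ via the change of variables $X = p_t(Y)$, and then differentiate the resulting parameter-dependent integrand at $t=0$. By the chain rule, $\nabla\psi_t(p_t(Y)) = (Dp_t(Y))^{-T}\nabla\psi(Y)$, while $X_2 = Y_2 + t\eta_2(Y)$ and $dX = \det Dp_t(Y)\,dY$, so
\begin{equation*}
J(\psi_t, I_t) = \int_{\Omega} \left[\frac{\bigl|(Dp_t(Y))^{-T}\nabla\psi(Y)\bigr|^{2}}{Y_2 + t\eta_2(Y)} + \bigl(Y_2 + t\eta_2(Y)\bigr)\,I(Y)\right] \det Dp_t(Y)\, dY.
\end{equation*}
The computation is then carried out using the elementary identities $\tfrac{d}{dt}\det Dp_t|_{t=0} = \mathrm{div\,}\eta$ and $\tfrac{d}{dt}(Dp_t)^{-1}|_{t=0} = -D\eta$, together with $\tfrac{d}{dt}(Y_2 + t\eta_2)^{-1}|_{t=0} = -\eta_2/Y_2^2$ and the scalar identity $v^T M v = v^T M^T v$, which yields $\tfrac{d}{dt}|(Dp_t)^{-T}\nabla\psi|^2|_{t=0} = -2\nabla\psi\cdot D\eta\nabla\psi$. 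Expanding by the product rule and collecting the contributions proportional to $\mathrm{div\,}\eta$, $D\eta$ and $\eta_2$ reproduces precisely the right-hand side of the claimed formula.

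For part (i), I would verify that $p_t$ is a $C^1$-diffeomorphism of $\Omega$ onto itself for $|t| < \|D\eta\|_{L^{\infty}}^{-1}$. Local invertibility follows from the inverse function theorem since $Dp_t = I + tD\eta$ is invertible and close to the identity, and because $\eta$ is supported in a compact subset $K \Subset \Omega$, the map $p_t$ equals the identity off $K$; this promotes local invertibility to a global diffeomorphism. The hypothesis $\eta_2 = 0$ on $\{y=0\}$ plays a decisive role: combined with the $C^1$ regularity of $\eta$, it gives the linear bound $|\eta_2(Y)| \le C Y_2$ on $K$, whence $Y_2 + t\eta_2(Y) \ge Y_2(1 - C|t|) > 0$. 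Thus $p_t$ preserves $\{y > 0\}$ and fixes the axis pointwise, and the weight comparability that results then yields $\psi_t \in W^{1,2}_{\rW, \mathrm{loc}}(\Omega)$.

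The main obstacle is justifying the interchange of derivative and integral in the presence of the singular weight $1/y$. I would apply the dominated convergence theorem to the difference quotients of the integrand, uniformly for $Y \in K$ and $|t|$ small. The same linear bound $|\eta_2(Y)| \le C Y_2$ on $K$ gives $(Y_2 + t\eta_2(Y))^{-1} \le 2/Y_2$ and $|\eta_2|/Y_2^2 \le C/Y_2$; meanwhile $\det Dp_t$ and $(Dp_t)^{-T}$ remain uniformly close to the identity on $K$. Consequently the difference quotients are controlled by a constant multiple of $Y_2^{-1}|\nabla\psi(Y)|^2 + Y_2$, which is integrable over $K$ because $\psi \in W^{1,2}_{\rW,\mathrm{loc}}(\Omega)$ and $K \Subset \Omega$. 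Dominated convergence then legitimizes passage to the limit $t \to 0$ under the integral sign and delivers the inner-variation formula; it is precisely this step, rather than the pointwise algebra above, that genuinely uses the axial-symmetry structure and the boundary condition on $\eta_2$.
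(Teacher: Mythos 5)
Your proposal follows the same route as the paper: change variables $X = p_t(Y)$ to pull $J(\psi_t, I_t)$ back to a fixed domain, use $\nabla\psi_t\circ p_t = (Dp_t)^{-T}\nabla\psi$, and differentiate the parameter-dependent integrand at $t=0$ using the standard identities for $\det Dp_t$ and $(Dp_t)^{-1}$. The only addition is that you make explicit the dominated-convergence justification (via the linear bound $|\eta_2(Y)| \le C\,Y_2$ forced by $\eta_2=0$ on the axis and $\eta\in C^1_c$), whereas the paper carries out the Taylor expansion more formally and leaves the $\mathrm{o}(t)$ integrability implicit; this is a welcome tightening but not a different argument.
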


\begin{proof} 
The first claim follows directly from the fact that \( \eta \in C_{c}^{1} (\Omega, \mathbb{R}^{2}) \). 
Therefore, we focus solely on proving the latter claim. 
By the change of variable $z=q_{t}(X)$, thus  $ X=p_{t}(z) = \left(p_1(z),p_2(z) \right)$, we get: 
\begin{align*}
\int_{\Omega} \dfrac{1}{y}|\nabla \psi_{t}|^{2}(X) + y\, I_t(X) \, dX &= \int_{\Omega} \left( \dfrac{1}{p_2(z)}\left| Dq_{t}(p_{t}(z))\nabla\psi(z)\right|^2+p_2(z)\, I(z)\,\right)  \left| \det Dp_{t}(z) \right|\,dz, \\
&=\int_{\Omega} \left(\dfrac{1}{p_2(z)}\nabla \psi(z)\cdot[Dp_{t}(z)]^{-T}[Dp_{t}(z)]^{-1}\nabla \psi (z) + p_2(z)\, I(z)\right) \, \left|\det Dp_{t}(z) \right|\,dz.
\end{align*}
We notice that 
\begin{align*}
& Dp_{t}=Id +tD\eta, \quad [Dp_{t}]^{-1}= Id - tD\eta +\mathrm{o}(t), \quad \det Dp_{t} = 1+ t \,\mathrm{div\,}\eta +\mathrm{o}(t), \\ 
& \dfrac{1}{p_2(z)}=\dfrac{1}{z_2}-t\frac{\eta_2(z)}{z_2^2}+\mathrm{o}(t),
\end{align*}
and we continue calculation as follows (for $t$ small):
\begin{align*}
&=\int_{\Omega} \left( \left(\dfrac{1}{z_2}-t\dfrac{\eta_2(z)}{z_2^2} \right)\nabla \psi(z)\cdot [Id - tD\eta]^{T} [Id - tD\eta]\nabla \psi (z) +\left(z_2+ t\eta_2(z)\right)\, I(z) \right) \, \left|1+ t \,\mathrm{div\,} \eta \right|\,dz + \mathrm{o}(t), \\
&=\int_{\Omega} \left( \left(\dfrac{1}{z_2}-t\dfrac{\eta_2(z)}{z_2^2}\right)\left(|\nabla \psi(z)|^2-2t\nabla \psi(z) \cdot  D\eta \nabla \psi(z) \right)+\left(z_2+t\eta_2(z)\right)
I(z)\right) \, \left(1+ t \, \mathrm{div\,} \eta \right) \,dz + \mathrm{o}(t), \\
&=\int_{\Omega} \dfrac{1}{z_2}|\nabla \psi(z)|^2-\dfrac{2t}{z_2}\nabla \psi(z) \cdot D\eta \nabla \psi(z) 
-t\dfrac{\eta_2(z)}{z_2^2}|\nabla \psi(z)|^2+z_2I(z)+t\eta_2(z)I(z)
\\
& \qquad \qquad  +t \, \mathrm{div\,} \eta \left(\dfrac{1}{z_2} |\nabla \psi(z)|^2  +z_2  I(z) \right) \,dz   +\mathrm{o}(t).
\end{align*}
Thus
\begin{align*}
\int_{\Omega} \dfrac{1}{y}|\nabla \psi_{t}|^{2}(X)&+y\, I_t(X) \, dX =  \int_{\Omega} \dfrac{1}{y}|\nabla \psi|^{2}+ y I(X) \, dX  \\
& +t \int_{\Omega}\left( \dfrac{1}{y}|\nabla \psi|^{2} +  y I\right) \mathrm{div \,} \eta -\dfrac{2}{y}\nabla \psi \cdot D\eta \nabla \psi-\dfrac{\eta_2}{y^2}|\nabla \psi|^2 + \eta_2  I\, dX + \mathrm{o}(t), 
\end{align*}
which concludes the proof of \textit{(ii)}.
\end{proof}

\medskip

\begin{proposition} 
\label{32}
Let $ (\psi,I) $ be a  variational solution in $ \Omega $. 
Then $ \psi $ is a $\mathcal{L}$-subsolution in $ \Omega $, and $\mathcal{L}$-solution in $ \{\psi > 0\} $.
\end{proposition}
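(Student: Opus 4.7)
The plan is to handle the two assertions separately. For $\mathcal{L}\psi = 0$ inside the open set $\{\psi > 0\}$, I would use inner variations with a vector field $\eta \in C_c^{1}(\{\psi > 0\}; \mathbb{R}^{2})$; the condition $\eta_{2} = 0$ on $\{y = 0\}$ is automatic, since the continuity of $\psi$ and $\psi|_{\{y=0\}} \equiv 0$ force $\{\psi > 0\} \subset \{y > 0\}$. Condition $(3)$ of Definition~\ref{Def-1-1} gives $I \equiv 1$ on the open set $\{\psi > 0\}$, and for $|t|$ small also $I \circ q_{t} \equiv 1$ on $\operatorname{supp}\eta$. Consequently, the $I$-dependent part of \eqref{inner-variation-1} reduces to $\int (y\,\operatorname{div}\eta + \eta_{2})\,dX$, which vanishes after integrating $\int y\,\partial_{2}\eta_{2}\,dX = -\int \eta_{2}\,dX$. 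What remains is the inner variation of the Dirichlet-type integral alone; since $\psi \in C^{2}(\{\psi>0\})$, a direct integration by parts yields
\begin{equation*}
0 \;=\; 2\int (\nabla \psi \cdot \eta)\,\mathcal{L}\psi \,dX.
\end{equation*}
Testing with $\eta = \phi\, e_{1}$ and $\eta = \phi\, e_{2}$ for arbitrary $\phi \in C_{c}^{\infty}(\{\psi > 0\})$ gives the pointwise identities $\partial_{1}\psi\cdot \mathcal{L}\psi = \partial_{2}\psi\cdot \mathcal{L}\psi = 0$ on $\{\psi > 0\}$. A short connectedness argument then upgrades this to $\mathcal{L}\psi \equiv 0$: if $\mathcal{L}\psi$ were nonzero at some point, by continuity it would be nonzero on a neighborhood $U$; but then $\nabla\psi \equiv 0$ on $U$, so $\psi$ is locally constant and $\mathcal{L}\psi \equiv 0$ on $U$, a contradiction.

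For the subsolution property on the whole of $\Omega$, the plan is a standard truncation. Fix $\phi \in C_{c}^{\infty}(\Omega)$ with $\phi \geq 0$ and a smooth nondecreasing cutoff $\rho_{\delta} \in C^{\infty}(\mathbb{R})$ with $\rho_{\delta}(s) = 0$ for $s \leq \delta/2$ and $\rho_{\delta}(s) = 1$ for $s \geq \delta$. Then $\phi_{\delta} := \phi\,\rho_{\delta}(\psi)$ lies in $C^{2}(\Omega)$ (it vanishes on the open neighborhood $\{\psi < \delta/2\}$ of the free boundary) and has compact support contained in $\{\psi > 0\}$, hence is admissible as a test function for the classical equation $\mathcal{L}\psi = 0$ established above. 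Expanding $\int \tfrac{1}{y}\nabla\psi \cdot \nabla\phi_{\delta}\,dX = 0$ produces
\begin{equation*}
0 \;=\; \int \rho_{\delta}(\psi)\,\frac{1}{y}\nabla\psi \cdot \nabla\phi\,dX \;+\; \int \phi\,\rho_{\delta}'(\psi)\,\frac{|\nabla\psi|^{2}}{y}\,dX,
\end{equation*}
where the second integral is nonnegative. Passing to the limit $\delta \to 0^{+}$ using the uniform bound $\tfrac{1}{y}|\nabla\psi| \leq C_{V}$ from \eqref{lip-condition} and Stampacchia's identity $\nabla\psi = 0$ a.e.\ on $\{\psi = 0\}$, dominated convergence yields $\int \tfrac{1}{y}\nabla\psi \cdot \nabla\phi\,dX \leq 0$, which is precisely the distributional $\mathcal{L}$-subsolution inequality.

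The main delicate point I expect is the cancellation of the $I$-terms in the first step. Although $I$ is merely Borel measurable, the inclusion $\chi_{\{\psi>0\}} \leq I$ together with the openness of $\{\psi > 0\}$ forces $I \equiv 1$ there, and for $|t|$ small $p_{t}(\operatorname{supp}\eta) \subset \{\psi > 0\}$ so $I \circ q_{t} \equiv 1$ on $\operatorname{supp}\eta$. Once this is secured, no further regularity of $I$ is required and the computation collapses to the classical inner-variation identity for $\int |\nabla\psi|^{2}/y\,dX$ alone, which is exactly what drives both conclusions.
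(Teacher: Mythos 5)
Your argument for the interior equation is essentially the paper's: you restrict the domain-variation identity \eqref{inner-variation-1} to vector fields supported in $\{\psi>0\}$ (where $I=1$ Lebesgue-a.e.\ and the constraint $\eta_2=0$ on $\{y=0\}$ is vacuous because $\{\psi>0\}\subset\{y>0\}$), integrate by parts using $\psi\in C^2(\{\psi>0\})$ to reach $0=2\int(\eta\cdot\nabla\psi)\,\mathcal{L}\psi\,dX$, and then pass from $\nabla\psi\,\mathcal{L}\psi\equiv0$ to $\mathcal{L}\psi\equiv0$; your continuity/locally-constant step is just a more explicit version of the paper's terse ``hence $\mathcal{L}\psi=0$ in $\{\psi>0\}$ due to $\psi\in C^2$'', and your pre-cancellation of the $I$-terms ($\int y\,\operatorname{div}\eta+\eta_2\,dX=0$) is the same cancellation the paper performs inside its coordinate computation, so that half is the same route and is correct. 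Where you genuinely differ is the subsolution assertion on all of $\Omega$: the paper disposes of it in one sentence, appealing to a verification ``in the viscosity sense'' with no details, whereas you give a complete distributional proof, testing the classical equation with $\phi\,\rho_\delta(\psi)$, discarding the nonnegative term $\int\phi\,\rho_\delta'(\psi)\,\tfrac{1}{y}|\nabla\psi|^2\,dX$, and passing to the limit via the bound \eqref{lip-condition} and the fact that $\nabla\psi=0$ a.e.\ on $\{\psi=0\}$ for $\psi\in W^{1,2}_{\rW,\mathrm{loc}}$. This truncation argument is correct and standard (it is in the same spirit as the paper's later Lemma \ref{useful-lemma-01}, which integrates by parts on $\{\psi>\epsilon\}$), and it buys a self-contained proof of the subsolution property in exactly the distributional sense admitted by the paper's definition, at the cost of a few lines the paper chose to omit; the only cosmetic slips in your write-up are that $I\equiv1$ on $\{\psi>0\}$ holds only a.e.\ (which is all you use) and that the remark about $I\circ q_t$ is superfluous, since \eqref{inner-variation-1} already involves only $I$ at $t=0$.
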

\begin{proof}
 Take $\eta \in C_c^{\infty}
(\{\psi > 0\}, \bR^2)$. Then $I \geq \chi_{\{\psi>0\}} = 1 $ Lebesgue-a.e. on the support of $\eta$, so 
\begin{equation}
\label{eq-eq-Eqq0}
\int_{ \{\psi>0\}} \left(\frac{1}{y}\left|\nabla \psi  \right|^2 +  y \right) \mathrm{div\,} \eta -\dfrac{2}{y}\nabla \psi \cdot D\eta \nabla \psi -\dfrac{1}{y^2} |\nabla \psi|^2 \eta_2 + \eta_2   \, dX=0.
\end{equation}
Note that $\eta=0$ on $y=0$ since $\psi$ vanishes there. Therefore, integrating by parts in the first term of  \eqref{eq-eq-Eqq0} yields: 
\begin{align*}
-\int_{\{\psi>0\}} \dfrac{2\eta_1}{y} \left( \psi_{x} \psi_{xx} + \psi_y \psi_{xy} \right)+  \int_{\{\psi>0\}} \dfrac{\eta_2}{y^2} \left( \psi_{x}^2 + \psi_y^2 \right) 
 -\int_{\{\psi>0\}} \dfrac{2 \eta_2}{y}\left(\psi_{x} \psi_{xy}+\psi_y \psi_{yy}\right) - \int_{\{\psi>0\}}  \eta_2; 
\end{align*}
and by integrating by parts on the second term of \eqref{eq-eq-Eqq0}, we'll have:
\begin{align*}
&\int_{\{\psi>0\}} \dfrac{4}{y}\eta_1\psi_{x} \psi_{xx} + \int_{\{\psi>0\}}\eta_1 \left (\dfrac{-2}{y^2}\psi_{x}\psi_y +\dfrac{2}{y}\psi_{xy} \psi_y + \dfrac{2}{y}\psi_{x} \psi_{yy} \right)+ \int_{\{\psi>0\}}\dfrac{2}{y}\eta_2 \left(\psi_{xx}\psi_y + \psi_{x}\psi_{yx} \right)\\ & \qquad + \int_{\{\psi>0\}}\eta_2 \left(\dfrac{-2}{y^2}\psi_y^2 +\dfrac{4}{y}\psi_y \psi_{yy} \right).
\end{align*}
Now by adding all of them up including the third term of \eqref{eq-eq-Eqq0}, we will have: 
\begin{align*}
2\int_{\{\psi>0\}}\left( \eta \cdot \nabla \psi \right) \,\mathrm{div\,}\left(\dfrac{\nabla \psi}{y} \right) = 2 \int_{\{\psi>0\}} \left(\eta \cdot \nabla \psi\right) \, \mathcal{L}\psi  = 0. 
\end{align*}
This implies that $\psi$ is an $\mathcal{L}$-solution in $\{\psi >0\} \cap \{ \nabla \psi \neq 0 \}$.  
Hence, $\mathcal{L}\psi=0$ in $\{\psi >0\}$ due to the assumption $\psi \in C^2( \{\psi >0\})$.
Moreover, as \( \psi \) is continuous in \( \Omega \), it follows that \( \psi \) is an \( \mathcal{L} \)-subsolution in \( \Omega \), for instance by verifying that it satisfies the subsolution property in the viscosity sense.
\end{proof}

The following equality for variational solutions will be useful in the subsequent sections.

\begin{lemma}  
\label{useful-lemma-01}  
Let \( (\psi, I) \) be a variational solution in \( \Omega \). 
Then, for every ball \( \overline{B_s^+(X_0)} \subset \Omega \), the following equality holds:  
\begin{equation}  
\label{r1}  
\int_{B_s^+(X_0)} \frac{1}{y} |\nabla \psi|^2 \, dX = \int_{\partial B_s^+(X_0)} \frac{1}{y} \psi \nabla \psi \cdot \nu \, d\mathcal{H}^{1}, 
\end{equation}  
where \( \nu \) denotes the outward unit normal on \( \partial B_s^+(X_0) \).  
\end{lemma}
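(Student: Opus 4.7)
The plan is to derive \eqref{r1} from the fact, established in Proposition~\ref{32}, that $\psi$ is a classical $\mathcal{L}$-solution in the open set $\{\psi>0\}$, combined with a cutoff argument. Since $\mathcal{L}\psi=0$ is guaranteed only inside the positivity set and the weight $1/y$ is singular at the axis, I would not test the equation against $\psi$ directly on $B_s^+(X_0)$; instead I would integrate by parts on subdomains compactly contained in $\{\psi>0\}$ obtained from the approximations $(\psi-\epsilon)^+$, and then pass to the limit.

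The first step is to choose admissible cutoff levels by applying Sard's theorem twice. Since $\psi\in C^{2}(\{\psi>0\})$, for a.e.\ $\epsilon>0$ the level set $\{\psi=\epsilon\}$ is a $C^2$ curve in $\{\psi>0\}$. A second application of Sard to the $C^2$ restriction of $\psi$ to the one-dimensional arc $\partial B_s^+(X_0)\cap\{\psi>0\}$ shows that, for a.e.\ $\epsilon$, this level set meets $\partial B_s^+(X_0)$ transversally. Fixing $\epsilon_k \downarrow 0$ in the intersection of these two full-measure sets, the set $D_k := \overline{B_s^+(X_0)}\cap\{\psi\geq\epsilon_k\}$ is a piecewise $C^2$ Lipschitz domain lying in the smoothness region of $\psi$, and disjoint from $\{y=0\}$ since $\psi=0$ there. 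On such $D_k$ I apply the divergence theorem to the vector field $\frac{\psi-\epsilon_k}{y}\nabla\psi$, obtaining
\[
\int_{D_k} \frac{1}{y}|\nabla\psi|^2\,dX + \int_{D_k} (\psi-\epsilon_k)\,\mathcal{L}\psi\,dX = \int_{\partial D_k} \frac{\psi-\epsilon_k}{y}\,\nabla\psi\cdot\nu\,d\mathcal{H}^1.
\]
The second term on the left vanishes by Proposition~\ref{32}. The boundary $\partial D_k$ decomposes into a piece on $\partial B_s^+(X_0)\cap\{\psi>\epsilon_k\}$ and a piece on the level curve $\{\psi=\epsilon_k\}\cap B_s^+(X_0)$; the latter contributes nothing because $\psi-\epsilon_k\equiv 0$ there, so the entire right-hand side reduces to an integral over the circular arc.

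Finally, I would pass to the limit $\epsilon_k\to 0^+$. The left-hand side equals $\int_{B_s^+(X_0)}\frac{1}{y}|\nabla\psi|^2\,\chi_{\{\psi>\epsilon_k\}}\,dX$, which converges by monotone convergence to $\int_{B_s^+(X_0)}\frac{1}{y}|\nabla\psi|^2\,dX$ once one recalls the standard Sobolev-space fact that $\nabla\psi=0$ a.e.\ on $\{\psi=0\}$. For the boundary term, the Lipschitz bound \eqref{lip-condition} gives $|\nabla\psi|/y \leq C_V$ on $\overline{B_s^+(X_0)}$, so the integrand is dominated by the integrable function $\|\psi\|_{L^\infty(\overline{B_s^+(X_0)})}\,C_V$ and converges pointwise to $\frac{\psi}{y}\nabla\psi\cdot\nu$; dominated convergence then delivers \eqref{r1}. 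The main technical subtlety is the need to guarantee that $D_k$ has Lipschitz boundary for the chosen sequence $\epsilon_k$ — without the transversality of the level curve with the circular arc at the ``corners,'' the classical divergence theorem could not be invoked directly, which is precisely why the second Sard argument is required.
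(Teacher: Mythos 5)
Your proposal is correct and follows essentially the same route as the paper: truncation at regular levels $\epsilon$ chosen via Sard's theorem, integration by parts on $\{\psi>\epsilon\}\cap B_s^+(X_0)$ using $\mathcal{L}\psi=0$ from Proposition~\ref{32}, vanishing of the level-set boundary term, and passage to the limit with dominated (resp.\ monotone) convergence together with the bound \eqref{lip-condition}. Your extra transversality argument at the corners is a careful refinement of a point the paper leaves implicit, not a different method.
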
 

\begin{proof}
Using the dominated convergence theorem, Proposition \ref{32}, and integration by parts applied to the sublevel sets \( \{\psi > \epsilon\} \), for a sequence of \( \epsilon >0 \) such that \( \partial \{\psi > \epsilon\} \) is smooth (guaranteed for Lebesgue-a.e. \( \epsilon \) by Sard's theorem), we obtain:
\begin{align*}
\nonumber
\int_{B_s^+(X_0)} \dfrac{1}{y} |\nabla \psi|^2 \, dX &= \lim_{\epsilon \to 0^+ }\int_{B_s^+(X_0)} \dfrac{1}{y} \nabla \psi \cdot \nabla (\psi - \epsilon)^+ \, dX \\ 
\nonumber
&= - \lim_{\epsilon \to 0^+} \int_{B_s^+(X_0)} (\psi-\epsilon)^+ \, \mathrm{div}\left(\dfrac{\nabla \psi}{y} \right) \, dX + \lim_{\epsilon \to 0^+} \int_{\partial B_s^+(X_0)} \dfrac{1}{y} (\psi - \epsilon)^+ \nabla \psi \cdot \nu \, d\mathcal{H}^{1} \\ 
\nonumber
&= \int_{\partial B_s^+(X_0)} \dfrac{1}{y} \psi \nabla \psi \cdot \nu \, d\mathcal{H}^{1}.
\end{align*}
\end{proof}

\begin{lemma}
\label{BV-estimate}
Let $ (\psi,I) $ be a variational solution on $ \Omega $, and $ B_{2r}^+(X_0) \Subset \Omega $. Then $ y I \in BV(B_{r}^+(X_0)) $, and 
\begin{equation*}
\int_{B_{r}^+(X_0)} \left|\nabla \left(y I \right) \right|\leq C r(y_0+r),
\end{equation*}
where the constant $C$ depends only on $C_{V}$.
\end{lemma}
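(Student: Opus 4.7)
The plan is to test the first-variation identity \eqref{inner-variation-1} against $\eta \in C_c^1(B_r^+(X_0); \bR^2)$ with $\|\eta\|_\infty \leq 1$ (and $\eta_2=0$ on $\{y=0\}$), and rearrange it to isolate
\begin{equation*}
\int yI\,\mathrm{div}\,\eta\,dX = \underbrace{\int\!\left(\frac{2}{y}\nabla\psi\cdot D\eta\,\nabla\psi - \frac{|\nabla\psi|^2}{y}\mathrm{div}\,\eta\right)dX}_{\rm (I)} + \int\frac{\eta_2|\nabla\psi|^2}{y^2}\,dX - \int\eta_2 I\,dX.
\end{equation*}
The rightmost integral is bounded trivially by $\|\eta\|_\infty|B_r^+(X_0)|\leq Cr^2\|\eta\|_\infty$. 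The middle integral will be cancelled by the principal part of (I), as I now explain.

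The integrand of (I) is the contraction $T_{ij}\partial_j\eta_i$ (summation convention) with the stress--energy tensor $T_{ij}:=(2\psi_i\psi_j-|\nabla\psi|^2\delta_{ij})/y$. A direct computation in $\{\psi>0\}$, where $\psi\in C^2$ and $\mathcal{L}\psi=0$ by Proposition~\ref{32}, gives $\sum_j\partial_j T_{ij}=\delta_{i2}|\nabla\psi|^2/y^2$. To integrate by parts rigorously across the (possibly irregular) free boundary, I truncate to $\{\psi>\epsilon\}$ for generic $\epsilon>0$ (so that $\{\psi=\epsilon\}$ is smooth by Sard's theorem), apply the divergence theorem, and pass $\epsilon\to 0^+$. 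Since $\nabla\psi=0$ a.e.\ on $\{\psi=0\}$, the interior contribution converges to $-\int(|\nabla\psi|^2/y^2)\eta_2\,dX$, which exactly cancels the middle integral above. The only surviving contribution is the boundary flux on $\{\psi=\epsilon\}$ (with outward unit normal $-\nabla\psi/|\nabla\psi|$), which evaluates to $-\int_{\{\psi=\epsilon\}}(|\nabla\psi|/y)(\eta\cdot\nabla\psi)\,d\mathcal{H}^1$.

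To control this boundary flux uniformly in $\epsilon$, I exploit that $\psi$ is an $\mathcal{L}$-subsolution by Proposition~\ref{32}: for any non-negative cutoff $\phi\in C_c^\infty(\Omega)$ with $\phi\geq\chi_{B_r^+(X_0)}$, $\supp\phi\subset B_{2r}^+(X_0)$ and $|\nabla\phi|\leq C/r$, the divergence theorem on $\{\psi>\epsilon\}$ combined with $\mathcal{L}\psi=0$ there yields
\begin{equation*}
\int_{\{\psi=\epsilon\}\cap B_r^+(X_0)}\frac{|\nabla\psi|}{y}\,d\mathcal{H}^1 \leq \int_{\{\psi=\epsilon\}}\phi\,\frac{|\nabla\psi|}{y}\,d\mathcal{H}^1 = -\int_{\{\psi>\epsilon\}}\nabla\phi\cdot\frac{\nabla\psi}{y}\,dX \leq C_V\int|\nabla\phi|\,dX \leq Cr,
\end{equation*}
where the penultimate step uses the Lipschitz bound \eqref{lip-condition}. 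Combined with $\sup_{B_r^+(X_0)}|\nabla\psi|\leq C_V(y_0+r)$ (also from \eqref{lip-condition}), the boundary flux is dominated by $C(y_0+r)r\|\eta\|_\infty$, whence $|\int yI\,\mathrm{div}\,\eta\,dX|\leq Cr(y_0+r)\|\eta\|_\infty$ and the BV estimate follows by duality. The main technical obstacle is the rigorous integration by parts in (I) across the free boundary, which I handle through the $\{\psi>\epsilon\}$-approximation combined with the non-negativity of $\mathcal{L}\psi$.
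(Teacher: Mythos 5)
Your argument is correct, and it is structurally the same as the paper's: both isolate $\int yI\,\mathrm{div}\,\eta$ from the domain-variation identity, both rewrite the quadratic gradient terms via the stress–energy tensor identity $\sum_j\partial_j T_{ij}=2\psi_i\mathcal{L}\psi+\delta_{i2}|\nabla\psi|^2/y^2$, and both invoke the Lipschitz bound $|\nabla\psi|/y\le C_V$ together with the non-negativity of $\mathcal{L}\psi$ to control the resulting boundary/measure term by $Cr(y_0+r)$. The one technical difference is the regularization used to justify the integration by parts across the free boundary: the paper mollifies $\psi$ by $\psi_t=\phi_t\ast\psi$ and passes $t\to0^+$, whereas you truncate to the generic smooth level sets $\{\psi>\epsilon\}$ (Sard) and pass $\epsilon\to0^+$. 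Your route actually sidesteps a small subtlety in the mollification approach (namely that $\mathcal{L}\psi_t\neq\phi_t\ast\mathcal{L}\psi$, since $\mathcal{L}$ has a variable coefficient, so the sign/mass control on $\mathcal{L}\psi_t$ requires a word of justification), at the cost of needing $\psi\in C^2(\{\psi>0\})$ on the compact sets $\{\psi\ge\epsilon\}\cap\overline{B_r^+}$ — which the definition of a variational solution supplies. The bounds $\int_{\{\psi=\epsilon\}\cap B_r^+}\frac{|\nabla\psi|}{y}\,d\mathcal{H}^1\le C_V C r$ (from the subsolution flux identity with a cutoff $\phi$) and $\sup_{B_r^+}|\nabla\psi|\le C_V(y_0+r)$ are exactly the two ingredients the paper also uses, and the final duality step is the standard BV characterization.
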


\begin{proof}
We derive the following basic estimate for \(\mathcal{L}\psi\), understood as a non-negative Borel measure. 
Let \(\zeta\) be a cutoff function such that \(\zeta \equiv 1\) on \(B_{3r/2}^+(X_0)\) and \(\zeta \equiv 0\) outside \(B_{2r}^+(X_0)\). Then, we have  
\[
\mathcal{L}\psi\left(B_{3r/2}^+(X_0)\right) \leq \int_{\Omega} \zeta \, d\mathcal{L}\psi = -\int_{\Omega} \frac{1}{y} \nabla \psi\, \cdot\, \nabla \zeta \leq \int_{B_{2r}^+ \setminus B_{3r/2}^+} \left| \frac{1}{y} \nabla \psi \right| \, | \nabla \zeta | \leq  C_VCr.
\]
Let \(\phi_t\) be a standard mollifier, and define \(\psi_t = \phi_t * \psi\). Then \(\psi_t\) is smooth, and from \eqref{inner-variation-1}, we obtain:  
\[
\begin{aligned}
& \lim_{t \to 0^+} \int_{\Omega} \frac{1}{y}\left|\nabla \psi_t \right|^2 \operatorname{div} \eta 
- \frac{2}{y} \nabla \psi_t \cdot D\eta \nabla \psi_t 
- \eta_2 \left(\frac{1}{y^2}|\nabla \psi_t|^2 - I \right) \, dX \\
& \quad = \int_{\Omega} \frac{1}{y}\left|\nabla \psi \right|^2 \operatorname{div} \eta 
- \frac{2}{y} \nabla \psi \cdot D\eta \nabla \psi 
- \eta_2 \left(\frac{1}{y^2}|\nabla \psi|^2 -  I \right) \, dX \\
& \quad = - \int_{\Omega} y I \, \operatorname{div} \eta \, dX,
\end{aligned}
\]
for every \(\eta \in C_c^{0,1}(B_{3r/2}^+(X_0), \mathbb{R}^2)\). 
Using integration by parts (as in the proof of Proposition \ref{32}), we can rewrite the expression as follows:  
\[
\int_{\Omega} \frac{1}{y}\left|\nabla \psi_t \right|^2 \operatorname{div} \eta - \frac{2}{y} \nabla \psi_t \cdot D\eta \nabla \psi_t - \eta_2 \left(\frac{1}{y^2}|\nabla \psi_t|^2 -  I \right) \, dX = 2 \int_{\Omega} \eta \cdot \nabla \psi_t \, \mathcal{L}\psi_t + \int_{\Omega}  \eta_2 I.
\]
Invoking the Lipschitz condition \eqref{lip-condition} and Proposition \ref{32} which guarantees $\mathcal{L}\psi \ge 0$, we have:  
\[
\left| \int_{\Omega} \frac{1}{y}\left|\nabla \psi_t \right|^2 \operatorname{div} \eta 
- \frac{2}{y} \nabla \psi_t \cdot D\eta \nabla \psi_t 
- \eta_2 \left(\frac{1}{y^2}|\nabla \psi_t|^2 -  I \right) \, dX \right| \leq CC_V  \sup |\eta| \, r (y_0+r) + C \sup |\eta| r^2.
\]
In particular,  
\[
\left|  \int_{\Omega} y I \, \operatorname{div} \eta \, dX \right| \leq C \sup |\eta|  \, r(y_0+r),
\]
which implies that \(yI \in BV(B_r^+(X_0))\), along with the stated estimate.  
\end{proof}

\section{Monotonicity Formulas and Classification of Blowups}
\label{s4}

Given a variational solution $(\psi,I)$ in $\Omega$, with free boundary $\Gamma := \Omega \cap \partial\{\psi > 0\}$, 
we refer to the points of the free boundary that lie on the axis $y=0$ as \textit{degenerate} points. 
The set of all such points is denoted by
\begin{equation*}
S_{\psi} := \Gamma \cap \{y=0\}.
\end{equation*}

It is important to note that the gradient condition $|\nabla \psi| =  y$ on the free boundary governs the behavior of the solution near degenerate points. 
More specifically, near a degenerate point $X_0 = (x_0, 0)$, the stream function $\psi$ typically exhibits quadratic growth, i.e., \(\psi(X) \sim r^2\) as \(X \to X_0\). 
This behavior is fundamentally different from that near other free boundary points.

To study the free boundary in regions away from the symmetric axis, we introduce the set of \textit{non-degenerate} free boundary points.  
Formally, this set is given by
\begin{equation*}
N_{\psi} := \left\{X_0 = (x_0, y_0) \in \Gamma :\,  y_0 > 0 \right\}.
\end{equation*}
We also define the \textit{growth exponent} \(\mu\), which captures the asymptotic growth rate of \(\psi\) near free boundary points, as follows:
\begin{equation}
\label{mu-constant}
\mu = \mu(X_0) := 
\begin{cases}
2, & \text{if } y_0 = 0, \\
1, & \text{if } y_0 > 0.
\end{cases}
\end{equation}
Now, for $X_0 \in \Omega$, we define the following energy and boundary functionals:
\begin{equation*}
D_{X_0}(\psi,r)= \int_{B_{r}^{+}(X_0)} \frac{1}{y}|\nabla \psi|^{2}+ y I \, dX,
\end{equation*}
and 
\begin{equation}
\label{def-H-non-deg}
H_{X_0}(\psi,r)=\int_{\partial B_{r}^{+}(X_0)} \frac{1}{y}\psi^{2}  \,d\mathcal{H}^{1},
\end{equation}
where $B_{r}^{+}(X_0)=B_r(X_0)\cap \{y > 0\} \subseteq \Omega$.  
We now introduce the following monotonicity-based functional:

\medskip

- For degenerate points $X_0$ on the axis  $\{y=0\}$,
\begin{equation*}
M_{X_0}(\psi, r) := r^{-3} D_{X_0}(\psi, r) - 2r^{-4} H_{X_0}(\psi, r).
\end{equation*}

- For non-degenerate points $X_0 \in \{y>0\}$,
\begin{equation*}
M_{X_0 }(\psi, r) := r^{-2} D_{X_0}(\psi, r) - r^{-3} H_{X_0}(\psi, r).
\end{equation*}

\medskip

To unify the notation, we use the growth constant $\mu = \mu(X_0)$, and rewrite both expressions in the unified form:
\[
M_{X_0}(\psi, r) := r^{-\mu - 1} D_{X_0}(\psi, r) - \mu r^{-\mu - 2} H_{X_0}(\psi, r).
\]
When the context is clear, we may omit the argument $ \psi $ and write simply $M_{X_0}(r)$. 
Moreover, when the center point $X_0$ is fixed or understood, we further abbreviate to $M(r)$. The same conventions apply to $D$ and $H$.

The primary use of the monotonicity formula is to identify explicit blow-up limits. 
We demonstrate this in Proposition \ref{10.1} and Proposition \ref{10.4}.
In particular, we apply the formula to show that any limit of a blow-up subsequence is a homogeneous function of degree $\mu$.

\begin{lemma}[Monotonicity Formula]
\label{M-F-01}
Let $(\psi, I)$ be a variational solution in $B_r^+(X_0) \subseteq \Omega$ with \(\psi(X_0)=0\). Then the function $M_{X_0}(r)$ is absolutely continuous and  non-decreasing when $y_0=0$ and almost non-decreasing when $y_0>0$. 
Moreover, for Lebesgue-a.e. $r \in (0, r_0)$, it satisfies the identity:
\begin{align}
\label{m-f-non-degenerate}
\frac{dM_{X_0}(r)}{dr} = & 2r^{-\mu-1} \int_{\partial B_r^+(X_0)} \frac{1}{y} \left( \nabla \psi \cdot \nu - \frac{\mu}{r} \psi \right)^2 \, d\mathcal{H}^{1} \nonumber\\
&+(2-\mu) r^{-\mu-2}\int_{B_{r}^{+}(X_0)}  (y-y_0) I-\dfrac{y-y_0}{y^2}|\nabla \psi |^2  dX 
+ (2-\mu) r^{-\mu-3} \int_{\partial B_r^+(X_0)}\dfrac{y-y_0}{y^2}\psi^2(X) \, d\mathcal{H}^1.
\end{align}
\end{lemma}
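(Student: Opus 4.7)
The plan is to derive \eqref{m-f-non-degenerate} by a Weiss-type argument adapted to the weighted setting: differentiate $D_{X_0}$ and $H_{X_0}$ directly, obtain a Pohozaev-type identity from the inner variation \eqref{inner-variation-1}, and complete the square using Lemma \ref{useful-lemma-01}.

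First I would record the two elementary derivatives. The coarea formula yields, for a.e.\ $r$,
\[
D_{X_0}'(r) = \int_{\partial B_r^+(X_0)} \Bigl(\tfrac{1}{y}|\nabla\psi|^2 + yI\Bigr) \, d\mathcal{H}^1,
\]
where the trace of $yI$ on a.e.\ sphere makes sense thanks to Lemma \ref{BV-estimate}. Writing $H_{X_0}(r)$ as an integral over $\partial B_1^+$ via the rescaling $X = X_0 + r\omega$ and differentiating gives
\[
H_{X_0}'(r) = \tfrac{1}{r}H_{X_0}(r) + 2\int_{\partial B_r^+(X_0)} \tfrac{1}{y}\psi\,\nabla\psi\cdot\nu \, d\mathcal{H}^1 - \tfrac{1}{r}\int_{\partial B_r^+(X_0)}\tfrac{y-y_0}{y^2}\psi^2 \, d\mathcal{H}^1.
\]
Absolute continuity of $M_{X_0}$ is then immediate from these representations together with local integrability of the integrands.

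Next I would apply \eqref{inner-variation-1} with radial test vector fields $\eta_\varepsilon(X) = \phi_\varepsilon(|X - X_0|)(X - X_0)$, where $\phi_\varepsilon$ is a smooth cutoff converging to $\chi_{[0,r]}$. A direct computation gives $\operatorname{div}\eta_\varepsilon = 2\phi_\varepsilon + \phi_\varepsilon'(|X-X_0|)|X-X_0|$, $\nabla\psi \cdot D\eta_\varepsilon \nabla\psi = \phi_\varepsilon |\nabla\psi|^2 + \phi_\varepsilon'(|X-X_0|)|X-X_0|(\nabla\psi\cdot\hat{\nu})^2$, and $\eta_{\varepsilon,2} = \phi_\varepsilon(|X-X_0|)(y - y_0)$. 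The admissibility requirement $\eta_{\varepsilon,2}|_{\{y=0\}} = 0$ is automatic for degenerate centers $y_0 = 0$; for non-degenerate centers one takes $r < y_0$ (monotonicity is a local statement at the free boundary), so that $B_r^+(X_0) \Subset \{y > 0\}$ and the constraint is vacuous. Sending $\varepsilon \to 0$ and using Lemma \ref{BV-estimate} to control the trace of $yI$ yields the Pohozaev-type identity
\begin{align*}
r\int_{\partial B_r^+(X_0)}\Bigl(\tfrac{1}{y}|\nabla\psi|^2 + yI\Bigr)d\mathcal{H}^1
&= 2r\int_{\partial B_r^+(X_0)}\tfrac{1}{y}(\nabla\psi\cdot\nu)^2 \, d\mathcal{H}^1 \\
&\quad + \int_{B_r^+(X_0)} \Bigl[2yI - \tfrac{y-y_0}{y^2}|\nabla\psi|^2 + (y-y_0)I\Bigr]dX
\end{align*}
for a.e.\ $r$.

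Then I would expand
\[
M_{X_0}'(r) = -(\mu+1)r^{-\mu-2}D_{X_0}(r) + r^{-\mu-1}D_{X_0}'(r) + \mu(\mu+2)r^{-\mu-3}H_{X_0}(r) - \mu r^{-\mu-2}H_{X_0}'(r),
\]
substitute the Pohozaev identity and the formula for $H_{X_0}'(r)$, and apply Lemma \ref{useful-lemma-01} to rewrite $\int_{B_r^+}y^{-1}|\nabla\psi|^2$ inside $D_{X_0}$ as a boundary integral $\int_{\partial B_r^+}y^{-1}\psi\,\nabla\psi\cdot\nu$. The boundary contributions collapse into $2r^{-\mu-1}\int_{\partial B_r^+} y^{-1}(\nabla\psi\cdot\nu - \tfrac{\mu}{r}\psi)^2 \, d\mathcal{H}^1$, and the residual terms acquire the common factor $(2-\mu)$ via the identity $(1-\mu)y + (y-y_0) = (2-\mu)(y-y_0) + (1-\mu)y_0$, whose last summand vanishes in both relevant cases ($\mu = 1$ or $y_0 = 0$). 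For the monotonicity claim, the square term is manifestly non-negative; when $y_0 = 0$ (so $\mu = 2$) the factor $(2-\mu)$ kills the remaining contributions and $M_{X_0}$ is genuinely non-decreasing, while for $y_0 > 0$ (so $\mu = 1$) the residual terms are controlled by $|y - y_0| \le r$ on $B_r^+(X_0)$ together with the Lipschitz bound \eqref{lip-condition}, giving an $L^1_{\mathrm{loc}}$ correction and the stated ``almost monotonicity''. The most delicate step will be the rigorous passage $\phi_\varepsilon \to \chi_{[0,r]}$ in the Pohozaev identity: since $yI$ is only BV, one must work at Lebesgue points of $r \mapsto \int_{\partial B_r^+}yI \, d\mathcal{H}^1$ and combine Lemma \ref{BV-estimate} with dominated convergence.
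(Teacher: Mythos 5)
Your proposal follows essentially the same route as the paper's proof: the derivatives of $D_{X_0}$ and $H_{X_0}$ (the latter by rescaling, exactly as in \eqref{dH_0}), a Pohozaev-type identity obtained by plugging an (approximately) radial field into \eqref{inner-variation-1} (this is \eqref{mono-eq1}), and completion of the square via Lemma \ref{useful-lemma-01}. I checked the bookkeeping: it closes, but with one caveat in the degenerate case. If you apply \eqref{r1} only to the copy of $\int_{B_r^+}\frac1y|\nabla\psi|^2$ sitting inside $D_{X_0}$, the cross term of the square comes out with coefficient $-(3\mu+1)$ instead of $-4\mu$; for $y_0=0$ you must also convert the Pohozaev term $\int_{B_r^+}\frac{y-y_0}{y^2}|\nabla\psi|^2$ (which then coincides with $\int_{B_r^+}\frac1y|\nabla\psi|^2$) through \eqref{r1}, after which the remaining terms merge with $H$ since $\frac{y-y_0}{y^2}=\frac1y$. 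For $\mu=1$ your scheme works as written. Also, the trace of $yI$ on a.e.\ sphere needs only $yI\in L^\infty$ and a Lebesgue-point argument in $r$; Lemma \ref{BV-estimate} is not required there.

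The one genuine shortfall is the admissibility step at non-degenerate centers. The lemma claims the identity for a.e.\ $r$ up to the largest radius with $B_{r}^+(X_0)\subseteq\Omega$, which for $X_0\in N_\psi$ near the axis includes radii $r\ge y_0$, where $B_r(X_0)$ meets $\{y=0\}$; the paper explicitly flags this case right after \eqref{dH_0}. Your restriction to $r<y_0$, justified by ``monotonicity is a local statement,'' proves a strictly weaker statement than the one asserted. The paper's remedy is to take $\eta_2=\xi_{\epsilon}(r-|X-X_0|)\,\xi_{\epsilon}(y)\,(y-y_0)$, i.e.\ to insert a second cutoff in $y$ so that $\eta_2=0$ on $\{y=0\}$ for every radius; the additional terms produced by $\xi_{\epsilon}'(y)=1/\epsilon$ on the strip $\{0<y<\epsilon\}$ vanish in the limit because \eqref{lip-condition} gives $\frac1y|\nabla\psi|^2\le C_V^2\,y$, so every such integrand carries a factor $y\le\epsilon$ that beats the $1/\epsilon$. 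Incorporating this modification (everything else in your argument unchanged) yields the full stated range.
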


\begin{proof}
Consider the function \( \xi_{\epsilon}(t) \) for sufficiently small \( \epsilon > 0 \) as follows:
\begin{equation*}
\xi_{\epsilon}(t):= \max\left(0,\min\left(1,\dfrac{t}{\epsilon}\right)\right)=
\begin{cases}
1 \quad & t \ge \epsilon, \\
 t/\epsilon  \quad & 0 < t <\epsilon, \\
0 \quad & t \le 0.
\end{cases}
\end{equation*}
We now define the vector field:
$$\eta(X):=\xi_{\epsilon}(r-|X-X_0|)\left[\begin{matrix}x-x_0 \\[8pt]
    \xi_{\epsilon}(y)(y-y_0)
\end{matrix}\right],$$ 
which satisfies the boundary condition $\eta_2=0$ on $y=0$, and use it as a test function in the domain variation formula \eqref{inner-variation-1}. 
Passing to the limit as $\epsilon \to 0$, and using assumption \eqref{lip-condition}, we obtain:
\begin{equation}
\label{mono-eq1}
\int_{B_r^+(X_0)} 2 y  I  +  (y-y_0) I-\dfrac{y-y_0}{y^2}|\nabla \psi |^2  dX -r\int_{\partial B_r^+(X_0)}\left[ \left(\frac{1}{y}\left|\nabla \psi  \right|^2 +  y I\right)-\frac{2}{y}  \left(\nabla \psi \cdot \nu \right)^2 \right] d \mathcal{H}^1=0.
\end{equation}

\medskip
We now compute the derivative of the monotonicity functional. 
Using the change of variables $Z=\dfrac{X-X_0}{r}$, a direct computation yields 
\begin{align}
\nonumber
\dfrac{d \left(r^{-\mu-2} H_{X_0}(r) \right)}{dr}
& =\dfrac{d}{dr}\left( r^{-\mu-1}\int_{\partial B_{1}^{+}}\dfrac{1}{y_0+rz_2}\psi^2(X_0+rZ) \, d\mathcal{H}^1\right) \\ \nonumber
&=-(\mu+1)r^{-\mu-2}\int_{\partial B_{1}^{+}}\dfrac{1}{y_0+rz_2}\psi^2(X_0+rZ) \, d\mathcal{H}^1 - r^{-\mu-1}\int_{\partial B_{1}^{+}} \dfrac{z_2}{(y_0+rz_2)^2}\psi^2(X_0+rZ) \, d\mathcal{H}^1 \\ \nonumber
&\qquad \qquad +r^{-\mu-1}\int_{\partial B_{1}^{+}} \dfrac{2\psi}{y_0+rz_2} \nabla \psi(X_0+rZ)\cdot Z \, d\mathcal{H}^1\\ \label{dH_0}
&=-(\mu+1)r^{-\mu-3}\int_{\partial B_{r}^{+}(X_0)}\dfrac{1}{y}\psi^2(X) \, d\mathcal{H}^1 - r^{-\mu-3}\int_{\partial B_{r}^{+}(X_0)}\dfrac{y-y_0}{y^2}\psi^2(X) \, d\mathcal{H}^1 \\ \nonumber
& \qquad \qquad +2r^{-\mu-2}\int_{\partial B_{r}^{+}(X_0)}\dfrac{1}{y}\psi \nabla \psi(X)\cdot \nu\, d\mathcal{H}^1.
\end{align}
Note that when \( X_0 \in N_\psi \) and \( r > y_0 \), the domain \( \partial B_1^+ \) corresponds to the truncated unit circle \( z_2 > -y_0/r \). However, since the integrand vanishes on \( y = 0 \), the result remains unaffected.
Also, we have
\begin{equation}
\label{e1}
\dfrac{d \left( r^{-\mu-1} D_{X_0}(r) \right) }{dr}=-(\mu+1)r^{-\mu-2}\int_{B_{r}^{+}(X_0)} \left(\frac{1}{y}|\nabla \psi|^{2}+ y I\right)\, dX +r^{-\mu-1}\int_{\partial B_{r}^{+}(X_0)} \left(\frac{1}{y}|\nabla \psi|^{2}+ yI\right)\, d\mathcal{H}^1.
\end{equation}

When $X_0\in S_\psi$ is a degenerate point, we have $\mu = 2$ and $y_0 = 0$. Invoking equations \eqref{mono-eq1} and \eqref{e1}, we obtain:
\begin{equation}
\label{Eq6}
\dfrac{d \left( r^{-3} D_{X_0}(r) \right)}{dr}=-4r^{-4}\int_{B_{r}^{+}(X_0)} \frac{1}{y}|\nabla \psi|^{2} \, dX + r^{-3}\int_{\partial B_r^{+}(X_0)} \frac{2}{y} \left(\nabla \psi \cdot \nu \right)^2 \, d\mathcal{H}^1.
\end{equation}
Now, applying equation \eqref{dH_0} together with \eqref{r1}, we get:
\begin{align*}
\dfrac{dM_{X_0}(r)}{dr} & = \dfrac{d \left( r^{-3} D_{X_0}(r) \right)}{dr}-2\dfrac{d \left( r^{-4} H_{X_0}(r)\right)}{dr}= 2r^{-3} \int_{\partial B_r^+(X_0)} \frac{1}{y} \left( \nabla \psi \cdot \nu-\dfrac{2}{r}\psi\right)^2 \,d\mathcal{H}^{1}.
\end{align*}

When $X_0 \in N_\psi$ is a non-degenerate point, we have $\mu = 1$. Once again, applying equations \eqref{mono-eq1} and \eqref{e1}, together with  \eqref{r1} yields:
\begin{align}
\dfrac{d \left( r^{-2} D_{X_0}(r) \right)}{dr}  =& -2r^{-3}\int_{\partial B_{r}^{+}(X_0)}\dfrac{1}{y}\psi \nabla \psi(X)\cdot \nu\, d\mathcal{H}^1
+ r^{-3}\int_{B_{r}^{+}(X_0)}  (y-y_0) I-\dfrac{y-y_0}{y^2}|\nabla \psi |^2  dX\notag \\ \label{derv-D-N}
& +r^{-2}\int_{\partial B_r^+(X_0)} \frac{2}{y}  \left(\nabla \psi \cdot \nu \right)^2  d \mathcal{H}^1.
\end{align}
Combining this with \eqref{dH_0}, we obtain
\begin{align*}
\dfrac{dM_{X_0}(r)}{dr}  = & 2r^{-2} \int_{\partial B_r^+(X_0)} \frac{1}{y} \left( \nabla \psi \cdot \nu-\dfrac{\psi}{r}\right)^2 \,d\mathcal{H}^{1} + r^{-3}\int_{B_{r}^{+}(X_0)}  (y-y_0) I-\dfrac{y-y_0}{y^2}|\nabla \psi |^2  dX \nonumber \\
&+ r^{-4} \int_{\partial B_r^+(X_0)}\dfrac{y-y_0}{y^2}\psi^2(X) \, d\mathcal{H}^1.
\end{align*}
\end{proof}

\begin{remark}\label{rmrk-almost-mono}
From relation \eqref{m-f-non-degenerate}, when \( X_0 \in S_\psi \) is a degenerate point, we have \( \mu(X_0) = 2 \), and \( M_{X_0}(r) \) is monotone. However, when \( X_0 \in N_\psi \) is a non-degenerate point, \( \mu(X_0) = 1 \), and there exists a universal constant \( C_M = C( C_V) \) such that the function \( M_{X_0}(r) + C_M r \) is monotone. This behavior is referred to as \textit{almost monotonicity}.
\end{remark}

\begin{definition} \label{d41}
Let \( r_n > 0 \) be a sequence converging to \( 0 \) as \( n \to +\infty \). 
The corresponding blow-up sequence \( \{\psi_n\} \) is defined for \( |X| < \frac{1}{r_n} \dist(X_0, \partial \Omega) \) as follows:   
\[
\psi_n(X) := \dfrac{\psi(X_0 + r_n X)}{r_n^{\mu(X_0)}}, \qquad \text{for any } X_0 \in \Gamma.
\] 
Here, $\mu(X_0)$ is the growth constant defined in \eqref{mu-constant}. 
If the functions \( \psi_n \) are uniformly bounded in \( W^{1,2}_{\rW, \mathrm{loc}}(B^+_r) \), then there exists a subsequence converging weakly in $W^{1,2}_{\rW, \mathrm{loc}}(\mathbb{R}^2_+)$ to a limit function $\psi_0$, which we refer to as a blow-up limit of $\psi$ at $X_0$. If $X_0 \in N_\psi$, the domain extends to all of $\mathbb{R}^2$, and convergence is in $W^{1,2}_{\rW, \mathrm{loc}}(\mathbb{R}^2)$.
\end{definition}

We use the monotonicity formula  to demonstrate that blow-up limits of variational solutions are one-homogeneous functions. 
Our goal is to prove that for every $X_0 \in N_{\psi}$ and for a variational solution $(\psi,I)$  in $\Omega$, the blow-up limit of $\psi$  can be classified into three types, as described in the following proposition.

\begin{proposition}[Blow-up limits of the variational solutions at non-degenerate points]
\label{10.1}
Let $X_0 \in N_{\psi}$ and let $(\psi,I)$ be a variational solution  on
$B_{1}^+(X_0)$, then the following statements hold:

\begin{enumerate}
\item
The rescalings $\psi_n$ converge to $\psi_0$ strongly in $W^{1,2}_{\rW,\mathrm{loc}}(\mathbb{R}^2)$.

\item
Every blow-up limit $\psi_0$ of $\psi_n$ is a one-homogeneous function, i.e. $\psi_0(rX) = r\psi_0(X)$
for all $X \in \mathbb{R}^2$ and $r > 0$. 

\item
If $ \psi_0 \not\equiv 0$, then the only blow-up limits are:
$$ \psi_0(X)= y_0 (X\cdot\nu)_+ \qquad \text{and} \qquad \psi_0(X)=  A |X\cdot\nu|$$ 
for some unit vector $\nu$ and some positive constant $A>0$.
\end{enumerate}
\end{proposition}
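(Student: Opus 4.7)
The plan is to establish the three claims in the order (2), (1), (3), using the almost-monotonicity of Lemma~\ref{M-F-01} as the principal tool. Since $y_0>0$, the weight $1/y$ is comparable to $1/y_0$ on $B_{r_n R}(X_0)$ for large $n$, and the Lipschitz bound \eqref{lip-condition} rescales to $|\nabla\psi_n(X)|\le C_V(y_0+r_n x_2)$, uniformly bounded on any $B_R\subset\mathbb{R}^2$. Combined with $\psi_n(0)=\psi(X_0)/r_n=0$, Arzel\`a--Ascoli yields a locally uniformly convergent subsequence $\psi_n\to\psi_0\in C^{0,1}_{\mathrm{loc}}(\mathbb{R}^2)$, with $\nabla\psi_n\rightharpoonup\nabla\psi_0$ in $L^2_{\rW,\mathrm{loc}}(\mathbb{R}^2)$, and a further extraction gives $I_n(X):=I(X_0+r_n X)\rightharpoonup I_0$ weakly-$*$ in $L^\infty_{\mathrm{loc}}$.

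For (2), the key is the scaling identity $M_{X_0}(\psi,r_n s)=\widetilde M^{(n)}(\psi_n,s)$, obtained by a direct change of variables with $\mu=1$, where $\widetilde M^{(n)}$ denotes the natural monotonicity functional for $\psi_n$ with weight $1/(y_0+r_n x_2)$. By Remark~\ref{rmrk-almost-mono} and boundedness of the functional, $m_0:=\lim_{r\to 0^{+}}M_{X_0}(\psi,r)$ exists, so $\widetilde M^{(n)}(\psi_n,s)\to m_0$ for every $s>0$. Integrating \eqref{m-f-non-degenerate} on $[0,r_n s]$ and changing variables $\rho=r_n\sigma$ (the lower-order correction terms in \eqref{m-f-non-degenerate} contribute $O(r_n)$ by explicit Lipschitz-based estimates) yields
\begin{equation*}
2\int_0^s\sigma^{-2}\int_{\partial B_\sigma^{+}}\frac{1}{y_0+r_n z_2}\Bigl(\nabla\psi_n\cdot\nu-\tfrac{\psi_n}{\sigma}\Bigr)^{2} d\mathcal{H}^1\,d\sigma \;=\; M_{X_0}(\psi,r_n s)-m_0+O(r_n) \;\longrightarrow\; 0.
\end{equation*}
Since the integrand is a non-negative quadratic in $(\nabla\psi_n,\psi_n)$ and the weight is uniformly bounded below by $1/(2y_0)$ on compacts, weak lower semicontinuity combined with Fatou's lemma forces $\nabla\psi_0\cdot X=\psi_0$ a.e., which is Euler's identity for 1-homogeneity.

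For (1), by Proposition~\ref{32} each $\psi_n$ is $\mathcal{L}_n$-harmonic in $\{\psi_n>0\}$, where $\mathcal{L}_n u:=\operatorname{div}(\nabla u/(y_0+r_n x_2))$. On any compact $K\Subset\{\psi_0>0\}$, $\psi_n>0$ for $n$ large, so standard elliptic regularity for divergence-form operators upgrades $\psi_n\to\psi_0$ to $C^{1,\alpha}$-convergence on $K$. On any compact $K\Subset\mathrm{int}\{\psi_0=0\}$, the uniform convergence $\psi_n\to 0$ combined with the interior gradient estimate applied on $K\cap\{\psi_n>0\}$ (and $\nabla\psi_n\equiv 0$ on $\{\psi_n=0\}$) yields $\nabla\psi_n\to 0=\nabla\psi_0$ in $L^\infty_{\mathrm{loc}}$. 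Since $\partial\{\psi_0>0\}$ is a 1-homogeneous cone in $\mathbb{R}^2$ and hence a union of rays of Lebesgue measure zero, strong $W^{1,2}_{\rW,\mathrm{loc}}(\mathbb{R}^2)$ convergence follows. Given (1), the inner-variation identity \eqref{inner-variation-1} passes to the limit (its $\eta_2$-term acquires an explicit factor $r_n$ after rescaling and vanishes), so $(\psi_0,I_0)$ satisfies the constant-weight inner-variation identity on $\mathbb{R}^2$ with weight $1/y_0$. For (3), write $\psi_0=rf(\theta)$ in polar coordinates with $f\in C(S^1)$, $f\ge 0$; harmonicity on $\{\psi_0>0\}$ gives $f''+f=0$ there, forcing each connected component of $\{f>0\}\subset S^1$ to be an open arc of length exactly $\pi$ on which $f$ is a positive multiple of a sine. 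The nontrivial cases are a single such arc, yielding $\psi_0=A(X\cdot\nu)_{+}$, or two antipodal arcs, yielding $\psi_0=A_{+}(X\cdot\nu)_{+}+A_{-}(X\cdot(-\nu))_{+}$. Substituting each ansatz (with $I_0=\chi_{\{\psi_0>0\}}$ in the first case and $I_0\equiv 1$ in the second) into the limit inner-variation identity and integrating by parts over each half-space produces a surface concentration on $\{X\cdot\nu=0\}$ proportional, respectively, to $A^2/y_0-y_0$ and $(A_{+}^2-A_{-}^2)/y_0$; vanishing for every test field enforces the Bernoulli condition $A=y_0$ in the first case and $A_{+}=A_{-}=:A$ in the second, giving $\psi_0=A|X\cdot\nu|$.

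The main technical obstacle is extracting 1-homogeneity from weak convergence alone via the integrated monotonicity identity, together with explicit Lipschitz-based control of the $O(r_n)$ correction terms in \eqref{m-f-non-degenerate}; the upgrade to strong convergence then follows routinely by elliptic regularity for $\mathcal{L}_n$-harmonic functions, using that the 1-homogeneous free boundary in $\mathbb{R}^2$ has Lebesgue measure zero.
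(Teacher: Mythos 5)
Your overall strategy (monotonicity formula for homogeneity, elliptic regularity plus a limiting inner-variation identity for the classification) is the same as the paper's, and your part (2) is fine — in fact doing it via weak lower semicontinuity before establishing strong convergence is a legitimate reordering. But there are two genuine gaps. First, your proof of strong convergence in (1) breaks down on $\mathrm{int}\{\psi_0=0\}$: the interior gradient estimate for $\mathcal{L}_n$-harmonic functions applied on $K\cap\{\psi_n>0\}$ only gives $|\nabla\psi_n(X)|\le C\sup_{B_d(X)}\psi_n/d$ with $d=\mathrm{dist}(X,\partial\{\psi_n>0\})$, which is not small for points whose distance to the free boundary of $\psi_n$ is comparable to $\sup\psi_n$; since non-degeneracy is not available for variational (non-minimizing) solutions, you cannot conclude $\nabla\psi_n\to 0$ in $L^\infty_{\mathrm{loc}}$ there, and energy could a priori concentrate near the moving free boundaries inside $\{\psi_0=0\}$. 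The paper avoids pointwise gradient control entirely: using $\mathcal{L}\psi_n\ge 0$ supported on $\{\psi_n=0\}$ and the truncation $(\psi_n-\epsilon)_+$, it derives $\int\eta\,|\nabla\psi_n|^2/(y_0+r_ny)=-\int\psi_n\,\nabla\eta\cdot\nabla\psi_n/(y_0+r_ny)$, whose right-hand side converges by uniform convergence of $\psi_n$ and weak convergence of the gradients; this gives convergence of the weighted Dirichlet energies and hence strong $W^{1,2}_{\rW,\mathrm{loc}}$ convergence. You need an argument of this type (or another mechanism ruling out concentration) rather than the $L^\infty$ claim.

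Second, in (3) you assume $I_0=\chi_{\{\psi_0>0\}}$ in the half-plane case, and you only extract a weak-$*$ limit of $I_n$. Both points matter precisely because the solution is variational and not minimizing: the definition only gives $\chi_{\{\psi>0\}}\le I$, so $I_0$ may equal a nontrivial constant on $\{X\cdot\nu<0\}$, and a weak-$*$ limit of characteristic functions need not be $\{0,1\}$-valued. If $I_0\equiv\theta\in(0,1]$ on the zero phase, your surface-concentration computation would give $A_+^2/y_0+y_0(\theta-1)=0$, i.e.\ $A_+=y_0\sqrt{1-\theta}$, so without further input the slope is not pinned to $y_0$ and the classification fails. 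The paper closes this by (a) invoking Lemma \ref{BV-estimate} and BV compactness to get strong $L^1_{\mathrm{loc}}$ convergence of $I_n$, so $I_0\in\{0,1\}$ a.e., (b) deducing from the limiting identity with test fields supported in $\{\psi_0=0\}$ that $I_0$ is constant there, and (c) letting the Bernoulli-type relation $A_+^2=y_0^2(1-\overline I_0)$ together with $A_+>0$ force $\overline I_0=0$ and $A_+=y_0$. Your two-arc case is unaffected (there $I_0=1$ a.e.\ follows from $I_0\ge\chi_{\{\psi_0>0\}}$), but the one-arc case needs the argument above rather than the assumption $I_0=\chi_{\{\psi_0>0\}}$.
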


\begin{proof}
\textit{(1)} Considering the rescaled functions \( \psi_n(X) = \frac{\psi(X_0+r_nX)}{r_n} \) and \( I_n(X) = I(X_0+r_nX) \); these functions also satisfy the variational solution properties in the ball \( B_R \) when \( n \) is sufficiently large (however, a rescaled version of \eqref{inner-variation-1} must be used, where the weight $1/y$ replaced by $1/(y_0+r_ny)$). 
In particular,  \( \psi_n \) remains uniformly bounded in  \( W^{1,2}_{\rW} \) with respect to \( n \).
For any sequence \( r_n \to 0 \), we can extract a subsequence (still denoted \( r_n \)) along which \( \psi_{n} \) converges locally uniformly and weakly in \( W^{1,2}_{\rW, \mathrm{loc}} \) to a Lipschitz function \( \psi_0: \mathbb{R}^2 \to [0,+\infty) \).  
Furthermore, this convergence can be strengthened on the set  \( A = \{\psi_0 > 0\} \). 
Indeed, for each \( Y \in A \), there exists a small ball \( B_{\epsilon}(Y) \subseteq A \) such that \( \psi_{n} > 0 \) in \( B_{\epsilon}(Y) \) for all sufficiently large \( n \) and 
\[
\Delta \psi_n = \dfrac{r_n}{y_0+r_n y} \partial_y \psi_n, \quad \text{ in } B_{\epsilon}(Y).
\] 
By elliptic regularity, we conclude that \( \nabla \psi_{n} \to \nabla \psi_0 \) in \( B_{\epsilon}(Y) \),  
which implies that \( \psi_0 \) is a harmonic function in $A$.

Now, we prove the strong convergence in \( W^{1,2}_{\rW,\mathrm{loc}}(\mathbb{R}^2) \). Take any \( \eta \in C_c^{\infty}(\mathbb{R}^2) \) with \( \eta \geq 0 \), and a sequence \( \epsilon \to 0^+ \) such that \( \{ \psi_0 > \epsilon \} \) is smooth on a neighborhood of \( \supp \eta \). 
We have
\[
\begin{aligned}
\int_{\supp \eta}  |\nabla \psi_0|^2 \eta \, dX & = \lim_{\epsilon \to 0^+} \int_{\supp \eta} \nabla \psi_0 \cdot \nabla \left(\psi_0 - \epsilon \right)_+ \eta \, dX \\
& = - \lim_{\epsilon \to 0^+}  \int_{\supp \eta}  \mathrm{div} \left(\eta \nabla \psi_0 \right) \left(\psi_0 - \epsilon \right)_+ \, dX \\
& = - \lim_{\epsilon \to 0^+}  \int_{\supp \eta} \eta \Delta \psi_0 \left(\psi_0 - \epsilon \right)_+ \, dX - \lim_{\epsilon \to 0^+}  \int_{\supp \eta}  \nabla \eta \cdot \nabla \psi_0  \left(\psi_0 - \epsilon \right)_+ \, dX \\
& = - \int_{\supp \eta}  \psi_0 \nabla \eta \cdot \nabla \psi_0 \, dX,
\end{aligned}
\]
where the first and last steps used dominated convergence and the middle step that \( \psi_0 \) is harmonic on \( \{ \psi_0 > \epsilon \} \subseteq A \). The same computation is valid with \( \psi_{n} \) in place of \( \psi_0 \) with weight $1/(y_0+r_ny)$, and so we see that
\[
\begin{aligned}
\int_{\supp \eta}  |\nabla \psi_0|^2 \eta \, dX &= - \int_{\supp \eta} \psi_0 \nabla \eta \cdot \nabla \psi_0 \, dX \\
& = -\lim_{r_n \to 0} \int_{\supp \eta}  \frac{y_0}{y_0+r_ny} \psi_{n} \nabla \eta \cdot \nabla \psi_{n} \, dX = \lim_{r_n \to 0} \int_{\supp \eta} \frac{y_0}{y_0+r_ny} |\nabla \psi_{n}|^2 \eta \, dX.
\end{aligned}
\]
This implies that \( \psi_{n} \to \psi_0 \) strongly in \( W^{1,2}_{\rW,\mathrm{loc}}(\mathbb{R}^2) \) as \( n \to +\infty \).

\textit{(2)} By integrating the equality \eqref{m-f-non-degenerate} with $\mu = 1$ over the interval $(r_n \rho_1, \, r_n \rho_2)$, for $0 < \rho_1 < \rho_2$, we obtain:
\begin{align*}
\int_{r_n\rho_1}^{r_n\rho_2}\dfrac{dM_{X_0}(r)}{dr} \, dr   = & \int_{r_n\rho_1}^{r_n\rho_2} 2r^{-2} \int_{\partial B_r^+(X_0)} \frac{1}{y}  \left( \nabla \psi \cdot \nu-\dfrac{\psi}{|X-X_0|}\right)^2 \,d\mathcal{H}^{1}dr \\
&  + \int_{r_n\rho_1}^{r_n\rho_2} r^{-3}\left(\int_{B_{r}^{+}(X_0)}  (y-y_0) I 
- \dfrac{y-y_0}{y^2}|\nabla \psi |^2 \right)\; dX dr \\
&  + \int_{r_n\rho_1}^{r_n\rho_2} r^{-4} \int_{\partial B_r^+(X_0)}\dfrac{y-y_0}{y^2}\psi^2(X) \, d\mathcal{H}^1 dr\\
 = & \int_{B_{r_n\rho_2}(X_0)\setminus B_{r_n\rho_1}(X_0)}\dfrac{2}{y}|X-X_0|^{-2} \left( \nabla \psi \cdot \nu-\dfrac{\psi}{|X-X_0|}\right)^2 \; dX\\
&  + \int_{r_n\rho_1}^{r_n\rho_2} r^{-3}\left(\int_{B_{r}(X_0)} (y-y_0) I 
- \dfrac{y-y_0}{y^2}|\nabla \psi |^2 \right)\; dX dr \\
&  + \int_{B_{r_n\rho_2}(X_0)\setminus B_{r_n\rho_1}(X_0)} |X-X_0|^{-4} \dfrac{y-y_0}{y^2}\psi^2(X) \, dX.
\end{align*}
After rescaling, we can compute
\begin{align*}
\int_{r_n\rho_1}^{r_n\rho_2} \dfrac{dM_{X_0}(r)}{dr}\; dr  = & \int_{B_{\rho_2}\setminus B_{\rho_1}}\dfrac{2}{y_0 +r_n \overline{y}} \left| \overline{X} \right|^{-4} \left( \nabla \psi_n \cdot \overline{X}-\psi_n\right)^2 \;d\overline{X} \\
&  +  r_n \int_{\rho_1}^{\rho_2} r^{-3}\left(\int_{B_{r}}  \overline{y}  I_n
- \dfrac{ \overline{y}}{\left(y_0 +r_n \overline{y}\right)^2} \left|\nabla \psi_n  \right|^2 \right) \, d\overline{X} dr \\
&  + \int_{B_{\rho_2}\setminus B_{\rho_1}} \left| \overline{X} \right|^{-4} \dfrac{r_n \overline{y}}{\left(y_0 +r_n \overline{y}\right)^2}\psi_n^2   \, d\overline{X}.
\end{align*}
so
$$
\limsup_{r_n\to0}\int_{B_{\rho_2}\setminus B_{\rho_1}}\dfrac{2 \left|\overline{X}\right|^{-4}}{y_0 +r_n \overline{y}} \left( \nabla \psi_n \cdot \overline{X}-\psi_n\right)^2 \;d\overline{X} \leq \lim_{r_n\to0} M_{X_0}(r_n\rho_2)-M_{X_0}(r_n\rho_1) = 0.
$$
The desired homogeneity then follows from the strong convergence of $\psi_n$ to $\psi_0$.

\textit{(3)}
We now derive the form of the function $\psi_0(X)$ in polar coordinates, under the assumption that $\{\psi_0 > 0\}$ is nonempty. 
From \textit{(1)}, we know that $\psi_0$ is harmonic in $\{\psi_0>0\}$.
This together with the homogeneity, i.e. $\psi_0(X) = r \phi(\theta)$, gives $\phi''+\phi =0$ in $\{\phi>0\}$, where $\phi$ is a $2\pi$-periodic function. 
 
Suppose $\phi>0$ on an interval $(0,a)$ with $\phi(0)=\phi(a)=0$, then $a=\pi$ and $\phi(\theta)=A_+\sin(\theta)$ on $[0,\pi]$ for some constant $A_+>0$. 
If $\phi$ is not identically zero on $[\pi, 2\pi]$, a similar argument yields $\phi(\theta) = A_- \sin(\theta)$ on $[\pi,2\pi]$ for some $A_->0$. 
Therefore, in general, 
$$ \psi_0(X)=A_+ \left( X \cdot \nu \right)_++A_- \left(X \cdot \nu \right)_-,$$ 
for some unit vector $\nu$, with at least one of the constants $A_{\pm} $ nonzero.

On the other hand, the rescaled functions \( \psi_n \) satisfy the following domain variation formula:
\begin{align*}
\int \left(\frac{1}{y_0+r_n y} \left|\nabla \psi_n \right|^2 + (y_0+r_n y) I_n \right)\mathrm{div\,} \xi & - \frac{2}{y_0+r_n y}\nabla \psi_n \cdot D \xi \nabla \psi_n \\
& - r_n \xi_2\left( \frac{1}{(y_0+r_n y)^2} \left|\nabla \psi_0 \right|^2  -  I_n  \right) \, dX = 0.
\end{align*}
Considering the strong $L^1_{\mathrm{loc}}$-limit of $I_{n}$ along a subsequence, and denoting this limit by $I_0 \in \{0, 1\}$ (Lebesgue-a.e. in $\mathbb{R}^2$) and using the fact that $I_0 \ge \chi_{\{\psi_0>0\}}$ together with the local uniform convergence of $\psi_n$ to $\psi_0$, we get $I_0 = 1$ Lebesgue-a.e. in $\{\psi_0 > 0\}$. 
Now, after passing to the limit (notice that we have the strong convergence of $\psi_n$'s and also the compact embedding of $BV$ in $L^1$), we obtain:
\begin{equation}
\label{Thm53-Eq-A}
\int \left( \frac{1}{y_0} \left|\nabla \psi_0 \right|^2 +  y_0 I_0 \right) \mathrm{div\,} \xi  - \frac{2}{y_0}\nabla \psi_0 \cdot D \xi \nabla \psi_0 \, dX =0.
\end{equation}
We observe from \eqref{Thm53-Eq-A} that if $\psi_0 = 0$ on $\supp\xi$, then one has
$$
\int I_0\ \mathrm{div\,} \xi \, dX =0;
$$
which gives that $I_0$ is a constant in each connected component of $\{\psi_0 = 0\}$.

First, consider the case when one of the constants $A_\pm$ is zero; for instance, $\psi_0(X)=A_+ \left( X \cdot \nu \right)_+$.
We already know that $I_0=1$ on $\{X\cdot \nu>0\}$ and that $I_0$ is constant on $\{X \cdot \nu < 0\}$, denoted as $\overline{I}_0$.
The outer normal to $\partial\{\psi_0 > 0\} =\{X\cdot\nu=0\}$ is the constant normal $-\nu$. 
Let $\xi(X) := \zeta (X) \nu$ for any $\zeta \in C^{1}_c(\bR^2)$ and insert $\xi$ into the equation \eqref{Thm53-Eq-A}. Integrating by parts gives that:
\begin{equation*}
\begin{aligned}
0 = &\int_{\{X\cdot\nu>0\}}   \left(\frac{A_+^2}{y_0}  + y_0 \right) \mathrm{div\,} \xi - \frac{2A_+^2}{y_0}\nu \cdot D \xi \nu \, dX + \int_{\{X\cdot\nu < 0\}}  y_0 \overline{I}_0\ \mathrm{div\,} \xi  \, dX  \\
 =  & \int_{\{X\cdot\nu=0\}} \left(  \frac{A_+^2}{y_0} +  y_0 (\overline{I}_0 -1)\right) \zeta \, d \mathcal{H}^1.
\end{aligned}
\end{equation*}
Since this holds for arbitrary $\zeta$, it follows that $A_+^2=  y_0^2(1-\overline{I}_0)$ on $\{X\cdot\nu=0\}$. Because we assumed $A_+>0$, we deduce that $\overline{I}_0=0$, and hence $\psi_0(X)= y_0 \left(X \cdot \nu \right)_+$. 

Now, suppose that both of $A_{\pm}$ are not zero. We will show that $A_+=A_- $. In this case, obviously we have $I_0=1$ Lebesgue-a.e. in $\mathbb{R}^2$. 
Therefore, using \eqref{Thm53-Eq-A} and testing with a vector field  $\xi(X) := \zeta (X) \nu$  implies that 
\[
0= \int \left( \mathrm{div\,} \xi  - 2 \nu \cdot D \xi \nu \right)\left(A_+^2\chi_{\{X\cdot\nu>0\}} + A_-^2\chi_{\{X\cdot\nu < 0\}}\right) \, dX =\int_{\{X\cdot\nu=0\}} \left( A_+^2-A_-^2 \right) \zeta \, d \mathcal{H}^1.
\]
Since this holds for arbitrary $\zeta$, we conclude $A_+ = A_-:=A$. Therefore, $\psi_0(X)=A |X \cdot \nu|$. 
\end{proof}

In the next proposition, we once again apply the monotonicity formula to show that blow-up limits at degenerate points are two-homogeneous functions, and we explicitly determine the form of the blow-up limit.

\begin{proposition}[Blow-up limits of the variational solutions at degenerate points]
\label{10.4}
Let $X_0 \in S_{\psi}$ and let $(\psi,I)$ be a variational solution  on
$B_{1}^+(X_0)$, then the following statements hold:

\begin{enumerate}
\item
$\psi_n$ converges to $\psi_0$ strongly in $W^{1,2}_{\rW,\mathrm{loc}}(\mathbb{R}^2_+)$.

\item
Every blow-up limit $\psi_0$ of $\psi_n$ is a two-homogeneous function, i.e. $\psi_0(rX) = r^2\psi_0(X)$
for each $X \in \mathbb{R}_+^2$ and $r > 0$. 

\item
If $ \psi_0 \not\equiv 0$, then we have $\psi_0(X)=A y^2$ for some $A>0$.
\end{enumerate}
\end{proposition}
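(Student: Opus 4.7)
The overall strategy mirrors Proposition \ref{10.1}, with the simplifications afforded by the \emph{scale invariance} of the rescaling when $y_0=0$ and $\mu=2$. First, the rescaling $\psi_n(X):=r_n^{-2}\psi(X_0+r_nX)$ preserves the Lipschitz bound exactly, i.e.\ $|\nabla\psi_n|/y\le C_V$, and preserves the equation $\mathcal{L}\psi_n=0$ on $\{\psi_n>0\}$. Integrating $\partial_y\psi_n$ along vertical segments from the axis yields $\psi_n(X)\le (C_V/2)y^2$, so $\{\psi_n\}$ is locally equibounded and equicontinuous; along a subsequence $\psi_n\to\psi_0$ locally uniformly on $\mathbb{R}^2_+$, and elliptic interior regularity promotes this to $C^1_{\mathrm{loc}}$-convergence inside $\{\psi_0>0\}$. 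To upgrade to strong convergence in $W^{1,2}_{\rW,\mathrm{loc}}(\mathbb{R}^2_+)$, I would apply the integration-by-parts identity of Lemma \ref{useful-lemma-01} to both $\psi_n$ and $\psi_0$ on $B_s^+$ and pass to the limit in the boundary integrals $\int_{\partial B_s^+}(\psi_n\nabla\psi_n\cdot\nu)/y\,d\mathcal{H}^1$ via dominated convergence, using the uniform pointwise bound $|\psi_n\nabla\psi_n|/y\le(C_V^2/2)\,y^2$.

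For Step (2), since $\mu=2$ the last two terms in the derivative formula \eqref{m-f-non-degenerate} vanish, leaving $M_{X_0}(\psi,\cdot)$ non-decreasing with a finite limit $M_\infty:=\lim_{r\to 0^+}M_{X_0}(\psi,r)$. A direct scaling computation gives the identity $M_{X_0}(\psi,r_ns)=M_0(\psi_n,s)$; combined with the strong convergence of Step (1), this yields $M_0(\psi_0,s)=M_\infty$ for every $s>0$. Constancy of $s\mapsto M_0(\psi_0,s)$, together with the derivative formula applied to $\psi_0$, forces $\nabla\psi_0\cdot X=2\psi_0$ a.e., hence $\psi_0$ is two-homogeneous by Euler's identity.

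For Step (3), set $w:=\psi_0/y^2$ on $\{y>0\}$. Two-homogeneity makes $w$ zero-homogeneous and the Lipschitz bound gives $0\le w\le C_V/2$, so $w=w(\theta)$. Elliptic regularity passed to the limit yields $\mathcal{L}\psi_0=0$ in $\{\psi_0>0\}$, and a direct computation gives $\mathcal{L}\psi_0=y^{-2}\mathrm{div}(y^3\nabla w)$; inserting $w=w(\theta)$ in polar coordinates reduces the equation on each connected component of $\{w>0\}\subseteq(0,\pi)$ to the ODE
\[
\sin\theta\,w''(\theta)+3\cos\theta\,w'(\theta)=0,
\]
whose general solution is $w(\theta)=C_1+C_2U(\theta)$ with $U'(\theta)=1/\sin^3\theta$; in particular $U$ is strictly increasing on $(0,\pi)$ with $U(0^+)=-\infty$ and $U(\pi^-)=+\infty$. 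A case analysis on the connected components of $\{w>0\}$ then eliminates every proper subinterval: interior components $(a,b)\Subset(0,\pi)$ are ruled out because $w(a)=w(b)=0$ together with strict monotonicity of $U$ forces $C_1=C_2=0$; components touching $\theta=0$ or $\theta=\pi$ are ruled out because boundedness of $w$ forces $C_2=0$, after which the vanishing condition at the interior endpoint forces $C_1=0$. Hence either $\psi_0\equiv 0$ or the unique component is $(0,\pi)$ with $w\equiv A>0$, giving $\psi_0(X)=Ay^2$.

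The main technical obstacle lies in the case analysis of Step (3): one must identify the second independent solution $U$ of the reduced ODE explicitly, verify its strict monotonicity and unboundedness at the two endpoints of $(0,\pi)$, and couple this with the boundary behavior of $w$ (which differs on components touching the axis versus interior components) in order to eliminate every configuration other than $w\equiv\text{const}$.
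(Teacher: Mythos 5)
Your proposal is essentially correct and, for parts (1)--(2), follows the same route as the paper, which simply declares these parts ``analogous to the non-degenerate case'' (Proposition \ref{10.1}); your observation that the rescaling is exactly scale-invariant when $y_0=0$ (the weight $1/y$ and the operator $\mathcal{L}$ are preserved, and $|\nabla\psi_n|\le C_V y$, $\psi_n\le \tfrac{C_V}{2}y^2$ carry over verbatim) is precisely what makes the degenerate case simpler. For part (3) you genuinely diverge from the paper: the paper cites Proposition 4.3 of the reference \cite{du2023free1}, whereas you give a self-contained classification by writing $\psi_0=y^2w$, noting $\mathcal{L}(y^2w)=y^{-2}\operatorname{div}(y^3\nabla w)$, reducing to the ODE $\sin\theta\,w''+3\cos\theta\,w'=0$ for the zero-homogeneous profile, and eliminating all components of $\{w>0\}$ other than $(0,\pi)$ using the boundedness $0\le w\le C_V/2$ and the divergence of the second solution $U$ at $\theta=0,\pi$. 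I checked the computation ($y\Delta w+3\partial_yw=\tfrac1r(\sin\theta\,w''+3\cos\theta\,w')$, $U'=\sin^{-3}\theta$) and the case analysis; this is a correct and attractive replacement for the external citation, at the cost of having to justify that $w$ is continuous on $(0,\pi)$, vanishes at interior endpoints of components, and that $\mathcal{L}\psi_0=0$ holds classically on $\{\psi_0>0\}\cap\{y>0\}$ --- all of which you do, or can do, with the locally uniform and $C^2_{\mathrm{loc}}$ convergence inside the positivity set.

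Two points in part (2) are stated more strongly than you have justified. First, you pass from $M_{X_0}(\psi,r_ns)=M_{\mathbf 0}(\psi_n,s)$ to ``$M_{\mathbf 0}(\psi_0,s)=M_\infty$'', but $M_{\mathbf 0}(\psi_0,s)$ contains the term $\int y\,\tilde I$ for a limit indicator $\tilde I$; its existence requires the $L^1_{\mathrm{loc}}$ compactness of $I_n$ coming from the (rescaled) BV bound of Lemma \ref{BV-estimate}, which you do not mention. Second, and more importantly, you then ``apply the derivative formula to $\psi_0$''; but \eqref{m-f-non-degenerate} is proved only for variational solutions, and you have not shown that the blow-up pair $(\psi_0,I_0)$ satisfies the domain-variation identity \eqref{inner-variation-1} (this requires passing \eqref{inner-variation-1} to the limit, as is done for instance in the proof of part (3) of Proposition \ref{10.1} and in Theorem \ref{5.3}). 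The standard and simpler fix --- which is exactly the paper's argument in Proposition \ref{10.1}(2) --- avoids the limit pair altogether: integrate \eqref{m-f-non-degenerate} for $\psi$ itself over $(r_n\rho_1,r_n\rho_2)$, where for $\mu=2$ only the nonnegative square term survives, rescale, and use the existence of $\lim_{r\to0^+}M_{X_0}(\psi,r)$ (finite by the Lipschitz bound) together with the strong convergence of part (1) (or convexity and weak lower semicontinuity) to conclude $\nabla\psi_0\cdot X=2\psi_0$ a.e. With this replacement, your proof is complete; part (1) as you set it up also works, with the minor caveat that applying the identity of Lemma \ref{useful-lemma-01} to $\psi_0$ should be justified by noting that its proof only uses that $\psi_0$ is a continuous, nonnegative $\mathcal{L}$-subsolution which is an $\mathcal{L}$-solution on its positivity set, properties inherited in the limit.
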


\begin{proof}
The proofs of \textit{(1)} and \textit{(2)} are analogous to those in the non-degenerate case.
For \textit{(3)}, we refer to Proposition 4.3 in \cite{du2023free1}.
\end{proof}

In the following proposition, we characterize all possible values of $M_{X_0}(0^+)$.

\begin{proposition}
\label{Highest-frequency-V}
Let \( (\psi, I) \) be a variational solution in \( \Omega \), and suppose \( B_1^{+}(X_0) \subseteq \Omega \). Then \( M_{X_0}(\psi, 0^+) = -\infty \) if and only if \( \psi(X_0) > 0 \). If \( \psi(X_0) = 0 \), then
\begin{equation}  
\label{H-F-Interval}
M_{X_0}(\psi,0^+) = \lim_{r \to 0} \frac{\left| \left\{X \in B_r^{+}(X_0) \,:\, I(X) = 1 \right\} \right|_{\rW}}{r^{\mu+1}} \in \left[0,  \omega(X_0) \right],  
\end{equation}  
where $\mu=\mu(X_0)$ is the growth exponent defined in \eqref{mu-constant}, and \( \left|\,\cdot\,\right|_{\rW} \)  denotes the weighted volume with respect to the measure \( y \, dX \), given by 
\[
\left| A \right|_{\rW}:= \int_{A} y \, dX.
\]  
The energy upper bound \( \omega(X_0) \) is given by
\[
\omega(X_0):=  r^{-\mu-1}\int_{B_r^{+}(X_0)} y \, dX = 
\begin{cases}
\dfrac{2}{3}, \quad & X_0 \in S_{\psi}, \\
y_0\pi, \quad & X_0 \in N_{\psi} \text{ and } r < y_0.
\end{cases}
\]  
Furthermore, the map \( X_0 \mapsto M_{X_0}(\psi, 0^+) \) is upper semi-continuous in \( \Omega \).
\end{proposition}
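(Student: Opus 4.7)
The plan is to split the proof according to whether $\psi(X_0)>0$ or $\psi(X_0)=0$. In the former case, continuity of $\psi$ yields a positive lower bound $c>0$ in a half-ball $B_{r_0}^+(X_0)$; since $\psi$ vanishes on the axis, this forces $y_0>0$, so $\mu(X_0)=1$. One then estimates $H_{X_0}(r)\ge c^2\int_{\partial B_r^+(X_0)}y^{-1}\,d\mathcal{H}^1\gtrsim r/y_0$ for $r<y_0/2$, giving $r^{-\mu-2}H_{X_0}(r)\to +\infty$, while the Lipschitz bound \eqref{lip-condition} yields $r^{-\mu-1}D_{X_0}(r)=O(y_0)$. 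Hence $M_{X_0}(r)\to -\infty$, establishing the easy direction. The converse (that $M_{X_0}(0^+)$ is finite whenever $\psi(X_0)=0$) will be an automatic byproduct of the identity developed in the next paragraph.

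The main case is $\psi(X_0)=0$. Here I combine Lemma \ref{useful-lemma-01} with the trivial decomposition $D_{X_0}(r)=\int_{B_r^+(X_0)}y^{-1}|\nabla\psi|^2\,dX+|\{I=1\}\cap B_r^+(X_0)|_{\rW}$ to obtain the identity
\begin{equation*}
M_{X_0}(r)=r^{-\mu-1}\bigl|\{I=1\}\cap B_r^+(X_0)\bigr|_{\rW}+E(r),\qquad E(r):=r^{-\mu-1}\int_{\partial B_r^+(X_0)}\frac{\psi}{y}\Bigl(\nabla\psi\cdot\nu-\tfrac{\mu\psi}{r}\Bigr)\,d\mathcal{H}^1.
\end{equation*}
The plan is then to show $E(r_n)\to 0$ along every sequence $r_n\to 0$. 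Applying Propositions \ref{10.1} and \ref{10.4}, I pass to a subsequence along which the rescalings $\psi_n$ converge strongly in $W^{1,2}_{\rW,\mathrm{loc}}$ to a $\mu$-homogeneous limit $\psi_0$, and $I_n\to I_0$ in $L^1_{\mathrm{loc}}$. Rewriting $E(r_n)$ in the rescaled variables with the appropriate weight ($1/(y_0+r_n z_2)$ when $y_0>0$, $1/z_2$ when $y_0=0$), and using that the $\psi_n$ are uniformly Lipschitz (so their $C^0$ and $\nabla\psi_n\cdot\nu$ traces on $\partial B_1^+$ converge by elliptic regularity in $\{\psi_0>0\}$), the limit integrand becomes $\psi_0(\nabla\psi_0\cdot\nu-\mu\psi_0)$ on $\partial B_1^+$, which vanishes identically by Euler's identity for the $\mu$-homogeneous $\psi_0$. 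Since $M_{X_0}(r)+C_Mr$ is non-decreasing by Lemma \ref{M-F-01} and Remark \ref{rmrk-almost-mono}, the limit $M_{X_0}(0^+)$ exists; combined with $E(r_n)\to 0$, we deduce $r_n^{-\mu-1}|\{I=1\}\cap B_{r_n}^+(X_0)|_{\rW}\to M_{X_0}(0^+)$. Since the right-hand side is subsequence-independent, the full limit exists and equals $M_{X_0}(0^+)$, yielding \eqref{H-F-Interval}. The upper bound $\omega(X_0)$ then follows immediately from $I\le 1$, together with a direct polar-coordinates computation of $|B_r^+(X_0)|_{\rW}$ in the two cases.

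Upper semi-continuity of $X\mapsto M_X(\psi,0^+)$ is handled by cases. If $y_0>0$, then $y_n>0$ eventually and $\mu(X_n)=\mu(X_0)=1$; I fix a small $r>0$, use dominated convergence to deduce $M_{X_n}(\psi,r)\to M_{X_0}(\psi,r)$, and combine this with the almost monotonicity $M_{X_n}(0^+)\le M_{X_n}(\psi,r)+C_Mr$ to obtain $\limsup_n M_{X_n}(0^+)\le M_{X_0}(\psi,r)+C_Mr$; letting $r\to 0$ gives the inequality. If $y_0=0$, any subsequence with $y_{n_k}>0$ satisfies $M_{X_{n_k}}(0^+)\le \omega(X_{n_k})=\pi y_{n_k}\to 0\le M_{X_0}(0^+)$ by the formula just proved, while any axial subsequence is treated as in the previous case with $\mu=2$. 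The principal obstacle will be the vanishing of $E(r_n)$: weak $W^{1,2}_\rW$ convergence is insufficient to pass to the limit in the boundary integral, so the strong convergence from Propositions \ref{10.1} and \ref{10.4} is essential, and it is the exact $\mu$-homogeneity of the blow-up limit $\psi_0$ that kills the integrand pointwise via Euler's identity.
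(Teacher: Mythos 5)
Your proof is correct and follows essentially the same route as the paper: both identify $M_{X_0}(r)$ with the weighted density of $\{I=1\}$ up to a discrepancy that is shown to vanish as $r\to0$ using Lemma \ref{useful-lemma-01}, the strong convergence and $\mu$-homogeneity of blow-ups from Propositions \ref{10.1} and \ref{10.4}, and a subsequence-independence argument, together with the same case analysis (and the same $\omega(X_n)=\pi y_n\to0$ trick) for upper semi-continuity. The only cosmetic difference is that you pass to the limit in the boundary-integral form $E(r)$ and kill the limit integrand via Euler's identity on $\partial B_1^+$, whereas the paper performs the equivalent cancellation in bulk form directly from the strong $W^{1,2}_{\rW}$ convergence and the harmonicity/homogeneity of $\psi_0$.
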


\begin{proof}
The fact that \( M_{X_0}(r) \to -\infty \) when \( \psi(X_0) > 0 \) is straightforward. 
In this case, \( X_0 \) must lie away from the axis \( y = 0 \), and thus \( \mu(X_0) = 1 \). 
Moreover, the quantity \( r^{-2} D_{X_0}(r) \) remains bounded, while
\( r^{-1} H_{X_0}(r) \to \frac{1}{y_0} \psi^2(X_0) |\partial B_1^+| \).
Therefore, we may assume \( \psi(X_0) = 0 \) and proceed to establish \eqref{H-F-Interval}.

First, in this case, \( M_{X_0}(r) \) is bounded in terms of the constant \( C_V \). More precisely, when $X_0\in S_\psi$ we have  
\[
D_{X_0}(r) \leq \int_{B_r^+(X_0)} C_V^2 y + yI \, dX \le (1+C_V^2) \omega(X_0) r^3,
\]  
while  
\[
H_{X_0}(r) \leq \int_{\partial B_r^+(X_0)}C_V^2 y^3 \, d\mathcal{H}^1 \le C_V^2 \left|\partial B_1^+ \right|r^4.
\]  
In particular, \( M_{X_0}(0^+) \) is finite.
When $X_0\in N_\psi$ and $r<y_0/2$, we similarly obtain $D_{X_0}(r) \leq (1+C_V^2)\omega(X_0)r^2$ and $H_{X_0}(r) \leq 5y_0C_V^2\left|\partial B_1^+ \right| r^3$.

Consider the rescaled functions \( \psi_r(X) = \frac{\psi(X_0+rX)}{r^\mu} \) and \( I_r(X) = I(X_0+rX) \).
These functions also satisfy the variational solution properties in half-ball $B_R^+$ when $X_0\in S_\psi$ and in the ball \( B_R \) when \( X_0 \in N_\psi \) and \( r \) is sufficiently small. In the latter case, a rescaled version of \eqref{inner-variation-1} must be used, where the weight $1/y$ is replaced by $1/(y_0+ry)$. 

By Proposition \ref{10.1} and Proposition \ref{10.4}, for any sequence \( r_k \to 0 \) we can extract a subsequence (still denoted \( r_k \)) along which \( \psi_{r_k} \) converges strongly in  \( W^{1,2}_{\rW} \) to a function \( \psi_0 \), which is homogeneous of degree \( \mu(X_0) \), an \( \mathcal{L} \)-solution in \( \{\psi_0 > 0\} \) when \( X_0 \in S_\psi \), and harmonic when \( X_0 \in N_\psi \).

From the strong convergence of \( \psi_{r_k} \) in \( W^{1,2}_{\rW}(B_1) \), we  obtain in case $X_0\in N_\psi$
\[
 \int_{B_1} \frac{1}{y_0} |\nabla \psi_0|^2 \, dX -   \int_{\partial B_1} \frac{1}{y_0} \psi_0^2 \, d\mathcal{H}^{1} = \lim_{r_k \to 0}  \int_{B_1} \frac{1}{y_0+r_ky} |\nabla \psi_{r_k}|^2 \, dX -    \int_{\partial B_1} \frac{1}{y_0+r_ky} \psi_{r_k}^2 \, d\mathcal{H}^{1}.
\]
Arguing as in \eqref{r1}, the left-hand side vanishes, since \( \psi_0 \) is a homogeneous harmonic function on \( \{\psi_0 > 0\} \).  
It follows that
\[
0 = \lim_{r_k \to 0} r_k^{-2} \int_{B_{r_k}(X_0)} \frac{1}{y} |\nabla \psi|^2 \, dX -  r_k^{-3} \int_{\partial B_{r_k}(X_0)} \frac{1}{y} \psi^2 \, d\mathcal{H}^{1} = \lim_{r_k \to 0} M_{X_0}(\psi, r_k) - \frac{\left| \{X \in B_{r_k}(X_0) \, : \, I(X) = 1\} \right|_{\rW}}{r_{k}^{2}}.
\]
Therefore,
\[
M_{X_0}(\psi, 0^+) = \lim_{r \to 0} \frac{\left| \left\{X \in B_r(X_0) \, : \, I(X) = 1 \right\} \right|_{\rW}}{r^{2}}.
\]
Since this holds along every sequence \( r_k \to 0 \), the conclusion follows. 
The argument for $X_0 \in S_\psi$ proceeds in an analogous manner.

The upper semi-continuity of the map  \( X_0 \mapsto M_{X_0}(\psi,0^+) \) follows directly from the fact that it is the monotone limit of continuous functions. 
Indeed, when $y_0>0$, let \( \delta > 0 \) and \( K < +\infty \). Then, Lemma \ref{M-F-01} implies that
\[
M_{X}(\psi,0^+) \leq M_{X}(\psi,r) + C_Mr \leq M_{X_0}(\psi,r) + C_Mr + \frac{\delta}{2} \leq 
\begin{cases}
M_{X_0}(\psi,0^+) + \delta, & \quad \text{if \,} M_{X_0}(\psi,0^+) > -\infty, \\
-K, & \quad \text{if \,} M_{X_0}(\psi,0^+) = -\infty,
\end{cases}
\]
provided we first fix \( r > 0 \) (depending on \( X_0 \)), and then take \( |X - X_0| \) sufficiently small.

However, when $y_0=0$, since $M_X(\psi, r)$ is not continuous in $X$ near $X_0$; thus, we must distinguish between sequences approaching $X_0$ along $\{y=0\}$ and along $\{y>0\}$.
If $\{y=0\} \ni X_n\to X_0$, same reasoning as above shows upper semi-continuity.  

If $\{y>0\} \ni X_n\to X_0$, and $\psi(X_n)>0$ then $M_{X_n}(\psi, 0^+)=-\infty$ which implies that
$\limsup_{n \to + \infty} M_{X_n}(\psi, 0^+) \le M_{X_0}(\psi, 0^+)$.
In the case $\psi(X_n)=0$,  we have already shown that $M_{X_n}(\psi, 0^+) \in [0, \omega(X_n)]$, so
\[
\limsup_{n \to + \infty} M_{X_n}(\psi, 0^+) \le \limsup_{n \to + \infty} \omega(X_n) = 0 \le M_{X_0}(\psi, 0^+).
\]
\end{proof}

\section{The Frequency Formula}

Given a variational solution \( (\psi, I) \) in \( \Omega \), the \textbf{frequency formula} is defined as  
\[
\cN_{X_0}(\psi, r) = \frac{r D_{X_0}(\psi, r) - r\int_{B_r^{+}(X_0)} y \, dX}{ H_{X_0}(\psi, r)},
\]
and any \( B_r^{+}(X_0) \subseteq \Omega \), where the denominator is positive. 

For convenience, we also introduce the \textbf{volume difference} as 
\[
\cV_{X_0}(\psi, r) = \frac{r\int_{B_r^{+}(X_0)}  y \left(1 - I \right) \, dX}{ H_{X_0}(\psi, r)} \geq 0.  
\] 
With these definitions, we have the identity  
\begin{equation}
\label{nv}  
\cN_{X_0}(\psi, r) + \cV_{X_0}(\psi, r) = \frac{r \int_{B_r^{+}(X_0)} \frac{1}{y} |\nabla \psi|^2 \, dX}{ H_{X_0}(\psi, r)},
\end{equation}  
which corresponds to the classical Almgren frequency formula for harmonic functions. 
When there is no ambiguity, we suppress the parameters \( \psi \) and \( X_0 \) and simply write \( \mathcal{N}(r) \) and \( \mathcal{V}(r) \).

We will establish the monotonicity of the frequency formula in separate subsections.

\subsection{Monotonicity of the Frequency Formula: Degenerate Case}

\begin{lemma}
\label{nprim}  
Let \( (\psi, I) \) be a variational solution in \( \Omega \), and suppose \( B_{r_0}^+(X_0) \subseteq \Omega \). Assume that \( X_0 \in S_{\psi} \) and that \( M_{X_0}(\psi, t) \geq  \omega(X_0) \) for some \( t < r_0 \). 
Then, on the interval \( (t, r_0) \), the frequency formula \( \cN_{X_0}(\psi, r) \) satisfies \( \cN_{X_0}(\psi, r) \geq 2 \), and \( \cN_{X_0}(\psi, r) \) is absolutely continuous and non-decreasing. Moreover, the derivative of the frequency formula is given by  
\begin{align}  
\frac{d}{dr} \cN_{X_0}(\psi, r) &= \frac{2r^{-1}}{ H_{X_0}(\psi, r)} \int_{\partial B_{r}^{+}(X_0)} \frac{1}{y} \left| \nabla \psi \cdot \nu - \frac{\psi}{r}\left(\cN_{X_0}(\psi,r) + \cV_{X_0}(\psi,r) \right) \right|^2 \, d\mathcal{H}^{1} 
\nonumber \\  
&\quad + \frac{2}{r} \left(  
\cV_{X_0}^2(\psi, r) + \cV_{X_0}(\psi, r) \left( \cN_{X_0}(\psi, r) -2 \right) \right)
\label{npr}  
\end{align}
for Lebesgue-a.e. \(r \in (t, r_0) \).
\end{lemma}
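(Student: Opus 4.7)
The approach is a direct differentiation of $\mathcal{N}(r) = W(r)/H(r)$, where at a degenerate point $X_0 \in S_\psi$ one has $\mu(X_0) = 2$, $y_0 = 0$, and $\int_{B_r^+(X_0)} y\,dX = \tfrac{2}{3}r^3$, so that $W(r) = rD(r) - \tfrac{2}{3}r^4$. The strategy is to compute $W'$ and $H'$ explicitly, substitute into $\mathcal{N}' = (W'H - WH')/H^2$, and then regroup the output as the sum of a perfect-square boundary integral and a lower-order correction expressible in $\mathcal{V}$ and $\mathcal{N}$.

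The first ingredient is $H'(r) = 2K(r)$, where $K(r) := \int_{B_r^+(X_0)} \tfrac{1}{y}|\nabla\psi|^2\,dX$; this follows by specializing \eqref{dH_0} to $\mu=2$, $y_0=0$ and applying Lemma \ref{useful-lemma-01} to convert the boundary flux $\int_{\partial B_r^+}\tfrac{1}{y}\psi\nabla\psi\cdot\nu\,d\mathcal{H}^1$ into the interior Dirichlet integral $K(r)$. The second ingredient is $W'(r) = 2rP(r) - 4V(r)$, where $P(r) := \int_{\partial B_r^+(X_0)} \tfrac{1}{y}(\nabla\psi\cdot\nu)^2\,d\mathcal{H}^1$ and $V(r) := \int_{B_r^+(X_0)} y(1-I)\,dX$. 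This arises by writing $W'(r) = D(r) + rD'(r) - \tfrac{8}{3}r^3$, using the monotonicity identity \eqref{mono-eq1} with $y_0=0$ to express $rD'(r)$ in terms of $K$, the flux $P$ and the interior mass $\int yI$, and then simplifying via $\int_{\partial B_r^+} y\,d\mathcal{H}^1 = 2r^2$ and $\int_{B_r^+} yI\,dX = \tfrac{2}{3}r^3 - V(r)$.

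Substituting into the quotient rule gives $\mathcal{N}'(r) = \tfrac{2r(PH - K^2)}{H^2} + \tfrac{2V(rK - 2H)}{H^2}$. Algebraic expansion yields the identity
\[
P(r)H(r) - K(r)^2 \;=\; H(r)\int_{\partial B_r^+(X_0)} \tfrac{1}{y}\Bigl|\nabla\psi\cdot\nu - \tfrac{K(r)}{H(r)}\psi\Bigr|^2 d\mathcal{H}^1,
\]
which is nonnegative by Cauchy--Schwarz, and the identification $K/H = (\mathcal{N}+\mathcal{V})/r$ from \eqref{nv} converts this into the first summand of \eqref{npr}. For the second summand, using $V/H = \mathcal{V}/r$ and $rK/H = \mathcal{N}+\mathcal{V}$ yields $\tfrac{2V(rK-2H)}{H^2} = \tfrac{2\mathcal{V}}{r}(\mathcal{N}+\mathcal{V}-2) = \tfrac{2}{r}\bigl(\mathcal{V}^2 + \mathcal{V}(\mathcal{N}-2)\bigr)$, matching exactly.

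The a priori bound $\mathcal{N}(r) \geq 2$ on $(t, r_0)$ comes for free from Lemma \ref{M-F-01}: since $\mu = 2$, $M_{X_0}(\cdot)$ is genuinely monotone, so the hypothesis $M_{X_0}(t) \geq \omega(X_0) = 2/3$ propagates to $M_{X_0}(r) \geq 2/3$ on $(t, r_0)$, and unraveling $r^{-3}D(r) - 2r^{-4}H(r) \geq 2/3$ gives $rD(r) - \tfrac{2}{3}r^4 \geq 2H(r)$. Both contributions to $\mathcal{N}'(r)$ are then nonnegative a.e.~(the first as a square; the second because $\mathcal{V} \geq 0$ and $\mathcal{N} \geq 2$), yielding monotonicity, while absolute continuity follows from that of $D$, $H$, $V$. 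The main technical obstacle is the bookkeeping in the derivation of $W'(r) = 2rP(r) - 4V(r)$: the weight $1/y$ produces several auxiliary boundary and volume terms in \eqref{mono-eq1}, and aligning them with $H' = 2K$ and with the identification $K/H = (\mathcal{N}+\mathcal{V})/r$ so that the square in \eqref{npr} emerges with precisely the right slope $(\mathcal{N}+\mathcal{V})/r$ demands careful accounting of the axis contributions.
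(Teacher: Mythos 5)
Your proposal is correct and follows essentially the same route as the paper's proof: both differentiate the quotient directly, using $H'(r)=2\int_{B_r^{+}(X_0)}\frac{1}{y}|\nabla\psi|^2\,dX$ (obtained from \eqref{dH_0} together with Lemma \ref{useful-lemma-01}), the domain-variation identity \eqref{mono-eq1} to evaluate $rD'(r)$, completion of the square with slope $(\cN+\cV)/r$ via \eqref{r1} and \eqref{nv}, and the monotonicity of $M$ combined with the hypothesis $M_{X_0}(\psi,t)\ge\omega(X_0)$ to obtain $\cN\ge 2$ and hence monotonicity. One remark: your computation yields the prefactor $\frac{2r}{H}$ in the boundary-square term, which agrees with the paper's own proof and with Remark \ref{r2} (and is the scaling-consistent choice), so the factor $2r^{-1}$ in the printed formula \eqref{npr} is a typo in the statement rather than a discrepancy on your side.
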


\begin{proof}
Without loss of generality, we may assume \( X_0 = \mathbf{0} \). 
First, notice that by Lemma \ref{M-F-01}, we have \( M_{\mathbf{0}}(r) \geq M_{\mathbf{0}}(t) \geq  \omega(\mathbf{0}) \), for $r \in (t,r_0)$ which implies \( \cN_{\mathbf{0}}(r) \geq 2 \). Furthermore, this lemma implies that \( \cN_{\mathbf{0}}(r) \) is absolutely continuous. 
Recalling the computations in \eqref{Eq6} and \eqref{dH_0}, we obtain
\[
(r^{-3}D)' = -4r^{-4}\int_{B_{r}^{+}} \frac{1}{y}|\nabla \psi|^{2} \, dX + r^{-3}\int_{\partial B_r^{+}} \frac{2}{y} \left(\nabla \psi \cdot \nu \right)^2 \, d\mathcal{H}^1,
\]
and
\[
(r^{-4}H)' = -4r^{-5}\int_{\partial B_{r}^{+}}\dfrac{1}{y}\psi^2 \, d\mathcal{H}^1 +2r^{-4}\int_{\partial B_{r}^{+}}\dfrac{1}{y}\psi \nabla \psi\cdot \nu\, d\mathcal{H}^1.
\]
Writing \(\mathcal{N}=\mathcal{N}_{\mathbf{0}}\) for short, we have
\[
\cN'  = \frac{d}{dr} \left( \frac{r^{-3}D -  \omega}{r^{-4}H} \right) = \frac{(r^{-3}D)'}{r^{-4}H} - \frac{(r^{-4}H)'\left(r^{-3}D -  \omega \right)}{(r^{-4}H)^2}
= \frac{(r^{-3}D)'-(r^{-4}H)'\cN}{r^{-4}H}.
\]

Expanding the numerator and simplifying yields
\begin{align} \nonumber
\cN' & = \frac{1}{H} \left(-4\int_{B_{r}^{+}} \frac{1}{y}|\nabla \psi|^{2} \, dX + r\int_{\partial B_r^{+}} \frac{2}{y} \left(\nabla \psi \cdot \nu \right)^2 \, d\mathcal{H}^1 \right) \\ \nonumber
& \quad - \frac{\cN}{H} \left( -4r^{-1}\int_{\partial B_{r}^{+}}\dfrac{1}{y}\psi^2 \, d\mathcal{H}^1 +2\int_{\partial B_{r}^{+}}\dfrac{1}{y}\psi \nabla \psi\cdot \nu\, d\mathcal{H}^1 \right)\\ \label{np}
& = -\frac{4}{r}(\cN+\cV) + \frac{2r}{H}\int_{\partial B_r^{+}} \frac{1}{y} \left(\nabla \psi \cdot \nu \right)^2 \, d\mathcal{H}^1  + \frac{4}{r}\cN - \frac{2}{r}\cN(\cN+\cV),
\end{align}
where in the last line we have used \eqref{r1} and \eqref{nv}, together with the definition of \(H\).
The boundary integral can be rewritten as
\begin{align*}
 \frac{2r}{H}\int_{\partial B_r^{+}} \frac{1}{y} \left(\nabla \psi \cdot \nu \right)^2 \, d\mathcal{H}^1 =
 \frac{2r}{H}\int_{\partial B_r^{+}} \frac{1}{y} \left(\nabla \psi \cdot \nu -\frac{\psi}{r}(\cN+\cV)\right)^2 \, d\mathcal{H}^1 + \frac{2}{r}(\cN+\cV)^2.
\end{align*}
Substituting this into \eqref{np} gives
\[
\cN' = \frac{2r}{H}\int_{\partial B_r^{+}} \frac{1}{y} \left(\nabla \psi \cdot \nu -\frac{\psi}{r}(\cN+\cV)\right)^2 \, d\mathcal{H}^1  + \frac{2}{r}\cV(\cN+\cV-2).
\]

Finally, to confirm that \( \cN \) is nondecreasing, note that \( \cV \geq 0 \) and, as established earlier, \( \cN \geq 2 \). These conditions guarantee that all terms in the expression for \( \cN' \) are non-negative.
\end{proof}

\subsection{Almost Monotonicity of the Frequency Formula: Nondegenerate Case}
Before proving the monotonicity of the frequency formula, we need a modified monotonicity result for $M$, which provides a lower estimate for $M(r)$.

\begin{lemma}\label{auxil-lem-almgrn}
Let $(\psi, I)$ be a variational solution on $\Omega$, and suppose $B_{r_0}^+(X_0) \subseteq \Omega$. 
Fix a constant $\delta_0>0$, and assume that $X_0=(x_0, y_0) \in N_\psi$ with $y_0 \ge \delta_0$. 
Moreover, suppose that \( M_{X_0}(\psi, 0^+) =  \omega(X_0) \). 
Then there exists a constant $C_\star=C(\delta_0)$ such that the inequality
\[
M_{X_0}(r) + C_\star r^{-5/2} H_{X_0}(r) \ge \omega(X_0),
\]
holds for $r<\min(r_0, y_0/128)$. 
\end{lemma}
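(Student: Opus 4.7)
The plan is to show that the corrected quantity
\[
\tilde M(r) := M_{X_0}(r) + C_\star r^{-5/2}H_{X_0}(r)
\]
satisfies $\tilde M(r) \ge \omega(X_0)$ for $r < \min(r_0, y_0/128)$, by verifying that $\tilde M(0^+) = \omega(X_0)$ and then establishing an almost-monotonicity property for $\tilde M$. First I would check the boundary value: the Lipschitz estimate \eqref{lip-condition} together with $\psi(X_0)=0$ gives $\psi(X) \le C_V(y_0+r)r$ on $B_r^+(X_0)$, hence $H_{X_0}(r) \le C(y_0)r^3$ and therefore $r^{-5/2}H_{X_0}(r) \to 0$ as $r\to 0^+$. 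Combined with the hypothesis $M_{X_0}(\psi,0^+) = \omega(X_0)$, this yields $\tilde M(0^+) = \omega(X_0)$.

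Next I would compute $\tilde M'(r)$ in two parts. For $M'(r)$, Lemma~\ref{M-F-01} in the non-degenerate case $\mu=1$ gives $M'(r) = T_1 + T_2 + T_3$, where $T_1 = 2r^{-2}\int_{\partial B_r^+}(\nabla\psi\cdot\nu - \psi/r)^2/y\, d\cH^1 \ge 0$ is the main positive term, and $T_2,T_3$ carry the factor $y-y_0$. For $(r^{-5/2}H)'$, I would use the identity \eqref{dH_0} with $\mu=1$ combined with Lemma~\ref{useful-lemma-01} to rewrite the boundary integral $\int_{\partial B_r^+}\psi\nabla\psi\cdot\nu/y$ as the bulk integral $\int_{B_r^+}|\nabla\psi|^2/y$, and then invoke \eqref{nv} to recognize this as $(\cN+\cV)H/r$. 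After a direct calculation this yields
\[
\bigl(r^{-5/2}H\bigr)'(r) = r^{-7/2}H(r)\bigl[2(\cN+\cV) - \tfrac{5}{2}\bigr] - r^{-7/2}\int_{\partial B_r^+}\frac{y-y_0}{y^2}\psi^2\, d\cH^1.
\]

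The key analytic ingredient is a Cauchy--Schwarz bound for $T_1$, namely
\[
T_1 \ge 2 r^{-4}H(r)\,\bigl(\cN(r)+\cV(r)-1\bigr)^2,
\]
obtained by expanding $\int_{\partial B_r^+}(\nabla\psi\cdot\nu-\psi/r)^2/y$ as $\int(\nabla\psi\cdot\nu)^2/y - (2/r)\int\psi\nabla\psi\cdot\nu/y + H/r^2$, applying Cauchy--Schwarz to $(\int\psi\nabla\psi\cdot\nu/y)^2 \le H \int(\nabla\psi\cdot\nu)^2/y$, and identifying $\int\psi\nabla\psi\cdot\nu/y = (\cN+\cV)H/r$ via Lemma~\ref{useful-lemma-01} and \eqref{nv}. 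The error terms $T_2$, $T_3$ and the $(y-y_0)$-integral in $(r^{-5/2}H)'$ are all controlled, using $|y-y_0|\le r$, $y_0\ge \delta_0$, and the Lipschitz bound, by quantities of order $C(\delta_0)(1+\cN+\cV)\,r^{-3}H(r)$.

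Assembling these estimates and choosing $C_\star = C(\delta_0)$ appropriately, the plan is to show $\tilde M'(r)\ge 0$ for $r$ in the specified range; integrating from $0^+$ and using $\tilde M(0^+)=\omega(X_0)$ then gives the claim. \textbf{The main obstacle} is balancing the positive quadratic term $2r^{-4}H(\cN+\cV-1)^2$ against the contribution from $(r^{-5/2}H)'$ (which is negative when $\cN+\cV$ is near $1$) and against the error terms of order $r^{-3}H$. The specific exponent $-5/2$ in the correction is precisely tuned so that the derivative of $r^{-5/2}H$ contributes a linear-in-$(\cN+\cV)$ term able to offset the near-critical behaviour of $T_1$ at $\cN+\cV=1$. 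If direct sign-checking of $\tilde M'$ does not quite close, a natural fallback is a case split based on the size of $H(r)$ relative to $r^{7/2}$: in the regime $H(r)\gtrsim r^{7/2}$ one has $C_\star r^{-5/2}H(r)\gtrsim r$ and the conclusion follows directly from Remark~\ref{rmrk-almost-mono}, so only the regime $H(r)\ll r^{7/2}$ (where $\psi$ is essentially flat on $\partial B_r^+$ and the blow-up is identically zero with $I\equiv1$) needs the delicate calculation sketched above.
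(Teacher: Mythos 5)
The proposal's overall strategy (compute the derivative of $\tilde M(r) := M_{X_0}(r) + C_\star r^{-5/2}H_{X_0}(r)$, check $\tilde M(0^+) = \omega(X_0)$, and try to integrate) aims in the right direction, and the ingredients (Lemma \ref{M-F-01}, \eqref{dH_0}, \eqref{r1}, \eqref{nv}) are the correct ones. However, there are several concrete gaps, and in at least one place the approach as sketched cannot close.

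\textbf{Arithmetic error, with a downstream consequence.} The claimed formula
$(r^{-5/2}H)'(r) = r^{-7/2}H[2(\cN+\cV) - \tfrac{5}{2}] - r^{-7/2}\int_{\partial B_r^+}\frac{y-y_0}{y^2}\psi^2$
is off: differentiating $r^{-5/2}H = r^{1/2}\cdot r^{-3}H$ and using \eqref{dH_0} correctly gives the constant $-\tfrac{3}{2}$, not $-\tfrac{5}{2}$. This matters, because with the correct $-\tfrac{3}{2}$ the contribution of $(r^{-5/2}H)'$ at $\cN+\cV=1$ is $+\tfrac{1}{2}r^{-7/2}H>0$, whereas with $-\tfrac{5}{2}$ it is negative. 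So the ``main obstacle'' you identify (a negative $(r^{-5/2}H)'$ near $\cN+\cV=1$) is an artifact of the error; it is not where the true difficulty lies.

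\textbf{The error-term estimate is too lossy, and this is where the proof actually gets stuck.} You bound $T_2$, $T_3$ and the $(y-y_0)$ boundary term by $C(\delta_0)(1+\cN+\cV)r^{-3}H$. But the sharp bound one gets for $T_2$, taking absolute values and using $|y-y_0|\le ry/(y_0-r)$, is of order $(\cN+2\cV)r^{-3}H/(y_0-r)$, and $\cN+2\cV=(\cN+\cV)+\cV$ need \emph{not} be comparable to $1+\cN+\cV$: a priori $\cV$ can be arbitrarily large (of order $r^3/H$) in the regime $H$ small, which is exactly the regime this lemma must handle. No choice of $C_\star$ fixes this if $T_2$ is bounded in absolute value. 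The paper avoids this entirely: it does \emph{not} take $|T_2|$ but instead keeps the term $+2C_\star r^{-5/2}\int_{B_r}\frac{1}{y}|\nabla\psi|^2$ (coming from $(r^{-5/2}H)'$) and decomposes it via $\int\frac{1}{y}|\nabla\psi|^2 = D - r^2\omega(X_0) + \int y(1-I)$; this produces a $+\int y(1-I)$ contribution that cancels the negative one in $T_2$, leaving a \emph{non-negative} net coefficient for $\int y(1-I)$, plus a term proportional to $\tilde M-\omega(X_0)$.

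\textbf{The target inequality $\tilde M'(r)\ge 0$ is too strong to prove directly.} What the paper actually proves (after dropping $T_1\ge 0$ and the non-negative $\int y(1-I)$ and $H$ contributions) is the Gronwall differential inequality $\tilde M'(r)\ge\bigl(2C_\star r^{-1/2}-\tfrac{1}{y_0-r}\bigr)(\tilde M(r)-\omega(X_0))$, and concludes $\tilde M\ge\omega(X_0)$ from $\tilde M(0^+)=\omega(X_0)$. The pointwise inequality $\tilde M'\ge 0$ is true only a posteriori (once $\tilde M\ge\omega$ is known). Relatedly, the Cauchy--Schwarz lower bound on $T_1$ you develop is not needed: the paper simply discards $T_1\ge 0$. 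Finally, the fallback regime split ($H\gtrsim r^{7/2}$ vs.\ $H\ll r^{7/2}$) is sound for the first case but says nothing in the second beyond ``the delicate calculation'', which is exactly the part that has the gap above.

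Fixing the proof along the paper's lines means: drop $T_1$, expand the $E:=\int\frac{1}{y}|\nabla\psi|^2$ contribution from $(C_\star r^{-5/2}H)'$ through $E=r^2(M-\omega(X_0)+C_\star r^{-5/2}H)+(r^{-1}-C_\star r^{-1/2})H+\int y(1-I)$, verify all three resulting coefficients are non-negative for $C_\star$ and $r$ as specified, and then run Gronwall from $\tilde M(0^+)=\omega(X_0)$.
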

\begin{proof}
Recall \eqref{m-f-non-degenerate} and \eqref{dH_0}, and note that $\int_{B_r(X_0)} (y-y_0)\,dX = 0 \quad \text{for } r<y_0$, thus for $r<y_0$ we can compute
\begin{align*}
\frac{d}{dr}(M+C_\star r^{-5/2}H) \ge &\  r^{-3}\int_{B_{r}(X_0)}  (y-y_0) (I-1)-\dfrac{y-y_0}{y^2}|\nabla \psi |^2  dX + r^{-4}\int_{\partial B_{r}(X_0)}\dfrac{y-y_0}{y^2}\psi^2(X) \, d\mathcal{H}^1  \\
 & -\frac{3C_\star}{2}r^{-7/2} H + 2C_\star r^{-5/2} \int_{B_r(X_0)} \dfrac{1}{y}|\nabla \psi |^2  dX - C_\star r^{-7/2}\int_{\partial B_{r}(X_0)}\dfrac{y-y_0}{y^2}\psi^2(X) \, d\mathcal{H}^1 \\
 \ge &-\frac{r^{-2}}{y_0-r} \int_{B_{r}(X_0)}  y(1-I)+\dfrac{1}{y}|\nabla \psi |^2  dX 
  - \frac{r^{-3}+C_\star r^{-5/2}}{y_0-r}H  -\frac{3C_\star}{2}r^{-7/2} H  \\
& + 2C_\star r^{-5/2} \int_{B_r(X_0)} \dfrac{1}{y}|\nabla \psi |^2  dX,
\end{align*}
where we have also used the inequality $ -ry/(y_0-r) \le y-y_0 \le ry/(y_0-r)$ when $r<y_0$.
Now, using the definition of $M$, we obtain
\begin{align*}
\frac{d}{dr}\left(M+C_\star r^{-5/2}H \right) \ge & \left(2C_\star r^{-1/2}-\frac{1}{y_0-r}\right)  \left(M-\omega(X_0)+C_\star r^{-5/2}H \right) + \left(2C_\star r^{-5/2}-\frac{2r^{-2}}{y_0-r} \right)\int_{B_{r}(X_0)}  y(1-I)\,dX \\
&+ \left(\frac{C_\star}{2}r^{-7/2} - 2C_\star^2 r^{-3} - \frac{2r^{-3}}{y_0-r}\right)H.
\end{align*}
Choosing $C_\star= \sqrt{2/\delta_0}$ and restricting to $r \le y_0/128$, the coefficients of all three terms are positive. Hence,
\[
\frac{d}{dr} \left(M-\omega(X_0)+C_\star r^{-5/2}H \right) \ge \left(2C_\star r^{-1/2}-\frac{1}{y_0-r} \right)  \left(M-\omega(X_0)+C_\star r^{-5/2}H \right). 
\]
This proves the statement, since $r^{-3}H(r)$ is bounded and therefore 
\[\lim_{r \to 0} M(r) -\omega(X_0)  + C_\star r^{-5/2} H(r) = 0.
\]
\end{proof}

\begin{lemma}[non-degenerate case] 
\label{nprim-N}  
Let \( (\psi, I) \) be a variational solution on \( \Omega \), and suppose \( B_{r_0}^+(X_0) \subseteq \Omega \). 
Fix a constant $\delta_0>0$, and assume that \( X_0=(x_0, y_0) \in N_\psi \) with $y_0\ge \delta_0$.
Moreover, suppose that  \( M_{X_0}(\psi, 0^+) =  \omega(X_0) \). 
Then, the frequency function \( \cN_{X_0}(\psi, r) \) satisfies 
\[
\cN_{X_0}(\psi, r) \;\geq\; 1 - C_\star r^{1/2}.
\]
Furthermore, the function \( \cN_{X_0}(\psi, r) \) is absolutely continuous and almost monotone. 
In particular, there exists a constant $C=C(\delta_0)$ such that for Lebesgue-a.e. $r < \min(r_0, y_0/128, \delta_0/2)$,
\begin{equation}\label{ndg-almgren}
(y_0-r)^2\dfrac{d}{dr}\left[(y_0-r)^{-2}\cN\right]
\ge \dfrac{2r}{ H} \int_{\partial B_{r}(X_0)} \dfrac{1}{y}\left(\nabla \psi \cdot \nu-\dfrac{\psi }{r } \left(\cN+\cV \right)\right)^2 \, d\mathcal{H}^1 + \frac{2}{r}\left(\cV^2 + \cV \left(\cN- 1- \frac{r}{y_0-r} \right)\right).
\end{equation} 
\end{lemma}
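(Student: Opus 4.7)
The plan is to mirror the degenerate computation of Lemma \ref{nprim}, but with two systematic differences: the relation between $\cN$ and $M$ now involves the almost-monotonicity of Lemma \ref{auxil-lem-almgrn} instead of exact monotonicity, and the derivative of $\cN$ picks up correction terms (the $(2-\mu)$-contributions in \eqref{m-f-non-degenerate} and the weighted term in \eqref{dH_0}) which must be absorbed into the $(y_0-r)^{-1}$ factor on the left-hand side of \eqref{ndg-almgren}.

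\emph{Step 1 (Lower bound on $\cN$).} Since $r<y_0$ implies $B_r^{+}(X_0)=B_r(X_0)$, by symmetry $\int_{B_r(X_0)} y\, dX = y_0\pi r^2 = \omega(X_0) r^2$, and unpacking the definitions gives the identity
\[
\cN_{X_0}(\psi,r)\;=\;1+\dfrac{r^{3}\bigl(M_{X_0}(\psi,r)-\omega(X_0)\bigr)}{H_{X_0}(\psi,r)}.
\]
Combined with the almost monotonicity $M_{X_0}(r)-\omega(X_0)\ge -C_\star r^{-5/2} H_{X_0}(r)$ from Lemma \ref{auxil-lem-almgrn}, this yields $\cN_{X_0}(\psi,r)\ge 1 - C_\star r^{1/2}$. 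With $C_\star=\sqrt{2/\delta_0}$ and $r<\delta_0/2$ we get $\cN>0$ on the relevant interval; absolute continuity follows from that of $D$ and $H$ together with $H>0$.

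\emph{Step 2 (Derivative with correction terms).} Writing $\cN=(r^{-2}D-\omega)/(r^{-3}H)$ and applying the quotient rule, one substitutes the explicit formulas for $(r^{-2}D)'$ and $(r^{-3}H)'$ from \eqref{derv-D-N} and \eqref{dH_0} with $\mu=1$. After using \eqref{r1}, the definition \eqref{nv} of $\cN+\cV$, and expanding the square $(\nabla\psi\cdot\nu)^2=\bigl(\nabla\psi\cdot\nu-\tfrac{\psi}{r}(\cN+\cV)\bigr)^2+\tfrac{2\psi}{r}(\cN+\cV)\nabla\psi\cdot\nu-\tfrac{\psi^2}{r^2}(\cN+\cV)^2$, the calculation in \eqref{np} reproduces itself modulo two extra terms and leads to
\[
\cN'=\dfrac{2r}{H}\int_{\partial B_r^{+}(X_0)}\dfrac{1}{y}\Bigl(\nabla\psi\cdot\nu-\dfrac{\psi}{r}(\cN+\cV)\Bigr)^2 d\mathcal{H}^1 +\dfrac{2}{r}\bigl(\cV^2+\cV(\cN-1)\bigr)+\dfrac{A}{H}+\dfrac{\cN B}{rH},
\]
where
\[
A:=\int_{B_r^{+}(X_0)}(y-y_0)I-\dfrac{y-y_0}{y^2}|\nabla\psi|^2\, dX,\qquad B:=\int_{\partial B_r^{+}(X_0)}\dfrac{y-y_0}{y^2}\psi^2\, d\mathcal{H}^1.
\]
These are exactly the terms that vanished in the degenerate case because of the coefficient $(2-\mu)$.

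\emph{Step 3 (Estimating $A$ and $B$).} For $r<y_0$, the symmetry of $B_r(X_0)$ gives $\int_{B_r(X_0)}(y-y_0)\,dX=0$, so one may replace $I$ by $I-1$ in the definition of $A$. The pointwise bound $|y-y_0|\le ry/(y_0-r)$ used in the proof of Lemma \ref{auxil-lem-almgrn}, together with $\int_{B_r^{+}(X_0)} y(1-I)\,dX=H\cV/r$ and $\int_{B_r^{+}(X_0)} |\nabla\psi|^2/y\,dX=H(\cN+\cV)/r$, yields
\[
\dfrac{A}{H}\;\ge\;-\dfrac{\cN+2\cV}{y_0-r},\qquad \left|\dfrac{B}{rH}\right|\;\le\;\dfrac{1}{y_0-r}.
\]
Using the positivity $\cN>0$ from Step 1, the second estimate gives $\cN B/(rH)\ge -\cN/(y_0-r)$, so altogether $A/H+\cN B/(rH)\ge -2(\cN+\cV)/(y_0-r)$.

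\emph{Step 4 (Assembly).} The identity $(y_0-r)^2\tfrac{d}{dr}[(y_0-r)^{-2}\cN]=\cN'+2\cN/(y_0-r)$ is direct. Adding $2\cN/(y_0-r)$ to the inequality from Steps 2–3 cancels the $-2\cN/(y_0-r)$ contribution, leaving a residue $-2\cV/(y_0-r)$ which combines with $\tfrac{2}{r}\cV(\cN-1)$ to produce the factor $\cV(\cN-1-r/(y_0-r))$ appearing in \eqref{ndg-almgren}. The main obstacle is the bookkeeping in Step 2: the singular weight $1/y$ produces additional integrals on $B_r^{+}$ and $\partial B_r^{+}$ compared to the degenerate setting, and the challenge is to organize them as the single pair $(A,B)$ so that the pointwise comparison $|y-y_0|\le ry/(y_0-r)$ cleanly absorbs them. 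Once this is done, positivity of $\cN$ from Step 1 is the final ingredient needed to close the sign analysis on the $B$-term.
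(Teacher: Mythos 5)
Your proposal is correct and follows essentially the same route as the paper's proof: the lower bound $\cN\ge 1-C_\star r^{1/2}$ via the identity $\cN = 1 + r^3(M-\omega)/H$ combined with Lemma \ref{auxil-lem-almgrn}, the quotient-rule computation with the two correction integrals $A$ and $B$ (using $\int_{B_r(X_0)}(y-y_0)\,dX=0$ to pass from $I$ to $I-1$), the pointwise bound $|y-y_0|\le ry/(y_0-r)$ together with $\cN>0$, and the final identity $(y_0-r)^2\frac{d}{dr}[(y_0-r)^{-2}\cN]=\cN'+2\cN/(y_0-r)$. The only differences are cosmetic (where the $I\to I-1$ replacement is performed), so nothing further is needed.
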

\begin{proof}
From Lemma \ref{auxil-lem-almgrn},  we deduce the first statement of the lemma. Indeed,
\begin{equation}\label{nprim-N-eq1}
\cN(r) = \frac{rD - r^3\omega(X_0)}{H} = \frac{r^3M + H -r^3\omega(X_0)
}{H} \ge 1 - C_\star r^{1/2}.
\end{equation}

To analyze the derivative of the frequency function, recall from \eqref{Eq6} and \eqref{dH_0} that
\begin{align*}
(r^{-2}D)' = & -2r^{-3}\int_{B_{r}(X_0)} \frac{1}{y}|\nabla \psi|^{2} \, dX 
+ r^{-3}\int_{B_{r}(X_0)}  (y-y_0) I-\dfrac{y-y_0}{y^2}|\nabla \psi |^2  dX\nonumber \\
& +r^{-2}\int_{\partial B_r(X_0)} \frac{2}{y}  \left(\nabla \psi \cdot \nu \right)^2  d \mathcal{H}^1.
\end{align*}
and
\[
(r^{-3}H)' =  -2r^{-4}\int_{\partial B_{r}(X_0)}\dfrac{1}{y}\psi^2(X) \, d\mathcal{H}^1 
-r^{-4}\int_{\partial B_{r}(X_0)}\dfrac{y-y_0}{y^2}\psi^2(X) \, d\mathcal{H}^1
+2r^{-3}\int_{\partial B_{r}(X_0)}\dfrac{1}{y}\psi \nabla \psi(X)\cdot \nu\, d\mathcal{H}^1;
\]

Thus, computing $\cN'$ as in the degenerate case gives
\begin{align*}
\cN' = &  \frac{d}{dr} \left( \frac{r^{-2}D -  \omega(X_0)}{r^{-3}H} \right) 
= \frac{(r^{-2}D)'-(r^{-3}H)'\cN }{r^{-3}H}\\
= &  \dfrac{2r}{H} \int_{\partial B_{r}(X_0)} \dfrac{1}{y}\left(\nabla \psi \cdot \nu-\dfrac{\psi}{r} \left(\cN+\cV \right)\right)^2 \, d\mathcal{H}^1 + \frac{2}{r}\left(\cV^2 + \cV(\cN-1)\right) \\
& + \frac{1}{H}\int_{B_{r}(X_0)}  (y-y_0) (I-1)-\dfrac{y-y_0}{y^2}|\nabla \psi |^2  dX + \frac{r^{-1}\cN}{H}\int_{\partial B_{r}(X_0)}\dfrac{y-y_0}{y^2}\psi^2(X) \, d\mathcal{H}^1,
\end{align*}
where we have used the relation $\int_{B_r(X_0)}(y-y_0)\,dX=0$ for $r<y_0$ in the last line. 
Applying the inequality $ -ry/(y_0-r) \le y-y_0 \le ry/(y_0-r)$ for $r<y_0$, and using that $\cN \ge 0$ when $r< \delta_0/2$ (by \eqref{nprim-N-eq1}), we obtain
\begin{align*}
\cN' \ge & \dfrac{2r}{H} \int_{\partial B_{r}(X_0)} \dfrac{1}{y}\left(\nabla \psi \cdot \nu-\dfrac{\psi}{r} \left(\cN+\cV \right)\right)^2 \, d\mathcal{H}^1 + \frac{2}{r}\left(\cV^2 + \cV(\cN-1)\right) 
\\& - \frac{r}{(y_0-r)H} \int_{B_{r}(X_0)}  y(1-I)+\frac{1}{y}|\nabla \psi |^2  dX - \frac{r^{-1}\cN}{H}\int_{\partial B_{r}(X_0)}\dfrac{r}{y(y_0-r)}\psi^2(X) \, d\mathcal{H}^1\\
= &\dfrac{2r}{H} \int_{\partial B_{r}(X_0)} \dfrac{1}{y}\left(\nabla \psi \cdot \boldsymbol{\nu}-\dfrac{\psi}{r} \left(\cN+\cV \right)\right)^2 \, d\mathcal{H}^1 + \frac{2}{r}\left(\cV^2 + \cV(\cN-1)\right) - \frac{2(\cN+\cV)}{y_0-r}
\end{align*}
Therefore,
\[
\cN' + \frac{2\cN}{y_0-r} \ge\dfrac{2r}{H} \int_{\partial B_{r}(X_0)} \dfrac{1}{y}\left(\nabla \psi \cdot \nu-\dfrac{\psi}{r} \left(\cN+\cV \right)\right)^2 \, d\mathcal{H}^1 + \frac{2}{r}\left(\cV^2 + \cV \left(\cN-1 - \frac{r}{y_0-r} \right)\right).
\]
\end{proof}

\begin{remark}\label{rmrk-almost-almgren}
Under the assumptions of Lemma~\ref{nprim-N}, we get
\begin{align*}
(y_0-r)^{-2}\cN(r) - (y_0-s)^{-2} \cN(s) \ge  & \int_s^r \frac{2\cV^2}{t(y_0-t)^{2} } \ dt - C_\star \left( \int_s^r  \frac{2\cV^2}{t(y_0-t)^{2} }  \ dt\right)^{1/2}
\left( \int_s^r  \frac{2}{(y_0-t)^{2} }  \ dt\right)^{1/2} \\
& - \left( \int_s^r  \frac{2\cV^2}{t(y_0-t)^{2} }  \ dt\right)^{1/2} \left( \int_s^r  \frac{2t}{(y_0-t)^{4} }  \ dt\right)^{1/2} \\
 \ge & \int_s^r \frac{2\cV^2}{t(y_0-t)^{2} } \ dt  - 2 \left( C(\delta_0) (r-s) \int_s^r  \frac{2\cV^2}{t(y_0-t)^{2} }  \ dt\right)^{1/2} \\
 \ge & - C(\delta_0) (r-s)
\end{align*}
Thus the almost monotonicity relation
\begin{equation}\label{almost-almgren}
\cN(s) \le \cN(r) + C_N (r-s) (1+\cN(r))
\end{equation}
holds for $ s\le r < \min(y_0/128, \delta_0/2)$.
\end{remark}
\begin{remark}
\label{r2}
By multiplying through and applying \eqref{r1} and \eqref{nv}, the right hand side of formulas \eqref{npr} and \eqref{ndg-almgren} can be rewritten as 
\begin{align*}
\frac{2r}{H} \int_{\partial B_{r}^{+}(X_0)} \frac{1}{y} \left| \nabla \psi \cdot \nu - \frac{\psi}{r} \cN \right|^2 \, d\mathcal{H}^{1}+ \frac{2}{r}  \cV \left( \cN- \mu(X_0) - \frac{r\left(2-\mu(X_0)\right)}{y_0-r} \right).
\end{align*}
This alternative formulation is more convenient when handling the first term on the right-hand side. Conversely, \eqref{npr} is more beneficial when focusing on the \( \cV_{X_0}^2(\psi, s) \) term or when finer control over the coefficient associated with the homogeneity order of \( \psi \) is needed.
In subsequent sections, we will repeatedly integrate the above identity, for which the following estimate, when $X_0 \in N_\psi$, will be useful.
Using the estimate $\cN(t) \ge 1- C_\star t^{1/2}$, we obtain
\[
\begin{aligned}
\int_s^r \frac{2t}{H} \int_{\partial B_{t}^{+}(X_0)} \frac{1}{y} \left| \nabla \psi \cdot \nu - \frac{\psi}{r} \cN \right|^2 \, d\mathcal{H}^{1}dt & \le \left(\frac{y_0-s}{y_0-r}\right)^{2}\cN(r) - \cN(s)  + (y_0-s)^2 \int_s^r \frac{2C_\star  \cV}{t^{1/2}(y_0-t)^2} + \frac{2\cV}{(y_0-t)^3}\, dt\\
& \le \cN(r)-\cN(s) + C(r-s) \cN(r) +  C r^{1/2} \left(\int_s^r \frac{2\cV^2}{t(y_0-t)^2}\, dt \right)^{1/2}\\
& \le \cN(r)-\cN(s) + Cr \cN(r) + Cr^{1/2} \left(\cN(r)^{1/2} + r^{1/2} \right) \\
& \le \cN(r)-\cN(s) + Cr^{1/2}\cN(r) + Cr,
\end{aligned} 
\]
where we have used  the computation in Remark \ref{rmrk-almost-almgren} to estimate the integral term involving $\cV$.
Here the constant $C$ depends on $\delta_0$, and the estimate holds for sufficiently small $r$, as in Lemma \ref{nprim-N}.
\end{remark}

\subsection{Further Properties of the Frequency Formula}
\begin{lemma}[degenrate case]
\label{42}
Let $(\psi, I)$ be a variational solution on $\Omega$, and $B_r^{+}(X_0) \subseteq \Omega$. Assume that $ X_0 \in S_\psi$,
$M_{X_0}(\psi, r/2) \geq  \omega(X_0)$ and that $ \cN_{X_0}(\psi, r) \leq N_+ $ for some $N_+ > 2$. Then, there is a constant
$C=C(N_+)$ such that for any $s\in[r/2, r]$, we have:
\begin{align*}
s^{-4}H_{X_0}(\psi, s) \leq r^{-4} H_{X_0}(\psi, r) \leq Cs^{-4}H_{X_0}(\psi, s).
\end{align*}
\end{lemma}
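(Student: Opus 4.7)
My plan is to obtain the doubling estimate by computing the logarithmic derivative of $s^{-4}H_{X_0}(\psi,s)$ and exploiting both the lower bound $\mathcal{N}_{X_0} \geq 2$ (which forces monotonicity) and the upper bound $\mathcal{N}_{X_0} \leq N_+$ together with an $L^2$-control on $\mathcal{V}_{X_0}$ (which yields the reverse inequality).

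First, from the computation already performed in the proof of Lemma \ref{nprim} (using the expression for $(r^{-4}H)'$ together with \eqref{r1} and the definition \eqref{nv} of $\mathcal{N} + \mathcal{V}$), I can write
\begin{equation*}
\frac{d}{ds}\bigl(s^{-4}H_{X_0}(\psi,s)\bigr) \;=\; -4 s^{-5}H_{X_0}(\psi,s) \;+\; 2 s^{-4}\!\int_{\partial B_s^+(X_0)}\!\frac{1}{y}\psi\,\nabla\psi\cdot\nu\, d\mathcal{H}^1 \;=\; \frac{2}{s}\bigl(\mathcal{N}+\mathcal{V}-2\bigr)\,\bigl(s^{-4}H_{X_0}(\psi,s)\bigr).
\end{equation*}
Since the hypotheses $X_0\in S_\psi$ and $M_{X_0}(\psi,r/2) \geq \omega(X_0)$ place us in the setting of Lemma \ref{nprim} on the interval $[r/2,r]$, we have $\mathcal{N}_{X_0}(\psi,s)\geq 2$ and $\mathcal{V}_{X_0}(\psi,s)\geq 0$, so the logarithmic derivative is non-negative. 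Integrating yields $s^{-4}H_{X_0}(\psi,s)\leq r^{-4}H_{X_0}(\psi,r)$ for all $s\in[r/2,r]$, which is the left-hand inequality.

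For the reverse bound I will control the integral $\int_{r/2}^{r}\frac{2}{t}(\mathcal{N}+\mathcal{V}-2)\,dt$ from above. The $\mathcal{N}$-contribution is trivially bounded by $(N_+-2)\log 2$ since $\mathcal{N}$ is monotone and capped by $N_+$. For the $\mathcal{V}$-contribution, I drop the non-negative boundary term in the monotonicity formula \eqref{npr}, keeping only $\frac{d\mathcal{N}}{dt}\geq \frac{2}{t}\bigl(\mathcal{V}^2 + \mathcal{V}(\mathcal{N}-2)\bigr)\geq \frac{2\mathcal{V}^2}{t}$, and integrate to get
\begin{equation*}
\int_{r/2}^{r}\frac{\mathcal{V}(t)^2}{t}\,dt \;\leq\; \frac{1}{2}\bigl(\mathcal{N}_{X_0}(\psi,r)-\mathcal{N}_{X_0}(\psi,r/2)\bigr) \;\leq\; \frac{N_+-2}{2}.
\end{equation*}
Cauchy--Schwarz against $\int_{r/2}^r t^{-1}\,dt = \log 2$ then gives $\int_{r/2}^{r}\tfrac{\mathcal{V}(t)}{t}\,dt \leq \sqrt{(N_+-2)\log 2/2}$.

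Combining these bounds, $\int_{s}^{r}\tfrac{2}{t}(\mathcal{N}+\mathcal{V}-2)\,dt \leq C(N_+)$ for every $s\in[r/2,r]$, and exponentiation yields $r^{-4}H_{X_0}(\psi,r) \leq e^{C(N_+)} s^{-4}H_{X_0}(\psi,s)$, which is the right-hand inequality with $C=e^{C(N_+)}$. The only real obstacle is the $\mathcal{V}$-bound, and it is resolved precisely by the non-negative lower bound for $\mathcal{N}'$ furnished by Lemma \ref{nprim}; everything else is a direct computation once the identity for $\frac{d}{ds}\log(s^{-4}H)$ is written down.
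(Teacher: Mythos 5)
Your argument is correct and is essentially the paper's own proof: the same identity $\frac{d}{ds}\log\bigl(s^{-4}H\bigr)=\frac{2}{s}(\cN+\cV-2)$ combined with $\cN\ge 2$, $\cV\ge 0$ gives the left inequality, and the same use of Lemma \ref{nprim} to bound $\int_{r/2}^{r}\cV^2/t\,dt$ by $\cN(r)-\cN(r/2)\le N_+-2$ followed by Cauchy--Schwarz gives the right one. Your only cosmetic deviation is weighting the Cauchy--Schwarz against $\int t^{-1}dt=\log 2$ rather than the paper's bound $1/s\le 2/r$, which changes nothing beyond the explicit constant.
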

\begin{proof}
We can assume that \( X_0 = \mathbf{0} \).  From the assumptions \( \cN_0(s) \geq 2 \), and since \( \cV_{\mathbf{0}}(s) \geq 0 \), we have \( \cN_{\mathbf{0}}(s) + \cV_{\mathbf{0}}(s) \geq 2 \). Additionally, we have the following expression for the derivative:
\begin{align*}
\dfrac{d}{ds} s^{-4}H_{\mathbf{0}}(\psi, s) & =-4s^{-5}\int_{\partial B_{s}^{+}}\dfrac{1}{y}\psi^2(X) \, d\mathcal{H}^1 +2s^{-4} \int_{\partial B_{s}^{+}} \dfrac{1}{y}\psi \nabla \psi(X)\cdot \boldsymbol{\nu}\, d\mathcal{H}^1;
\end{align*}
which, with the help of \eqref{r1} and \eqref{nv}, gives:
\[
\frac{d}{ds} s^{-4} H_{\mathbf{0}}(\psi, s) = \frac{H_{\mathbf{0}}}{s^5} \left( -4 + 2(\cN_{\mathbf{0}} + \cV_{\mathbf{0}}) \right) \geq 0.
\]
This yields the lower bound of this lemma. For the upper bound, we can proceed directly using Lemma \ref{nprim} as follows:
\[
\int_{r/2}^{r} \frac{2\cV_{\mathbf{0}}^2(s)}{s} \, ds \leq \int_{r/2}^{r} \frac{d}{ds} \cN_{\mathbf{0}}(s) \, ds = \cN_{\mathbf{0}}(r) - \cN_{\mathbf{0}} \left( r/2 \right) \leq N_+ - 2;
\]
hence,
\[
\int_{r/2}^{r} \cV_{\mathbf{0}}(s) \, ds \leq \sqrt{r/2 \int_{r/2}^{r} \cV_{\mathbf{0}}^2(s) \, ds} \leq \sqrt{ r^2\int_{r/2}^{r} \frac{\cV_{\mathbf{0}}^2(s)}{2s} \, ds} \leq \frac{r\sqrt{N_+ - 2}}{2}.
\]
Now, noting that \( s \geq r/2 \), by integrating the following formula
\[
\frac{d}{ds} \log s^{-4} H_{\mathbf{0}}(s) = \frac{\frac{d}{ds} s^{-4}H_{\mathbf{0}}(s)}{s^{-4}H_{\mathbf{0}}(s)} = \frac{2}{s} \left( \cN_{\mathbf{0}} + \cV_{\mathbf{0}} - 2 \right) \leq \frac{4}{r} \left( N_+ + \cV_{\mathbf{0}} - 2 \right)
\]
over \([s,r]\), we obtain:
\[
\log r^{-4}H_{\mathbf{0}}(r) - \log s^{-4}H_{\mathbf{0}}(s) \leq \frac{4}{r} \int_s^r \left( N_+ + \cV_{\mathbf{0}} - 2 \right) \, ds \leq 2 \sqrt{N_+ - 2} + 2 \left( N_+ - 2 \right).
\]
Thus, we have:
\[
r^{-4}H_{\mathbf{0}}(r) \leq s^{-4} H_{\mathbf{0}}(s) e^{2 \sqrt{N_+ - 2} + 2 (N_+ - 2)}.
\]
This completes the proof.
\end{proof}

\begin{lemma}[non-degenrate case]
\label{42-N}
Let $(\psi, I)$ be a variational solution on $\Omega$, and suppose $B_r(X_0) \subseteq \Omega$. 
Assume that $ X_0 \in N_\psi$ with $y_0 \ge \delta_0$. 
Moreover, suppose that $M_{X_0}(\psi, 0+) =  \omega(X_0)$ and that $ \cN_{X_0}(\psi, r) \leq N_+ $ for some $N_+ > 1$. 
Then, there is a constant $C=C(N_+, \delta_0)$ such that for any $s\in[r/2, r]$ with $ r < \min(y_0/128, \delta_0/2)$, we have:
\begin{align*}
C^{-1}s^{-3} H_{X_0}(\psi, s) \leq r^{-3} H_{X_0}(\psi, r) \leq Cs^{-3} H_{X_0}(\psi, s).
\end{align*}
\end{lemma}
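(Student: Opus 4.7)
The plan is to mirror the strategy of Lemma \ref{42} for the degenerate case, but tracking the additional correction terms that arise because the weight $1/y$ is no longer centered at $X_0$ and because $\cN$ is only almost monotone.

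First, I will compute the logarithmic derivative of $s^{-3}H_{X_0}(s)$. Specializing \eqref{dH_0} to $\mu=1$, and using \eqref{r1} together with the definition of $\cN+\cV$ in \eqref{nv} to rewrite
\[
\int_{\partial B_s(X_0)}\frac{1}{y}\psi\nabla\psi\cdot\nu\,d\mathcal{H}^1=\frac{H(s)(\cN(s)+\cV(s))}{s},
\]
I obtain
\[
\frac{d}{ds}\log\!\bigl(s^{-3}H(s)\bigr)=\frac{2(\cN(s)+\cV(s)-1)}{s}+E(s),
\qquad
E(s):=-\frac{1}{sH(s)}\int_{\partial B_s(X_0)}\frac{y-y_0}{y^2}\psi^2\,d\mathcal{H}^1.
\]
For $s<y_0/128$ the pointwise bound $|y-y_0|/y\le s/(y_0-s)$ on $\partial B_s(X_0)$ gives $|E(s)|\le 1/(y_0-s)\le 2/\delta_0$, so $E$ is integrable on $[s,r]$ with $\int_s^r|E(t)|\,dt\le\log\!\bigl((y_0-s)/(y_0-r)\bigr)\le\log(128/127)$.

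Next, for the upper bound $r^{-3}H(r)\le Cs^{-3}H(s)$: the almost monotonicity estimate \eqref{almost-almgren} yields $\cN(t)\le N_++C_Nr(1+N_+)\le C(N_+,\delta_0)$ for $t\in[s,r]$, and to control the $\cV$ contribution I will use Cauchy--Schwarz,
\[
\int_s^r\frac{\cV(t)}{t}\,dt\le\sqrt{\log(r/s)}\left(\int_s^r\frac{\cV^2(t)}{t}\,dt\right)^{1/2}\le\sqrt{\log 2}\left(\int_s^r\frac{\cV^2(t)}{t}\,dt\right)^{1/2}.
\]
Rearranging the computation of Remark \ref{rmrk-almost-almgren} (i.e.\ absorbing the two cross terms into $\int_s^r 2\cV^2/(t(y_0-t)^2)\,dt$ via $2ab\le\varepsilon a^2+b^2/\varepsilon$) gives
\[
\int_s^r\frac{\cV^2(t)}{t(y_0-t)^2}\,dt\le C\bigl((y_0-r)^{-2}\cN(r)-(y_0-s)^{-2}\cN(s)+C_\star^2(r-s)\bigr)\le C(N_+,\delta_0),
\]
where the last inequality uses $\cN(r)\le N_+$ and the bound on $\cN(s)$ coming from \eqref{almost-almgren}. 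Since $y_0-t\ge y_0/2\ge\delta_0/2$, the $\cV^2/t$ integral is also bounded by $C(N_+,\delta_0)$. Combining these bounds with the estimate on $E$ gives $\log(r^{-3}H(r))-\log(s^{-3}H(s))\le C(N_+,\delta_0)$.

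Finally, for the lower bound $s^{-3}H(s)\le Cr^{-3}H(r)$: invoking $\cN(t)\ge 1-C_\star t^{1/2}$ from \eqref{nprim-N-eq1} together with $\cV(t)\ge0$,
\[
\frac{2(\cN(t)+\cV(t)-1)}{t}\ge -\frac{2C_\star}{t^{1/2}},
\]
whose integral on $[s,r]\subset[r/2,r]$ is bounded in absolute value by $4C_\star r^{1/2}\le C(\delta_0)$. Adding the bounded contribution of $E$ yields $\log(r^{-3}H(r))-\log(s^{-3}H(s))\ge -C(N_+,\delta_0)$, and exponentiating gives the desired two-sided bound.

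The main obstacle, as opposed to Lemma \ref{42}, is that $\frac{d}{ds}\log(s^{-3}H)$ is no longer automatically nonnegative: in the degenerate case $\cN\ge 2$ made the derivative $\ge 0$, which gave the lower inequality for free. Here I have only the near-nonnegativity $\cN\ge 1-C_\star s^{1/2}$ and must also absorb the $(y-y_0)/y^2$ error. Both are controlled by the bounds above, after which the argument parallels the degenerate case.
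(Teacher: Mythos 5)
Your proposal follows essentially the same route as the paper's proof: differentiate $\log\bigl(t^{-3}H\bigr)$ via \eqref{dH_0}, \eqref{r1} and \eqref{nv}, control the off-center error $E(t)$ through $|y-y_0|/y\le t/(y_0-t)$, obtain the lower bound from $\cN\ge 1-C_\star t^{1/2}$ and $\cV\ge 0$, and obtain the upper bound from the estimate on $\int_s^r \cV^2/\bigl(t(y_0-t)^2\bigr)\,dt$ supplied by Remark~\ref{rmrk-almost-almgren} together with Cauchy--Schwarz; this is exactly the paper's strategy.

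One step, however, is not justified as written. From $\int_s^r \cV^2/\bigl(t(y_0-t)^2\bigr)\,dt\le C(N_+,\delta_0)$ and the \emph{lower} bound $y_0-t\ge\delta_0/2$ you cannot conclude $\int_s^r \cV^2/t\,dt\le C(N_+,\delta_0)$: removing the weight $(y_0-t)^{-2}$ needs an \emph{upper} bound on $(y_0-t)^2$, and the only one available is $y_0^2$, which is not controlled by $\delta_0$ (nothing in the hypotheses bounds $y_0$ from above), so as stated your constant degrades like $y_0^2$. The repair is to keep, rather than discard, the factor $(y_0-r)^{-2}$ in your preceding display: since $(y_0-r)^{-2}\cN(r)\le C N_+ y_0^{-2}$, $-(y_0-s)^{-2}\cN(s)\le 0$, and the absorbed cross terms are likewise of size $C(\delta_0)(r-s)y_0^{-2}$, one actually gets $\int_s^r \cV^2/\bigl(t(y_0-t)^2\bigr)\,dt\le C(N_+,\delta_0)\,y_0^{-2}$, whence $\int_s^r \cV^2/t\,dt\le y_0^2\cdot C(N_+,\delta_0)y_0^{-2}=C(N_+,\delta_0)$, which is what your Cauchy--Schwarz step requires; this cancellation of the $y_0$ factors is precisely what the paper's Cauchy--Schwarz with the weight $(y_0-t)^2/(2t)$ is set up to exploit. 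A minor point in the same display: the bound you need on $\cN(s)$ there is the \emph{lower} bound $\cN(s)\ge 1-C_\star s^{1/2}$ from \eqref{nprim-N-eq1} (Lemma~\ref{nprim-N}), not \eqref{almost-almgren}, which only controls $\cN(s)$ from above. With these two corrections your argument coincides with the paper's proof.
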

\begin{proof} 
We have $\cN(t) \ge 1-C_\star t^{1/2}$ according to Lemma \ref{nprim-N}.
Additionally, we compute the derivative as follows:
\[
\begin{aligned}
\frac{d}{dt} \left( t^{-3} H(t) \right) = &\ 2 t^{-4} H(t) \left(\cN + \cV - 1 \right) - t^{-4} \int_{\partial B_{t}(X_0)}\dfrac{t-y_0}{y^2}\psi^2(X) \, d\mathcal{H}^1  \\
\ge & - 2 C_\star t^{-7/2} H(t) -  \frac{t^{-3}}{y_0-t} H(t) \\
= &  - t^{-3} H(t) \left( 2C^\star t^{-1/2} + \frac{1}{y_0-t} \right).
\end{aligned}
\]
Hence, 
$$\frac{d}{dt} \log(t^{-3}H) \ge - \left( 2C^\star t^{-1/2} + 1/(y_0-t) \right), $$
which establishes the desired lower bound in the lemma.

\medskip

For the upper bound, we use the estimate in Remark~\ref{rmrk-almost-almgren}:
\[
(y_0-r)^{-2}\cN(r) - (y_0-s)^{-2} \cN(s) \ge \int_s^r \frac{2\cV^2}{t(y_0-t)^{2} } \ dt  - 2 \left( C(\delta_0) (r-s) \int_s^r  \frac{2\cV^2}{t(y_0-t)^{2} }  \ dt\right)^{1/2}.
\]
Using  $\cN(s) \ge 1-C_\star s^{1/2}$, we obtain
\[
\int_s^r \frac{2\cV^2}{t(y_0-t)^{2} } \ dt  \le C_1(N_+, \delta_0) (1+ r).
\]
Thus,
\[
\int_{s}^{r} \dfrac{\cV(t)}{t} \, dt \leq \left( \int_{s}^{r} \dfrac{2\cV^2(t)}{t(y_0-t)^2} \, dt \right)^{1/2} \left( \int_s^r \dfrac{(y_0-t)^2}{2t}\,dt\right)^{1/2}\leq C_2 \sqrt{1 + r }.
\]
Now, recalling \eqref{almost-almgren}, we integrate the following inequlaity
\[
\frac{d}{dt} \log t^{-3} H(t) \le  \frac{2}{t} \left( \cN + \cV - 1 \right) +  \frac{1}{y_0-t}  \leq \frac{2}{t} \left( C_3(\cN(r) + r)+ \cV \right) + C_4 
\]
over the interval \([s,r]\). This yields
\[
\log r^{-3}H(r) - \log s^{-3}H(s) \leq 2C_3(\cN(r) +r)\ln \left(\frac{r}{s} \right)  + C_4(r-s)+ \int_s^r \frac{2\cV(t)}{t} \, dt \leq C_5(1 +r).
\]
Thus, we have:
\[
r^{-3}H(r) \leq s^{-3} H(s) \exp\left(C_5(1 +r)\right).
\]
\end{proof}

\begin{lemma}
\label{43}
Let \( (\psi, I) \) be a variational solution on \( \Omega \), and suppose \( B_r^{+}(X_0) \subseteq \Omega \). 
Assume that $ X_0 \in N_\psi$ with $y_0 \ge \delta_0$. 
Moreover, suppose that $M_{X_0}(\psi, 0+) =  \omega(X_0)$ and that $ \cN_{X_0}(\psi, r) \leq N_+ $ for some $N_+ > 1$. 
Then, there is a constant $C=C(N_+, \delta_0)$ such that for every $ r < \min(y_0/128, \delta_0/2)$
\[
\sup_{s \in \left[ r/2, r \right]} \cV_{X_0}(\psi, s) \leq C \cV_{X_0}(\psi, r).
\]
An analogous estimate holds when $X_0\in S_\psi$.
\end{lemma}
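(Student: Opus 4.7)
The plan is to write $\cV_{X_0}(\psi,s)$ as the ratio of numerator $s\int_{B_s^{+}(X_0)} y(1-I)\,dX$ over denominator $H_{X_0}(\psi,s)$, and to estimate each factor separately on the interval $s\in[r/2,r]$. Since $I$ is $\{0,1\}$-valued the integrand $y(1-I)$ is nonnegative, so the map
\[
s\ \longmapsto\ s\int_{B_s^{+}(X_0)} y(1-I)\,dX
\]
is a product of two nondecreasing functions of $s$, hence nondecreasing. In particular, for every $s\in[r/2,r]$ the numerator is bounded above by its value at $s=r$.

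For the denominator I would invoke Lemma \ref{42-N} directly: under the standing hypotheses it provides the two-sided comparison $C^{-1} s^{-3}H_{X_0}(\psi,s) \le r^{-3}H_{X_0}(\psi,r) \le C s^{-3} H_{X_0}(\psi,s)$ for all $s\in[r/2,r]$. The lower comparison yields
\[
H_{X_0}(\psi,s)\ \ge\ C^{-1}\left(\tfrac{s}{r}\right)^{3} H_{X_0}(\psi,r)\ \ge\ (8C)^{-1} H_{X_0}(\psi,r),
\]
which is the only consequence of Lemma \ref{42-N} that I actually need.

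Combining the two bounds gives, for any $s\in[r/2,r]$,
\[
\cV_{X_0}(\psi,s)\ =\ \frac{s\int_{B_s^{+}(X_0)} y(1-I)\,dX}{H_{X_0}(\psi,s)}\ \le\ \frac{r\int_{B_r^{+}(X_0)} y(1-I)\,dX}{(8C)^{-1}\, H_{X_0}(\psi,r)}\ =\ 8C\,\cV_{X_0}(\psi,r),
\]
which is exactly the stated estimate (after taking the supremum over $s$). For the degenerate case $X_0\in S_\psi$, the argument is identical, using Lemma \ref{42} with exponent $4$ in place of Lemma \ref{42-N}: the numerator is still nondecreasing in $s$ by the same pointwise-sign reasoning, and the lower comparison $H_{X_0}(\psi,s) \ge C^{-1}(s/r)^{4} H_{X_0}(\psi,r) \ge (16C)^{-1}H_{X_0}(\psi,r)$ gives the analogous conclusion.

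I do not anticipate any serious obstacle; the estimate is essentially a formal consequence of two ingredients already in hand, namely the pointwise nonnegativity of $y(1-I)$ (which makes the numerator monotone) and the two-sided $H$-comparability of Lemmas \ref{42}--\ref{42-N} (which makes the denominator comparable at nearby radii). The constant in the conclusion depends on $N_+$ and $\delta_0$ exactly through the constant produced by those earlier lemmas.
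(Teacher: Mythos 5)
Your proof is correct and follows essentially the same route as the paper: bound the numerator $s\int_{B_s^{+}}y(1-I)\,dX$ by its value at $s=r$ using nonnegativity of $y(1-I)$, then control $H_{X_0}(\psi,r)/H_{X_0}(\psi,s)\le C(r/s)^{\mu+2}\le C$ via Lemma \ref{42-N} (resp.\ Lemma \ref{42} in the degenerate case). The only cosmetic difference is that you phrase the $H$-comparison as a lower bound on $H(s)$ rather than an upper bound on the ratio, which is the same inequality from the lemma.
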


\begin{proof}
Note that directly from the definitions,
\[
\cV_{X_0}(\psi, s) = \frac{s \int_{B_s^+} y \left(1 - I \right) \, dX}{H_{X_0}(\psi, s)} \leq \frac{r \int_{B_r^+} y \left(1 - I \right) \, dX}{ H_{X_0}(\psi, s)} = \frac{ H_{X_0}(\psi, r)}{  H_{X_0}(\psi, s)} \cV_{X_0}(\psi, r).
\]
By Lemma \ref{42-N}, we have:
\[
\frac{ H_{X_0}(\psi, r)}{H_{X_0}(\psi, s)} \leq C\left(\frac{r}{s}\right)^{3}\le 2^{3}C; 
\]
which finishes the proof. 
\end{proof}

The next lemma essentially asserts that 
\(  \cV_{X_0}(\psi, t) \to 0 \) as \( t \to 0^+ \)
at any point \( X_0 \), where \( M_{X_0}(\psi, 0^+) =\omega(X_0) \). 
Notice that the limit $\cN_{X_0}(\psi, 0+)$ exists due to Lemmas \ref{nprim} and \ref{nprim-N}, and the condition \( \cN_{X_0}(\psi, 2t) - \cN_{X_0}(\psi, t) \to 0 \) automatically holds as \( t \to 0^+ \).

\begin{lemma} 
\label{44}
Let \( (\psi, I) \) be a variational solution on \( B_r^{+}(X_0) \subseteq \Omega \),  and let \( t \in \left( 0, r/2 \right] \). 
Assume that $ X_0 \in N_\psi$ with $y_0 \ge \delta_0$,  that $M_{X_0}(\psi, 0+) =  \omega(X_0)$, 
\[ 
\cN_{X_0}(\psi, 2t) - \cN_{X_0}(\psi, t) < \delta, 
\]
  and that $ \cN_{X_0}(\psi, r) \leq N_+ $ for some $N_+ > 1$, with  $ r < \min(y_0/128, \delta_0/2)$.
Then, there is a constant $C=C(N_+, \delta_0)$ such that 
\[
 \cV_{X_0}(\psi, t) \leq  C \sqrt{\delta + t}.
\]
Moreover, in the degenerate case $X_0\in S_\psi$, the estimate reduces to 
\[
 \cV_{X_0}(\psi, t) \leq  C \sqrt{\delta}.
\]
\end{lemma}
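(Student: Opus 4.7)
The plan is to integrate the monotonicity formula for $\cN_{X_0}$ over the dyadic interval $[t,2t]$, derive an integral estimate of the form $\int_t^{2t}\cV^2(s)/s\,ds\lesssim\delta$ (or $\lesssim\delta+t$ in the non-degenerate case), and then convert this into a pointwise bound on $\cV(t)$ via Lemma \ref{43}, which provides the dyadic comparability of $\cV$.

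In the degenerate case $X_0\in S_\psi$, the assumption $M_{X_0}(\psi,0^+)=\omega(X_0)$ together with the monotonicity of $M$ from Lemma \ref{M-F-01} yields $M(r)\ge\omega(X_0)$ for every admissible $r$, so Lemma \ref{nprim} applies and $\cN(r)\ge 2$ holds throughout. Identity \eqref{npr} then simplifies to $d\cN/dr\ge 2\cV^2/r$ (since $\cV\ge 0$ and $\cN-2\ge 0$), and integrating over $[t,2t]$ gives $\int_t^{2t}(2\cV^2(s)/s)\,ds\le \cN(2t)-\cN(t)<\delta$. Applying Lemma \ref{43} at the radius $r=s\in[t,2t]$ (so that $t\in[s/2,s]$) yields $\cV(t)\le C\cV(s)$, whence the integral is bounded below by $(2\ln 2/C^2)\,\cV^2(t)$, and the clean bound $\cV(t)\le C\sqrt{\delta}$ follows.

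In the non-degenerate case $X_0\in N_\psi$, I would integrate \eqref{ndg-almgren} from $t$ to $2t$ to obtain
\[
(y_0-2t)^{-2}\cN(2t)-(y_0-t)^{-2}\cN(t)\ge \int_t^{2t}\frac{2}{s(y_0-s)^2}\Bigl(\cV^2+\cV\Bigl(\cN-1-\tfrac{s}{y_0-s}\Bigr)\Bigr)\,ds.
\]
The bound $\cN(s)-1\ge -C_\star s^{1/2}$ from Lemma \ref{nprim-N}, combined with $s/(y_0-s)\le 2s/\delta_0$ on the range $s<y_0/128$, gives $\cN-1-s/(y_0-s)\ge -g(s)$, where $g(s):=C_\star s^{1/2}+2s/\delta_0$. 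Young's inequality $\cV g\le\tfrac{1}{2}\cV^2+\tfrac{1}{2}g^2$ then absorbs the cross term to yield
\[
\int_t^{2t}\frac{\cV^2}{s(y_0-s)^2}\,ds\le (y_0-2t)^{-2}\cN(2t)-(y_0-t)^{-2}\cN(t)+\int_t^{2t}\frac{g^2}{s(y_0-s)^2}\,ds.
\]
A telescoping decomposition of the first term, combined with $(y_0-2t)^{-2}-(y_0-t)^{-2}\le Ct/y_0^3$ and $\cN\le CN_+$ (via the almost monotonicity in Remark \ref{rmrk-almost-almgren}), bounds it by $C\delta/y_0^2+CN_+t/y_0^3$; the second integral is directly controlled by $C(\delta_0)t/y_0^2$ using $g^2\le 2C_\star^2 s+8s^2/\delta_0^2$ and $y_0-s\ge y_0/2$. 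Applying Lemma \ref{43} once more to lower-bound the left-hand side by $(\ln 2)\cV^2(t)/(C^2 y_0^2)$, the factors of $1/y_0^2$ cancel on both sides, and the remaining $1/y_0^3$ term is absorbed using $y_0\ge\delta_0$, giving $\cV^2(t)\le C(N_+,\delta_0)(\delta+t)$. The main difficulty is precisely this bookkeeping of the $y_0$-dependent weights in \eqref{ndg-almgren}: both the integrated-monotonicity upper bound and the Lemma \ref{43} lower bound naturally carry a factor of $1/y_0^2$, and these scalings must match for the final constant to depend only on $N_+$ and $\delta_0$ rather than on $y_0$ itself.
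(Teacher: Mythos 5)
Your proposal is correct and follows essentially the same route as the paper: integrate the (almost-)monotonicity of $\cN$ over the dyadic interval $[t,2t]$ (via \eqref{npr} in the degenerate case and \eqref{ndg-almgren}, packaged in Remark \ref{rmrk-almost-almgren}, in the non-degenerate case), bound the resulting weighted integral of $\cV^2$ by $C(\delta+t)$ (resp.\ $C\delta$), and convert this into a pointwise bound through the dyadic comparability of $\cV$ in Lemma \ref{43}. The only deviations are cosmetic — you absorb the cross term by pointwise Young's inequality instead of the Cauchy--Schwarz absorption already carried out in Remark \ref{rmrk-almost-almgren}, and you lower-bound the full integral by $\cV^2(t)$ rather than selecting one good radius by pigeonhole — and your explicit tracking of the $(y_0-s)^{-2}$ weights cancelling on both sides is a careful rendering of a step the paper's terser write-up leaves implicit.
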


\begin{proof}
{\bf Degenerate Case ($X_0 \in S_{\psi}$):}
From Lemma \ref{nprim}, we have:
\[
\int_{t}^{2t} \frac{2\cV^2(s)}{s} \, ds \leq \int_{t}^{2t} \frac{d}{ds} \cN(s) \, ds = \cN(2t) - \cN(t) < \delta.
\]
Thus, there must be at least one \( s \in [t, 2t] \) such that \( \cV(s) \leq \sqrt{\delta} \).
Applying Lemma \ref{43}, we obtain:
\[
\cV(t) \leq C(N_+) \sqrt{\delta}.
\]
{\bf Non-degenerate Case ($X_0 \in N_{\psi}$):}
From the estimate in Remark \ref{rmrk-almost-almgren}, we have:
\begin{align*}
   \int_t^{2t} \frac{2\cV^2}{s(y_0-s)^{2} } \ ds  \le  & C(\delta_0) \left( \cN(2t)-\cN(t) \right) + C(N_+, \delta_0) t \\
    \le & C(N_+, \delta_0) (\delta + t),
\end{align*}
Applying again Lemma \ref{43} completes the proof.
\end{proof}

\begin{lemma}\label{46}
Let \( (\psi, I) \) be a variational solution on \( \Omega \), and suppose \( B_r^{+}(X_0) \subseteq \Omega \). 
Assume that $ X_0 \in N_\psi$ with $y_0 \ge \delta_0$. 
Moreover, suppose that $M_{X_0}(\psi, 0+) =  \omega(X_0)$ and that $ \cN_{X_0}(\psi, 2r) \leq N_+ $ for some $N_+ > 1$, with $ 3r < \min(y_0/128, \delta_0/2)$.
Then, there exists a  constant $C=C(N_+, \delta_0)$ such that for any $Y \in B_{r/4}(X_0)\cap N_\psi$, and $s < r/2$,
$$
\cN_Y(\psi,s) \leq C\cN_{X_0}(\psi,2r) + Cr \qquad \text{and} \qquad  H_Y \left(\psi,5r/8 \right) \geq C H_{X_0}(\psi,r).
$$
\end{lemma}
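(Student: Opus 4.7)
The plan is to transfer quantities from $X_0$ to $Y$ using ball inclusions coming from $|Y-X_0|<r/4$, combined with the monotonicity machinery at $X_0$ from Lemmas~\ref{42-N} and~\ref{nprim-N}. First I would record the elementary consequences of $3r<\min(y_0/128,\delta_0/2)$ and $|Y-X_0|<r/4$: the vertical coordinate satisfies $y_Y\ge y_0-r/4\ge\delta_0/2$, so the weight $1/y$ is comparable throughout $B_{2r}(X_0)$, and we have the inclusions $B_{r/4}(Y)\subset B_{r/2}(X_0)$, $B_r(X_0)\setminus B_{r/2}(X_0)\subset B_{5r/4}(Y)\setminus B_{r/4}(Y)$, and $B_s(Y)\subset B_{3r/4}(X_0)$ for $s<r/2$.

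For the $H$-lower bound, positivity of $\psi^2/y$ gives the integral comparison $\int_{r/4}^{5r/4}H_Y(\psi,s)\,ds\ge \int_{r/2}^rH_{X_0}(\psi,s)\,ds$. Lemma~\ref{42-N} applied at $X_0$ (whose hypotheses $M_{X_0}(\psi,0^+)=\omega(X_0)$ and $\mathcal{N}_{X_0}(\psi,2r)\le N_+$ are in force) gives $s^{-3}H_{X_0}(\psi,s)\sim r^{-3}H_{X_0}(\psi,r)$ on $[r/2,r]$, hence the right-hand side is bounded below by $C(N_+,\delta_0)\,r\,H_{X_0}(\psi,r)$. A mean-value argument then locates $s^\star\in[r/4,5r/4]$ with $H_Y(\psi,s^\star)\ge C H_{X_0}(\psi,r)$; to reach the specific radius $5r/8$, I would establish doubling of $s^{-3}H_Y(\psi,s)$ across $[r/4,5r/4]$ using the derivative identity for $H_Y$ (valid without any density hypothesis at $Y$) and bounding $\mathcal{N}_Y+\mathcal{V}_Y$ via the Lipschitz bound \eqref{lip-condition} (which controls the numerator $\int_{B_s(Y)}|\nabla\psi|^2/y\,dX$ by $Cs^3$) together with the mean-value lower bound on $H_Y$ just obtained. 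For the $\mathcal{N}$-estimate, I would write, for $s<r/2$,
\[
\mathcal{N}_Y(\psi,s)\le \frac{s\int_{B_s(Y)}|\nabla\psi|^2/y\,dX}{H_Y(\psi,s)}\le \frac{s\bigl(\mathcal{N}_{X_0}(\psi,2r)+\mathcal{V}_{X_0}(\psi,2r)\bigr)H_{X_0}(\psi,2r)}{2r\,H_Y(\psi,s)},
\]
bound $\mathcal{V}_{X_0}(\psi,2r)\le C(N_+,\delta_0)$ by Lemma~\ref{44} (using Remark~\ref{rmrk-almost-almgren} to control $\mathcal{N}_{X_0}(\psi,4r)-\mathcal{N}_{X_0}(\psi,2r)$), use $H_{X_0}(\psi,2r)\le CH_{X_0}(\psi,r)$ from Lemma~\ref{42-N}, and the doubling from the first part together with $H_Y(\psi,5r/8)\ge CH_{X_0}(\psi,r)$ to bound $H_Y(\psi,s)$ from below. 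Collecting terms delivers $\mathcal{N}_Y(\psi,s)\le C(N_+,\delta_0)(\mathcal{N}_{X_0}(\psi,2r)+r)$.

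The main obstacle is establishing doubling of $s^{-3}H_Y$ on $[r/4,5r/4]$ without the density hypothesis $M_Y(\psi,0^+)=\omega(Y)$, which in the proof of Lemma~\ref{42-N} was the route to the lower bound $\mathcal{N}_Y\ge 1-C^\star s^{1/2}$. The intended workaround is self-bootstrapping: the Lipschitz bound \eqref{lip-condition} controls the numerator $\int|\nabla\psi|^2/y$ absolutely, while the integral-average lower bound on $H_Y$ arising from the comparison with integrals at $X_0$ forces uniform control on $\mathcal{N}_Y+\mathcal{V}_Y$ across the coarse interval $[r/4,5r/4]$. A fallback would be a blow-up/compactness argument at $Y$, using Proposition~\ref{Highest-frequency-V} together with upper semi-continuity of $X\mapsto M_X(\psi,0^+)$ to rule out pathological limits inconsistent with the normalization $M_{X_0}(\psi,0^+)=\omega(X_0)$.
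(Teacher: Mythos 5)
Your plan for the $H$-lower bound is broadly in the spirit of the paper's argument (though the paper integrates $H_{X_0}$ over $[r/4,3r/8]$ and uses $B_{3r/8}(X_0)\subset B_{5r/8}(Y)\subset B_r(X_0)$, which stays inside the stated domain, rather than working with $B_{5r/4}(Y)$ as you propose). Your worry about doubling without a density hypothesis at $Y$ is also misplaced for this half of the lemma: from the derivative identity for $H_Y$ one always has $\tfrac{d}{dt}\log(t^{-3}H_Y(t))\ge 2t^{-1}(\cN_Y+\cV_Y-1)-\tfrac{1}{y_Y-t}\ge -\tfrac{2}{t}-\tfrac{1}{y_Y-t}$ since $\cN_Y+\cV_Y\ge 0$ unconditionally, and this weaker bound already gives $\int_0^{5r/8}H_Y(s)\,ds\le Cr\,H_Y(5r/8)$, which is all the paper needs.

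The genuine gap is in your $\cN$-estimate. From $\cN_Y(\psi,s)\le \frac{s\int_{B_s(Y)}\frac{1}{y}|\nabla\psi|^2}{H_Y(\psi,s)}\le \frac{s}{2r}\cdot\frac{(\cN_{X_0}(2r)+\cV_{X_0}(2r))H_{X_0}(2r)}{H_Y(s)}$, the best lower bound one can put on $H_Y(s)$ is cubic in $s$ (indeed $H_Y(s)\sim s^3$ since $\psi$ vanishes at $Y$ and is Lipschitz); plugging in $H_Y(s)\gtrsim (s/r)^3H_Y(5r/8)\gtrsim (s/r)^3H_{X_0}(r)$ yields $\cN_Y(s)\lesssim (r/s)^2(\cN_{X_0}(2r)+\cdots)$, which blows up as $s\to0$ and is not the claimed uniform bound. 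The paper avoids this entirely: it bounds $\cN_Y$ only at the fixed scale $5r/8$, via $\int_{B_{5r/8}(Y)}\frac{1}{y}|\nabla\psi|^2\le\int_{B_r(X_0)}\frac{1}{y}|\nabla\psi|^2$ and $H_Y(5r/8)\ge CH_{X_0}(r)$, obtaining $\cN_Y(5r/8)\le C(\cN_{X_0}(r)+\cV_{X_0}(r))$, and then propagates from the scale $5r/8$ down to every $s<r/2$ by invoking the almost monotonicity \eqref{almost-almgren} of $\cN_Y$. That almost-monotonicity step is the missing ingredient in your proposal, and it cannot be replaced by the ``direct'' inequality you write. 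Note also that \eqref{almost-almgren} at $Y$ rests on the hypotheses of Lemma~\ref{nprim-N}, i.e.\ on $M_Y(\psi,0^+)=\omega(Y)$; every application of the lemma in the paper has $Y\in\Sigma_H$, so this is fine, but it means the density hypothesis at $Y$ which you were hoping to circumvent is in fact quietly used in the $\cN$-estimate — just not where you suspected.
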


\begin{proof}
First, from Lemma \ref{42-N}, we have:
\[
r^{-3}\int_{\partial B_r(X_0)} \frac{1}{y} \psi^2 \, d\mathcal{H}^1 \leq C^2 s^{-3}\int_{\partial B_s} \frac{1}{y} \psi^2 \, d\mathcal{H}^1,
\]
for any \( s \in \left[ r/4, r \right] \). Integrating this gives:
\[
\int_{\partial B_r(X_0)} \frac{1}{y} \psi^2 \, d\mathcal{H}^1 \leq Cr^{-1} \int_{B_{3r/8}(X_0) \setminus B_{r/4}(X_0)} \frac{1}{y} \psi^2 \, d\mathcal{H}^1 \leq Cr^{-1} \int_{B_{3r/8}(X_0)} \frac{1}{y} \psi^2\, d\mathcal{H}^1.
\]
Taking any \( Y \in B_{r/4}(X_0) \cap N_\psi \), then $\frac{d}{dt}\log\left(t^{-3}  H_Y(t)\right) \ge -C\left(1+t^{-1/2}\right) $, and one can check that  
\[
\int_{B_{5r/8}(Y)} \frac{1}{y} \psi^2 \, dX = \int_0^{5r/8} H_Y(s) \, ds \le  \int_0^{5r/8} \left(\frac{s}{r}\right)^3\exp(Cr^{1/2})\, ds \times H_Y(5r/8) \le Cr  H_Y(5r/8).
\] 
Thus, this gives
\[
\int_{B_{3r/8}(X_0)} \frac{1}{y} \psi^2 \, dX \le \int_{B_{5r/8}(Y)} \frac{1}{y} \psi^2 \, dX  \leq Cr  H_Y(5r/8).
\] 
All these together, we imply that \( H_{X_0}(r)  \leq  C  H_Y \left( 5r/8 \right)\).

On the other hand, we have the trivial inequality:
\[
\int_{B_{5r/8}(Y)} \frac{1}{y}|\nabla \psi|^2 \, dX \le 
\int_{B_{r}(X_0)} \frac{1}{y}|\nabla \psi|^2 \, dX ,
\]
 showing that: 
\[
\cN_Y \left( 5r/8 \right) \leq C \left(\cN_{X_0}(r) + \cV_{X_0} (r)\right)\le C \left(\cN_{X_0}(r) + \sqrt{\cN_{X_0}(2r) + r}\right),
\]
where the last inequality follows from Lemma \ref{44}.
Finally, for some \( s < r/2 \),  it follows from \eqref{almost-almgren} that: 
\[
\begin{aligned}
 1 + \cN_Y(s) \leq & \left(1+Cr\right) \left(1 + \cN_Y \left( 5r/8 \right)\right)  \leq \left(1+Cr\right) \left(1 + \cN_{X_0}(r)+C\sqrt{\cN_{X_0}(2r) + r } \right) \\
\leq & \left(1+Cr\right) \left(\left(1+ Cr\right) \left(1+\cN_{X_0}(2r)\right)+C\left(\cN_{X_0}(2r) + r \right) \right) \\
\le & \left(1+Cr\right) \left(1 + C\left(\cN_{X_0}(2r) + r \right)\right).
\end{aligned}
\]
Here we used the estimate $\cN_{X_0}(2r) \geq 1 - C_\star (2r)^{1/2} > c >0$ to deduce that  $\sqrt{\cN_{X_0}(2r) + r }  \le C(\cN_{X_0}(2r) + r)$. 
Hence, $\cN_Y(s) \le C(\cN_{X_0}(2r) + r)$.
\end{proof}

\section{Analysis of frequency blow-ups}
Throughout this section, we analyze a blow-up sequence of a variational solution \((\psi_k, I_k)\) with $ \partial\{\psi>0\} \ni X_k \to X_0$  and
\begin{equation}\label{5-h1}
 M_{X_k}(\psi,0+) =  \omega(X_k).
 \end{equation}
Furthermore, consider a sequence $r_k\to 0$ and assume $\cN(2r_k) \le N_+$ for some constant $N_+$, as well as
\begin{equation}\label{5-h2}
\delta_k := \cN_{X_k}(\psi_k, 2r_k) - \cN_{X_k}(\psi_k,r_k/2) \to 0, \qquad \text{ as } k \to +\infty.
 \end{equation}
Besides, we assume that the following limit exists:
\begin{equation}\label{5-h3}
\lim_{k \to +\infty} \cN_{X_k}(\psi_k, r_k) = N_\infty.
\end{equation}
Given such a sequence, we define the re-normalized sequence
\[
v_k(X) = 
\frac{\psi_k(X_k+r_k X)}{\sqrt{r_k^{\mu(X_k)-2}H_{X_k}(\psi_k, r_k)}} ,
\]
which satisfies the property 
\[
\int_{\partial B_1^+} \frac{r_k^{\mu(X_k)-2}}{r_k^{-1}y_k+ y}\ v_k^2  \, d \mathcal{H}^1= 1.
\]

\begin{lemma}[non-degenerate case]
\label{5.1n}
Suppose that \(X_k \in N_{\psi_k} \cap \{ y \ge \delta_0 \}\), for some fixed constant \(\delta_0 > 0\), and that assumptions \eqref{5-h1}, \eqref{5-h2} and \eqref{5-h3} are satisfied.
A sub-sequence of the re-normalized sequence $v_k$ converges weakly in $W^{1,2}(B_1)$ and strongly in $L^2(B_1)$ and   $L^2(\partial B_1)$ to a function $v \in W^{1,2}(B_1)$. The function $v$ satisfies the following properties:
\begin{enumerate}
\item[(i)] $v \geq 0$ on $B_1$;
\item[(ii)] $v$ is a subharmonic  on $B_1$;
\item[(iii)] the restriction of $v$ to $B_1 \setminus B_{1/2}$ is homogeneous of order $N_{\infty} \ge 1$;
\item[(iv)] $\int_{\partial B_1}  v^2 \, d\mathcal{H}^{1} = y_0$.
\end{enumerate}
\end{lemma}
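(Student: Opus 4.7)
The plan is to establish uniform $W^{1,2}$ bounds on $v_k$, extract a weak limit, and then identify that limit using the almost-monotonicity of the frequency together with the smallness of $\mathcal{V}_{X_k}$ coming from hypothesis \eqref{5-h2}.

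\textbf{Uniform bounds and items (i), (ii), (iv).} Since $y_k \geq \delta_0 > 0$ and $r_k \to 0$, the weight $(y_k + r_k x_2)^{-1}$ is eventually comparable to $1/y_0$ on any fixed ball, and the rescaled half-ball $\{y_k + r_k x_2 \geq 0\} \cap B_1$ fills out the entire $B_1$. A direct change of variables, using \eqref{nv}, converts the frequency bound $\mathcal{N}_{X_k}(\psi_k, r_k) \leq N_+$ into
$$
\int_{B_1^+} \frac{|\nabla v_k|^2}{y_k + r_k x_2}\, dX \;=\; \mathcal{N}_{X_k}(\psi_k, r_k) + \mathcal{V}_{X_k}(\psi_k, r_k) \;\leq\; N_+ + 1,
$$
for large $k$, since Lemma \ref{44} with $\delta_k \to 0$ and $r_k \to 0$ gives $\mathcal{V}_{X_k}(\psi_k, r_k) \to 0$. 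Hence $v_k$ is uniformly bounded in $W^{1,2}(B_1)$ and a subsequence converges weakly in $W^{1,2}(B_1)$ and strongly in $L^2(B_1)$ and $L^2(\partial B_1)$ to a function $v$. Item (i) follows from pointwise non-negativity of $v_k$; item (iv) from passing to the limit in the normalization $\int_{\partial B_1^+} v_k^2 / (y_k + r_k x_2)\, d\mathcal{H}^1 = 1$ using uniform convergence of the weight to $1/y_0$ and strong $L^2(\partial B_1)$ convergence. For (ii), Proposition \ref{32} tells us $\psi_k$ is an $\mathcal{L}$-subsolution, so after rescaling $\mathrm{div}\bigl((y_k + r_k x_2)^{-1}\nabla v_k\bigr) \geq 0$ in $\mathcal{D}'(B_1)$; testing against a nonnegative $\varphi \in C_c^\infty(B_1)$ and passing to the limit gives $\Delta v \geq 0$.

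\textbf{Homogeneity (iii).} Rewrite the derivative of $\mathcal{N}_{X_k}$ using the alternative expression in Remark \ref{r2} and integrate over $[r_k/2, 2r_k]$. The explicit integrated bound at the end of Remark \ref{r2}, combined with hypothesis $\delta_k \to 0$, yields
$$
\int_{r_k/2}^{2r_k} \frac{2t}{H_{X_k}(t)} \int_{\partial B_t^+(X_k)} \frac{1}{y} \Bigl|\nabla \psi_k \cdot \nu - \tfrac{\psi_k}{t}\, \mathcal{N}_{X_k}(t)\Bigr|^2\, d\mathcal{H}^1\, dt \;\longrightarrow\; 0.
$$
By the almost-monotonicity \eqref{almost-almgren} and \eqref{5-h2}--\eqref{5-h3}, $\mathcal{N}_{X_k}(t) \to N_\infty$ uniformly for $t \in [r_k/2, 2r_k]$, while Lemma \ref{44} gives $\mathcal{V}_{X_k}(t) \to 0$ there. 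Reapplying the first-step bound at the slightly larger scale $2r_k$ (which again yields $W^{1,2}$ boundedness on $B_2$, hence strong $L^2$ convergence on the annulus $B_2 \setminus B_{1/2}$), the display above rescales to
$$
\int_{B_2 \setminus B_{1/2}} \bigl|\nabla v_k \cdot X - N_\infty\, v_k\bigr|^2\, dX \;\longrightarrow\; 0.
$$
Weak $W^{1,2}$ convergence of $v_k$ combined with lower semicontinuity then forces $\nabla v \cdot X = N_\infty\, v$ a.e.\ on $B_1 \setminus B_{1/2}$, which is the Euler equation for homogeneity of degree $N_\infty$. Finally, $N_\infty \geq 1$ is immediate from $\mathcal{N}_{X_k}(r_k) \geq 1 - C_\star r_k^{1/2}$ (Lemma \ref{nprim-N}).

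The key difficulty lies entirely in (iii): one must align the rescaling with the simultaneous uniform convergences $\mathcal{N}_{X_k}(t) \to N_\infty$ (to secure the correct coefficient of $v_k$ in the Euler expression) and $\mathcal{V}_{X_k}(t) \to 0$ (to suppress the auxiliary $\mathcal{V}$-terms in the frequency derivative identity) on the annulus $t \in [r_k/2, 2r_k]$, and then promote weak to strong $L^2$-convergence of $v_k$ on the larger annulus $B_2 \setminus B_{1/2}$ via a second invocation of the first-step bound at scale $2r_k$.
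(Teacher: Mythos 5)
Your proposal is correct and follows essentially the same route as the paper: uniform $W^{1,2}$ bounds from $\mathcal{N}+\mathcal{V}$ with $\mathcal{V}\to 0$ by Lemma \ref{44}, subharmonicity from Proposition \ref{32}, the normalization passed to the limit via strong $L^2(\partial B_1)$ convergence, and homogeneity by integrating the frequency derivative (Remark \ref{r2}) over a pinched annulus, replacing $\mathcal{N}_{X_k}(t)$ by $N_\infty$ via uniform convergence, and concluding by weak lower semicontinuity of the convex functional $w\mapsto\int|\nabla w\cdot X-N_\infty w|^2$. The only cosmetic differences are your annulus $[r_k/2,2r_k]$ versus the paper's $[r_k/2,r_k]$ (both covered by \eqref{5-h2}, and both implicitly needing the $H$-doubling of Lemma \ref{42-N} to pass to the $v_k$-normalization) and your invocation of strong $L^2$ convergence on $B_2\setminus B_{1/2}$, which is not actually needed for the final step.
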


\begin{proof}
We assume that $r_k \leq \delta_0 / 256$ to ensure that all results from the previous section can be applied. 
By Lemma \ref{44}, we know that $\cV_{X_k}(\psi_k, r_k) \to 0$, so
\begin{align*}
\int_{B_1} \frac{1}{y_k + r_k y} |\nabla v_k|^2 \, dX = \frac{r_k\int_{B_{r_k}(X_k)} \frac{1}{y} |\nabla \psi_k|^2 \, dX}{H_{X_k}(\psi_k, r_k)} = \cN_{X_k}(\psi_k, r_k) + \cV_{X_k}(\psi_k, r_k) \to N_{\infty}.
\end{align*}
Given that $\int_{\partial B_1} \frac{1}{y_0} v_k^2 \, d\mathcal{H}^1 = 1$, this implies that the sequence $v_k$ is uniformly bounded in $W^{1,2}(B_1)$. 
Therefore, by Rellich’s theorem and the compact embedding on the boundary,  we can extract a subsequence that converges weakly in $W^{1,2}(B_1)$ to some $v \in W^{1,2}(B_1)$ and strongly in $L^2(B_1)$ and in $L^2(\partial B_1)$. Consequently, \textit{(iv)} holds.

Since $v_k \geq 0$, it follows that $v \geq 0$, which proves \textit{(i)}.

Let $\eta \in C^{\infty}_c(B_1^{+})$ be any test function with $\eta \geq 0$. Then,
\begin{align*}
\int_{B_1} \frac{1}{y_0} \nabla v \cdot \nabla \eta \, dX = \lim_{k \to + \infty} \int_{B_1} \frac{1}{y_k + r_k y} \nabla v_k \cdot \nabla \eta \, dX \leq 0,
\end{align*}
since the functions $\psi_k$ (and thus the functions $v_k$) are $\mathcal{L}$-subsolutions, as shown in Proposition \ref{32}. Therefore, $v$ is also a subharmonic, which proves \textit{(ii)}.

Applying now Remark \ref{r2} to $\psi_k$, and integrating from $r_k/2$ to $r_k$, we obtain:
\begin{align*}
\delta_k + O(r_k^{1/2}) = & \int_{B_{r_k} \setminus B_{r_k/2}} \frac{|X|^{-1}}{H_{X_k}(\psi_k, |X|)} \frac{1}{y} \left| \nabla \psi_k \cdot (X-X_k)  -  \psi_k \cN_{X_k}(\psi_k,|X|) \right|^2 \, dX  \\
 \le  &  \int_{B_1 \setminus B_{1/2}}  \frac{|X|^{-1}}{y_k+ r_k y} \left| \nabla v_k \cdot X  -  v_k \cN(\psi_k, r_k|X|) \right|^2 \, dX  \to 0.
\end{align*}
Since $\cN_{X_k}(\psi_k, r_k|X|) \to N_{\infty}$ uniformly in $|X|$ and $v_k$ is bounded in $W^{1,2}(B_1)$, we have:
\[
\int_{B_1\setminus B_{1/2}} \left| \nabla v_k \cdot X - v_k  N_{\infty} \right|^2 \, dX  \to 0.
\]
The functional 
$$ w \mapsto \int_{B_1 \setminus B_{1/2} }  \left|\nabla w \cdot X - w N_{\infty} \right|^2 \, dX, $$ 
is convex, and thus weakly lower semi-continuous under the weak convergence in $W^{1,2}$.
This leads to the following inequality:
$$
\int_{B_1 \setminus B_{1/2}}  \left|\nabla v \cdot X - v N_{\infty} \right|^2 \, dX \leq \lim_{k \to + \infty} \int_{B_1 \setminus B_{1/2}} \left|\nabla v_k \cdot X - v_k N_{\infty} \right|^2 \, dX =0,
$$
which implies that
\[
\nabla v \cdot X -   v N_{\infty} = 0, \qquad \text{almost everywhere on } B_1 \setminus B_{1/2}.
\]
This condition is equivalent to the order $N_{\infty}$ homogeneity, i.e., $v(rX) = r^{N_{\infty}} v(X)$ almost everywhere. 
Notice that the estimate $\cN_{X_k}(\psi_k, r_k) \ge 1 - C_\star r_k^{1/2}$ implies that  $N_\infty \ge 1$.
This completes the proof of \textit{(iii)}.
\end{proof}

 A similar result can be proved for the degenerate case by an analogous argument.
 We present the result in the following lemma without proof.

\begin{lemma}[degenerate case]
\label{5.1d}
Suppose that \(X_k \in S_{\psi_k}\) and that assumptions \eqref{5-h1}, \eqref{5-h2} and \eqref{5-h3} hold.
A sub-sequence of the re-normalized sequence $v_k$ converges weakly in $W^{1,2}_{\rW}(B_1^{+})$ and strongly in $L^2_{\rW}(B_1^{+})$ and   $L^2_{\rW}(\partial B_1^{+})$ to a function $v \in W^{1,2}_{\rW}(B_1^{+})$. The function $v$ satisfies the following properties:
\begin{enumerate}
\item[(i)] $v \geq 0$ on $B_1^{+}$;
\item[(ii)] $v$ is a $\mathcal{L}$-subsolution  on $B_1^{+}$;
\item[(iii)] the restriction of $v$ to $B_1^{+} \setminus B_{1/2}^{+}$ is homogeneous of order $N_{\infty}\ge 2$;
\item[(iv)] $\int_{\partial B_1^{+}} \frac{1}{y} v^2 \, d\mathcal{H}^{1} = 1$.
\end{enumerate}
\end{lemma}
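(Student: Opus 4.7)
The plan is to mirror the proof of Lemma \ref{5.1n}, making the adjustments required by the fact that $X_k$ lies on the symmetry axis $\{y=0\}$, $\mu(X_k)=2$, and the weight $1/y$ is genuinely singular rather than a bounded smooth factor. With $y_k=0$ and $\mu=2$, the renormalization simplifies to $v_k(X)=\psi_k(X_k+r_kX)/\sqrt{H_{X_k}(\psi_k,r_k)}$, with normalization $\int_{\partial B_1^+}\frac{1}{y}v_k^2\,d\mathcal{H}^1=1$. The starting point is uniform $W^{1,2}_{\rW}(B_1^+)$-boundedness: Lemma \ref{44} (degenerate case) gives $\cV_{X_k}(\psi_k,r_k)\to 0$, and identity \eqref{nv} after rescaling yields
\[
\int_{B_1^+}\frac{1}{y}|\nabla v_k|^2\,dX = \cN_{X_k}(\psi_k,r_k)+\cV_{X_k}(\psi_k,r_k)\longrightarrow N_\infty.
\]

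Next I would extract a subsequence converging weakly in $W^{1,2}_{\rW}(B_1^+)$ and strongly in $L^2_{\rW}(B_1^+)$, as well as on the trace $L^2_{\rW}(\partial B_1^+)$; this relies on the Fabes--Kenig--Serapioni weighted-Sobolev theory, valid here since $y$ is an $A_2$-weight on $\mathbb{R}^2$. The normalization therefore passes to the limit, giving \textit{(iv)}. Property \textit{(i)} is immediate from $v_k\geq 0$. For \textit{(ii)}, Proposition \ref{32} makes each $\psi_k$ (hence each $v_k$) an $\mathcal{L}$-subsolution, so for any nonnegative $\eta\in C_c^\infty(B_1^+)$,
\[
\int_{B_1^+}\frac{1}{y}\nabla v\cdot\nabla\eta\,dX=\lim_{k\to\infty}\int_{B_1^+}\frac{1}{y}\nabla v_k\cdot\nabla\eta\,dX\leq 0,
\]
by the weak $W^{1,2}_{\rW}$ convergence.

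For the homogeneity \textit{(iii)}, I would integrate the exact monotonicity identity \eqref{npr} from $r_k/2$ to $r_k$, dropping the nonnegative $\cV$-terms:
\[
\delta_k \;\geq\; \int_{r_k/2}^{r_k}\frac{2s}{H_{X_k}(\psi_k,s)}\int_{\partial B_s^+}\frac{1}{y}\Bigl|\nabla\psi_k\cdot\nu-\tfrac{\psi_k}{s}\bigl(\cN_{X_k}+\cV_{X_k}\bigr)\Bigr|^2 d\mathcal{H}^1\, ds,
\]
where I use the form of Remark \ref{r2} with the $\cN$-centred reformulation. Rescaling via $X\mapsto X_k+r_kX$ converts this to an integral of $|\nabla v_k\cdot X-v_k\cN_{X_k}(\psi_k,r_k|X|)|^2/y$ over $B_1^+\setminus B_{1/2}^+$, which tends to zero because $\cN_{X_k}(\psi_k,r_k|X|)\to N_\infty$ uniformly in $|X|\in[1/2,1]$ by \eqref{5-h2} and \eqref{5-h3}. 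Weak lower-semicontinuity of the convex functional $w\mapsto \int \frac{1}{y}|\nabla w\cdot X-N_\infty w|^2\,dX$ then forces $\nabla v\cdot X=N_\infty v$ a.e.\ on the annulus, i.e.\ $N_\infty$-homogeneity. Finally, $N_\infty\geq 2$ follows since Lemma \ref{nprim} guarantees $\cN_{X_k}(\psi_k,r)\geq 2$ throughout the admissible interval under the hypothesis \eqref{5-h1}.

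The main obstacle is the compactness step: because $1/y$ is singular at the axis and the half-balls $B_1^+$ touch $\{y=0\}$, one must justify the weighted Rellich-type embedding $W^{1,2}_{\rW}(B_1^+)\hookrightarrow L^2_{\rW}(B_1^+)$ and the trace $W^{1,2}_{\rW}(B_1^+)\to L^2_{\rW}(\partial B_1^+)$. This is where invoking the Muckenhoupt $A_2$-framework (or, equivalently, the axisymmetric extension that identifies $1/y$ with a three-dimensional Euclidean volume factor in cylindrical coordinates) is essential; every other step is a routine adaptation of the non-degenerate argument.
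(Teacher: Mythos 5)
Your proposal follows exactly the route the paper intends (the paper omits the proof of Lemma \ref{5.1d}, stating only that it is analogous to Lemma \ref{5.1n}), and most of your steps are correct: the uniform bound via Lemma \ref{44} and \eqref{nv}, the homogeneity argument obtained by integrating \eqref{npr} over $(r_k/2,r_k)$ (together with the doubling estimate of Lemma \ref{42} to replace $H_{X_k}(\psi_k,s)$ by $H_{X_k}(\psi_k,r_k)$, which you should state explicitly), and the bound $N_\infty\ge 2$ from Lemma \ref{nprim}, which applies because in the degenerate case $M_{X_k}$ is genuinely monotone, so \eqref{5-h1} gives $M_{X_k}(\psi_k,t)\ge\omega(X_k)$ for all $t$.

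The genuine gap is in the one step where the degenerate case actually differs from the non-degenerate one, namely the compactness/trace step, and your justification for it would fail as written. Neither $y$ nor $1/y$ is an $A_2$ Muckenhoupt weight on $\mathbb{R}^2$: a power weight $|y|^{\alpha}$ is $A_2$ only for $-1<\alpha<1$, and indeed $1/|y|$ is not even locally integrable across the axis, so the Fabes--Kenig--Serapioni framework does not apply to the spaces $W^{1,2}_{\rW}$, $L^2_{\rW}$ as you invoke it. The parenthetical ``equivalently'' is also wrong: the three-dimensional cylindrical volume element corresponds to the weight $y$ and the operator $\mathrm{div}(y\nabla\cdot)$, whereas here $\mathcal{L}=\mathrm{div}(y^{-1}\nabla\cdot)$. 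The standard repair is the substitution $\psi=y^2u$ (i.e.\ $v_k=y^2u_k$): a direct computation with cancellation of the cross term gives
\[
\int \frac{1}{y}\,|\nabla \psi|^2\,dX=\int y^{3}|\nabla u|^2\,dX,
\qquad
\int \frac{1}{y}\,\psi^2\,dX=\int y^{3}u^2\,dX,
\]
so the $W^{1,2}_{\rW}$-bound on $v_k$ is exactly an $H^1$-bound on the axisymmetric functions $u_k$ viewed on a ball in $\mathbb{R}^5$ (the weight $y^3$ being the volume factor of the four-dimensional radial variable). The classical Rellich and compact trace embeddings in $\mathbb{R}^5$ then yield precisely the claimed strong convergence in $L^2_{\rW}(B_1^+)$ and $L^2_{\rW}(\partial B_1^+)$ after transforming back. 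With this replacement (or any correct weighted Rellich/trace theorem for this specific weight), the rest of your argument goes through and coincides with the paper's intended proof.
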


In the above argument, it is important to note that strong convergence \( v_k \to v \) in \( W^{1,2}_{\rW}(B_1^{+}) \), or even in the annular region \( B_1^{+} \setminus B_{1/2}^{+} \), is not guaranteed. Fortunately, we do not need to fully address this challenging concentration-compactness issue. Instead, we focus on the situation where the blow-up limit is one-dimensional.

\begin{theorem}
\label{5.3}
$(i)$ Suppose that \(X_k \in S_{\psi_k}\) and that assumptions \eqref{5-h1}, \eqref{5-h2} and \eqref{5-h3} hold. 
Let $A = B_1^{+} \setminus \overline{B_{1/2}^{+}}$ and $\mathbf{n}$ be a unit vector. Furthermore, assume that the normalized sequence $(v_k)$, defined above, satisfies:
$$
\int_A \frac{1}{y} \left|\nabla v_k \cdot \mathbf{n} \right|^2 \, dX \to 0, \qquad \text{as} \quad k \to + \infty.
$$
Then, $\mathbf{n} = \pm\mathbf{e}_1$ and  $v$ is independent of the first coordinate on $A$, i.e., $v(X) = h(y)$, $N_{\infty} = 2$, and 
$$ h(y) = \alpha y^{2}, \qquad \text{in} \quad A, $$ 
for the constant $\alpha=\alpha(X_0)$ such that the normalization 
\(
\int_{\partial B_1^{+}} \frac{1}{y} v^2 \, d \mathcal{H}^1 = 1
\)
happens.

$(ii)$ Suppose  \(X_k \in N_{\psi_k}\)  and that assumptions \eqref{5-h1}, \eqref{5-h2} and \eqref{5-h3} hold. 
Let $A = B_1 \setminus \overline{B_{1/2}}$ and $\mathbf{n}$ be a unit vector. 
Furthermore, assume that the normalized sequence $(v_k)$, defined above, satisfies:
$$
\int_A \frac{1}{y_0+r_ky} \left|\nabla v_k \cdot \mathbf{n}\right|^2 \, dX \to 0, \qquad \text{as} \quad k \to + \infty.
$$
Then, on $A$, $v$ is a one dimensional solution, $N_\infty = 1$, and $v(X)=\alpha |X\cdot \tilde{\mathbf{n}}|$, where $\tilde{\mathbf{n}}$ is a unit vector perpendicular to $\mathbf{n}$.
\end{theorem}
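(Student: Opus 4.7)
The plan is to prove both parts by the same three-step template: (1) translate the convergence hypothesis into a kernel condition on the weak limit $v$; (2) combine this with the homogeneity supplied by Lemma~\ref{5.1d} or \ref{5.1n} to derive an explicit one-dimensional profile on $A$; and (3) invoke the limiting PDE satisfied by $v$ on its positivity set to pin down the exponent, supplemented in part (ii) by the limiting first-variation identity to extract the correct boundary balance.

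For Step 1, along a weakly convergent subsequence $v_k \rightharpoonup v$ provided by Lemma~\ref{5.1d}/\ref{5.1n}, the convex functional $w \mapsto \int_A \frac{1}{y}\lvert\nabla w\cdot \mathbf{n}\rvert^2\,dX$ (and its analogue with weight $1/y_0$ in the non-degenerate case) is weakly lower semicontinuous, so the hypothesis forces $\nabla v\cdot \mathbf{n} = 0$ a.e.\ on $A$. For Step 2, write $v(r,\theta) = r^{N_\infty}\phi(\theta)$ on $A$ (by the homogeneity assertion in Lemma~\ref{5.1d}(iii)/\ref{5.1n}(iii)) and set $\mathbf{n}=(\cos\alpha,\sin\alpha)$; the kernel condition reduces to the separable ODE
$$N_\infty\cos(\theta-\alpha)\phi(\theta) = \sin(\theta-\alpha)\phi'(\theta),$$
whose non-negative solutions are $\phi(\theta) = c\lvert\sin(\theta-\alpha)\rvert^{N_\infty}$ on each interval of positivity. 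In part (i), the boundary condition $v=0$ on $\{y=0\}$ (part of Definition~\ref{Def-1-1}) imposes $\phi(0)=\phi(\pi)=0$; combined with $v\not\equiv 0$ this forces $\sin\alpha=0$, hence $\mathbf{n}=\pm\mathbf{e}_1$ and $v(X) = Cy^{N_\infty}$ on $A$. In part (ii) there is no axis boundary to exploit, so with $\tilde{\mathbf{n}}=(-\sin\alpha,\cos\alpha)\perp\mathbf{n}$ and the identity $r\lvert\sin(\theta-\alpha)\rvert = \lvert X\cdot\tilde{\mathbf{n}}\rvert$ on $A$, the profile takes the two-sided form $v(X) = C_+(X\cdot\tilde{\mathbf{n}})_+^{N_\infty} + C_-(X\cdot\tilde{\mathbf{n}})_-^{N_\infty}$ with $C_\pm\ge 0$.

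For Step 3, I would upgrade $v_k\rightharpoonup v$ to strong $W^{1,2}_{\mathrm{loc}}$-convergence on relatively compact subdomains of $\{v>0\}$ by replicating the argument in the proof of Proposition~\ref{10.1}(1): each $\psi_k$ is an $\mathcal{L}$-solution on its positivity set by Proposition~\ref{32}, so the rescaled $v_k$ satisfy an elliptic equation on any ball contained in $\{v>0\}$ for large $k$, and interior elliptic regularity promotes weak to strong convergence of gradients there. Consequently $v$ is an $\mathcal{L}$-solution inside $\{v>0\}$ in the degenerate case and harmonic inside $\{v>0\}$ in the non-degenerate case (since $1/(y_0+r_ky)\to 1/y_0$). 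In part (i), $\mathcal{L}(Cy^{N_\infty}) = CN_\infty(N_\infty-2)y^{N_\infty-3}\equiv 0$ on $A\cap\{v>0\}$ together with $N_\infty\ge 2$ forces $N_\infty=2$; in part (ii), $\Delta(s^{N_\infty}) = N_\infty(N_\infty-1)s^{N_\infty-2}\equiv 0$ on each side where $C_\pm>0$, together with $N_\infty\ge 1$, forces $N_\infty=1$. The constant $\alpha$ is then determined by the normalization in Lemma~\ref{5.1d}(iv) or \ref{5.1n}(iv). Finally, to obtain $C_+=C_-$ in part (ii), I would pass the rescaled inner-variation identity~\eqref{inner-variation-1} to the limit — using strong $L^1_{\mathrm{loc}}$-convergence of a subsequence of $I_k$ (supplied by the BV bound in Lemma~\ref{BV-estimate}) and strong $W^{1,2}_{\mathrm{loc}}$-convergence of $v_k$ on $\{v>0\}$ — to obtain an inner-variation identity for $v$, and then test it with $\xi=\zeta\tilde{\mathbf{n}}$ and read off the balance across $\{X\cdot\tilde{\mathbf{n}}=0\}$, exactly as at the end of the proof of Proposition~\ref{10.1}(3).

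The principal obstacle is the upgrade from $v$ being a mere $\mathcal{L}$-subsolution (as supplied by Lemmas~\ref{5.1d}/\ref{5.1n}) to $v$ being an honest $\mathcal{L}$-solution (respectively harmonic) inside $\{v>0\}$, since only on the set of strong convergence can one write out the explicit PDE computation that pins down $N_\infty$. A secondary subtlety in part (ii) is the symmetry $C_+=C_-$: it cannot be extracted from homogeneity and the PDE on $\{v>0\}$ alone, and the limiting inner-variation identity must be invoked. The standing assumption $M_{X_k}(\psi,0+)=\omega(X_k)$ is what ultimately makes this symmetry accessible, because a one-sided profile of the form $\alpha(X\cdot\tilde{\mathbf{n}})_+$ yields $M(r)\to y_0\pi/2 < y_0\pi = \omega(X_k)$ and is therefore asymptotically precluded.
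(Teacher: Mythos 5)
Your Steps 1 and 2 are sound and close to what the paper does (weak lower semicontinuity gives $\nabla v\cdot\mathbf{n}=0$, homogeneity gives the one-dimensional profile; your trace argument for $\mathbf{n}=\pm\mathbf{e}_1$ in case (i) is acceptable modulo routine detail). The genuine gap is in Step 3, at exactly the point you call "the principal obstacle": the upgrade to "$v$ is an $\mathcal{L}$-solution (resp.\ harmonic) in $\{v>0\}$" cannot be obtained by replicating Proposition \ref{10.1}(1). That argument works because the blow-ups there are normalized by $r_n^{\mu}$, hence by \eqref{lip-condition} are uniformly Lipschitz and converge locally uniformly, so compact subsets of $\{\psi_0>0\}$ are eventually contained in $\{\psi_n>0\}$ and the equation passes to the limit. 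The frequency-normalized sequence $v_k=\psi_k(X_k+r_kX)/\sqrt{r_k^{\mu-2}H_{X_k}(\psi_k,r_k)}$ carries no such uniform Lipschitz bound: the normalizing factor can be $o(r_k^{\mu})$ precisely when $N_\infty>\mu-1$, which is the very possibility you are trying to exclude. Since $v_k\to v$ only in $L^2$ and a.e., positivity of $v$ on a ball does not force $v_k>0$ there for large $k$; the nonnegative measures $\mathrm{div}\left(\tfrac{1}{y_0+r_ky}\nabla v_k\right)$ may concentrate inside $\{v>0\}$ (collapsing free-boundary pieces), in which case $v$ is only a subsolution there, and subharmonicity does not pin down the exponent ($\mathcal{L}(cy^{N_\infty})\ge 0$ for every $N_\infty\ge 2$, and $\Delta\bigl(c\,|X\cdot\tilde{\mathbf{n}}|^{N_\infty}\bigr)\ge 0$ for every $N_\infty\ge 1$). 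This is exactly the concentration-compactness issue the paper says it avoids, and your argument as written begs it.

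The paper supplies the missing ingredient by a different mechanism: strong convergence of $\nabla v_k$ on compact subsets of $A\setminus\{X\cdot\tilde{\mathbf{n}}=0\}$ is extracted directly from the frequency pinching, combining the hypothesis $\int_A\frac1y|\nabla v_k\cdot\mathbf{n}|^2\to 0$ with $\int_A\frac1y|\nabla v_k\cdot X-\cN v_k|^2\to 0$ (Remark \ref{r2} integrated over $(r_k/2,r_k)$, using \eqref{5-h2}) and the strong $L^2_{\rW}$ convergence of $v_k$; the rescaled inner-variation identity is then passed to the limit on such sets, and that identity alone yields $\tilde{\mathbf{n}}=\pm\mathbf{e}_2$ and $N_\infty=2$ in case (i), and $h'h''=0$, hence $N_\infty=1$, in case (ii) — no interior equation for $v$ is ever invoked. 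Your plan for $\alpha_+=\alpha_-$ has a second, related gap: the balance across $\{X\cdot\tilde{\mathbf{n}}=0\}$ requires a test field supported in a neighborhood of that line, where strong convergence "on $\{v>0\}$" is insufficient, since gradient energy of $v_k$ could a priori concentrate on the line itself. The paper rules this out by passing the identity $\int_A\frac{1}{y_0+r_ky}\left(v_k\,\nabla v_k\cdot\nabla\eta+|\nabla v_k|^2\eta\right)dX=0$ to the limit and using $v\Delta v=0$ for the wedge, which gives norm convergence of the localized energies and hence strong convergence across the line; some such step must be added before the inner-variation test you propose is legitimate. Finally, your closing heuristic (a one-sided profile would force $M\to y_0\pi/2<\omega$) conflates the $r$-normalized blow-up with the $H$-normalized one and cannot substitute for this argument.
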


The normalization directly determines \(\alpha\): 
\begin{equation*}
\alpha(X_0)  =\begin{cases}
\dfrac{\sqrt{3}}{2}, \quad & X_0 \in S_{\psi}, \\[8pt]
\sqrt{y_0/\pi}, \quad & X_0 \in N_{\psi}.
\end{cases}
\end{equation*}

\begin{proof}
$(i)$ We already know that $v$ is homogeneous of order $N_\infty$, and satisfies $\nabla v\cdot  \mathbf{n} =0$ in $A$. 
Hence, $v(X) = h(X\cdot \tilde{\mathbf{n}})$ where $\tilde{\mathbf{n}}$ is a unit vector perpendicular to $\mathbf{n}$, and the function $h$ must take the form 
$$ h(z) = \alpha z^{N_{\infty}}, $$
with some $\alpha\ne 0$ due to the homogeneity of $v$.
We also recall that 
\[
0 = \int_A \frac{1}{y} \left|\nabla v(X) \cdot X - N_\infty v(X) \right|^2 \, dX = \lim_{k \to + \infty} \int_A \frac{1}{y} \left|\nabla v_k(X) \cdot X - N_\infty v_k(X) \right|^2 \, dX.
\]
Expanding the square and using the weak convergence \( v_k \to v \) in \( W_{\rW}^{1,2} \), we obtain
\[
\lim_{k \to + \infty} \int_A \frac{1}{y} \left| (\nabla v_k\cdot \tilde{\mathbf{n}}) (X \cdot \tilde{\mathbf{n}}) \right|^2 \, dX = \int_A \frac{1}{y} \left|(\nabla v\cdot \tilde{\mathbf{n}}) (X \cdot \tilde{\mathbf{n}} )\right|^2 \, dX.
\]
Now, let \( U \Subset A \setminus \{ X \cdot \tilde{\mathbf{n}} = 0 \} \).  
The above convergence, together with our assumptions, implies convergence in $W^{1,2}_{\rW}$-norm.  
Therefore, we conclude that $v_k \to v$ strongly in $W^{1,2}_{\rW}(U)$.

Taking any \( \xi = (\xi_1, \xi_2) \in C_c^{0,1}(U, \mathbb{R}^2) \), this implies that
\[
\int_{U} \frac{1}{y} \left|\nabla v \right|^2 \mathrm{div\,} \xi - \frac{2}{y}\nabla v \cdot D \xi \nabla v - \frac{1}{y^2} \left|\nabla v \right|^2 \xi_2 \, dX = 0,
\]
by passing the domain variation formula for \( \psi_k \) to the limit and using that \( \cV_{X_k}(\psi, r_k) \to 0 \). Due to the one-dimensional structure of \( v \), this may be rewritten  as
\begin{align*} 
0 = &  \int_U  \frac{1}{y} \left(h'(X\cdot \tilde{\mathbf{n}}) \right)^2 \left[  \mathrm{div\,} \xi - 2 \tilde{\mathbf{n}} \cdot D\xi \tilde{\mathbf{n}} - \frac{1}{y}  \xi_2\right] \, dX \\
= & \int_U - \frac{1}{y} \nabla\left(h'(X\cdot \tilde{\mathbf{n}}) \right)^2 \cdot \xi + \left( \nabla\left( \frac{2}{y}  \left( h'(X\cdot \tilde{\mathbf{n}}) \right)^2 \right) \cdot \tilde{\mathbf{n}}\right)  \left( \xi \cdot \tilde{\mathbf{n}} \right)\, dX \\
= & \int_U  \frac{2}{y} h'(X\cdot \tilde{\mathbf{n}}) h''(X\cdot \tilde{\mathbf{n}}) (\xi \cdot \tilde{\mathbf{n}})  - \frac{2}{y^2} \left(h'(X\cdot \tilde{\mathbf{n}}) \right)^2 (\mathbf{e}_2\cdot \tilde{\mathbf{n}}) (\xi \cdot \tilde{\mathbf{n}})\, dX \\
 = & \int_U  \frac{2\alpha^2}{y^2} N_\infty^2  \left ( (N_\infty-1) y -  (\mathbf{e}_2\cdot \tilde{\mathbf{n}}) (X\cdot \tilde{\mathbf{n}}) \right) (X\cdot \tilde{\mathbf{n}})^{2N_\infty-3} (\xi \cdot \tilde{\mathbf{n}})\, dX.
\end{align*}
Thus, we must have  
\[
(N_\infty-1) y  =  (\mathbf{e}_2\cdot \tilde{\mathbf{n}}) (X\cdot \tilde{\mathbf{n}}) \quad \text{ in } U,
\]
which forces $\tilde{\mathbf{n}} =\pm \mathbf{e}_2$ and $N_\infty = 2$.

\medskip

$(ii)$
The proof is similar, but we note that the rescaled functions \( v_k \) satisfy the following domain variation formula:
\begin{align*}
\int_{U}  \left(\frac{1}{y_0+r_ky} \left|\nabla v_k \right|^2 +  (y_0+r_ky)\frac{I_k-1}{r_k^{-3}H(r_k)}\right)\mathrm{div\,} \xi & - \frac{2}{y_0+r_ky}\nabla v_k \cdot D \xi \nabla v_k \\
& - r_k \xi_2\left( \frac{1}{(y_0+r_ky)^2} \left|\nabla v \right|^2  - \frac{I_k-1}{r_k^{-3}H(r_k)}\right) \, dX = 0,
\end{align*}
After passing to the limit, we obtain:
\begin{equation}\label{Thm53-Eq}
\int_{U}  \frac{1}{y_0} \left|\nabla v \right|^2 \mathrm{div\,} \xi  - \frac{2}{y_0}\nabla v \cdot D \xi \nabla v \, dX =0.
\end{equation}
Through a similar calculation, we obtain 
\[
 \int_U 2 h'(X\cdot \tilde{\mathbf{n}}) h''(X\cdot \tilde{\mathbf{n}})  (\xi \cdot \tilde{\mathbf{n}})\, dX  = 0,
\]
which implies that $N_\infty =1 $ and $ v(X) = \alpha (X\cdot \tilde{\mathbf{n}}) $ in each component of \( A \setminus \{ X\cdot \tilde{\mathbf{n}} = 0 \} \). 
Hence, \( v(X) = \alpha_+ (X\cdot \tilde{\mathbf{n}})_+ + \alpha_- (X\cdot \tilde{\mathbf{n}})_- \) for some constants \( \alpha_\pm \).
We will now show that \( \alpha_+ = \alpha_- \), which implies that \( v(X) = \alpha |X\cdot \tilde{\mathbf{n}}| \).

We already know that $v\Delta v=0$.
Let \( \eta \in C_c^\infty(A) \) be a test function. 
Since \( v_k \) satisfies
 $\mathrm{div\,} \left(\frac{1}{y_0+r_ky}\nabla v_k \right) =0$ in $\{v_k>0\}$,  we can write
\[
0=\int_A \frac{1}{y_0+r_ky}\left(\nabla v_k\cdot \nabla \eta v_k + |\nabla v_k|^2 \eta \right) \, dX.
\]
Passing to the limit, we obtain:
\[
\lim_{k\to \infty} \int_A \frac{1}{y_0+r_ky} |\nabla v_k|^2 \eta  \, dX = - \int_A \frac{1}{y_0} \nabla v\cdot \nabla \eta v\, dX = \int_A \frac{1}{y_0} |\nabla v|^2 \eta\, dX.
\]
As above, this gives strong convergence locally in $A$. 
As a consequence, the domain variation identity \eqref{Thm53-Eq} holds for any \( \xi \in C_c^{0,1}(A, \mathbb{R}^2) \).  
Therefore, testing with a vector field supported near \( \{ X\cdot \tilde{\mathbf{n}} = 0\} \), we find:
\[
\int_{\{X \cdot \tilde{\mathbf{n}}=0\} } \left(\alpha_+^2 - \alpha_-^2 \right) (\xi \cdot \tilde{\mathbf{n}})  \, d\cH^1 = 0.
\]
Since this holds for arbitrary \( \xi \), we conclude \( \alpha_+ = \alpha_- \), and thus \( v(X) = \alpha |x| \).
\end{proof}

\section{Rectifiability of free boundary}

In this section, we apply a quantitative method from geometric measure theory, pioneered by Naber and Valtorta in \cite{zbMATH06686585}, to establish the rectifiability of the free boundary.  
This approach relies on an (almost) monotone quantity, which in our context is the Almgren frequency function, \( \cN \).
To carry out this analysis, we restrict attention to the subset  \( N_\psi^t := N_\psi \cap \{ y >t \} \), for a fixed constant \( t > 0 \), and prove the rectifiability of this set.

For a variational solution $(\psi, I)$  in $\Omega \subseteq \mathbb{R}^2$, we define the set of highest-density points as
\begin{equation}
\label{sigma_H}
\Sigma_H := \left\{X_0 \in \Omega \,: \, X_0 \in N_\psi \, \text{and} \,  M_{X_0}(\psi,0^+) = \omega(X_0) \right\}.
\end{equation}
Recall from Proposition \ref{Highest-frequency-V} that $ \omega(X_0)$ is the maximal possible value that $M_{X_0}(0^+)$ may attain at $X_0$, and from Lemma \ref{nprim-N}, the frequency function $\cN_{X_0}(r)$ is almost non-decreasing and satisfies $\cN_{X_0}(\psi, 0^+) \geq 1$.

It will be convenient to also define:
\begin{align*}
\Sigma_H^t= \left\{X_0 \in \Omega \, : \, B_t(X_0) \Subset \Omega, X_0 \in N_\psi, \, y_0>t  \, \text{ and } \, M_{X_0}(\psi,0^+) = \omega(X_0) \right\}.
\end{align*}
The sets $\Sigma_H^t$ are relatively closed in $\Omega_t= \{X_0 \in \Omega : B_t^{+}(X_0) \Subset \Omega\}$. 

We will show $\Sigma_H^t$ is rectifiable for every fix $t>0$, which in turn implies the rectifiability of $\Sigma_H$.
The proof follows from the next two lemmas.

\begin{lemma}[Quantitative splitting]
\label{63}
Let $(\psi, I)$ be a variational solution on $\Omega$. For any $\rho, \gamma, N_+>0$, there exists $\eta_0 = \eta_0(\rho, \gamma, N_+)$ such that the following holds.
Let $\eta\le \eta_0$ and $X_0\in \Sigma_H^t$  with $B_{10r}(X_0)\subset \Omega$, and assume
\[
r \le \eta, \quad  \text{ and } \quad N^{\mathrm{max}}=\max_{X \in B_{3r}(X_0) \cap \Sigma^t_H} \cN_X(\psi, 3r) \leq N_+.
\]
Then, at least one of the following alternatives holds:
\begin{enumerate}
    \item either $N^{\mathrm{max}} < 1 +\gamma$ and $\Sigma_H^t \cap B_r(X_0) \subset \{Z \in B_r(X_0): \cN_Z( \gamma r) \ge N^{\mathrm{max}} -\gamma\}$

    \item or, there is a point $Y$ such that 
    \[
    \{Z \in B_r(X_0): \cN_Z( 3\rho r) \ge N^{\mathrm{max}} -\eta\} \subset B_{\rho r}(Y).
    \]    
\end{enumerate}
\end{lemma}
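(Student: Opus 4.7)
The strategy is the Naber--Valtorta quantitative splitting, adapted to our almost-monotone frequency $\cN$. Assume for contradiction the lemma fails: for some fixed $\rho, \gamma, N_+>0$ there are sequences $\eta_k\to 0$, $X_0^k \in \Sigma_H^t$ with $B_{10 r_k}(X_0^k) \subset \Omega$, $r_k \le \eta_k$, and $N^{\max}_k \le N_+$ such that neither alternative holds. Along a subsequence $N^{\max}_k \to N_\infty^\star \in [1, N_+]$ (the lower bound coming from Lemma~\ref{nprim-N} applied to $X_0^k\in \Sigma_H^t$, which lies in $B_{3r_k}(X_0^k) \cap \Sigma_H^t$). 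The failure of (2) provides two well-separated near-maximizers: pick any $Z_1^k$ in the non-empty set $\{Z \in B_{r_k}(X_0^k) : \cN_Z(\psi, 3\rho r_k) \ge N^{\max}_k - \eta_k\}$, then apply the failure of (2) with $Y = Z_1^k$ to obtain $Z_2^k$ in the same set with $|Z_2^k - Z_1^k| \ge \rho r_k$. The failure of (1) gives, up to a subsequence, either (a) $N^{\max}_k \ge 1+\gamma$ throughout, or (b) points $W^k \in \Sigma_H^t \cap B_{r_k}(X_0^k)$ with $\cN_{W^k}(\psi, \gamma r_k) < N^{\max}_k - \gamma$.

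Next, I would pass to the blow-up $v_k \to v$ of Lemma~\ref{5.1n} centered at $X_0^k$ with scale $r_k$. The limit $v$ is non-negative, subharmonic on $B_1$, and $N_\infty^\star$-homogeneous on $B_1 \setminus B_{1/2}$, while the rescaled centers $\tilde Z_i^k := (Z_i^k - X_0^k)/r_k$ converge to $\tilde Z_1, \tilde Z_2$ with $|\tilde Z_1 - \tilde Z_2| \ge \rho$. The central transfer step is to show that the near-maximality of $\cN$ at $Z_i^k$ forces $v$ to be $N_\infty^\star$-homogeneous also with respect to each $\tilde Z_i$. Using (i) the almost-monotonicity of $\cN$ from Remark~\ref{rmrk-almost-almgren} to propagate the near-maximum $\cN_{Z_i^k}(\psi, 3\rho r_k) \ge N^{\max}_k - \eta_k$ to adjacent scales, (ii) the comparison estimate Lemma~\ref{46} between frequencies at the nearby centers $X_0^k$ and $Z_i^k$, and (iii) the decay $\cV_{Z_i^k}(\psi,\cdot)\to 0$ from Lemma~\ref{44}, the integrated formula of Remark~\ref{r2} applied at $Z_i^k$ on the annulus between scales $\rho r_k$ and $3\rho r_k$ yields, after rescaling,
\[
\int_{B_{3\rho}(\tilde Z_i^k) \setminus B_{\rho}(\tilde Z_i^k)} \frac{1}{y_0^k + r_k y}\, \bigl|\nabla v_k(X)\cdot(X-\tilde Z_i^k) - N_\infty^\star v_k(X)\bigr|^2\,dX \longrightarrow 0.
\]
By weak lower semicontinuity, $\nabla v(X)\cdot(X-\tilde Z_i) = N_\infty^\star v(X)$ on $B_{3\rho}(\tilde Z_i)\setminus B_{\rho}(\tilde Z_i)$ for $i=1,2$. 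Together with $N_\infty^\star$-homogeneity at the origin, having two further centers of radial homogeneity of the same degree at macroscopic distance forces $\nabla v \cdot \mathbf n \equiv 0$ with $\mathbf n := (\tilde Z_2 - \tilde Z_1)/|\tilde Z_2 - \tilde Z_1|$. Theorem~\ref{5.3}(ii) then gives $v(X) = \alpha|X\cdot\tilde{\mathbf n}|$ with $\tilde{\mathbf n} \perp \mathbf n$ and $N_\infty^\star = 1$. This rules out branch (a), since $N^{\max}_k < 1+\gamma$ for all large $k$; in branch (b), the sharp bound $\cN_{W^k}(\psi, \gamma r_k) \ge 1 - C_\star(\gamma r_k)^{1/2}$ of Lemma~\ref{nprim-N} (valid because $W^k \in \Sigma_H^t$) eventually exceeds $N^{\max}_k - \gamma \to 1-\gamma$, contradicting the choice of $W^k$.

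The main obstacle is precisely this transfer step: establishing that the near-maximum frequency at the shifted centers $Z_i^k$---which themselves need not lie in $\Sigma_H$---passes through the blow-up to produce exact radial homogeneity of $v$ with respect to $\tilde Z_i$. This requires careful bookkeeping of almost-monotonicity in both scale directions, the comparison estimates of Section~4 bounding $\cN$ at nearby centers, and the treatment of the weight $1/y$ as a harmless $O(r_k)$ perturbation of $1/y_0^k$ (permissible because $y_0^k \ge t$). Once the two-center homogeneity is in place, the geometric deduction of translation invariance and the classification Theorem~\ref{5.3} close the contradiction.
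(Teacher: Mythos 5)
Your overall skeleton matches the paper's: argue by contradiction, extract two $\rho r_k$-separated near-maximizers $Z^1_k,Z^2_k$ from the failure of (2) and the dichotomy from the failure of (1), blow up, invoke the classification of Theorem~\ref{5.3} to force degree $1$, and kill branch (a) by $N^{\mathrm{max}}_k\to 1$ and branch (b) by the lower bound $\cN_{Y_k}(\gamma r_k)\ge 1-C_\star(\gamma r_k)^{1/2}$ of Lemma~\ref{nprim-N}. However, the step you yourself call ``the main obstacle'' is precisely where the proof lives, and your sketch of it does not go through as written. First, centering the blow-up at $X_0^k$ is problematic: Lemma~\ref{5.1n}(iii) gives homogeneity of the limit only under the frequency pinching \eqref{5-h2} \emph{at the blow-up center}, and no such pinching is available at $X_0^k$ (it is only known that $\cN_{X_0^k}(3r_k)\le N^{\mathrm{max}}_k$; the frequency at $X_0^k$ may drop well below $N^{\mathrm{max}}_k$ at smaller scales). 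Moreover the degree produced by Theorem~\ref{5.3} is $\lim\cN_{\text{center}}(r_k)$, so with center $X_0^k$ you would conclude nothing about $N^{\mathrm{max}}_k$. The paper instead centers the blow-up at the near-maximizer $Z^1_k$, where the pinching between scales $3\rho r_k$ and $3r_k$ follows from $\cN_{Z^i_k}(3\rho r_k)\ge N^{\mathrm{max}}_k-\eta_k$, $\cN_{Z^i_k}(3r_k)\le N^{\mathrm{max}}_k$ and Remark~\ref{rmrk-almost-almgren} (note this also requires taking $Z^i_k\in\Sigma_H^t$, as the paper does, so that the almost-monotonicity machinery applies at these centers). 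Second, your displayed claim that the homogeneity defect vanishes on the annulus $B_{3\rho}(\tilde Z^k_i)\setminus B_\rho(\tilde Z^k_i)$ is false in general: below the scale $3\rho r_k$ the frequency at $Z^i_k$ is not pinched from below, so the drop $\cN_{Z^i_k}(3\rho r_k)-\cN_{Z^i_k}(\rho r_k)$ need not be small; the correct annulus is between the scales $3\rho r_k$ and $3r_k$, which is where Remark~\ref{r2} is integrated in the paper.

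The deeper gap is the passage from two-center near-homogeneity to the hypothesis of Theorem~\ref{5.3}. You assert that homogeneity of the limit about $\tilde Z_1$ and $\tilde Z_2$ ``forces $\nabla v\cdot\mathbf n\equiv 0$,'' but Theorem~\ref{5.3} requires the sequence-level smallness $\int_A\frac{1}{y_0+r_ky}|\nabla v_k\cdot\mathbf n|^2\to 0$ on a full annulus $A$ centered at the blow-up point, and the naive subtraction of the two defect identities only controls $\nabla\psi_k\cdot(Z^1_k-Z^2_k)$ on the \emph{overlap} of the two good annuli, which in general does not contain the annulus needed by Theorem~\ref{5.3} (and working only with the weak limit $v$ is not enough, since strong convergence on the relevant region is exactly what is at stake). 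This is the role of Lemma~\ref{6.2}, which you never invoke: it converts the two defect estimates (with possibly different frequencies $N_{X_0},N_{Y_0}$) into a genuine directional-derivative bound on a fixed annulus around one of the centers, via integration along segments, and this is what feeds Theorem~\ref{5.3} in the paper's proof. Without Lemma~\ref{6.2} (or an equivalent quantitative argument replacing it), your transfer step is an unproved assertion, so the proposal as it stands has a genuine gap at its central point, even though the surrounding architecture and the final contradictions coincide with the paper's.
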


\begin{remark}
Lemma \ref{63} is stronger than Assumption 0.2 in \cite{edelen2019notes}, since
\[
\cN_Z(3\eta' r) \le \cN_Z(3\rho r) + 3C_N\rho r,
\]
and it suffices to choose $\eta_0' = \eta_0 / (1 + 3C_N\rho)$ in Assumption 0.2 to ensure
\[
\{Z \in B_r(X_0): \cN_Z(3\eta' r) \ge N^{\mathrm{max}} - \eta'\} \subset \{Z \in B_r(X_0): \cN_Z(3\rho r) \ge N^{\mathrm{max}} - \eta\} \subset B_{\rho r}(Y),
\]
whenever $r < \eta'$ and $\eta = \eta'(1 + 3C_N\rho)$.
\end{remark}

\begin{definition}
Let \(\nu\) be a finite Borel measure, and define its Jones number as  
\[
\beta_{\nu}(X_0,r) = \inf \left\{ r^{-3} \int_{B_r^{+}(X_0)} d^2(X,L) \, d\nu(X) \, : \, \text{$L$ is a line in $\mathbb{R}^2$} \right\}.
\]  
The smallness of \(\beta\) indicates how closely \(\nu\) is concentrated near a line. 
\end{definition}

Next, we establish an upper bound for the $\beta$-number in terms of the frequency drop over the support of \(\nu\).

\begin{lemma}[Effective control of the $\beta$-number]
\label{61}
Let $(\psi, I)$ be a variational solution on $\Omega$, and let $X_0\in \Sigma_H^t$ with $B_{10r}(X_0)\subset \Omega$ such that $\cN(X_0) \leq N_+$. 
Let $\nu$ be a finite (positive) Borel measure.
There exist constants $C_0, \delta_0>0$, depending only on $N_+$, such that if 
\[
r \le \delta_0 \quad \text{ and } \quad \cN_{X_0}(8r) - \cN_{X_0}(\delta_0 r) < \delta_0,
\]
then
$$
\beta_{\nu}(X_0,r) \leq C_0r^{-1} \int_{B_r(X_0)} \left[ \cN_X(\psi,8r)-\cN_X(\psi,r) \right] \, d\nu(X) + C_0 \nu(B_r(X_0)).
$$
\end{lemma}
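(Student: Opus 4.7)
The plan is to argue by contradiction and compactness, in the spirit of Naber--Valtorta. Suppose the claim fails. Then there exist a sequence of variational solutions $(\psi_k,I_k)$ on $\Omega_k$, centers $X_{0,k}\in\Sigma_H^t$ with $B_{10r_k}(X_{0,k})\Subset\Omega_k$, radii $r_k\to 0$, and parameters $\eta_k\to 0$ satisfying $\cN_{X_{0,k}}(\psi_k,8r_k)-\cN_{X_{0,k}}(\psi_k,\eta_k r_k)<\eta_k$, together with finite Borel measures $\nu_k$ such that
\[
\beta_{\nu_k}(X_{0,k},r_k)\;>\;k\cdot\Bigl(r_k^{-1}\!\!\int_{B_{r_k}(X_{0,k})}\!\![\cN_X(\psi_k,8r_k)-\cN_X(\psi_k,r_k)]\,d\nu_k+\nu_k(B_{r_k}(X_{0,k}))\Bigr).
\]
I renormalize $\nu_k$ so that the parenthesized factor equals $1$. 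Setting $v_k(Z):=\psi_k(X_{0,k}+r_k Z)/\sqrt{r_k^{-1}H_{X_{0,k}}(\psi_k,r_k)}$ and letting $\tilde\nu_k$ be the pushforward of $\nu_k$ under $X\mapsto(X-X_{0,k})/r_k$, the scale-invariance $\cN_Z(v_k,s)=\cN_{X_{0,k}+r_k Z}(\psi_k,r_k s)$ yields $\tilde\nu_k(B_1)\le 1$ and $\int_{B_1}[\cN_Z(v_k,8)-\cN_Z(v_k,1)]\,d\tilde\nu_k\le r_k\to 0$.

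Next I identify the blow-up profile. By Lemma~\ref{5.1n}, the frequency-drop hypothesis at $X_{0,k}$ gives a subsequence with $v_k\rightharpoonup v$ weakly in $W^{1,2}(B_8)$, where $v$ is $1$-homogeneous and non-negative. The condition $X_{0,k}\in\Sigma_H$ forces $M_{X_{0,k}}(0^+)=y_0\pi$, which by Proposition~\ref{Highest-frequency-V} implies $I_k\to 1$ Lebesgue-a.e.\ on $B_1$ after rescaling; combined with the classification of $1$-homogeneous non-negative subsolutions in Proposition~\ref{10.1}(3) and the limiting domain-variation identity (as in the proof of Theorem~\ref{5.3}(ii)), this pins down $v(Z)=\alpha\,|Z\cdot\mathbf n|$ with $\alpha=\sqrt{y_0/\pi}$ and some unit vector $\mathbf n$. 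By weak-$\ast$ compactness a further subsequence yields $\tilde\nu_k\rightharpoonup^{\ast}\tilde\nu$ as Radon measures on $\overline{B_1}$.

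The decisive quantitative step is a lower bound on the frequency drop of the limit profile at off-spine points. A direct computation with $v(Z)=\alpha|Z\cdot\mathbf n|$ centered at $Z_0$ with $|Z_0\cdot\mathbf n|=\tau\in(0,1]$ gives, in the constant-weight limit,
\[
\cN_{Z_0}(v,s)\;\approx\;\frac{s^2}{2\tau^2+s^2},\qquad\text{hence}\qquad\cN_{Z_0}(v,8)-\cN_{Z_0}(v,1)\;\ge\; c_0\,\tau^2,
\]
for a universal $c_0>0$. Interior elliptic regularity on $\{v>0\}$ upgrades weak convergence to $C^{1,\alpha}_{\mathrm{loc}}$-convergence $v_k\to v$ there; combined with the almost-monotonicity of $\cN$ (Lemma~\ref{nprim-N}) and the relative-frequency estimate of Lemma~\ref{46}, this transfers the inequality to $v_k$ uniformly: $\cN_Z(v_k,8)-\cN_Z(v_k,1)\ge(c_0/2)|Z\cdot\mathbf n|^2$ for $Z\in B_1$ suitably separated from $L:=\{Z\cdot\mathbf n=0\}$ and all sufficiently large $k$. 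Integrating against $\tilde\nu_k$ then gives $\int_{B_1}|Z\cdot\mathbf n|^2\,d\tilde\nu_k\le(2/c_0)\,r_k$.

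Finally, testing the infimum defining $\beta_{\nu_k}$ against the affine line $L_k:=X_{0,k}+r_k L$ and unwinding $d(X,L_k)=r_k|Z\cdot\mathbf n|$ gives
\[
\beta_{\nu_k}(X_{0,k},r_k)\;\le\; r_k^{-1}\int_{B_1}|Z\cdot\mathbf n|^2\,d\tilde\nu_k\;\le\;\frac{2}{c_0},
\]
a bound independent of $k$, contradicting $\beta_{\nu_k}>k\to+\infty$. I expect the principal obstacle to lie in the quantitative off-spine bound: one must control the anisotropic rescaled weight $1/(y_{0,k}+r_k z_2)$ uniformly in $k$, ensure that the $C^{1,\alpha}$-convergence survives close enough to $L$ for the pointwise inequality $c_0|Z\cdot\mathbf n|^2$ to hold after a dyadic decomposition in $|Z\cdot\mathbf n|$, and track boundary effects when $Z$ approaches $\partial B_1$; these are handled via Lemma~\ref{46} together with the assumed upper bound $\cN_{X_{0,k}}(\psi_k,8r_k)\le N_+$ and the almost-monotonicity results of Section~4.
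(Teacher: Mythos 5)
Your contradiction-and-compactness scheme diverges from the paper's direct argument (the paper pigeonholes a point $Y_0\in\Sigma_H^t\cap\overline{B_r(X_0)}$ with minimal frequency drop, uses Remark \ref{r2} together with Lemma \ref{6.2} to bound $\int_{B_{6r}(Y_0)\setminus B_{2r}(Y_0)}\frac1y|\nabla\psi\cdot(Y-Y_0)|^2$ by $CrH_{Y_0}(6r)[\cN_Y(8r)-\cN_Y(r)+Cr]$, tests $\beta_\nu$ with the line through $Y_0$ orthogonal to $\nabla\psi(\tilde X)/|\nabla\psi(\tilde X)|$ averaged against the Dirichlet measure on an annulus, and only at the very end uses a soft compactness argument for the non-degeneracy $\frac{r}{H_{Y_0}(6r)}\int_A\frac1y|\nabla\psi|^2\ge c$), and it has two genuine gaps. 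First, the identification of the blow-up limit as $v(Z)=\alpha|Z\cdot\mathbf n|$ is not justified. Proposition \ref{10.1}(3) classifies the Lipschitz-normalized blow-ups $\psi(X_0+rX)/r$, not the $H$-normalized sequence $v_k$; at points of $\Sigma_H$ the Lipschitz blow-up may vanish identically while $v_k$ stays normalized, and then Lemma \ref{5.1n} only gives a nonnegative subharmonic limit, homogeneous of some order $N_\infty\ge1$ on an annulus. Passing the domain-variation identity to the limit, as in Theorem \ref{5.3}(ii), requires strong $W^{1,2}$ convergence of $v_k$, which the paper explicitly warns is not guaranteed (the discussion after Lemma \ref{5.1d}); the paper only obtains the two-plane profile when an extra directional-derivative smallness $\int_A\frac1y|\nabla v_k\cdot\mathbf n|^2\to0$ is available, and that input always comes from \emph{other} nearby pinched points via Lemma \ref{6.2} (as in Lemma \ref{63} and Lemma \ref{85}). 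In your setup the pinching hypothesis lives only at the single center $X_{0,k}$, and the measure $\nu_k$ provides no such geometric information, so nothing forces $v$ to be $\alpha|Z\cdot\mathbf n|$ (nor $N_\infty=1$ beyond the one-sided bound).

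Second, even granting $v=\alpha|Z\cdot\mathbf n|$, the transfer of the pointwise drop bound $\cN_Z(v_k,8)-\cN_Z(v_k,1)\ge(c_0/2)|Z\cdot\mathbf n|^2$ to the approximations, uniformly on $\operatorname{supp}\tilde\nu_k$, does not follow from $C^{1,\alpha}_{\mathrm{loc}}$ convergence on $\{v>0\}$: the balls $B_1(Z)$, $B_8(Z)$ cross the spine $L=\{Z\cdot\mathbf n=0\}$, where only weak convergence holds and Dirichlet energy may concentrate; such concentration inflates $\cN_Z(v_k,1)$ much more than $\cN_Z(v_k,8)$ and can destroy the lower bound on the drop (this is the upper-bound direction on the energy, which weak convergence does not give). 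Moreover there is a quantitative mismatch that a qualitative compactness argument cannot repair: to contradict $\beta_{\nu_k}>k$ you need $\int_{B_1}|Z\cdot\mathbf n|^2\,d\tilde\nu_k\le Cr_k$, i.e.\ the pointwise inequality must hold $\tilde\nu_k$-a.e.\ with errors that are $O(r_k)$ after integration; excising a neighborhood $\{|Z\cdot\mathbf n|<\epsilon\}$ costs $\epsilon^2\tilde\nu_k(B_1)$, and $r_k^{-1}\epsilon^2\to\infty$ for any fixed $\epsilon$, while the convergence errors of $v_k\to v$ at stage $k$ are merely $o(1)$ and bear no relation to $r_k$. (There is also the lesser issue, shared with the statement as written, that $\cN_X$ and its almost-monotonicity are only controlled at points of $\Sigma_H^t$, so the pointwise bound cannot even be formulated on an arbitrary $\operatorname{supp}\nu_k$.) The paper's direct mechanism — representing squared distances to a line by $|\nabla\psi(\tilde X)\cdot(X-Y_0)|^2$, averaging in $\tilde X$ against $\frac1y|\nabla\psi|^2$, and invoking Lemma \ref{6.2} plus the soft non-degeneracy \eqref{lem61-eq2} — is designed precisely to produce the linear-in-drop, $O(r)$-error estimate that your scheme cannot reach; I recommend reworking the proof along those lines.
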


We postpone the proofs of these lemmas to the following subsections. 
The rectifiability of \(\Sigma_H^t\) will be deduced using the Naber–Valtorta method \cite{zbMATH06686585}; see also \cite{edelen2019notes}, which demonstrates that the following result follows from the two preceding lemmas combined with the almost monotonicity of \(\cN\).

\begin{theorem}
\label{72}
Let $(\psi, I)$ be a variational solution on $\Omega \supset B_1^+$.  
Then there exists a constant $C = C(t)$ such that:
\begin{enumerate}
\item For every $r\le 1$ we have the estimate 
$$ \left|B_r\left(\Sigma_H^t \cap B^{+}_{1/2} \right) \right| \leq Cr. $$

\item $\Sigma_H^t\cap B_{1/2}$ is a rectifiable curve and
\[
\mathcal{H}^{1}\left(B^{+}_{1/2} \cap \Sigma_H^t \right) \leq C.
\]

\item For $\mathcal{H}^{1}$-a.e. point $X_0 \in \Sigma_H$, we have  $\cN_{X_0}(\psi, 0^+) = 1$.
\end{enumerate}
\end{theorem}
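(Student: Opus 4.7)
The plan is to apply the Naber--Valtorta rectifiable Reifenberg framework from \cite{zbMATH06686585}, as exposited and adapted to free-boundary settings in \cite{edelen2019notes}. The three structural inputs required by this framework are now all in place: almost monotonicity of the frequency (Remark \ref{rmrk-almost-almgren}), the quantitative splitting dichotomy (Lemma \ref{63}), and the bound on Jones' $\beta$-number by the frequency drop (Lemma \ref{61}). The role played by $\cN_X(\psi,\cdot)$ here is exactly analogous to that of the Almgren frequency for multivalued harmonic functions in the original Naber--Valtorta treatment.

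For parts \textit{(1)} and \textit{(2)}, I would first extract a uniform upper bound $N_+$ on $\cN_X(\psi,3)$ for $X\in B_{1/2}^+\cap \Sigma_H^t$ by combining the Lipschitz bound \eqref{lip-condition} with the lower bound on $H_X$ coming from Lemma \ref{42-N}. I would then run the Naber--Valtorta covering scheme: at every dyadic scale $s\in(r,1)$ and every center $X\in \Sigma_H^t$, Lemma \ref{63} forces a dichotomy---either the local maximum frequency drops by a definite amount (the mass-drop alternative, which can occur only finitely many times per tree branch by almost monotonicity), or the relevant portion of $\Sigma_H^t$ is trapped inside a much smaller ball (the concentration alternative). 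Feeding the accumulated frequency drop along the tree of scales into Lemma \ref{61} converts it to a summable $\beta$-number estimate, and the discrete Reifenberg theorem of Naber--Valtorta yields the packing estimate $\sum_i r_i\le C$ for the radii of any Vitali-type cover of $\Sigma_H^t\cap B_{1/2}^+$, which is equivalent to \textit{(1)}. The rectifiable Reifenberg theorem of the same work upgrades this packing estimate to the countable $1$-rectifiability and the Hausdorff bound in \textit{(2)}.

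For part \textit{(3)}, the rectifiability from \textit{(2)} combined with the blow-up classification from Section \ref{s4} does the work. At $\mathcal{H}^1$-a.e.\ $X_0\in\Sigma_H^t$, the rectifiability provides a unique approximate tangent line $L_0$. The frequency-normalized blow-up $v$ at $X_0$ produced by Lemma \ref{5.1n} is non-trivial (by the normalization $\int_{\partial B_1}v^2\,d\mathcal{H}^1=y_0$), non-negative, subharmonic, harmonic on $\{v>0\}$, and it vanishes identically on $L_0$, as it is the limit of rescaled free-boundary values. Writing $v=r^{N_\infty}g(\theta)$ on the annulus $B_1\setminus B_{1/2}$ via the homogeneity from Lemma \ref{5.1n}\textit{(iii)}, the ODE $g''+N_\infty^2 g=0$ on $\{g>0\}$ together with $g\ge 0$ vanishing at two diametrically opposite points of $S^1$ forces $N_\infty=1$. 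Hence $\cN_{X_0}(\psi,0^+)=N_\infty=1$ at $\mathcal{H}^1$-a.e.\ $X_0\in\Sigma_H^t$, and the conclusion extends to $\Sigma_H=\bigcup_{n\ge 1}\Sigma_H^{1/n}$ by countable union.

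The main obstacle I anticipate is the bookkeeping of constants in the Naber--Valtorta iteration, in particular choosing the parameters $\rho,\gamma,\eta$ of Lemma \ref{63} compatibly with the almost monotonicity constant $C_N$ of Remark \ref{rmrk-almost-almgren} and with the degeneracy of $\mathcal{L}$ as $y\to 0$. Essentially all constants throughout the frequency section deteriorate as $y_0\to t\to 0^+$, so the quantitative argument has to be carried out at each fixed $t>0$; this produces an unavoidable $t$-dependent constant $C=C(t)$ in \textit{(1)} and \textit{(2)}, but is harmless for \textit{(3)} because the countable exhaustion $\Sigma_H=\bigcup_n \Sigma_H^{1/n}$ preserves $\mathcal{H}^1$-a.e.\ statements.
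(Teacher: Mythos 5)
Your treatment of parts \textit{(1)} and \textit{(2)} is essentially what the paper does: the paper, too, treats these as a direct application of the Naber--Valtorta machinery once the three structural inputs (Remark \ref{rmrk-almost-almgren}, Lemma \ref{63}, Lemma \ref{61}) are in place, and it explicitly writes ``We only need to prove part (3).'' So far so good.

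For part \textit{(3)}, however, your argument takes a genuinely different route from the paper, and it has a gap. You argue that at an $\mathcal{H}^1$-a.e.\ point $X_0\in\Sigma_H^t$, the approximate tangent line $L_0$ forces the frequency blow-up $v=r^{N_\infty}g(\theta)$ to satisfy $g\ge 0$, $g(0)=g(\pi)=0$, and $g''+N_\infty^2 g=0$ on $\{g>0\}$, and you claim this forces $N_\infty=1$. That last step is false as stated: for example $N_\infty=2$ with $g(\theta)=(\sin 2\theta)_+\chi_{[0,\pi/2]}(\theta)$ (extended by zero) satisfies all of those constraints, is continuous, and the corresponding $v=r^2 g(\theta)$ is even subharmonic, since the jump of $g'$ at $\theta=0$ and $\theta=\pi/2$ is nonnegative. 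Analogous examples exist for every integer $N_\infty\ge 1$. The constraints ``nonnegative, subharmonic, harmonic on the positivity set, homogeneous of degree $N_\infty$, vanishing at two antipodal points of $S^1$'' simply do not pin down $N_\infty$. The extra rigidity the paper exploits in this circle of ideas (Theorem \ref{5.3}) comes from a quantitative \emph{one-dimensionality} estimate, $\int_A |\nabla v_k\cdot\mathbf{n}|^2\to 0$, fed through the inner-variation (Euler--Lagrange) identity for the rescaled functions, and \emph{not} from the ODE on $S^1$ alone. But obtaining that one-dimensionality estimate via Lemma \ref{6.2} as in Lemma \ref{85} requires the frequency pinching $\cN_X(\psi,r)\le 1+\eta$ near $X_0$, which already presupposes $\cN_{X_0}(\psi,0^+)=1$ --- exactly the statement you are trying to prove. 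So that avenue is circular.

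The paper's actual proof of \textit{(3)} is a covering--contradiction argument, not a blow-up classification: it sets $A_\epsilon=\{X\in\Sigma_H^t\cap B_{1/2}^+:\cN_X(\psi,0^+)>1+\epsilon\}$, assumes $\mathcal{H}^1(A_\epsilon)=a>0$, and runs an iterated refinement of any efficient cover of $A_\epsilon$ using the quantitative splitting dichotomy of Lemma \ref{63}. Since every point of $A_\epsilon$ has frequency strictly above $1+\epsilon$, alternative (1) of Lemma \ref{63} is ruled out at every scale, so alternative (2) concentrates the high-frequency region in a single small ball; the remaining ``bad'' balls experience a definite frequency drop and so, by almost monotonicity, can appear only boundedly many times along any branch of the covering tree. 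Iterating then produces a cover of $A_\epsilon$ whose total length is an arbitrarily small fraction of the original, contradicting $a>0$. This bypasses the blow-up step entirely and avoids the circularity issue. You would need to replace the ODE step of your argument with this (or an equivalent) covering argument.
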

\begin{proof}
We only need to prove part $(3)$. 
For a given constant $ \epsilon > 0 $, define the set
\[
A_\epsilon := \{X \in \Sigma_H^t \cap B^{+}_{1/2} : \cN_{X}(\psi, 0^+) > \mu(X) + \epsilon\}.
\]
We aim to show that \( \mathcal{H}^{1}(A_\epsilon) = 0 \) for every \( \epsilon > 0 \). Toward a contradiction, assume that \( \mathcal{H}^{1}(A_\epsilon) = a > 0 \). Then, for small enough \( \delta > 0 \), we have
\begin{equation} \label{theorem72-eq3}
\frac{3a}{4} \le \mathcal{H}^{1}_\delta(A_\epsilon) \le \frac{5a}{4}.
\end{equation}
Choose a covering \( \bigcup_{i} B_{r_i}(X_i) \supset A_\epsilon \) such that
\begin{equation} \label{theorem72-eq1}
\sum_{i} r_i \le \frac{3a}{2},
\end{equation}
and each \( r_i \le \min(r_0, \delta) \), where \( r_0 \) will be chosen later.

Apply Lemma \ref{63} with \( \gamma = \epsilon/2 \) and a small constant \( \rho > 0 \) (which is a dimensional constant and will be fixed later) to find \( \eta_0 = \eta_0(\rho, \epsilon) \). For any ball \( B_{r_i}(X_i) \), pick a point \( X \in A_\epsilon \cap B_{r_i}(X_i) \). Then,
\[
\cN_{X}(3r_i) \ge \cN_X(0^+) - C_N r_i \ge 1 + \epsilon - C_N \eta > 1 + \frac{\epsilon}{2},
\]
by choosing \( \eta < \eta_0 \) such that \( C_N \eta < \epsilon/2 \). In particular, set \( r_0 = \eta \), so the condition \( r_i \le \eta \) is ensured.
Therefore,
\[
N^{\mathrm{max}} := \max_{Z \in \overline{B_{3r_i}(X)} \cap \Sigma^t_H} \cN_Z(\psi, 3r_i) > 1 + \gamma,
\]
and by Lemma~\ref{63}, alternative (2) must hold. Thus, there exists a point \( Y_i \) such that
\[
E := \{Z \in A_\epsilon \cap B_{r_i}(X) : \cN_Z(3\rho r_i) \ge N^{\mathrm{max}} - \eta\} \subseteq B_{\rho r_i}(Y_i).
\]

We now cover \( A_\epsilon \cap B_{r_i}(X_i) \) by balls \( B_{\rho r_i}(Z_j) \), with \( Z_j \in A_\epsilon \cap B_{r_i}(X_i) \), so that  \( B_{\rho r_i/2}(Z_j) \) are disjoint. Define:

\noindent 
- A {\it good ball} as one intersecting \( B_{3\rho r_i}(Y_i) \),\\
- A {\it bad ball} as one disjoint from \( B_{3\rho r_i}(Y_i) \), so that for such balls
\[
\max_{Z \in \overline{B_{3\rho r_i}(Z_j)} \cap \Sigma^t_H} \cN_Z(\psi, 3\rho r_i) \le N^{\mathrm{max}} - \eta.
\]

The number of good balls is bounded by a constant \( C_g \) (depending only on dimension). The number of bad balls is at most \( C_b \rho^{-2} \), where \( C_b \) is another dimensional constant.

Let \( U_1 \) denote the collection of these balls covering \( A_\epsilon \cap B_{r_i}(X_i) \) with radius \( \rho r_i \). 
For each ball \( B_{\rho r_i}(Z_j) \) in \( U_1 \), we repeat the construction: we select a ball \( B_{\rho^2 r_i}(Y_{ji}) \) and cover \( A_\epsilon \cap B_{\rho r_i}(Z_j) \) with good and bad balls of radius \( \rho^2 r_i \). Denote this new collection by \( U_2 \).
Iterating this procedure, we obtain successive families \( U_3, U_4, \cdots \), where each collection \( U_k \) consists of balls of radius \( \tau_k = \rho^k r_i \).

Consider \( U_k \) for large \( k \), and follow the ancestry of any ball. It cannot contain more than \( N = N(\eta) \approx N^{\mathrm{max}}/\eta \) bad balls; otherwise, the frequency in the current ball would fall below zero. Thus, the number of balls in \( U_k \) is bounded by
\[
|U_k| \le (C_b \rho^{-2})^N \cdot (C_g)^k.
\]

Therefore, the 1-dimensional size of all balls in \( U_k \) is
\begin{equation} \label{theorem72-eq2}
\sum_{B_{\tau_k}(Z) \in U_k} \tau_k \le (C_b \rho^{-2})^N (C_g)^k \rho^k r_i = (C_b)^N \rho^{k - 2N} (C_g)^k r_i \le (C_b)^N \rho^{\frac{k}{2} - 2N} r_i,
\end{equation}
where we have used \( \rho \le C_g^{-2} \), a choice possible by fixing \( \rho \) sufficiently small.

We first choose \( \rho \), then apply Lemma~\ref{63} to obtain \( \eta_0(\rho, \epsilon) \), then choose \( \eta \le \eta_0 \), and finally \( r_0 = \eta \). Then, for any covering \(  \bigcup_i B_{r_i}(X_i) \supset A_\epsilon \) with \( r_i \le \min(r_0, \delta) \), we have constructed a refined covering of each \( A_\epsilon \cap B_{r_i}(X_i) \) by balls of radius \( \tau_k = \rho^k r_i \) such that their total length satisfies \eqref{theorem72-eq2}.

Now choose \( k \) large enough so that \( (C_b)^N \rho^{\frac{k}{2} - 2N} \le \frac{1}{3} \). Then, by \eqref{theorem72-eq1},
\[
\mathcal{H}^{1}_\delta(A_\epsilon) \le \sum_i \frac{r_i}{3} \le \frac{a}{2},
\]
which contradicts \eqref{theorem72-eq3}. Therefore, \( \mathcal{H}^{1}(A_\epsilon) = 0 \), and the proof is complete.
\end{proof}

\subsection{Quantitative splitting}
For the proof of the quantitative splitting property, we require the following auxiliary lemma.

\begin{lemma}
\label{6.2}
Suppose $\psi$ satisfies the Lipschitz condition \eqref{lip-condition} in $\mathbb{R}^2_+$. 
Fix points $X_0 \in \mathbb{R}^2_+$ and  $Y_0 \in B_r^{+}(X_0)$ with $r<1$, and a number $N_+ > 1$. 
Assume there are two constants $N_{Y_0}, N_{X_0} \in [1/2, N_+]$ and a number $\delta > 0$ such that
$$
\int_{B_{10r}^{+}(Y_0) \setminus B_{r}^{+}(Y_0)} \frac{1}{y} \left|\nabla \psi(X) \cdot (X-Y_0) - N_{Y_0} \psi(X) \right|^2 \, dX \leq \delta,
$$
and also
$$
\int_{B_{10r}^{+}(X_0) \setminus B_{r}^{+}(X_0)} \frac{1}{y}  \left|\nabla \psi(X) \cdot (X - X_0) - N_{X_0} \psi(X) \right|^2 \, dX \leq \delta. 
$$
Then, there is a constant $C = C(N_+)$ such that
\begin{equation}
\label{d1}
\left|N_{X_0} - N_{Y_0}\right|^2\int_{B_{10r}^{+}(X_0) \setminus B_{r}^{+}(X_0)}  \frac{1}{y}  \psi^2 \, dX \leq C \delta,
\end{equation}
and
\begin{equation}
\label{d2}
\int_{B_{9r}^{+}(X_0) \setminus B_{2r}^{+}(X_0)}  \frac{1}{y}  \left|\nabla \psi(X) \cdot (Y_0-X_0) \right|^2 \,dX \leq C \delta.
\end{equation}
\end{lemma}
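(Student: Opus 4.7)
Set $v := Y_0 - X_0$ and $\lambda := N_{X_0} - N_{Y_0}$, so $|v| \le r$ and $|\lambda| \le 2N_+$. Introduce the frequency-defect $F_Z(X) := \nabla \psi(X) \cdot (X-Z) - N_Z\, \psi(X)$; the two hypotheses then read $\|F_{X_0}\|^2_{L^2_{\rW}} \le \delta$ and $\|F_{Y_0}\|^2_{L^2_{\rW}} \le \delta$ on the respective shells around $X_0$ and $Y_0$. The driving identity
\[
F_{X_0}(X) - F_{Y_0}(X) \;=\; \nabla\psi(X)\cdot v \;-\; \lambda\, \psi(X)
\]
says that imposing two different approximate frequencies at nearby points forces the directional derivative $\nabla\psi \cdot v$ to be approximately $\lambda\psi$. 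A short triangle-inequality check (using $|v|\le r$) shows that the inner annulus $A := B_{9r}^+(X_0)\setminus B_{2r}^+(X_0)$ is contained in both hypothesis annuli; hence on $A$,
\[
\int_A \frac{(\lambda\psi - \nabla\psi\cdot v)^2}{y}\,dX \le 4\delta.
\]

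The next step is to decouple the two squared terms. Expanding the square gives
\[
\lambda^2 \int_A \tfrac{\psi^2}{y} + \int_A \tfrac{(\nabla\psi\cdot v)^2}{y} \;\le\; 4\delta + 2\lambda \int_A \tfrac{\psi\,\nabla\psi\cdot v}{y}\,dX,
\]
and the cross term is handled by integration by parts. Using $\psi\nabla\psi\cdot v = \tfrac{1}{2} v\cdot\nabla(\psi^2)$ together with $\nabla(1/y) = -e_2/y^2$, one gets
\[
\int_A \frac{\psi\nabla\psi\cdot v}{y}\,dX \;=\; \frac{1}{2}\int_{\partial A} \frac{(v\cdot\nu)\,\psi^2}{y}\,d\mathcal{H}^1 \;+\; \frac{v_2}{2}\int_A \frac{\psi^2}{y^2}\,dX.
\]
The boundary contribution from $\{y=0\}$ vanishes since the Lipschitz condition \eqref{lip-condition} together with $\psi|_{\{y=0\}}=0$ forces $|\psi| \le C_V y^2/2$, so $\psi^2/y = O(y^3)$. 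On the spherical boundaries $|v\cdot\nu|\le |v|\le r$ and the surface contribution is $\lesssim r\,[H_{X_0}(\psi,2r) + H_{X_0}(\psi,9r)]$; the interior correction is bounded either by $|v|/(\text{lower bound of }y)\cdot \int_A \psi^2/y$ (when $y_0 \gtrsim r$) or by the absolute pointwise bound $\psi^2/y^2 \le C_V^2 y^2/4$ (when $y_0 \lesssim r$). Exploiting the approximate $N_{X_0}$-homogeneity forced by the $F_{X_0}$-hypothesis (which makes $s\,H_{X_0}(\psi,s)$ comparable to $\int_A\psi^2/y$ up to $O(\delta)$), Cauchy--Schwarz and Young's inequality absorb the cross term into the left-hand side, yielding
\[
\lambda^2 \int_A \frac{\psi^2}{y}\,dX + \int_A \frac{(\nabla\psi\cdot v)^2}{y}\,dX \;\le\; C(N_+)\,\delta,
\]
which is exactly \eqref{d2}.

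For \eqref{d1} there remains the passage from $A$ to the larger annulus $B_{10r}^+(X_0) \setminus B_r^+(X_0)$. Using only the $F_{X_0}$-hypothesis, a one-sided Almgren-type computation on $s\mapsto s^{-2N_{X_0}-1}H_{X_0}(\psi,s)$ shows this quantity is nearly constant on $[r,10r]$, and consequently
\[
\int_{B_{10r}^+(X_0)\setminus B_r^+(X_0)} \tfrac{\psi^2}{y}\,dX \;\le\; C(N_+)\int_A \tfrac{\psi^2}{y}\,dX + C(N_+)\,\delta.
\]
Multiplying by $\lambda^2 \le 4N_+^2$ and combining with the previous display gives \eqref{d1}. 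The main obstacle is the cross-term step in the middle paragraph: because the singular weight $1/y$ is not translation-invariant, integration by parts against the constant vector field $v$ produces the extra bulk integral $v_2\int\psi^2/y^2$, which is integrable only because the Lipschitz hypothesis forces the quadratic vanishing of $\psi$ on the axis. Controlling this bulk term together with the boundary term $r(H_{X_0}(\psi,2r)+H_{X_0}(\psi,9r))$ in a way that keeps the constant depending only on $N_+$, with no hidden dependence on $y_0/r$ or on $C_V$ beyond the axis-vanishing check, is the subtle point of the argument.
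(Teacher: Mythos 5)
Your opening reduction is correct and coincides with the paper's first move: the annulus $A = B_{9r}^{+}(X_0)\setminus B_{2r}^{+}(X_0)$ lies inside both hypothesis annuli, so the triangle inequality gives $\int_A \frac{1}{y}\bigl(\lambda\psi - \nabla\psi\cdot v\bigr)^2\,dX \le 4\delta$, where $v=Y_0-X_0$ and $\lambda = N_{X_0}-N_{Y_0}$. The gap is in the decoupling step. The hypothesis says only that $\nabla\psi\cdot v = \lambda\psi + e$ with $\|e\|_{L^2_{\rW}(A)}\le 2\sqrt\delta$; it does \emph{not} say that $\lambda\psi$ and $\nabla\psi\cdot v$ are each small. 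Expanding the square and substituting this decomposition gives
\[
2\lambda\int_A\frac{\psi\,\nabla\psi\cdot v}{y}\,dX
\;=\;2\lambda^2\int_A\frac{\psi^2}{y}\,dX \;+\;2\lambda\int_A\frac{\psi\,e}{y}\,dX,
\]
and the first summand is \emph{exactly} the same order as the quantity $\lambda^2\int_A\psi^2/y$ you are trying to bound; after Young on the error term, the expanded inequality collapses to a tautology of the form $\|\nabla\psi\cdot v\|^2 \le C\delta + C\lambda^2\int\psi^2/y$, which is circular. Integration by parts cannot repair this: your boundary term $\tfrac12\int_{\partial A}(v\cdot\nu)\psi^2/y$ is bounded only by $r\bigl(H_{X_0}(\psi,2r)+H_{X_0}(\psi,9r)\bigr)$, which under the approximate homogeneity you invoke is \emph{comparable} to $\int_A\psi^2/y$; multiplying by $2|\lambda|$ yields a term of order $|\lambda|\int_A\psi^2/y$, which dominates $\lambda^2\int_A\psi^2/y$ exactly in the relevant regime $|\lambda|<1$ and hence cannot be absorbed. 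The bulk term $\lambda v_2\int_A\psi^2/y^2$ has the same defect whenever $r/y_0$ is not $\lesssim|\lambda|$, which you cannot arrange.

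The paper's proof never expands this square and, crucially, extracts more than the single combined estimate $\|\lambda\psi-\nabla\psi\cdot v\|\lesssim\sqrt\delta$ from the hypotheses. It first reduces \eqref{d2} to \eqref{d1} via the triangle inequality you already wrote, then proves \eqref{d1} by integrating the two approximate first-order ODEs in their own right: along radial rays from $X_0$ (and, separately, from $Y_0$) one gets $\int_{\partial B_s^+}\frac1y|T^{-N_{X_0}}\psi(X_0+T(X-X_0))-\psi(X)|^2\,d\mathcal{H}^1\le C\delta$, and along the segment $X\mapsto X+t(Y_0-X_0)$ one gets $\int\frac1y|e^{N_{Y_0}-N_{X_0}}\psi(X+Y_0-X_0)-\psi(X)|^2\,dX\le C\delta$. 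Taking $T=2$ and telescoping these four estimates produces $|2^{-N_{X_0}}-2^{-N_{Y_0}}|^2\int\psi^2/y\lesssim\delta$, and the elementary bound $|2^{-N_{X_0}}-2^{-N_{Y_0}}|\ge c(N_+)|N_{X_0}-N_{Y_0}|$ then gives \eqref{d1}. The mechanism that isolates $\lambda^2$ is the \emph{differential} dilation factor $2^{-N_{X_0}}$ versus $2^{-N_{Y_0}}$ applied to the same function; your argument uses only the difference of the two hypotheses and has no analogue of this mechanism, and that is the missing idea.
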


\begin{proof}
For the sake of simplicity, we present the proof in the case \( r = 1 \).
We can rewrite \(Y_0 - X_0 = (X - X_0) - (X - Y_0)\) for any \(X\), leading to the identity  
\[
\nabla \psi(X) \cdot (Y_0-X_0) = \nabla \psi(X) \cdot (X-X_0) - \nabla \psi(X) \cdot ( X - Y_0).
\]  
Integrating this expression, we obtain  
\[
\begin{aligned}
\left \| \nabla \psi(X) \cdot (Y_0 - X_0) - (N_{X_0} -N_{Y_0}) \psi(X) \right\|_{L^2_{\rW} \left(B_{9}^+{(X_0)} \setminus B_2^+{(X_0)} \right)}  
& \leq  \left \| \nabla \psi(X) \cdot (X -X_0) - N_{X_0} \psi(X) \right\|_{L^2_{\rW} \left(B_{9}^{+}(X_0) \setminus B_2^{+}(X_0) \right)} \\
& \hspace{-4cm} + \left\| \nabla \psi \left(X+(X_0-Y_0) \right) \cdot (X-Y_0) - N_{Y_0} \psi(X+(X_0-Y_0)) \right\|_{L^2_{\rW} \left(B_{9}^{+}(Y_0) \setminus B_2^{+}(Y_0) \right)} \\
& \leq 2 \sqrt{\delta}.
\end{aligned}
\]
In particular, this result confirms that \eqref{d1} implies \eqref{d2}. 

For any \( X \in B_8^{+}(X_0) \setminus B_3^{+}(X_0) \), the line segment \( X + t(Y_0-X_0) \), where \( t \in [0, 1] \), remains within \( B_9^{+}(X_0) \setminus B_2^{+}(X_0) \). Consequently, we have:
\[
\begin{aligned}
\psi(X+Y_0-X_0)& e^{N_{Y_0} - N_{X_0}} - \psi(X)  
 = \int_0^1 \frac{d}{dt} \left(e^{(N_{Y_0} - N_{X_0})t} \psi(X+t(Y_0-X_0)) \right) \, dt \\
& = \int_0^1 e^{(N_{Y_0} - N_{X_0})t} \left[\nabla \psi(X+t(Y_0-X_0)) \cdot (Y_0-X_0) - (N_{X_0} - N_{Y_0}) \psi(X+t(Y_0-X_0)) \right] \, dt.
\end{aligned}
\]
We square the expression and integrate over the annulus \( B^{+}_8(X_0) \setminus B^{+}_3(X_0) \) with respect to the weighted measure \( \frac{1}{y} \, dX \):  
\begin{equation} \label{18}
\begin{aligned} 
\int_{B_8^{+}(X_0) \setminus B_3^{+}(X_0)} &  \frac{1}{y} \left|\psi(X+Y_0-X_0)e^{N_{Y_0} - N_{X_0}}-\psi(X) \right|^2 \,dX \\ 
& \leq C \int_{B_9^{+}(X_0) \setminus B_2^{+}(X_0)}  \frac{1}{y}  \left| \nabla \psi(X) \cdot (Y_0-X_0) - (N_{X_0} - N_{Y_0}) \psi(X) \right|^2 \, dX \\
& \leq c \delta.
\end{aligned}
\end{equation}
Similarly, consider the line segment \( Z_t:=X_0+t(X-X_0) \) with \( X \in \partial B_s(X_0) \) and \( t \in [1, T] \), where \( sT \leq 10 \). In this case, we obtain:
$$
\begin{aligned}
T^{-N_{X_0}} \psi(Z_T)-\psi(X)  
&= \int_1^T \frac{d}{dt} \left( t^{-N_{X_0}} \psi(Z_t) \right) \, dt \\
&= \int_1^T  t^{-N_{X_0}-1} \left[ \nabla \psi(Z_t) \cdot t(X -X_0) - N_{X_0} \psi(Z_t) \right] \, dt.
\end{aligned}
$$
By squaring and integrating over the sphere \( \partial B_s \), we obtain:  
\begin{align}
\label{12}
\int_{\partial B_s^{+}(X_0)}   \frac{1}{y}  \left| T^{-N_{X_0}} \psi(Z_T)-\psi(X) \right|^2 \, d\mathcal{H}^{1}(X) \leq C(N_+) \delta,
\end{align}
where the constant \( C(N_+) \) depends on \( T \) and \( s \) but can be chosen uniformly for \( s \in [1, 10] \). This allows us to compare the integrals of \( \psi^2 \) over spheres of different radii:
\[
\begin{aligned}
T^{-2N_{X_0}} \int_{\partial B_s^{+}(X_0)}  \frac{1}{y}  \psi^2(Z_T) \, d\mathcal{H}^{1}(X)  
& \leq C \int_{\partial B_s^{+}(X_0)}  \frac{1}{y} \psi^2(X) \,d\mathcal{H}^{1}(X) + C\delta.
\end{aligned}
\]  
Thus, we obtain:  
\[
\int_{\partial B_{Ts}^{+}(X_0)}  \frac{1}{y} \psi^2(X) \, d\mathcal{H}^{1} \leq  C \int_{\partial B_s^{+}(X_0)} \frac{1}{y} \psi^2(X) \, d\mathcal{H}^{1} + C\delta.
\]  
Similarly,  
\[
\int_{\partial B_{s}^{+}(X_0)}  \frac{1}{y}  \psi^2(X) \,d\mathcal{H}^{1} \leq  C\int_{\partial B_{Ts}^{+}(X_0)}  \frac{1}{y}  \psi^2(X) \,d\mathcal{H}^{1} + C\delta.
\]  
By averaging over annuli, we obtain:
\begin{align}
\label{17}
\frac{1}{C} \int_{B_{10}^{+}(X_0) \setminus B_{1}^{+}(X_0)}  \frac{1}{y}  \psi^2(X)dX - \delta \leq \int_{\partial B_s^{+}(X_0)}  \frac{1}{y} \psi^2(X) \, d\mathcal{H}^{1} \leq C\int_{B_{10}^{+}(X_0) \setminus B_{1}^{+}(X_0)}  \frac{1}{y} \psi^2(X) + C\delta
\end{align}
for every \( s \in [1, 10] \).

Instead, we can integrate \eqref{12} over the interval \( [3,4] \) and choose \( T = 2 \), leading to  
\begin{align}
\label{21}
\int_{B_{4}^{+}(X_0) \setminus B_{3}^{+}(X_0)}  \frac{1}{y}  \left|2^{-N_{X_0}} \psi(2X-X_0) - \psi(X) \right|^2 \, dX \leq C\delta.
\end{align}  
A similar computation, now centered at \( Y_0 \), yields  
\begin{align}
\label{22}
\int_{B_{4}^{+}(Y_0) \setminus B_{3}^{+}(Y_0)}  \frac{1}{y} \left|2^{-N_{Y_0}} \psi(2X-Y_0) - \psi(X) \right|^2 \, dX \leq C\delta,
\end{align}
as well. On the other hand, from \eqref{18}, we have the following bound by setting \( A = e^{N_{Y_0}-N_{X_0}} \), which is controlled in terms of \( N_+ \):  
\[
\int_{B_{4}^{+}(X_0) \setminus B_{3}^{+}(X_0)}  \frac{1}{y}  \left|\psi(X)-A\psi(X+Y_0-X_0) \right|^2 \, dX \leq C\delta.
\]  
Similarly, we obtain:  
\[
\int_{B_{4}^{+}(X_0) \setminus B_{3}^{+}(X_0)}  \frac{1}{y}  \left|\psi(2X-X_0)-A\psi(2X+Y_0-2X_0) \right|^2 \, dX = \frac{1}{4} \int_{B_{8}^{+}(X_0) \setminus B_{6}^{+}(X_0)}  \frac{1}{y}  \left|\psi(X)-A\psi(X+Y_0-X_0) \right|^2 \, dX \leq C\delta.
\]
Applying the triangle inequality (with all norms taken in \( L^2_{\rW}(B_4^{+}(X_0) \setminus B_3^{+}(X_0)) \)), we get:  
\[
\begin{aligned}
& \left\|2^{-N_{Y_0}} \psi(2X-X_0)-2^{-N_{X_0}} \psi(2X-X_0) \right\|   \leq \left\|2^{-N_{X_0}} \psi(2X-X_0) -\psi(X) \right\| + \left\|\psi(X) - A\psi(X+Y_0-X_0) \right\| \\
& \qquad + \left\|A\psi(X+Y_0-X_0)-A2^{-N_{Y_0}} \psi(2X+Y_0-2X_0) \right\| + \left\|A2^{-N_{Y_0}}\psi(2X+Y_0-2X_0)-2^{-N_{Y_0}} \psi(2X-X_0) \right\| \\
& \hspace{165pt} \leq C \sqrt{\delta},
\end{aligned}
\]  
where we have used \eqref{21} and \eqref{22} to bound the first and third terms, respectively. Rewriting, we obtain:  
\[
\left|2^{-N_{Y_0}}-2^{-N_{X_0}} \right|^2 \int_{B_{8}^{+}(X_0) \setminus B_{6}^{+}(X_0)}  \frac{1}{y}  \psi^2(X) \, dX \leq C \delta.
\]  
Since \( \left|2^{-N_{Y_0}}-2^{-N_{X_0}}\right| \geq c(N_+) \left|N_{Y_0} - N_{X_0} \right| \), using \eqref{17}, we conclude:  
\[
\left|N_{Y_0} - N_{X_0} \right|^2 \int_{B_{10}^{+}(X_0) \setminus B_{1}^{+}(X_0)}  \frac{1}{y} \psi^2(X) \, dX \leq C \delta.
\]  
This completes the proof of \eqref{d1}.
\end{proof}

\begin{proof}[Proof of Lemma \ref{63}]
We assume that neither alternative holds for a sequence $(\psi_k, I_k)$ of variational solutions and numbers $\eta_k \to 0$, and proceed to derive a contradiction. We may assume that there are points $Z^1_k, Z^2_k, Y_k \in \Sigma_H \cap B_{r_k}(X_0)$ for $r_k \le \eta_k$ and $N^{\mathrm{m}}_k\le N_+$ such that
\begin{equation}\label{lem63-eq1}
|Z^1_k - Z^2_k| \ge \rho r_k, \quad
\cN_{Z^1_k}(\psi_k, 3\rho r_k) \ge N^{\mathrm{m}}_k -\eta_k, \quad \cN_{Z^2_k}(\psi_k, 3\rho r_k) \ge N^{\mathrm{m}}_k -\eta_k, 
\end{equation}
and also
\begin{equation}\label{lem63-eq3}
N^{\mathrm{m}}_k > 1 + \gamma \quad \text{ or } \quad\cN_{Y_k}(\psi_k, \gamma r_k) < N^{\mathrm{m}}_k - \gamma.
\end{equation}

Integrating the formula in Remark \ref{r2} and using Remark \ref{rmrk-almost-almgren} given us
$$
\int_{3\rho r_k}^{3r_k} \int_{\partial B_s(Z_k^i)}  \frac{2s^{-1}}{H_{Z_k^i}(\psi_k, s)} \frac{1}{y}  \left| \nabla \psi_k(X) \cdot \left(X-Z_k^i\right) - \cN_{Z_k^i}( s) \psi_k(X)  \right|^2 \, d\mathcal{H}^{1}ds  \le  \left(\cN_{Z_k^i}(3r_k) - \cN_{Z_k^i}(3\rho r_k) + C r_k^{1/2}(1+  N^{\mathrm{m}}_k)\right).
$$
Using the inequality 
\begin{equation}\label{lem63-eq2}
-Cr_k (1+  N^{\mathrm{m}}_k) -\eta_k + N^{\mathrm{m}}_k \le -Cr_k (1+ \cN_{Z_k^i}(3r_k))  + \cN_{Z_k^i}(3\rho r_k) \le \cN_{Z_k^i}(s) \le \cN_{Z_k^i}(3r_k) + Cr_k (1+ \cN_{Z_k^i}(3r_k))  \le N^{\mathrm{m}}_k + Cr_k  (1+  N^{\mathrm{m}}_k) ,
\end{equation}
and $H_{Z_k^i}(\psi_k, s) \le C H_{Z_k^i}(\psi_k, 3r_k)$, we obtain: 
\[
\frac{r_k^{-1}}{H_{Z_k^i}(\psi_k, 3r_k)}\int_{B_{3r_k}(Z_k^i)\setminus B_{3\rho r_k}(Z_k^i)}  \frac{1}{y} \left| \nabla \psi_k(X) \cdot \left(X-Z_k^i\right) - N^{\mathrm{m}}_k \psi_k(X)  \right|^2 \, dX = O(\eta_k),
\]
where the ratio $O(\eta_k)/\eta_k $ is bounded by a constant depending only on $t$ and $N_+$.
Now apply Lemma \ref{46}, to find the inequality  $H_{Z_k^i}(\psi_k, 3r_k) \le C H_{Z_k^1}(\psi_k, 2r_k)$ for $i=1,2$.
Next, apply  Lemma \ref{6.2}, centered at $Z_k^1$ and using $Z_k^2$, to get
\[
\frac{r_k^{-1}}{H_{Z_k^1}(\psi_k, 2r_k)}\int_{B_{2r_k}(Z_k^1)\setminus B_{r_k}(Z_k^1)}  \frac{1}{y} \left| \nabla \psi_k(X) \cdot \left(Z_k^1-Z_k^2\right) \right|^2 \, dX = O(\eta_k).
\]
From \eqref{lem63-eq1}, we may assume that $(Z_k^1-Z_k^2)/r_k \to \mathbf{n} \ne0$.
Consider the sequence of re-normalized solutions 
$$ v_k(X) = \frac{\psi_k(Z_k^1 + 2r_kX)}{\sqrt{(2r_k)^{-1}H_{Z_k^1}(\psi_k, 2r_k)}}. $$
These satisfy the hypotheses of Lemma \ref{5.1n} and Theorem \ref{5.3} (notice that $\nabla v_k$ is bounded in $L^2_{\rW}(B_1^+)$), so in particular, 
$$\lim_{k\to\infty}\cN_{Z_k^i}(2r_k) = 1.$$ 
From \eqref{lem63-eq2}, we obtain that $N^{\mathrm{m}}_k\to 1$, which contradicts the assumption $N^{\mathrm{m}}_k > 1+\gamma$ in \eqref{lem63-eq3}. 
Moreover, from the second condition in \eqref{lem63-eq3}, we have
$\cN_{Y_k}(\psi_k, \gamma r_k) < 1-\gamma/2$ for sufficiently large $k$. 
This contradicts  the lower bound $\cN_{Y_k}(\psi_k, \gamma r_k) \ge 1- C_\star(\gamma r_k)^{1/2}$; see Lemma \ref{nprim-N}.
\end{proof}

\subsection{Effective control} 
\begin{proof}[Proof of Lemma \ref{61}]

Applying Lemmas \ref{42-N} and \ref{46}, we obtain that for any \( Y \in \Sigma_H^t \cap \overline{B_r(X_0)} \), we have:  
\[
\cN_{Y}(9r) \leq C (1 + N_+).
\]  
Moreover, from the second conclusion in Lemma \ref{46}, combined with repeated applications of Lemma \ref{42-N}, we deduce for all $s\in[r, 9r]$:  
\[
\frac{1}{C} H_{X_0}(10r) \leq  H_{Y}(s) \leq C  H_{X_0}(10r).
\]  
Next, we select  a point \( Y_0 \in \Sigma_H^t \cap \overline{B_r(X_0)} \) such that  
\[
\cN_{Y_0}(8r) - \cN_{Y_0}(  r) = \min_{X \in \Sigma_H^t \cap \overline{B_r(X_0)}} \left[ \cN_X(8r) - \cN_X( r) \right].
\]  
Since the function \( \cN_X(r) \) is continuous in \( X \), and the set \( \Sigma_H^t \cap \overline{B_r(X_0)} \) is closed, the minimum is attained. 
Moreover, by the assumption in the statement of the lemma  for a sufficiently small constant \( \delta_0 = \delta_0(N_+) \), to be chosen later, we have 
\begin{equation}\label{lem61-eq1}
    \cN_{Y_0}(8r) - \cN_{Y_0}( r) \leq  \cN_{X_0}(8r) - \cN_{X_0}( r) \le  \cN_{X_0}(8r) - \cN_{X_0}(\delta_0 r) + C(N_+)r  \le \delta_0 + C(N_+)r .
\end{equation}

At any point \(Y \in \Sigma_H^t \cap \overline{B_r(X_0)}\), integrating the formula in Remark \ref{r2}, we obtain
$$
\int_{r}^{8r} \int_{\partial B_s(Y)} \frac{2s^{-1}}{H_{Y}( s)} \frac{1}{y} \left| \nabla \psi(X) \cdot \left(X-Y\right) - \cN_{Y}( s) \psi(X)  \right|^2 \, d\mathcal{H}^{1}(X) \leq \cN_{Y}(8r) - \cN_{Y}(r) + C(N_+) r^{1/2}.
$$
From Remark \ref{rmrk-almost-almgren}, we also have
\[ 
-Cr (1+\cN(s) ) \le \cN_{Y}(s) - \cN_{Y}(r) \leq \cN_{Y}(8r) - \cN_{Y}(r) + Cr ( 1+ \cN(8r) ), 
\]
which implies
\[
|\cN_{Y}(s) - \cN_{Y}(r)| \leq  \cN_{Y}(8r) - \cN_Y( r)  + C(N_+) r . 
\]
Combining this with inequalities \( H_{Y}(s) \leq CH_{X_0}(10r) \leq CH_{Y_0}(6r) \),  we deduce
\begin{align*}
\int_{B_{8r}(Y)\setminus B_{r}(Y)}  \frac{1}{y} \left| \nabla \psi(X) \cdot \left(X-Y \right) - \cN_{Y}(r) \psi(X) \right|^2 \, dX \leq CrH_{Y_0}(6r) \left[\cN_{Y}(8r) - \cN_{Y}(r)+ C(N_+) r  \right].
\end{align*}
Applying this estimate at both \(Y_0\) and \(Y\) into Lemma \ref{6.2}, equation \eqref{d2} leads to:  
\begin{align}
\label{6.21}
\int_{B_{6r}(Y_0) \setminus B_{2r}(Y_0)}  \frac{1}{y} \left|\nabla \psi(X) \cdot \left(Y-Y_0\right) \right|^2 \, dX \leq CrH_{Y_0}(6r) \left[\cN_{Y}(8r) - \cN_{Y}( r) + C(N_+) r \right].
\end{align}
Here, we have used the fact that \(\cN_{Y_0}(8r) - \cN_{Y_0}(\delta_0r) \leq \cN_{Y}(8r) - \cN_{Y}(\delta_0r) \) .

Now, considering a line
\[
L = \{X: (X - Y_0) \cdot \boldsymbol{e} = 0\},
\]  
the definition of \(\beta = \beta_{\nu}(X_0,r)\) provides:  
\begin{align*}
\beta r^{3} \leq \int_{B_r(X_0)}  \left| \left(X - Y_0 \right) \cdot \boldsymbol{e} \right|^2 \, d\nu(X).
\end{align*}
Substituting \(\boldsymbol{e} = \frac{\nabla \psi(\tilde{X})}{|\nabla \psi(\tilde{X})|}\) for any point \(\tilde{X} \in \{\psi > 0\}\), we obtain:  
\begin{align*}
\beta r^{3}\left|\nabla \psi(\tilde{X}) \right|^2 \leq \int_{B_r(X_0)}  \left|\nabla \psi(\tilde{X}) \cdot \left(X-Y_0 \right) \right|^2 \, d\nu(X).
\end{align*}
Integrating over \(A = \left(B_{6r}(Y_0) \setminus B_{2r}(Y_0) \right) \cap \{\psi > 0\}\) with respect to singular measure, and switching the order of integration, we arrive at:  
\begin{align*}
\beta r^{3}\int_{A} \frac{1}{\tilde{y}} \left|\nabla u(\tilde{X}) \right|^2 \, d\tilde{X} \leq \int_{B_r}  \int_{A} \frac{1}{\tilde{y}} \left| \nabla \psi(\tilde{X}) \cdot (X-Y_0) \right|^2 \, d\tilde{X} d\nu(X).
\end{align*}
Using \eqref{6.21}, we deduce:  
\begin{align*}
\beta r^{3}\int_{A} \frac{1}{\tilde{y}} \left| \nabla \psi(\tilde{X}) \right|^2 \, d\tilde{X} \leq CrH_{Y_0}(6r) \int_{B_r^{+}}   \left[ \cN_X(8r) - \cN_X(r) + C(N_+) r \right] \, d\nu(X).
\end{align*}
Now, to complete the proof, it remains to show that 
\begin{equation}\label{lem61-eq2}
\frac{r}{H_{Y_0}(6r)} \int_{A} \frac{1}{y} \left|\nabla \psi(X) \right|^2 \, dX \geq c > 0.
\end{equation}
We will prove that there exist constants $\delta_0$ and $c>0$ such that if \eqref{lem61-eq1} holds for $r<\delta_0$, then the inequality \eqref{lem61-eq2} also holds.

Toward a contradiction, suppose that there exist sequences $\psi_k$, $Y_k$ and $r_k< \frac{1}{k}$ such that
\[
 \cN_{Y_k}(\psi_k, 8r_k) - \cN_{Y_k}(\psi_k,r_k) \leq \frac{C}{k} \quad \text{ and } \quad 
 \frac{r_k}{H_{Y_0}(\psi_k,6r_k)}\int_{A} \frac{1}{y} \left|\nabla \psi_k(X) \right|^2 \, dX \le \frac{1}{k}.
\]
Furthermore, we may assume that $ \cN_{Y_k}(\psi_k, 6r_k)\to N_\infty$ with \( N_\infty \ge 1 \), since \( \cN_{Y_k}(\psi_k, 6r_k) \ge 1 - C_\star (6r_k)^{1/2} \).
Now consider the sequence of re-normalized
solutions
\[
v_k(X) = \frac{\psi_k(Y_k + 6r_kX)}{\sqrt{(6r_k)^{-1}H_{Y_k}(\psi_k, 6r_k)}},
\]
and apply Lemma \ref{5.1n}. 
Therefore, $v_k\to v$ where the limit function $v$ is homogeneous of degree $N_\infty \ge 1$ in $ B_1 \setminus B_{1/2}$. 
However, passing to the limit yields
\[
\int_{B_1\setminus B_{1/3}} \frac{1}{y} \left|\nabla v(X) \right|^2 \, dX =0,
\]
which implies $v$ is non-zero constant in $ B_1 \setminus B_{1/2}$; recall $(iv)$ from Lemma \ref{5.1n}. 
This contradicts the homogeneity of $v$ of order greater than one.
\end{proof}

\section{A complete identification of the measure \texorpdfstring{$\mathcal{L}u$}{Lu} of variational solutions}

Theorem \ref{72} allows us to fully characterize $\mathcal{L}u$ for variational solutions. In particular, Theorem \ref{MAIN-THEOREM} (iii) states that, in the sense of distributions, a variational solution $(\psi, I)$ satisfies:
$$
\mathcal{L}\psi = \mathrm{div}\left(\frac{1}{y} \nabla \psi \right) =  \mathcal{H}^{1}\llcorner \partial^{\star}\{\psi>0\} + \frac{2}{\sqrt{\pi y}} \sqrt{\lim_{r \to 0^+} r^{-3}H_{X}(\psi,r)}\mathcal{H}^{1} \llcorner\Sigma_{H}.
$$
To establish this identification of the measure $\mathcal{L}\psi$, we require an analysis of the regular free boundary (subsection \ref{8.1}), an examination of singular points (subsection \ref{8.2}), and the identification of the function \( I \) for a variational solution $(\psi, I)$ (subsection \ref{8.3}).

\subsection{Analysis of regular points}
\label{8.1} 
Let $(\psi, I)$ be a variational solution in $\Omega$. Due to the upper semi-continuity of the mapping \(X_0 \mapsto M_{X_0}(\psi, 0^+)\) (see Proposition \ref{Highest-frequency-V}), the set $\Sigma_H$ is relatively closed in $\Omega$. Consequently, $\Omega \setminus \Sigma_H$ is an open set. In this section, we review some results concerning $\Omega \setminus \Sigma_H$, particularly focusing on the portion of the free boundary $\partial\{\psi> 0\}$ contained within it. Specifically, this corresponds to the part of the free boundary where the density satisfies \(M_{X_0}(0^+) <  \omega(X_0) \).

To begin with, observe that within the interior of \(\{\psi = 0\}\), the function \(I\) must be locally constant. Indeed, equation \eqref{inner-variation-1} yields  
\[
0=\int y I \,\mathrm{div} \eta +  \eta_2 I \, dX =  \int  I \,\mathrm{div} (y\eta)  \, dX,
\]  
for all \(\eta = (\eta_1, \eta_2) \in C^{\infty}_c(\{\psi = 0\}^{\circ}, \mathbb{R}^2)\) such that \(\eta_2 = 0\) on \(\{y=0\}\), implying that \(I\) is constant within each connected component in $\{y>0\}$.
If \(I = 0\) in a given component, we obtain \(M_{X_0}(\psi, 0^+) \equiv 0\) there due to Proposition \ref{Highest-frequency-V}. 
Conversely, if \(I = 1\), we encounter more complicated case where \(M_{X_0}(\psi, 0^+) \equiv  \omega(X_0) \). The latter scenario is certainly possible, as the pair \((\psi, I) = (0,1)\) constitutes a variational solution. 
However, since the mapping \(X_0 \mapsto M_{X_0}(\psi, 0^+)\) is upper semi-continuous, it follows that at any point \(X_0 \in \partial (\{\psi = 0\}^{\circ})\), 
we must have \(M_{X_0}(\psi, 0^+) =  \omega(X_0) \), which implies \(X_0 \in \Sigma_H\).  
In Subsection \ref{8.2}, we will establish that this scenario is actually impossible. Consequently, \(I = 0\) within \(\{\psi = 0\}^{\circ}\), unless \(\psi\) is identically zero. In this subsection, we simply note that within a connected open set where \(\Sigma_H = \emptyset\), unless \(\psi \equiv 0\), we necessarily have \(M_{X_0}(\psi, 0^+) < \omega(X_0)\).

\begin{lemma}
\label{81}
Let $(\psi, I)$ be a variational solution in $\Omega$, and $B^{+}_{2r}(X_0) \Subset \Omega$ with $y_0>0$. Then, if $\psi(X_0) = 0$,
\begin{align*}
M_{X_0}(\psi,0^+) \in \{0\} \cup \left[ \frac{\omega(X_0)}{2}, \omega(X_0) \right],
\end{align*}
with $M_{X_0}(\psi, 0^+) = 0$, only if $X_0$ is in the interior of $\{\psi = 0\}$. Moreover, if $X_0 \in N_\psi$ and the
blow-up sequence $$\psi_r(X):=\frac{\psi(X_0 + r X)}{r} \to 0, \qquad \text{as $r \to 0$}; $$ 
then $M_{X_0}(\psi, 0^+) = \omega(X_0)$.
\end{lemma}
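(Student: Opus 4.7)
My plan is to use the blow-up classification of Proposition \ref{10.1} to pin down the possible values of $M_{X_0}(\psi, 0^+)$, and then rule out $M=0$ at free boundary points. Since $y_0>0$, we have $\mu(X_0)=1$ and $\omega(X_0)=y_0\pi$. By Proposition \ref{10.1}, along any $r_n\to 0$ and up to subsequences, the rescalings $\psi_{r_n}$ converge strongly in $W^{1,2}_{\rW,\mathrm{loc}}(\mathbb{R}^2)$ to one of $\psi_0\equiv 0$, $\psi_0(X)=y_0(X\cdot\nu)_+$, or $\psi_0(X)=A|X\cdot\nu|$, while $I_{r_n}\to I_0$ in $L^1_{\mathrm{loc}}$. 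The change of variables $X=X_0+rZ$ in $M(r)=r^{-2}D(r)-r^{-3}H(r)$ passes to the limit and yields
\[
M_{X_0}(\psi, 0^+) = \int_{B_1}\left(\frac{|\nabla\psi_0|^2}{y_0}+y_0 I_0\right)dZ - \int_{\partial B_1}\frac{\psi_0^2}{y_0}\,d\mathcal{H}^1.
\]

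I then evaluate this case by case. In the one-sided case (where $I_0=\chi_{\{Z\cdot\nu>0\}}$, as in the proof of Proposition \ref{10.1}(3)), direct integration gives $M=\omega/2$; in the two-sided case ($I_0\equiv 1$), $M=\omega$; and in the zero blow-up case, passing the domain-variation identity to the limit yields $\int y_0 I_0\operatorname{div}\xi\,dZ=0$ for all admissible $\xi$, forcing $I_0$ to be a.e.\ constant in $\{0,1\}$ and hence $M\in\{0,\omega\}$. This exhausts the three possibilities and establishes $M_{X_0}(\psi,0^+)\in\{0,\omega/2,\omega\}\subseteq\{0\}\cup[\omega/2,\omega]$.

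For the statement that $M=0$ forces $X_0$ into the interior of $\{\psi=0\}$, I argue by contradiction. Assume $M=0$ and $X_0\in N_\psi=\partial\{\psi>0\}\cap\{y>0\}$; by the preceding analysis, the blow-up is $\psi_0\equiv 0$ with $I_0\equiv 0$. Let $U$ be a connected component of $\{\psi>0\}$ with $X_0\in\overline U$. By Proposition \ref{32}, $\psi$ is an $\mathcal{L}$-solution in $U$ vanishing on $\partial U\cap\Omega\cap\{y>0\}$, and the maximum principle forbids $U\Subset\Omega\cap\{y>0\}$, so $U$ extends beyond any small ball $B_\rho(X_0)$. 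Pick $Y\in U\cap\partial B_\rho(X_0)$ and let $Z$ be the first crossing of $\partial U$ along the segment $[Y,X_0]$. The open segment $(Y,Z)\subset U$ serves as an interior barrier at $Z$, so a Hopf-lemma argument for $\mathcal{L}$ (which is uniformly elliptic for $y\ge y_0/2$) gives $|\nabla\psi(Z)|>0$; by Proposition \ref{10.1} the blow-up of $\psi$ at $Z$ is then non-trivial, so $M(Z,0^+)\ge\omega(Z)/2$. Letting $\rho\to 0$ forces $Z\to X_0$, and the upper semi-continuity of $X\mapsto M(X,0^+)$ (Proposition \ref{Highest-frequency-V}) then gives $M(X_0,0^+)\ge\omega(X_0)/2>0$, contradicting $M=0$. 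This proves $X_0\in\mathrm{int}\{\psi=0\}$. The ``moreover'' statement follows immediately: if $X_0\in N_\psi$ and $\psi_r\to 0$, we are in the zero blow-up case so $M\in\{0,\omega\}$, and the interior argument above excludes $M=0$, leaving $M=\omega(X_0)$.

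The main obstacle is the precise justification of the Hopf-lemma step at $Z$, since $\partial U$ may fail the interior sphere condition there if $Z$ is a singular cusp point of the free boundary. My expectation is that the segment $(Y,Z)\subset U$ provides enough interior ``width'' to set up a quantitative barrier argument for the weighted operator $\mathcal{L}$, or alternatively that one can perturb $Y$ slightly to force $Z$ to be a regular boundary point of $\partial U$; either route requires some care in the weighted setting.
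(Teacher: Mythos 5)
Your first part (the case analysis of $M_{X_0}(\psi,0^+)$ via the blow-up classification of Proposition \ref{10.1}) is sound and in fact slightly sharper than the paper, which only records the lower bound $M\ge\omega(X_0)/2$ via a Fatou-type argument rather than the exact values $\{0,\omega/2,\omega\}$. The genuine gap is in the step you yourself flag: the Hopf-lemma argument at the first hitting point $Z$. The Hopf boundary-point lemma needs an interior ball (or at least a uniform cone) in $U$ touching $\partial U$ at $Z$; a straight segment $(Y,Z)\subset U$ ending at $Z$ gives no such condition. For non-minimizing variational solutions there is no nondegeneracy, and this failure is not a technicality: if near $Z$ the component $U$ is a thin cusp around the segment (say $\{0<s<r,\ |t|<s^2\}$ in suitable coordinates, which is compatible with everything known at this stage), a positive $\mathcal{L}$-harmonic function vanishing on the lateral boundary can decay faster than any power at $Z$, so $|\nabla\psi(Z)|>0$ (or any linear lower bound forcing a nontrivial blow-up at $Z$) simply need not hold. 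The fallback of perturbing $Y$ so that $Z$ becomes a regular free boundary point presupposes that regular points are dense in $\partial U$, which is not available here — controlling the singular set is precisely the content of the later sections of the paper, and Lemma \ref{81} is used \emph{before} that machinery. Since the ``moreover'' clause is reduced to this interior claim, it inherits the gap.

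The paper closes this step by a measure-theoretic argument that avoids any boundary regularity: by upper semi-continuity of $X\mapsto M_X(\psi,0^+)$ and the gap in its admissible values, the set $\{M_X(\psi,0^+)\le 0\}\cap\{y>0\}$ is relatively open, so $M_X(\psi,0^+)\le 0$ on some $B_\delta(X_0)$. At every point of this ball the Lebesgue density of $\{I=1\}$ (with respect to the weighted volume, via Proposition \ref{Highest-frequency-V}) is either $0$ (where $M_X(0^+)=0$) or full (where $\psi(X)>0$), so the essential boundary of $\{I=1\}$ is empty in $B_\delta(X_0)$; since $yI\in BV$ by Lemma \ref{BV-estimate}, the isoperimetric inequality forces $I$ to be a.e.\ constant there, and the zero density at $X_0$ gives $I\equiv 0$, hence $\psi\equiv 0$ on $B_\delta(X_0)$ by continuity. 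You should replace your Hopf/barrier step by an argument of this type (or otherwise supply a genuine nondegeneracy statement, which is not available for these critical points).
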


\begin{proof}
Consider a blow-ups of \(\psi\) along a sub-sequence that converges to a homogeneous function \(\psi_0\) of degree one; Proposition \ref{10.1}.
First, note that if \( \psi_0 \equiv 0 \), the limit of \(I(X_0 + rX)\) as \(r \to 0\) must be a constant function. 
Hence,  \( M_{X_0}(\psi, 0^+) \in \{0, \, \omega(X_0)\} \).

Now we establish that \(M_{X_0}(\psi, 0^+) = 0\) can occur only if \(\psi_0 \equiv 0\) (along each convergent sub-sequence), and \(M_{X_0}(\psi, 0^+) \geq \omega(X_0)/2\) otherwise. 
According to Proposition \ref{10.1},
if \(\psi_0\) is nonzero, then 
\[
\left|\{\psi_0 > 0\} \cap B_1 \right| \geq \frac{ \omega(X_0)}{2y_0},
\]  
and thus
\[
\frac{ \omega(X_0)}{2} \le  y_0 \left|\{\psi_0 > 0\} \cap B_1 \right| \leq \liminf_{k \to + \infty} \frac{\left|\{ \psi_{r_k}>0\}\cap B_{r_k}\right|_{\rW}}{r_k^2} \leq \lim_{k \to + \infty} \frac{ \left|\{I_k=1\}\right|_{\rW}}{r_k^2} = M_{X_0}(\psi, 0^+).
\]
We conclude that \( M_{X_0}(\psi, 0^+) = 0 \) can occur only if \( \psi_0 \equiv 0 \).

The function \(X \mapsto M_{X}(\psi, 0^+)\) is upper semi-continuous on \(\Omega\) (see Proposition \ref{Highest-frequency-V}). 
Therefore,  
\[
\left\{ X \,:\, M_{X}(\psi, 0^+) \leq 0 \right\}\cap \{y>0\} = \left\{X \,:\, M_{X}(\psi, 0^+) <  \omega(X)/4 \right\}\cap\{y>0\}
\]  
is relatively open. 
Let \(X_0\) be a point such that \(M_{X_0}(\psi, 0^+) = 0\) with $y_0>0$, and assume that \(M_{X}(\psi, 0^+) \leq 0\) for all  \(X\in B_{\delta}(X_0)\) with some small \(0< \delta < y_0\). 
Then, at every point \(X \in B_{\delta}(X_0)\), we must either have \(M_X(\psi, 0^+) = 0\), and hence  
\[
\lim_{s \to 0} \frac{\left|\{I=1\}\cap B_s(X)\right|_{\rW}}{s^2} = 0,
\]  
or \(\psi(X) > 0\), and so  
\[
\lim_{s \to 0} \frac{\left|\{I=1\}\cap B_s(X)\right|_{\rW}}{s^2} = \omega(X).
\]
Therefore, the density of $\{I=1\}$ is zero or one.
This, however, implies that the essential boundary of \(\{I = 1\}\) is empty in \(B_{\delta}(X_0)\), which  yields that either \(I \equiv 0\) or  \(I \equiv 1\) Lebesgue-a.e. on \(B_{\delta}(X_0)\) (indeed, Lemma \ref{BV-estimate} implies that $\{I = 1\}$ is a set of finite perimeter; \cite[Lemma 5.3]{evans2018measure} shows that its reduced boundary is trivial and so the perimeter measure of $\{I=1\}$; and the isoperimetric inequality \cite[Theorem 5.11]{evans2018measure} then yields the claim).
The later case is impossible because, at least at one point, \(X_0\), the density is zero, meaning that \(I \equiv 0\). 
This implies that \(M_X(\psi, 0^+) = 0\) and \(\psi(X) = 0\) for Lebesgue-a.e. \(X \in B_{\delta}(X_0)\), and by continuity, \(\psi = 0\) on all of \(B_{\delta}(X_0)\).

The final statement follows from the fact that  \( M_{X_0}(\psi, 0^+) \in \{0, \, \omega(X_0)\} \) and as shown earlier in the lemma, \(M_{X_0}(\psi, 0^+) \neq 0\) when $X_0 \in \partial\{\psi>0\}$.
\end{proof}

\begin{lemma}
\label{82}
Let \((\psi, I)\) be a variational solution on a connected open set \(\Omega\), and assume that \(\Sigma_H = \emptyset\). Then, either \(\psi \equiv 0\) or \(I = \chi_{\{\psi > 0\}}\) Lebesgue-a.e., implying that the set \(\{\psi > 0\}\) has locally finite perimeter. Furthermore,  
\[
\mathcal{H}^{1} \left(B^{+}_r(X_0) \cap N_\psi \right) \leq C(C_V) r
\]  
for any \(B^{+}_{2r}(X_0) \Subset \Omega\), where \(C_V\) is the constant in the definition of the variational solution.
\end{lemma}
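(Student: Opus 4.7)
The plan is to establish the three assertions in order. For (i), I would test the first-variation identity \eqref{inner-variation-1} with vector fields $\eta\in C_c^\infty(\{\psi=0\}^\circ,\mathbb{R}^2)$ vanishing at $\{y=0\}$ to show that $I$ is constant on every connected component of $\{\psi=0\}^\circ$. Suppose, for contradiction, that $I\equiv 1$ on some component $U$; since $\psi\not\equiv 0$, we have $U\subsetneq \Omega$, and the connectedness of $\Omega$ forces $\partial U\cap\Omega\ne\varnothing$. If $\partial U\cap\Omega\cap\{y>0\}\ne\varnothing$, pick a point $X_0$ there: then $X_0\in\partial\{\psi>0\}\cap N_\psi$ (otherwise a neighborhood of $X_0$ would lie in $\{\psi=0\}^\circ$, contradicting $X_0\in\partial U$), and at every interior point $X\in U\cap\{y>0\}$, Proposition \ref{Highest-frequency-V} gives $M_X(\psi,0^+)=\omega(X)$; letting $X\to X_0$ and invoking the upper semi-continuity of $X\mapsto M_X(\psi,0^+)$ then forces $M_{X_0}(\psi,0^+)=\omega(X_0)$, i.e.\ $X_0\in\Sigma_H$, contradicting the hypothesis. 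The residual case $\partial U\cap\Omega\subset\{y=0\}$ is disposed of by a clopen argument on $\Omega\cap\{y>0\}$ (using that $\{y=0\}$ has empty interior in $\mathbb{R}^2_+$), which forces either $U=\varnothing$ or $\psi\equiv 0$. Combined with $I\ge \chi_{\{\psi>0\}}$, this yields $I=\chi_{\{\psi>0\}}$ on $\Omega\setminus\partial\{\psi>0\}$.

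Part (ii) then follows from Lemma \ref{BV-estimate}: by (i), $yI=y\chi_{\{\psi>0\}}\in BV_{\mathrm{loc}}(\Omega)$, and the distributional product rule $\nabla(y\chi_{\{\psi>0\}})=\chi_{\{\psi>0\}}\mathbf{e}_2+y\,\nabla\chi_{\{\psi>0\}}$ shows that the weighted perimeter measure $y\,d|\nabla\chi_{\{\psi>0\}}|$ is locally finite in $\Omega$. Since $y$ is bounded below on every compact subset of $\Omega\cap\{y>0\}$, $\{\psi>0\}$ has locally finite perimeter on $\{y>0\}$, and $\partial^\star\{\psi>0\}\cap\{y>0\}$ has locally finite $\mathcal{H}^1$-measure; combined with the density analysis in (iii) below (which shows $\partial\{\psi>0\}\cap\{y>0\}\subseteq \partial^\star\{\psi>0\}$ up to a null set), $\partial\{\psi>0\}$ has zero Lebesgue measure, upgrading the equality of (i) to $I=\chi_{\{\psi>0\}}$ a.e.\ on $\Omega$.

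For (iii), I would first observe that under $\Sigma_H=\varnothing$, Lemma \ref{81} together with Proposition \ref{10.1} forces every $X_0\in N_\psi$ to satisfy $M_{X_0}(\psi,0^+)=\omega(X_0)/2$, with every blow-up of the form $\psi_0(X)=y_0(X\cdot\nu)_+$: the degenerate blow-up $\psi_0\equiv 0$ yields $M_{X_0}=\omega(X_0)$ by Lemma \ref{81}, and the two-sided limit $A|X\cdot\nu|$ has $I_0\equiv 1$ and hence $M_{X_0}=\omega(X_0)$ (cf.\ the proof of Proposition \ref{10.1}); both would place $X_0$ in $\Sigma_H$. Hence $\{\psi>0\}$ has weighted density $1/2$ at every $X_0\in N_\psi$, which gives $N_\psi\subseteq \partial^\star\{\psi>0\}$ up to an $\mathcal{H}^1$-null set. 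Proposition \ref{32} gives that $\mathcal{L}\psi$ is a non-negative Radon measure, and a cutoff argument identical to the one in Lemma \ref{BV-estimate}, using only \eqref{lip-condition}, yields $\mathcal{L}\psi(B_r^+(X_0))\le C(C_V)\,r$ whenever $B_{2r}^+(X_0)\Subset\Omega$. To close, I would insert $\xi=\zeta\nu$ into \eqref{inner-variation-1} with $\nu$ the measure-theoretic outward normal of $\{\psi>0\}$, integrate by parts on the superlevel sets $\{\psi>\varepsilon\}$ as in Proposition \ref{32}, and pass to the limit $\varepsilon\to 0^+$, extracting the Bernoulli identity $|\nabla\psi|/y=1$ on $\partial^\star\{\psi>0\}$ and the lower bound $\mathcal{L}\psi\ge \mathcal{H}^1\llcorner\partial^\star\{\psi>0\}$ on $\{y>0\}$. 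Combining these with $N_\psi\subseteq \partial^\star\{\psi>0\}$ yields
\begin{equation*}
\mathcal{H}^1\bigl(N_\psi\cap B_r^+(X_0)\bigr)\le \mathcal{H}^1\bigl(\partial^\star\{\psi>0\}\cap B_r^+(X_0)\bigr)\le \mathcal{L}\psi(B_r^+(X_0))\le C(C_V)\,r.
\end{equation*}

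The main obstacle is extracting the lower bound $\mathcal{L}\psi\ge \mathcal{H}^1\llcorner\partial^\star\{\psi>0\}$ together with the Bernoulli identity from the first-variation formula, which requires a careful limiting argument combining the integration by parts of Proposition \ref{32} with the strong $W^{1,2}_{\rW}$-convergence of blow-ups from Proposition \ref{10.1}. A viable alternative that sidesteps this identification is a direct covering argument: the uniform density-$1/2$ condition at each $X_0\in N_\psi$, together with the relative isoperimetric inequality and the BV estimate of Lemma \ref{BV-estimate} applied on a dyadic cover, delivers a lower bound on the weighted perimeter in each covering ball and thus the same $\mathcal{H}^1$-bound.
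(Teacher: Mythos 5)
Your part (i) is essentially the paper's argument: local constancy of $I$ on $\{\psi=0\}^{\circ}$ from \eqref{inner-variation-1}, and upper semi-continuity of $X\mapsto M_X(\psi,0^+)$ (Proposition \ref{Highest-frequency-V}) forcing boundary points of a component with $I\equiv 1$ into $\Sigma_H$, contradicting $\Sigma_H=\emptyset$ unless $\psi\equiv 0$. Your blow-up observation is also correct: under $\Sigma_H=\emptyset$, the zero blow-up and the two-sided blow-up $A|X\cdot\nu|$ are excluded at points of $N_\psi$ (both would give $M_{X_0}(0^+)=\omega(X_0)$), so the density of $\{I=1\}$ at such points is $1/2$. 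Note, however, that the paper needs only the cruder fact $M_{X_0}(0^+)\in[\omega(X_0)/2,\omega(X_0))$ from Lemma \ref{81}, i.e. density in $[1/2,1)$, which already places $N_\psi$ inside the essential boundary $\partial^e\{I=1\}$; the exact value $1/2$ and the whole discussion of $\partial^\star\{\psi>0\}$ are not needed.

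The genuine gap is in your main route to the $\mathcal{H}^1$ bound. The chain $N_\psi\subseteq\partial^\star\{\psi>0\}$, $\mathcal{L}\psi(B_r^+)\le C(C_V)r$, and $\mathcal{L}\psi\ge\mathcal{H}^1\llcorner\partial^\star\{\psi>0\}$ leaves the decisive third inequality (with the Bernoulli identity) unproved, and the mechanism you propose does not work as stated: $\xi=\zeta\nu$ with $\nu$ the measure-theoretic normal of $\{\psi>0\}$ is not an admissible $C^1_c$ vector field in \eqref{inner-variation-1} (the normal is defined only $\mathcal{H}^1$-a.e. on $\partial^\star\{\psi>0\}$ and has no regularity), and at this point in the paper no regularity of $\partial\{\psi>0\}$ is available — the identity $\mathcal{L}\psi=\mathcal{H}^1\llcorner\partial^\star\{\psi>0\}$ on $\Omega\setminus\Sigma_H$ is obtained only later, in Corollary \ref{84}, after the viscosity property and imported flatness/epsilon-regularity theory (or, alternatively, by a measure-differentiation argument using strong convergence of blow-ups, which you do not carry out and yourself flag as the main obstacle). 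There is also an ordering slip in (ii): writing $yI=y\chi_{\{\psi>0\}}\in BV_{\mathrm{loc}}$ "by (i)" presupposes $|\partial\{\psi>0\}|=0$, which is part of what is being proved; the paper avoids this by working with $\{I=1\}$ throughout. The paper's actual proof is much shorter than your primary route: from $N_\psi\subseteq\partial^e\{I=1\}$ it bounds $\mathcal{H}^1\left(B_r^+(X_0)\cap\partial^e\{I=1\}\right)$ directly by the total variation of $y(I-1)$ from Lemma \ref{BV-estimate}, never touching $\mathcal{L}\psi$, $\partial^\star\{\psi>0\}$, or a Bernoulli condition. Your fallback (density $1/2$ plus relative isoperimetric inequality plus Lemma \ref{BV-estimate} on a cover) is the viable version of your plan and is essentially the paper's mechanism in covering form, but as written it is only a sketch: you would still need to handle the non-uniformity in $X$ of the scale below which the density estimate holds (e.g. by exhausting $N_\psi$ by sets on which it holds uniformly and using monotone convergence of $\mathcal{H}^1$).
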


\begin{proof}
As discussed at the beginning of this section, we have that \( I = \chi_{\{\psi > 0\}} \) on \( \Omega \setminus \partial \{\psi > 0\} \), and on \(N_\psi \), we have \( M_{X_0}(0^+) \in [\omega(X_0) / 2, \omega(X_0)) \), due to Lemma \ref{81}. 
In particular, on \( N_\psi \), the Lebesgue density of \( \{I = 1\} \) lies in \( [1/2, 1) \), so \( N_\psi \) is contained in the essential boundary \( \partial^e \{I = 1\} \). Indeed,
\[
\begin{aligned}
M_{X_0}(0^+) = & \lim_{s\to 0} \frac{\left|\{I=1\}\cap B_s(X_0)\right|_{\rW}}{s^2} = \lim_{s\to 0} \frac{1}{s^2}\int_{B_s(X_0)} yI\, dX\\
\le & \lim_{s\to 0} \frac{s+y_0}{s^2}\int_{B_s(X_0)} I\, dX = y_0 \lim_{s\to 0} \frac{\left|\{I=1\}\cap B_s(X_0)\right|}{s^2}.
\end{aligned}
\]
From Lemma \ref{BV-estimate}, this implies that
\[
\mathcal{H}^{1} \left(B^{+}_r(X_0) \cap N_\psi \right) \leq \mathcal{H}^{1} \left(B^{+}_r(X_0) \cap \partial^e \{I = 1\} \right) = \int_{B^{+}_r(X_0)} \left|\nabla \left(y I - y \right) \right| \leq C(C_V) r + r^2
\]
when \( B^{+}_{2r}(X_0) \Subset \Omega \). 
This implies that \( | N_\psi| = 0 \), \( I = \chi_{\{\psi > 0\}} \) Lebesgue-a.e., and that \( \{\psi > 0\} \) has finite perimeter.
\end{proof}

To proceed, it will be useful to introduce the notion of a viscosity solution for our problem.

\begin{definition}[Viscosity solution]
Let \(\psi\) be a continuous function \(\psi : \Omega \to [0, +\infty)\) that is an \(\mathcal{L}\)-solution in \(\{\psi > 0\}\). Then, \(\psi\) is a viscosity solution if, for any smooth function \(\phi\) satisfying either \(\phi \leq \psi\) or \(\phi_+ \geq \psi\) in \(B^{+}_r(X_0) \subseteq \Omega\), with \(X_0 \in N_{\psi}\) and \(\phi(X_0) = 0\), the following conditions hold: 

\begin{enumerate}
\item[(i)] \(\phi \leq \psi\) implies \( |\nabla \phi(X_0)| \leq y_0 \).  
\item[(ii)]  \(\phi_+ \geq \psi\) implies \( |\nabla \phi(X_0)| \geq y_0 \).
\end{enumerate}
\end{definition}

\begin{lemma}
Let $(\psi, I)$ be a variational solution on $\Omega$, and assume $\Sigma_H=\emptyset$. Then, $\psi$ is a viscosity
solution on $\Omega$.
\end{lemma}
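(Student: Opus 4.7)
\medskip
\noindent\textbf{Proof plan.} The first clause of the viscosity definition, that $\psi$ is an $\mathcal{L}$-solution in $\{\psi>0\}$, is already provided by Proposition \ref{32}. Hence all that is needed is to verify the two one-sided touching conditions at every point $X_0\in N_\psi$. The overall strategy is blow-up: at each such point I classify the possible blow-up profiles, show that under the standing hypothesis $\Sigma_H=\emptyset$ the only ones that can occur are the half-plane profiles, and then read off the two inequalities from the comparison of test functions with these explicit profiles.

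Fix $X_0=(x_0,y_0)\in N_\psi$. Because $X_0$ lies on $\partial\{\psi>0\}$, it cannot be an interior point of $\{\psi=0\}$, so Lemma \ref{81} excludes $M_{X_0}(\psi,0^+)=0$; combined with $\Sigma_H=\emptyset$ (which rules out $M_{X_0}(\psi,0^+)=\omega(X_0)$), one has $M_{X_0}(\psi,0^+)\in[\omega(X_0)/2,\omega(X_0))$. Along any sequence $r_k\to 0$, the Lipschitz bound \eqref{lip-condition} and Proposition \ref{10.1} yield, up to a subsequence, a blow-up limit $\psi_0$ that is one-homogeneous, nonnegative, and of one of three types: $\psi_0\equiv 0$, $\psi_0(X)=y_0(X\cdot\nu)_+$, or $\psi_0(X)=A|X\cdot\nu|$. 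The last option forces $I_0\equiv 1$ (as established in the proof of Proposition \ref{10.1}) and hence $M_{X_0}(\psi,0^+)=\omega(X_0)$, contradicting $\Sigma_H=\emptyset$. The first option is also excluded: by the last statement of Lemma \ref{81} it would force $M_{X_0}(\psi,0^+)\in\{0,\omega(X_0)\}$, neither compatible with our situation. Therefore every subsequential blow-up has the form $\psi_0(X)=y_0(X\cdot\nu)_+$ for some unit vector $\nu=\nu(r_k)$.

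Now verify the two conditions. For the subsolution condition, let $\phi$ be smooth with $\phi\le\psi$ in $B_r^+(X_0)$ and $\phi(X_0)=0$. The rescalings $\phi_r(X):=\phi(X_0+rX)/r$ satisfy $\phi_r\le\psi_r$ and converge uniformly on compacts to the linear function $\nabla\phi(X_0)\cdot X$. Passing to a subsequence for which $\psi_{r_k}\to y_0(X\cdot\nu)_+$ uniformly on compacts, one obtains
\[
\nabla\phi(X_0)\cdot X\ \le\ y_0\,(X\cdot\nu)_+\qquad\text{for all }X\in\mathbb{R}^2.
\]
Writing $p=\nabla\phi(X_0)$ and testing with $X=p$ gives $|p|^2\le y_0(p\cdot\nu)_+\le y_0|p|$, so $|\nabla\phi(X_0)|\le y_0$. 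For the supersolution condition, let $\phi$ be smooth with $\phi_+\ge\psi$ and $\phi(X_0)=0$. Since $(\phi_r)_+\ge\psi_r$, the analogous limit gives
\[
\bigl(\nabla\phi(X_0)\cdot X\bigr)_+\ \ge\ y_0\,(X\cdot\nu)_+\qquad\text{for all }X,
\]
and testing with $X=\nu$ yields $\nabla\phi(X_0)\cdot\nu\ge y_0$, hence $|\nabla\phi(X_0)|\ge y_0$.

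\medskip
The main obstacle is ensuring that, along \emph{every} subsequence, the blow-up limit is nontrivial and equal to a half-plane profile. This is the heart of the argument and is resolved by combining the classification in Proposition \ref{10.1} with the sharp density dichotomy of Lemma \ref{81}; once that is in hand, the viscosity inequalities are immediate consequences of uniform-on-compacts convergence and elementary linear-algebra tests with the vectors $X=p$ and $X=\nu$.
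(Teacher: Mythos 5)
Your proposal is correct and follows essentially the same route as the paper: blow up at a point of $N_\psi$, use Lemma \ref{81} together with $\Sigma_H=\emptyset$ and the classification in Proposition \ref{10.1} to force the half-plane profile $y_0(X\cdot\nu)_+$, and then pass the rescaled test-function inequality to the limit. Your explicit exclusion of the $A|X\cdot\nu|$ and trivial blow-ups, and the finishing tests with $X=\nabla\phi(X_0)$ and $X=\nu$, are just a more detailed rendering of what the paper leaves implicit.
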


\begin{proof}
Let $\phi \leq \psi$ (or $\phi_+ \geq \psi$) in $B^{+}_r(X_0)$, where $\phi(X_0) = \psi(X_0) = 0$ and $X_0 \in N_\psi$. 
Without loss of generality, we assume that if the first inequality holds, then $|\nabla \phi(X_0)| \neq 0$. 
The condition $X_0 \in N_\psi$ ensures, by Lemma \ref{81}, that $M_{X_0}(\psi, 0^+) \geq  \omega(X_0) / 2$. 
However, since $\Sigma_H = \emptyset$, it follows that $M_{X_0}(\psi, 0^+) < \omega(X_0)$.
Then by Proposition \ref{10.1}, along a sequence $s_k$, we obtain
$$
\psi_k(X) = \frac{\psi(X_0 + s_k X)}{s_k} \;\to\; \psi_0(X) = y_0 (X\cdot \nu)_+
$$
locally uniformly and strongly in $W_{\mathrm{loc}}^{1,2}$, where $\nu$ is a unit vector. 
On the other hand, we have
\begin{equation}
\label{diff-cases}
\lim_{s_k \to 0^+} \frac{\phi(X_0 + s_k X)}{s_k} =
\nabla \phi(X_0) \cdot X.
\end{equation}
By the homogeneity of $\psi_0$, it follows that $\nabla \phi(X_0) \cdot X \leq \psi_0(X)$ (or $(\nabla \phi(X_0) \cdot X)_+ \geq \psi_0(X)$). 
Therefore, $\nabla \phi(X_0)$ is parallel to $\nu$ and $|\nabla \phi(X_0)| \le y_0$ (or $|\nabla \phi(X_0)| \ge y_0$).
\end{proof}

\begin{corollary}
\label{84}
Let $(\psi, I)$ be a variational solution on a connected open set $\Omega$, and assume that $\Sigma_H=\emptyset$.  
Then, the reduced boundary $\partial^{\star}\{\psi> 0\}$ is identical to $\partial\{\psi> 0\}$ and is locally represented by graphs of analytic functions, along which $|\nabla \psi| = y$ from the $\{\psi> 0\}$ side. 
Moreover, $\psi$ satisfies  
\begin{align*}
\mathcal{L}\psi =  \mathcal{H}^{1} \llcorner \partial^{\star}\{ \psi>0\},
\end{align*}  
in the sense of distributions.
\end{corollary}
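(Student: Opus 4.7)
The plan is in three steps: first narrow down the blow-up profiles at every non-degenerate free-boundary point to the single-plane half-space, then upgrade this pointwise flatness to analytic regularity of the free boundary via viscosity/hodograph machinery, and finally read off the distributional identity from Green's formula.

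\emph{Step 1: only flat blow-ups occur.} Since $\Sigma_H=\emptyset$, at every $X_0 \in N_\psi$ one has $M_{X_0}(\psi,0^+) < \omega(X_0)$ by definition of $\Sigma_H$. The two-plane blow-up $\psi_0(X) = A|X\cdot\nu|$ listed in Proposition \ref{10.1} is incompatible with this, because its proof shows $I_0 \equiv 1$ Lebesgue-a.e.\ on $\mathbb{R}^2$, and then Proposition \ref{Highest-frequency-V} forces $M_{X_0}(\psi,0^+)=\omega(X_0)$. Likewise, the trivial blow-up $\psi_0\equiv 0$ at a point of $N_\psi$ is ruled out by Lemma \ref{81}. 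Thus the only blow-up at each $X_0\in\partial\{\psi>0\}\cap\{y>0\}$ is $\psi_0(X) = y_0(X\cdot\nu)_+$, with strong $W^{1,2}_{\rW,\mathrm{loc}}$ convergence.

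\emph{Step 2: flat implies analytic.} The preceding lemma shows $\psi$ is a viscosity solution, and Step~1 supplies a flat blow-up at every free-boundary point. This is precisely the hypothesis of a De Silva-type flat-implies-$C^{1,\alpha}$ theorem adapted to the weighted Bernoulli problem for $\mathcal{L}$. On any compact subregion $\{y\geq \delta>0\}$ the operator $\mathcal{L}=\mathrm{div}(\tfrac{1}{y}\nabla\cdot)$ is uniformly elliptic with real-analytic coefficients and the flux $|\nabla\psi|=y$ is analytic, so the partial Harnack iteration and the associated linearization go through and yield that $\partial\{\psi>0\}$ is locally a $C^{1,\alpha}$ graph near every such point. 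A standard partial hodograph transform then rewrites the problem as a quasilinear elliptic boundary value problem with analytic data on a fixed smooth domain, and the Kinderlehrer--Nirenberg bootstrap upgrades the graph to a real-analytic one on which $|\nabla\psi|=y$ holds classically from the $\{\psi>0\}$ side. In particular $\partial^\star\{\psi>0\}=\partial\{\psi>0\}$.

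\emph{Step 3: the distributional identity.} With $\partial\{\psi>0\}$ locally analytic and $\mathcal{L}\psi=0$ classically inside $\{\psi>0\}$, Green's formula applied on $\{\psi>0\}$ gives, for any $\varphi\in C_c^\infty(\Omega)$,
\begin{equation*}
-\int_\Omega \frac{1}{y}\nabla\psi\cdot\nabla\varphi\,dX = -\int_{\{\psi>0\}} \frac{1}{y}\nabla\psi\cdot\nabla\varphi\,dX = -\int_{\partial\{\psi>0\}} \varphi\,\frac{1}{y}\nabla\psi\cdot\nu_{\mathrm{out}}\,d\mathcal{H}^{1}.
\end{equation*}
Since $\nabla\psi$ points into $\{\psi>0\}$ along the boundary, $\nabla\psi\cdot\nu_{\mathrm{out}} = -|\nabla\psi| = -y$, so the right-hand side equals $\int_{\partial\{\psi>0\}}\varphi\,d\mathcal{H}^1$. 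This is exactly $\langle \mathcal{H}^1\llcorner \partial^\star\{\psi>0\},\varphi\rangle$, yielding the claimed identity.

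The main obstacle is Step~2: one must carry the Alt--Caffarelli/De Silva flatness theory from its model case ($\Delta$ and $|\nabla\psi|=1$) over to the weighted divergence-form operator $\mathcal{L}$ with the variable flux $|\nabla\psi|=y$. On compact subsets of $\{y>0\}$ this is only an analytic perturbation of the classical framework, but one still has to verify that the linearization of the flux condition, the partial Harnack step, and the subsequent hodograph-based analyticity argument all go through in detail.
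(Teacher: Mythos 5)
Your proposal follows essentially the same route as the paper: use $\Sigma_H=\emptyset$ together with Lemma \ref{81} and Proposition \ref{Highest-frequency-V} to exclude the trivial and two-plane blow-ups, conclude from Proposition \ref{10.1} that the rescalings are $\epsilon$-flat at every non-degenerate free boundary point, invoke the viscosity formulation and flat-implies-$C^{1,\alpha}$ plus higher regularity to obtain an analytic free boundary with $|\nabla \psi| = y$ from the positivity side, and then read off $\mathcal{L}\psi = \mathcal{H}^{1}\llcorner \partial^{\star}\{\psi>0\}$ by integration by parts. The one step you flag as requiring verification --- the De Silva-type flatness theory adapted to the operator $\mathcal{L}$ and the flux $|\nabla\psi|=y$, and the subsequent hodograph/analyticity bootstrap --- is exactly the step the paper disposes of by citing the existing axisymmetric regularity literature (\cite{zbMATH05934884, du2024regularity, du2023free2} and \cite{MR752578}) rather than re-proving it, so your treatment is, if anything, more explicit than the paper's.
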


\begin{proof}
First note that by $\epsilon$-flatness the free boundary of a viscosity solution $\psi$
in $B^{+}_1$ in the sense
\begin{align*}
\left(x - \epsilon\right)^+ \leq \psi(X) \leq (x + \epsilon)^+
\end{align*}
implies the free boundary to be $C^{1,\alpha}$ in $B^{+}_{1/2}$; see e.g. the developed theory in \cite{zbMATH05934884} or \cite{du2024regularity, du2023free2}.
This, in turn, implies that the reduced boundary $\partial^{\star}\{\psi>0\}$, which coincides with the topological free boundary $\partial \{\psi>0\}$, is locally given by the graph of an analytic function, according to well-established theory (see, e.g., \cite{MR752578}).
Next we observe that by the assumption $\Sigma_H = \emptyset$ as well as Lemma \ref{81}, a limit of
the blow-up family $\psi_r(X) = \psi(X_0+rX)/r$ as a sequence $r\to 0$ at free boundary points $X_0$ cannot be zero. 
Therefore, by  Proposition \ref{10.1} we obtain the blowup solution must be $\psi_0(X) = y_0 (X\cdot \nu)_+ $ for some unit vector $\nu$. 
Therefore, for sufficiently small $r$, the $\epsilon$-flatness condition holds for $\psi_r$.
\end{proof}

\subsection{Analysis of singular points}
\label{8.2}

The analysis of regular points does not rely on Theorem \ref{72}. However, in the presence of singular points, the absence of this theorem leads to significant challenges: the set $\Sigma_H$, and consequently the support of $\mathcal{L}\psi$, could be large and exhibit a highly irregular structure. Theorem \ref{72} resolves the most major irregularities, allowing for more accurate characterization of $\mathcal{L}\psi$ and $I$.

\begin{lemma}
\label{85}
Let $(\psi, I)$ be a variational solution in $\Omega$. Then, for $\mathcal{H}^{1}$-a.e. $X_0 \in \Sigma_H$, the following hold:  
\begin{enumerate}
\item The renormalized functions  
\begin{align*}
v_{X_0,r}(X) = \frac{\psi(X_0+rX)}{ \sqrt{r^{-1}H_{X_0}(\psi,r)}} \to \alpha(X_0) |X\cdot \nu|,
\end{align*}  
converge strongly in $W^{1,2}_{\rW, \mathrm{loc}}(\mathbb{R}^2)$, where $\nu$ is a unit vector.  
\item The Lebesgue density of $\{\psi> 0\}$ at $X_0$ exists and equals $1$:  
\begin{align*}
\lim_{r \to 0} \frac{\left|B_r(X_0) \cap \{\psi>0\}\right|}{\left|B_r(X_0)\right|}=1.
\end{align*}  

\end{enumerate}
\end{lemma}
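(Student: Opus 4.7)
The plan is to combine Theorem \ref{72} with the one-dimensional classification in Theorem \ref{5.3}(ii) to identify the blow-up. By Theorem \ref{72}(iii), combined with the rectifiability in Theorem \ref{72}(ii) and standard density theorems, $\mathcal{H}^{1}$-a.e.\ $X_0\in\Sigma_H$ satisfies simultaneously: $\cN_{X_0}(\psi,0^+)=1$; an approximate tangent line $T_{X_0}\Sigma_H$ exists; and the subset $\{Y\in\Sigma_H:\cN_Y(\psi,0^+)=1\}$ has $\mathcal{H}^{1}$-density one at $X_0$. I work with such an $X_0$.

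Given an arbitrary sequence $r_k\to 0$, I select $Y_k\in\Sigma_H$ with $\cN_{Y_k}(\psi,0^+)=1$ and $|Y_k-X_0|\in[r_k/2,r_k]$, and, after extracting a subsequence, $(Y_k-X_0)/r_k\to c\mathbf{n}$ with $c\in[1/2,1]$ and $\mathbf{n}\in T_{X_0}\Sigma_H$ a unit vector. The crucial quantitative step is to upgrade the pointwise identity $\cN_{Y_k}(\psi,0^+)=1$ into control of $\cN_{Y_k}(\psi,10r_k)$: for any fixed small $\rho>0$, continuity of $X\mapsto\cN_X(\psi,\rho)$ yields $\cN_{Y_k}(\psi,\rho)\to\cN_{X_0}(\psi,\rho)$, which is close to $1$ for $\rho$ small; almost monotonicity (Remark \ref{rmrk-almost-almgren}) then propagates this estimate down to $\cN_{Y_k}(\psi,10r_k)$ for $r_k<\rho/10$. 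A diagonal argument produces $\cN_{Y_k}(\psi,10r_k)\to 1$, and combined with the lower bound $\cN_{Y_k}(\psi,r_k)\ge 1-C_\star r_k^{1/2}$ from Lemma \ref{nprim-N} one obtains $\cN_{Y_k}(\psi,10r_k)-\cN_{Y_k}(\psi,r_k)\to 0$; the analogous statement at $X_0$ is immediate.

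Integrating Remark \ref{r2} over the radial interval $[r_k,10r_k]$ and using Lemma \ref{42-N} and Lemma \ref{46} to replace $H_{Y_k}(\psi,\cdot)$ by $H_{X_0}(\psi,r_k)$, I obtain for both $Z\in\{X_0,Y_k\}$
\[
\int_{B_{10r_k}^{+}(Z)\setminus B_{r_k}^{+}(Z)}\frac{1}{y}\,|\nabla\psi\cdot(X-Z)-\cN_Z(\psi,r_k)\,\psi|^{2}\,dX\le \delta_k\,r_k\,H_{X_0}(\psi,r_k),
\]
with $\delta_k\to 0$. Lemma \ref{6.2}, equation \eqref{d2}, applied at the pair $(X_0,Y_k)$ and divided by $c^{2}r_k^{2}$, then yields
\[
\frac{r_k}{H_{X_0}(\psi,r_k)}\int_{B_{9r_k}(X_0)\setminus B_{2r_k}(X_0)}\frac{1}{y}\,|\nabla\psi\cdot\mathbf{n}|^{2}\,dX\to 0.
\]
Rescaling at $s_k:=4r_k$ and defining $v_k(X):=\psi(X_0+s_kX)/\sqrt{s_k^{-1}H_{X_0}(\psi,s_k)}$, this is precisely the hypothesis of Theorem \ref{5.3}(ii) on the annulus $A=B_1\setminus\overline{B_{1/2}}$. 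The theorem then forces $N_\infty=1$ and $v(X)=\alpha(X_0)\,|X\cdot\nu|$ with $\nu\perp\mathbf{n}$ and $\alpha(X_0)=\sqrt{y_0/\pi}$. Since $\nu$ is determined up to sign by $T_{X_0}\Sigma_H$ and $|X\cdot\nu|$ is even in $\nu$, the limit $v$ is independent of the subsequence, so the full family $v_{X_0,r}$ converges as $r\to 0$; by $1$-homogeneity of $v$, the identification does not depend on whether one rescales at $s_k$ or $r_k$.

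The strong $W^{1,2}_{\rW,\mathrm{loc}}(\mathbb{R}^2)$ convergence follows from convergence of norms: a direct computation gives $\int_{B_1}y_0^{-1}|\nabla v|^{2}\,dX=\alpha(X_0)^{2}\pi/y_0=1$, whereas Lemma \ref{5.1n} together with $\cV_{X_0}(\psi,r_k)\to 0$ (Lemma \ref{44}) gives $\int_{B_1}(y_0+r_ky)^{-1}|\nabla v_k|^{2}\,dX=\cN_{X_0}(\psi,r_k)+\cV_{X_0}(\psi,r_k)\to 1$. Part (ii) then follows from strong $L^{2}(B_1)$-convergence and the relation $|\{v=0\}\cap B_1|=0$: for any $\epsilon>0$, the inclusion $\{v\ge\epsilon\}\setminus\{v_k>0\}\subset\{|v_k-v|\ge\epsilon\}$ combined with Chebyshev's inequality give $|\{v\ge\epsilon\}\setminus\{v_k>0\}|\le\epsilon^{-2}\|v_k-v\|_{L^{2}(B_1)}^{2}\to 0$, so $\liminf_k|\{v_k>0\}\cap B_1|\ge|\{v\ge\epsilon\}|$; letting $\epsilon\to 0$ yields $|\{\psi>0\}\cap B_r(X_0)|/|B_r(X_0)|\to 1$. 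The principal obstacle in this plan is obtaining the upper bound $\cN_{Y_k}(\psi,10r_k)\le 1+\mathrm{o}(1)$ along the coupled sequence $(Y_k,r_k)$, since almost monotonicity alone provides only lower bounds on $\cN$ in terms of $\cN(0^+)$; the remedy is to first pass through continuity of $\cN_X(\psi,\rho)$ at a fixed scale $\rho$, and then propagate the bound down to $r_k$ via Remark \ref{rmrk-almost-almgren}.
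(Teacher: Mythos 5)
Your proposal is essentially the same as the paper's proof: both rely on Theorem \ref{72} (frequency one, approximate tangent, $\mathcal{H}^1$-rectifiability and density) to pick a nearby $\Sigma_H$-point roughly one tangent-direction step away, apply Lemma \ref{6.2} to show the derivative along that direction is small, invoke Theorem \ref{5.3}(ii) to classify the blow-up as $\alpha(X_0)|X\cdot\nu|$, and deduce part (2) from the $L^2$-convergence of $v_k$ together with $|\{v=0\}|=0$. The one genuine deviation is the strong-convergence step: you argue via convergence of Dirichlet norms on $B_1$ (using $\cN_{X_0}+\cV_{X_0}\to 1$), which gives strong $W^{1,2}$-convergence there, but to upgrade to $W^{1,2}_{\rW,\mathrm{loc}}(\mathbb{R}^2)$ one still needs the normalization ratio $H_{X_0}(Rr_k)/H_{X_0}(r_k)\to R^3$ for each fixed $R$ (obtainable by rerunning the blow-up identification at scale $Rr_k$), whereas the paper sidesteps this by an integration-by-parts argument exploiting harmonicity of $v$ in $\{v>0\}$ that works directly against arbitrary $\eta\in C_c^\infty(\mathbb{R}^2)$.
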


\begin{proof}
From Theorem \ref{72}, for $\mathcal{H}^{1}$-a.e. \( X_0 \in \Sigma_H \), we have \( \cN_{X_0}(\psi, 0^+) = 1 \), and \(\Sigma_H\) possesses an approximate tangent. That is, the measures  
\begin{align*}
\boldsymbol{\nu}_{X_0,r}(E) = \frac{\mathcal{H}^{1}\left(\left(X_0+rE\right)  \cap \Sigma_H \right)}{r^{2}}
\end{align*}  
converge in the weak-* sense to \(\mathcal{H}^{1} \llcorner L\) for some line \( L \subseteq \mathbb{R}^2 \) passing through the origin.  

We now show that at any such point \( X_0 \), both \textit{(1)} and \textit{(2)} hold. 
By the continuity of the mapping \( X \mapsto \cN_X(\psi, r) \) for each small positive \( r \), we may assume that for every \( \eta > 0 \), there exists a \( \delta > 0 \) such that 
\[
\cN_X(\psi, r) \leq 1 + \eta \qquad \text{for} \quad X \in B_{\delta} \cap \Sigma_H \text{ and } r < \delta.
\]
Given any sequence \( r_k \to 0 \), consider the renormalized sequence  
\begin{align*}
v_k(X)  = \frac{\psi(X_0+r_k X)}{ \sqrt{r_k^{-1} H_{X_0}(\psi, r_k)}}.
\end{align*}  
By  Lemma \ref{5.1n}, the sequence \( v_k \) converges weakly to \( v \) in \( W^{1,2}_{\rW}(B_R) \), and strongly in \( L^2_{\rW}(B_R) \), as well as Lebesgue-a.e., as \( k \to +\infty \) for each \( R > 0 \) (after passing to a subsequence if necessary).  

From this, it follows that \( \chi_{\{v > 0\}} \leq \liminf_{k \to +\infty} \chi_{\{v_k > 0\}} \) Lebesgue-a.e. Then, applying Fatou’s lemma, we obtain  
\begin{align*}
\frac{\left|B_1 \cap \{v > 0\}   \right|}{|B_1|}
\leq \liminf_{k \to +\infty} \frac{\left |B_1 \cap \{v_k > 0\}  \right|}{|B_1|}
= \liminf_{k \to +\infty}\frac{\left| B_{r_k}(X_0) \cap \{\psi> 0\}  \right|}{|B_{r_k}(X_0)|}.
\end{align*}  
Since \( |X\cdot\nu| > 0 \) Lebesgue-a.e., property \textit{(2)} follows directly from \textit{(1)}.

Due to the existence of an approximate tangent for \( \boldsymbol{\nu} \), for any \( B_{\tau} (X) \) with \( X \in L \), we have  
\begin{align*}
\liminf_{k \to + \infty} \boldsymbol{\nu}_{X_0,r_k}
(B_{\tau} (X)) \geq \mathcal{H}^{1}
(B_{\tau} (X) \cap L) \geq 2 \tau.
\end{align*}  
In particular, for sufficiently large \( k \), the set \( B_{\tau r_k} (X_0 + r_k X) \cap \Sigma_H \) is non-empty. 
Choosing the unit vector $\nu$ such that \( L = \{X\cdot \nu = 0\} \) and let $\boldsymbol{n}$ be the unit vector perpendicular to $\nu$.
We can find points \( X_{k} \in  B_{r_k/40}(X_0 + r_k \boldsymbol{n}) \cap \Sigma_H\); let $X_k := X_0  + r_k \tilde X_k$.  

Using Lemmas \ref{42-N} and \ref{46}, we know that  
\begin{align*}
\frac{1}{C} H_{X_0}(\psi, r_k) \leq R^{-3} r_k^{3} H_{X_k}(\psi, Rr_k) \leq C H_{X_0}(\psi, r_k),
\end{align*}  
for every \( R \in [1/40, 40] \) and for some constant \( C \) independent of \( r_k \) and \( X_{k} \). Then, from the formulas in Remark \ref{r2} and Remark \ref{rmrk-almost-almgren}, we obtain:  
\begin{align*}
\int_{r_k/4}^{4r_k} \int_{\partial B_s(X_{k})} &\frac{2s^{-1}}{H_{X_0}(\psi,r_k)} \frac{1}{y} \left|\nabla \psi(X) \cdot (X-X_k) - \cN_{X_{k}}(\psi,s) \psi(X) \right|^2\,d\mathcal{H}^{1}\, ds \\
&\leq  (y_k-4r_k)^{-2} \cN_{X_{k}}(\psi,4r_k)-(y_k-r_k/4)^{-2}\cN_{X_{k}}(\psi, r_k/4) + C r_k^{1/2} \leq C \eta + C r_k^{1/2},
\end{align*}  
using \( \cN_{X_{k}}(r) \in [1-C_\star r^{1/2}, 1 + \eta] \) for sufficiently large \( k \). Applying this bound again, we find  
\begin{align*}
\int_{B_4(\tilde X_{k})\setminus B_{1/4}(\tilde X_{k})}
\frac{1}{y_0+r_ky}\left|\nabla v_k(X) \cdot (X-\tilde X_k) - v_k(X) \right|^2 \, dX \leq C\eta+ C r_k^{1/2}.
\end{align*}  
The same estimate holds at \( X_0 \):  
\begin{align*}
\int_{B_4\setminus B_{1/4}}
\frac{1}{y_0+r_ky}\left|\nabla v_k(X) \cdot X - v_k(X) \right|^2\,dX \leq C\eta+ C r_k^{1/2}.
\end{align*}  
Applying Lemma \ref{6.2}, we obtain  
\begin{align*}
\int_{B^{+}_2(X_0)\setminus B^{+}_{1/2}(X_0)}
\frac{1}{y_0+r_ky}\left|\nabla v_k(X) \cdot \tilde X_{k} \right|^2 \,dX \leq C\eta+ C r_k^{1/2}.
\end{align*}  
This implies  
\begin{align*}
\int_{B_2 \setminus B_{1/2}} \frac{1}{y_0+r_ky}\left|\nabla v_k \cdot \boldsymbol{n} \right|^2 \,dX\leq C\eta+Cr_k^{1/2},
\end{align*}  
due to the choice of \( X_{k} \). 
Since \( \eta \) was arbitrary, we conclude  
\begin{align*}
\lim_{k \to + \infty} \int_{B_2\setminus B_{1/2}} \frac{1}{y_0+r_ky}\left|\nabla v_k \cdot \boldsymbol{n}  \right|^2 \,dX=0.
\end{align*}  
Applying Theorem \ref{5.3}, we deduce that \( v(X) = \alpha(X_0) |X\cdot \nu| \) on \( B_1 \setminus \overline{B_{1/2}} \). 
A rescaled version of the same argument extends this conclusion to any annulus \( B_R \setminus \overline{B_{R/2}} \), and thus to all of \( \mathbb{R}^2 \). 

To establish the strong convergence \( v_k \to v \), consider  \( \eta \in C^{\infty}_c(\mathbb{R}^2) \), we have  
\begin{align*}
\int \frac{\eta}{y_0} |\nabla v|^2 \, dX &= -2\int \frac{v}{y_0} \nabla \eta \cdot \nabla v \, dX = \lim_{k \to + \infty} -2 \int \frac{v_k}{y_0+r_ky} \nabla \eta \cdot \nabla v_k \, dX = \lim_{k \to + \infty} \int \frac{\eta}{y_0+r_ky} |\nabla v_k|^2 \, dX,
\end{align*}  
where the first and last terms follow from the fact that \( \Delta v =0 \) in $\{v>0\}$,  and that each \( v_k \) satisfies
 $\mathrm{div\,} \left(\frac{1}{y_0+r_ky}\nabla v_k \right) =0$ in $\{v_k>0\}$.
Since the limit function \( \alpha(X_0) |X\cdot\nu| \) is independent of the chosen subsequence, we conclude that \textit{(1)} holds.
\end{proof}

\subsection{Identification of the limit \texorpdfstring{$I$}{I}}\label{8.3}

\begin{theorem}\label{86}
Let $(\psi, I)$ be a variational solution in an open and connected domain $\Omega$. Then, either $\psi \equiv 0$ or $I = \chi_{\{\psi>0\}}$ Lebesgue-a.e.
\end{theorem}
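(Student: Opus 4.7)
Assume $\psi \not\equiv 0$. Since $I\in\{0,1\}$ and $\chi_{\{\psi>0\}}\le I$, we have $I\equiv 1$ on $\{\psi>0\}$, and by the discussion preceding Lemma~\ref{81}, $I$ is locally constant on the interior of $\{\psi=0\}$. Thus it suffices to rule out the existence of a connected component $U$ of $(\{\psi=0\})^{\circ}$ on which $I\equiv 1$. Suppose, for contradiction, such a $U$ exists. For $X\in U$ with $y(X)>0$ and $r$ small enough that $B_r^{+}(X)\Subset U$, one has $D_X(\psi,r)=\int_{B_r^{+}(X)}y\,dX$ and $H_X(\psi,r)=0$, so $M_X(\psi,0^+)=\omega(X)$. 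Upper semi-continuity of $X\mapsto M_X(\psi,0^+)$ (Proposition~\ref{Highest-frequency-V}) and the matching upper bound $M_X(\psi,0^+)\le\omega(X)$ then give $M_{X_0}(\psi,0^+)=\omega(X_0)$ at every $X_0\in \partial U\cap\Omega\cap\{y>0\}$. Moreover $\partial U\cap\Omega\subset \partial\{\psi>0\}$: a boundary point of $U$ where $\psi>0$ contradicts continuity of $\psi$, while a boundary point $X_0$ lying in $(\{\psi=0\})^{\circ}$ would sit in the same connected component of $(\{\psi=0\})^{\circ}$ as the nearby points of $U$, namely $U$ itself. Hence $\partial U\cap\Omega\cap\{y>0\}\subset \Sigma_H$.

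By Theorem~\ref{72}, the set $\Sigma_H^{t}$ has locally finite $\mathcal{H}^{1}$-measure for every $t>0$, so $U$ has locally finite perimeter in $\Omega\cap\{y>t\}$. Lemma~\ref{85}(2) provides, at $\mathcal{H}^{1}$-a.e. $X_0\in\Sigma_H$, that the Lebesgue density of $\{\psi>0\}$ at $X_0$ equals $1$; since $U\subset \{\psi=0\}$, the density of $U$ at such $X_0$ is $0$. On the other hand, De Giorgi's structure theorem guarantees that every point of the reduced boundary $\partial^{\star}U$ is a density-$\tfrac12$ point for $U$. Together with $\partial^{\star}U\cap\{y>t\}\subset \Sigma_H$, this forces $\mathcal{H}^{1}(\partial^{\star}U\cap\{y>t\})=0$, so the perimeter of $U$ vanishes in $\Omega\cap\{y>t\}$ for every $t>0$.

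Vanishing perimeter forces $|C\setminus U|=0$ in every connected component $C$ of $\Omega\cap\{y>t\}$ that meets $U$, and continuity of $\psi$ then yields $\psi\equiv 0$ throughout $C$. Since $U$ is connected, it sits inside a single such component $C_t$, and $C_t$ grows, as $t\to 0^+$, to the connected component of $\Omega\cap\{y>0\}$ containing $U$. An elementary observation is that if $\Omega$ is connected as a subset of $\{y\ge 0\}$ then $\Omega\cap\{y>0\}$ is also connected, because any separation of the latter into two open pieces would extend through the half-ball neighborhoods of the axis points of $\Omega$ to a separation of $\Omega$. Thus $\psi\equiv 0$ on $\Omega\cap\{y>0\}$, and combined with $\psi|_{\{y=0\}}=0$ we conclude $\psi\equiv 0$ on $\Omega$---a contradiction.

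The substantive step is the second paragraph, where the quantitative rectifiability of $\Sigma_H$ (Theorem~\ref{72}) combines with the $\mathcal{H}^{1}$-a.e. blow-up density information (Lemma~\ref{85}) to rule out a nonzero reduced boundary for any hypothetical ``extra'' component $U$; the first and third paragraphs are essentially structural consequences of upper semi-continuity of $M_{X_0}(\psi,0^+)$ and continuity of $\psi$.
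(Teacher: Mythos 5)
Your argument has the same engine as the paper's: push $\partial U\cap\{y>0\}$ into $\Sigma_H$ via upper semi\mbox{-}continuity of $M_{X_0}(0^+)$, note that $\partial^{\star}U$ points have Lebesgue density $\tfrac12$ for $U$ while Lemma~\ref{85}(2) gives density $1$ for $\{\psi>0\}$ at $\mathcal H^1$\mbox{-}a.e.\ point of $\Sigma_H$, and derive a contradiction. Two issues, one a genuine gap:

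\textbf{(a) Missing the measure of $\partial\{\psi>0\}$.} Your reduction ``it suffices to rule out a component $U$ of $(\{\psi=0\})^\circ$ with $I\equiv 1$'' is only valid if $|\partial\{\psi>0\}|=0$: after showing $I=0$ on $(\{\psi=0\})^\circ$ and $I=1$ on $\{\psi>0\}$, the theorem asserts $I=\chi_{\{\psi>0\}}$ \emph{a.e.}, and you still have to dispose of the set $\partial\{\psi>0\}$ where neither holds. The paper handles this at the start of the proof by combining Lemma~\ref{82} (on the open set $\Omega\setminus\Sigma_H$, where $N_\psi$ has locally finite $\mathcal H^1$\mbox{-}measure) with Theorem~\ref{72} (which bounds $\mathcal H^1(\Sigma_H)$), concluding $\partial\{\psi>0\}$ has locally finite $\mathcal H^1$\mbox{-}measure, hence Lebesgue measure zero. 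You should insert this step.

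\textbf{(b) Imprecision in the endgame.} Your final paragraph says ``Since $U$ is connected, it sits inside a single such component $C_t$,'' but $U$ need not lie in $\{y>t\}$ (it can reach, or even contain, a neighborhood of the axis), and $U\cap\{y>t\}$ can meet several components of $\Omega\cap\{y>t\}$. The conclusion you want does follow, but more cleanly stated as: since $\mathcal H^1(\partial^{\star}U\cap\{y>t\})=0$ for every $t>0$, the perimeter of $U$ in $\Omega\cap\{y>0\}$ vanishes; with $\Omega\cap\{y>0\}$ connected (your observation) and $|U|>0$, this forces $|(\Omega\cap\{y>0\})\setminus U|=0$, hence $\psi\equiv 0$ by continuity. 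Alternatively, the paper avoids the connectivity digression entirely by assuming $U\neq\Omega$, choosing a smooth connected $\Omega'\Subset\Omega$ with $|U\cap\Omega'|>0$ and $|\Omega'\setminus U|>0$, and invoking the relative isoperimetric inequality to get $\mathcal H^1(\partial^{\star}U\cap\Omega')>0$ directly --- a shorter route to the same contradiction.

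In short: the core idea is the paper's own, but you need to add the $|\partial\{\psi>0\}|=0$ step and tighten the final connectivity argument (or replace it with the isoperimetric finish).
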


\begin{proof}
From Lemma \ref{82} and Theorem \ref{72}, we obtain that $|\partial \{\psi> 0\}| = 0$. Moreover, by definition, on $\{\psi> 0\}$ we have $I(X) = 1 = \chi_{\{\psi>0\}}(X)$ Lebesgue-a.e. Therefore, it suffices to show that $I = 0$ Lebesgue-a.e. in the interior $\{\psi = 0\}^{\circ}$.

Consider a non-empty connected component $U$ of $\{\psi = 0\}^{\circ}$ on which $I = 1$, recalling that $I$ is locally constant on $\{\psi = 0\}^{\circ}$. Clearly, we have $\partial U \cap \Omega \subseteq \partial \{\psi> 0\}$. For any $X_0 \in \partial U$, it follows from Proposition \ref{Highest-frequency-V} and the upper semi-continuity of $X_0 \mapsto M_{X_0}(0^+)$  that  
\[
M_{X_0}(0^+) \geq \limsup_{U \ni X\to X_0 } M_X(0^+) = \omega(X_0).
\]
In particular, we have $\partial U \cap \Omega \subseteq \Sigma_H$. 
By applying Theorem \ref{72}, we conclude that $\partial U \cap \Omega$ has locally finite Hausdorff measure, implying that $U$ has locally finite perimeter.

Assume that $U \neq \Omega$. Then, there exists a smooth, connected, open set $\Omega' \Subset \Omega$ such that $ |U \cap \Omega'| > 0$ and $|\Omega'\setminus U| > 0$. 
By the relative isoperimetric inequality, we obtain $\mathcal{H}^{1}\left(\partial^{\star} U \cap \Omega'\right) > 0$, where $\partial^{\star}U$ denotes the reduced boundary.
At each point $X_0 \in \partial^{\star}U$, the set $U$ has Lebesgue density $1/2$, so $\{\psi> 0\} \subseteq \Omega \setminus U$ can have at most Lebesgue density $1/2$ at point $X_0$. 
On the other hand, by Lemma \ref{85}, the Lebesgue density of $\{\psi> 0\}$ at $\mathcal{H}^{1}$-a.e. point in $\Sigma_H$, which contains $\partial^{\star}U \subseteq \partial U$, is $1$. This leads to a contradiction.
\end{proof}

\subsection{Identification of the measure \texorpdfstring{$\mathcal{L}\psi$}{Lpsi}}
\label{8.4}

\begin{theorem}
\label{87}
Let $(\psi, I)$ be a variational solution on $\Omega$. Then, the set $\partial \{\psi> 0\}$ is countably $\mathcal{H}^{1}$-rectifiable, has locally finite $\mathcal{H}^{1}$-measure, and satisfies in the sense of distributions  
\begin{align*}
\mathcal{L}\psi =  \mathcal{H}^{1}\llcorner \partial^{\star}\{ \psi> 0\}+ \frac{2}{\sqrt{\pi y}} \sqrt{\lim_{r \to 0^+} r^{-3} H_{X}(\psi, r)}\mathcal{H}^{1}\llcorner \Sigma_H.
\end{align*}
\end{theorem}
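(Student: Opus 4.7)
The overall approach is to decompose $\partial\{\psi>0\}$ into three strata---the regular portion in $\Omega\setminus\Sigma_H$, the set $\Sigma_H$, and the axial part $S_\psi\subset\{y=0\}$---and identify both the rectifiable structure and the density of $\mathcal{L}\psi$ on each. By Proposition~\ref{32} the distribution $\mathcal{L}\psi$ is a non-negative Radon measure, and it is supported in $\partial\{\psi>0\}$ because it vanishes in the open set $\{\psi>0\}$ (Proposition~\ref{32}) and in $\{\psi=0\}^\circ$ (using Theorem~\ref{86}, which gives $I=\chi_{\{\psi>0\}}$ a.e., so the inner-variation identity reduces to $\int yI\,\mathrm{div}\,\eta\,dX=0$ there). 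Rectifiability and local finiteness of $\mathcal{H}^1(\partial\{\psi>0\})$ then assemble as follows: on each connected component of the open set $\Omega\setminus\Sigma_H$, Corollary~\ref{84} applies, yielding both that the free boundary is locally an analytic curve and that $\mathcal{L}\psi = \mathcal{H}^1\llcorner\partial^\star\{\psi>0\}$ there; $\Sigma_H=\bigcup_{t>0}\Sigma_H^t$ is countably $\mathcal{H}^1$-rectifiable with locally finite $\mathcal{H}^1$-measure by Theorem~\ref{72}; and $S_\psi\subset\{y=0\}$ is contained in a line (with blow-up $Ay^2$ satisfying $\mathcal{L}(Ay^2)=0$, so it carries no $\mathcal{L}\psi$-mass).

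The core of the proof is the density of $\mathcal{L}\psi\llcorner\Sigma_H$ at $\mathcal{H}^1$-a.e.\ $X_0\in\Sigma_H$. By Lemma~\ref{85}, the normalized blow-up $v_{X_0,r}$ converges strongly in $W^{1,2}_{\rW,\mathrm{loc}}(\mathbb{R}^2)$ to $\alpha(X_0)|X\cdot\nu|$ with $\alpha(X_0)=\sqrt{y_0/\pi}$, so setting $\psi_r(X):=\psi(X_0+rX)/r$ we have $\psi_r\to A(X_0)|X\cdot\nu|$ strongly in $W^{1,2}_{\rW,\mathrm{loc}}(\mathbb{R}^2)$, with $A(X_0)=\alpha(X_0)\sqrt{\lim_{r\to 0^+}r^{-3}H_{X_0}(\psi,r)}$ (the limit existing because the blow-up is unique at such points). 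Moreover, the same lemma shows $\{\psi>0\}$ has Lebesgue density one at $X_0$, so $\mathcal{H}^1$-a.e.\ point of $\Sigma_H$ lies outside $\partial^\star\{\psi>0\}$ and the two terms in the claimed decomposition are mutually singular. Testing $\mathcal{L}\psi$ against $\phi(X)=\tilde\phi((X-X_0)/r)$ with $\tilde\phi\in C_c^\infty(B_1)$, a change of variables and the strong $W^{1,2}_{\rW,\mathrm{loc}}$ convergence yield
\[
r^{-1}\langle\mathcal{L}\psi,\phi\rangle = -\int\frac{\nabla\psi_r\cdot\nabla\tilde\phi}{y_0+ry}\,dX \xrightarrow{r\to 0} -\frac{A(X_0)}{y_0}\int\mathrm{sgn}(X\cdot\nu)\,\nu\cdot\nabla\tilde\phi\,dX = \frac{2A(X_0)}{y_0}\int_{\{X\cdot\nu=0\}}\tilde\phi\,d\mathcal{H}^1.
\]
Combining with the approximate-tangent $\mathcal{H}^1(\Sigma_H\cap B_r(X_0))/(2r)\to 1$ inherited from Theorem~\ref{72}, one reads off the density
\(
\tfrac{d\mathcal{L}\psi}{d(\mathcal{H}^1\llcorner\Sigma_H)}(X_0) = \tfrac{2A(X_0)}{y_0} = \tfrac{2}{\sqrt{\pi y_0}}\sqrt{\lim_{r\to 0^+}r^{-3}H_{X_0}(\psi,r)},
\)
exactly as stated.

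The principal obstacle is upgrading the functional blow-up above to a Radon-measure-theoretic identity: the strong $W^{1,2}_{\rW,\mathrm{loc}}$ convergence of $\psi_r$ immediately gives vague convergence of the rescaled measures $r^{-1}\mathcal{L}\psi(X_0+r\,\cdot\,)$ on open sets, but the density formula effectively requires testing against (an approximation of) a characteristic function of a ball, and one must also rule out a singular part of $\mathcal{L}\psi$ with respect to $\mathcal{H}^1\llcorner\partial\{\psi>0\}$. Both points are handled by combining the linear upper bound $\mathcal{L}\psi(B_r(X_0))\le Cr$ (extracted from the proof of Lemma~\ref{BV-estimate}) with the rectifiable approximate-tangent structure of Theorem~\ref{72}, in a Besicovitch/Federer-type differentiation argument which both controls the singular part and justifies the passage to the limit against the cutoff. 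Once the pointwise densities are verified on all three strata and no contribution is charged to $S_\psi$, the stated identity follows.
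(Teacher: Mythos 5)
Your proposal is essentially correct and follows the same route as the paper: regular points are handled by Corollary~\ref{84}, the set $\Sigma_H$ has the prescribed density via the blow-up of Lemma~\ref{85}, the linear bound $\mathcal{L}\psi(B_r)\le Cr$ from Lemma~\ref{BV-estimate} plus the rectifiability from Theorem~\ref{72} allow a Federer/Radon--Nikodym decomposition $\mathcal{L}\psi=\theta\,\mathcal{H}^1\llcorner\partial\{\psi>0\}$, and the density $\theta$ is computed at $\mathcal{H}^1$-a.e.\ point of $\Sigma_H$ exactly as you do (the paper evaluates $\mathcal{L}\psi(B_r)/\sqrt{r^{-1}H}$ directly rather than testing against a cutoff, but this is the same calculation). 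The only genuine variation is that you single out the axial stratum $S_\psi$ as a third piece; the paper instead absorbs $\{y=0\}$ into $\Omega\setminus\Sigma_H$ and relies on Corollary~\ref{84} to give $\mathcal{L}\psi\llcorner(\Omega\setminus\Sigma_H)=\mathcal{H}^1\llcorner\partial^\star\{\psi>0\}$ there, so your extra claim that $\mathcal{L}(Ay^2)=0$ forces $\theta=0$ on $S_\psi$ is compatible but not needed once Corollary~\ref{84} is invoked. One small slip: the existence of $\lim_{r\to 0}r^{-3}H_{X_0}$ is not a consequence of blow-up uniqueness alone (the normalized rescalings $v_{X_0,r}$ could converge even if $r^{-3}H_{X_0}(r)$ oscillated); it actually follows from the almost-monotonicity of $r^{-3}H$ recorded in Lemma~\ref{42-N}.
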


\begin{proof}
The measure $\mathcal{L}\psi$ is supported on $\partial \{\psi> 0\}$, and applying Corollary \ref{84} to $\psi$ on the open set $\Omega \setminus \Sigma_H$, yields:
\begin{align*}
\mathcal{L}\psi \llcorner(\Omega \setminus \Sigma_H) =  \mathcal{H}^{1} \llcorner \partial^{\star}\{\psi> 0 \}. 
\end{align*}
Additionally, from Lemma \ref{BV-estimate}, we have the estimate:
\begin{align*}
\mathcal{L}\psi \left(B^{+}_r(X_0) \right) \leq C(C_V)r
\end{align*}
when $B^{+}_{2r}(X_0) \Subset \Omega$. This implies that:  
$$ \mathcal{H}^{1} \left( \left(\partial \{\psi> 0\} \setminus \Sigma_H \right) \cap \Omega' \right) = \mathcal{H}^{1} \left(\partial^{\star}\{\psi> 0\}\cap \Omega' \right) < +\infty, $$
for any $\Omega' \Subset \Omega$. On the other hand, by Theorem \ref{72}, we have $\mathcal{H}^{1} \left (\Omega' \cap \Sigma_H \right) < +\infty$, which leads to  
\[
\mathcal{H}^{1} \left(\Omega' \cap \partial \{\psi>0\} \right) < +\infty.
\]
Moreover, since $\Sigma_H$ is countably $\mathcal{H}^{1}$-rectifiable, it follows that $\partial \{\psi> 0\}$ is also countably $\mathcal{H}^{1}$-rectifiable.

By \cite[3.2.19]{zbMATH03280855} and the Radon-Nikodym theorem, if we define the density function
\begin{align*}
\theta(X_0) = \limsup_{r \to 0^+}\frac{\mathcal{L}\psi \left(B_r(X_0)\right)}{2r} \leq C(C_V),
\end{align*}
then
\begin{align*}
\mathcal{L}\psi= \theta \mathcal{H}^{1}\llcorner \partial \{\psi>0\}.
\end{align*}
Thus, it suffices to show that for $\mathcal{H}^{1}$-a.e. point $X_0=(x_0,y_0) \in  \Sigma_H$, we have:
\begin{align*}
\theta(X_0)=\frac{2}{\sqrt{\pi y_0}}\sqrt{\lim_{r \to 0^+} r^{-3} H_{X_0}(\psi, r)}.
\end{align*}
Applying Lemma \ref{85}, for $\mathcal{H}^{1}$-a.e. point $ X_0 \in \Sigma_H$, we have that  
\[
v_{X_0,r}(X) = \frac{\psi(X_0+rX)}{\sqrt{r^{-1} H_{X_0}(\psi,r)}} \to v(X) = \alpha(X_0) |X\cdot \nu| = \sqrt{\frac{y_0}{\pi}} |X\cdot \nu|
\]
in $W^{1,2}_{\rW,\mathrm{loc}} (\mathbb{R}^2)$, for some unit vector $\nu$. 
Define the rescaled operator $\cL_r v = \mathrm{div\,} \left(\frac{1}{y_0+ry}\nabla v \right)$, then  
\begin{align*}
\frac{\mathcal{L}\psi\left(B_r(X_0)\right)}{ \sqrt{r^{-1} H_{X_0}(\psi,r)}}=\mathcal{L}_rv_{X_0,r}(B_1) \to \frac{1}{y_0} \Delta v(B_1) = \frac{4}{\sqrt{\pi y_0}}
\end{align*}
as $r \to 0$, using the fact that $\Delta v(\partial B_1) =0$ and that $\mathcal{L}_rv_{X_0,r} \to \frac{1}{y_0} \Delta v$ in the weak-* sense of measures, as $r \to 0$, while explicitly computing the Laplacian of $v$ in the final step. Finally, 
\begin{align*}
\theta(X_0) = \limsup_{r \to 0^+}\frac{\mathcal{L}\psi \left(B_r(X_0)\right)}{2r} 
= \frac{2}{\sqrt{\pi y_0}} 
\sqrt{\lim_{r \to 0^+} r^{-3} H_{X_0}(\psi, r)},
\end{align*}
which completes the proof.
\end{proof}

\begin{proof}[Proof of the Main Theorem \ref{MAIN-THEOREM}]

(i) is a consequence of Theorem \ref{86}.

(ii) has been established in Theorem \ref{87}.

(iii) follows from Theorems \ref{87} and \ref{85}.

(iv) follows from Lemma \ref{85} and Theorem \ref{72}.
\end{proof}

\paragraph{\bf{Acknowledgements}}
M. Fotouhi and P. Vosooqnejad were supported by Iran National Science Foundation (INSF) under project No. 4001885. M. Bayrami was supported by a grant from IPM. This research was in part supported by a grant from IPM (No.1404350111).

\section*{Declarations}

\medskip
\noindent {\bf  Funding and/or Conflicts of interests/Competing interests:} The authors declare that there are no financial, competing or conflict of interests.


\bibliographystyle{acm}
\bibliography{mybibfile}

\end{document}